\documentclass [12pt]{amsart}
\usepackage[utf8]{inputenc}
\pdfoutput=1

\usepackage{amsmath,amssymb,amsthm,amsfonts}
\usepackage{mathtools}

\usepackage{comment}
\usepackage[bookmarksdepth=4]{hyperref}
\usepackage{bbm}
\usepackage{mathrsfs}

\usepackage{tikz,graphicx,color}
\usepackage{tikz-cd}
\usepackage{tikz-3dplot}
\usetikzlibrary{calc}
\usetikzlibrary{arrows}
\usetikzlibrary{shapes}
\usetikzlibrary{patterns}
\usetikzlibrary{positioning}
\usetikzlibrary{arrows.meta}
\usetikzlibrary{decorations.markings}
\usepackage{epstopdf}
\pdfpageattr{/Group <</S /Transparency /I true /CS /DeviceRGB>>}

\usepackage[arrow]{xy}
\usepackage{diagbox}
\usepackage{subfig}
\usepackage{arcs}
\usepackage{xcolor}
\usepackage{tabu}
\usepackage{booktabs}

\usepackage[margin=1.01in]{geometry}

\usepackage{soul}
\usepackage{accents}

\usepackage{enumitem}
\usepackage{letltxmacro}
\usepackage{thmtools,etoolbox}

\def\myarabic#1{\normalfont(\roman{#1})}
\newlist{theoremlist}{enumerate}{1}
\setlist[theoremlist]{label=\myarabic{theoremlisti},ref={\myarabic{theoremlisti}},itemindent=0pt,labelindent=0pt,
leftmargin=*,noitemsep}

\makeatletter
\renewcommand{\p@theoremlisti}{\perh@ps{\thetheorem}}
\protected\def\perh@ps#1#2{\textup{#1#2}}
\newcommand{\itemrefperh@ps}[2]{\textup{#2}}
\newcommand{\itemref}[1]{\begingroup\let\perh@ps\itemrefperh@ps\ref{#1}\endgroup}
\makeatother

\usepackage{nameref,hyperref}
\usepackage[capitalize]{cleveref}

\newtheorem{theorem}{Theorem}[section]

\newtheorem{lemma}[theorem]{Lemma}
\newtheorem{proposition}[theorem]{Proposition}
\newtheorem{corollary}[theorem]{Corollary}
\theoremstyle{definition}
\newtheorem{remark}[theorem]{Remark}
\theoremstyle{definition}
\newtheorem{definition}[theorem]{Definition}
\newtheorem{conjecture}[theorem]{Conjecture}

\theoremstyle{definition}
\newtheorem{problem}[theorem]{Problem}
\theoremstyle{definition}
\newtheorem{example}[theorem]{Example}

\def\S{Section~}

\addtotheorempostheadhook[theorem]{\crefalias{theoremlisti}{theorem}}
\addtotheorempostheadhook[lemma]{\crefalias{theoremlisti}{lemma}}
\addtotheorempostheadhook[proposition]{\crefalias{theoremlisti}{proposition}}

\def\Acal{\mathcal{A}}\def\Fcal{\mathcal{F}}\def\Gcal{\mathcal{G}}\def\Ical{\mathcal{I}}\def\Lcal{\mathcal{L}}\def\Pcal{\mathcal{P}}\def\Tcal{\mathcal{T}}\def\Ucal{\mathcal{U}}\def\Xcal{\mathcal{X}}




\def\one{{\mathbbm{1}}}
\def\C{\mathbb{C}}
\def\R{\mathbb{R}}

\def\Z{\mathbb{Z}}
\def\Q{\mathbb{Q}}
\def\P{\mathbb{P}}


\newcommand\parr[1]{{({#1})}}

\def\<{{\langle}}
\def\>{{\rangle}}

\def\la{{\lambda}}
\def\l{{\lambda}}

\def\CP{{\C P}}


\def\det{{ \operatorname{det}}}
\def\tr{{ \operatorname{tr}}}

\def\diag{{ \operatorname{diag}}}
\def\rank{{ \operatorname{rank}}}

\def\codim{ \operatorname{codim}}

\def\op{{ \operatorname{op}}}

\def\Span{ \operatorname{Span}}

\def\Cone{\operatorname{Cone}}

\def\CC{{\mathbb C}}

\def\CP{{\CC\mathbb P}}

\def\GL{\operatorname{GL}}
\def\SL{\operatorname{SL}}

\def\Gr{\operatorname{Gr}}
\def\Grtnn{\Gr_{\ge 0}}



\def\codim{{\rm codim}}
\def\Z{{\mathbb Z}}
\def\R{{\mathbb R}}
\def\Gr{{\rm Gr}}

\def\Cone{{\rm Cone}}

\def\GL{{\rm GL}}
\def\diag{{\rm diag}}

\def\Inv{{\rm Inv}}

\def\Waff{\tW}

\newcommand\pcvar[1]{\Pi_{#1}^\circ}

\def\id{{\operatorname{id}}}

\newcommand{\Le}{\textup{\protect\scalebox{-1}[1]{L}}}

\begin{document}
\numberwithin{equation}{section}
\newcommand\TODOL[1]{\textcolor{blue!20!black!30!green}{TODO-LATER:#1}}
\title[Regularity theorem for totally nonnegative flag varieties]{Regularity theorem for totally nonnegative flag~varieties}
\author{Pavel Galashin}
\address{Department of Mathematics, University of California, Los Angeles, 520 Portola Plaza,
Los Angeles, CA 90025, USA}
\email{\href{mailto:galashin@math.ucla.edu}{galashin@math.ucla.edu}}
\author{Steven N. Karp}
\address{LaCIM, Universit\'{e} du Qu\'{e}bec \`{a} Montr\'{e}al, CP 8888, Succ.\ Centre-ville, Montr\'{e}al, QC H3C 3P8, Canada}
\email{\href{mailto:karp.steven@courrier.uqam.ca}{karp.steven@courrier.uqam.ca}}
\author{Thomas Lam}
\address{Department of Mathematics, University of Michigan, 2074 East Hall, 530 Church Street, Ann Arbor, MI 48109-1043, USA}
\email{\href{mailto:tfylam@umich.edu}{tfylam@umich.edu}}
\thanks{P.G.\ was supported by an Alfred P. Sloan Research Fellowship and by the National Science Foundation under Grants No.~DMS-1954121 and No.~DMS-2046915. S.N.K.\ was supported by the Natural Sciences and Engineering Research Council of Canada under a Postdoctoral Fellowship. T.L.\ was supported by a von Neumann Fellowship from the Institute for Advanced Study and by the National Science Foundation under Grants No.~DMS-1464693 and No.~DMS-1953852.}

\begin{abstract}
We show that the totally nonnegative part of a partial flag variety $G/P$ (in the sense of Lusztig) is a regular CW complex, confirming a conjecture of Williams. In particular, the closure of each positroid cell inside the totally nonnegative Grassmannian is homeomorphic to a ball, confirming a conjecture of Postnikov.
\end{abstract}

\date{\today}

\subjclass[2020]{Primary:  14M15. 
  Secondary: 
05E45, 
15B48, 
20G20. 
}

\keywords{Total positivity, algebraic group, partial flag variety,  Richardson variety, totally nonnegative Grassmannian, positroid cell, affine Kac--Moody group.}
\maketitle
\setcounter{tocdepth}{1}
\tableofcontents
\newcommand{\smat}[1]{\left[\begin{smallmatrix}
      #1
    \end{smallmatrix}\right]}
\newcommand{\pmat}[1]{\begin{pmatrix}
#1
  \end{pmatrix}}

\def\ub{{\bar u}}

\def\xs{\operatorname{xs}}
\def\ys{\operatorname{ys^{-1}}}
\def\ut{{\tilde u}}

\def\Lie{\operatorname{Lie}\;}
\def\U{{\mathcal U}}
\def\B{{\mathcal B}}
\def\X{\Xcal}
\def\G{\Gcal}
\def\U{\Ucal}
\def\Ft{{\tilde F}}

\def\pj{^{\parr J}}
\def\Uj{U\pj}
\def\Cj{C\pj}
\def\Cuj{C\pj_u}
\def\Phij{\Phi\pj}
\def\gj{g\pj}
\let\uu\u
\def\u{h}
\def\hj{h^{\parr J}}
\def\Lie{\operatorname{Lie}}

\def\bs{\backslash}
\let\ii\i
\def\i{{\mathbf{i}}}
\def\j{{\mathbf{j}}}
\def\t{{\mathbf{t}}}

\def\d#1{\dot{#1}}
\def\ds{\d{s}}
\def\dw{\d{w}}
\def\du{\d{u}}
\def\dv{\d{v}}
\def\da{\d{a}}
\def\db{\d{b}}
\def\dc{\d{c}}
\def\dz{\d{z}}
\def\dr{\d{r}}
\def\dq{\d{q}}
\def\gfr{\mathfrak{g}}
\def\alphacheck{\alpha^\vee}
\def\X{\accentset{\circ}{X}}
\def\Xcl{X}

\def\Rich_#1^#2{\accentset{\circ}{R}_{#1,#2}}
\def\Richcl_#1^#2{R_{#1,#2}}
\def\RRich_#1^#2{\accentset{\circ}{R}_{#1,#2}^\R}
\def\RRichcl_#1^#2{R_{#1,#2}^\R}
\def\Rtp_#1^#2{R_{#1,#2}^{>0}}
\def\Rtnn_#1^#2{R_{#1,#2}^{\geq0}}

\def\Gtnn{{G_{\geq0}}}
\def\GBtnn{{(G/B)_{\geq0}}}
\def\GPtnn{{(G/P)_{\geq0}}}

\def\PR_#1^#2{\accentset{\circ}{\Pi}_{#1,#2}}
\def\PRtp_#1^#2{\Pi_{#1,#2}^{>0}}
\def\PRtnn_#1^#2{\Pi_{#1,#2}^{\geq0}}
\def\PRcl_#1^#2{\Pi_{#1,#2}}
\def\PRR_#1^#2{\accentset{\circ}{\Pi}_{#1,#2}^\R}
\def\PRRcl_#1^#2{\Pi_{#1,#2}^\R}

\def\sf{\operatorname{sf}}
\def\Tsf{T_{\sf}}
\def\bw{{\mathbf{w}}}
\def\bv{{\mathbf{v}}}
\def\bu{{\mathbf{u}}}
\def\ba{{\mathbf{a}}}
\def\bb{{\mathbf{b}}}
\def\bc{{\mathbf{c}}}
\def\wnot{{\mathbf{w_0}}}
\def\woj{w_J}
\def\dwoj{\dw_J}

\def\Fcal{\mathcal{F}}
\def\Qsf{\Fcal_{\sf}}
\def\Qsfs{\Fcal^\ast_{\sf}}
\def\QsfN#1{\Fcal_{\sf}^{#1}}
\def\QsfsN#1{(\Fcal^\ast_{\sf})^{#1}}
\def\Tsf{T^{\sf}}
\def\Bsf{B^{\diamond}}
\def\Usf{U_{\sf}}

\def\Utp{U_{>0}}
\def\Utnn{U_{\geq0}}

\def\HKL{{\operatorname{HKL}}}
\def\maxx{{\operatorname{max}}}
\def\minn{{\operatorname{min}}}
\def\leqJ{\preceq}
\def\lessJ{\prec}
\def\geqJ{\succeq}
\def\gessJ{\succ}
\def\QJfilt#1{Q_J^{\succeq#1}}
\def\QJfilter#1#2{Q_J^{\succeq(#1,#2)}}
\def\QJideal#1#2{Q_J^{\preceq(#1,#2)}}

\def\pd#1{_{(#1)}}
\def\pu#1{^{(#1)}}

\def\bx{{\mathbf{x}}}
\def\by{{\mathbf{y}}}
\def\bi{{\mathbf{i}}}
\def\bj{{\mathbf{j}}}

\def\Gomp{G_0^\mp}
\def\Gopm{G_0^\pm}
\def\Deltamp{\Delta^\mp}
\def\Deltapm{\Delta^\pm}
\def\Goj{G_0\pj}
\def\K{{\mathbb{K}}}
\def\k{{\mathbbm{k}}}
\def\kast{\k^\ast}

\def\RedMR{\operatorname{Red}}
\def\Red{\operatorname{Red}}
\def\gMR#1#2{{\mathbf{g}}_{#1,#2}}
\def\gMRvw{\gMR\bv\bw}
\def\GMRtp#1#2{G^{>0}_{#1,#2}}
\def\GMRtpvw{\GMRtp\bv\bw}
\def\weightL{Y(T)}
\def\rootL{Q_\Phi}
\def\kls{{\operatorname{KLS}}}
\def\Qkls{Q_P(W,W_J)}
\def\leqkls{\preceq_{P}}

\def\flag{{\operatorname{flag}}}
\def\minormap{\Delta^\flag}
\def\Proj{{\mathbb{P}}}

\def\Kast{\K^{\ast}}
\def\GMRsf#1#2{G^{\sf}_{#1,#2}}

\def\Hom{\operatorname{Hom}}

\def\wj{w^J}
\def\TR{T(\R)} 
\def\GR{G(\R)}
\def\BR{B(\R)}
\def\BRm{B_-(\R)}
\def\PAR{P(\R)}
\def\GBR{(G/B)_{\R}}
\def\GPR{(G/P)_{\R}}
\def\conjug{\sigma_\C}

\def\hjmap{\kappa}
\def\hjmp_#1{\hjmap_{#1}}
\def\hjx{\hjmap_x}
\def\Ftmap{\eta}
\def\Ft{\Ftmap}
\def\zetamap{\zeta_{u,v}\pj}
\def\Guvbig{{G_{u,v}\pj}}
\def\pidup{\pi_{\du P_-}}
\def\ut{u_\maxx}
\def\ut{{\tilde u}}
\def\dut{\d{\ut}}
\def\Fcalo{{\Fcal^{\diamond}}}
\def\Go{G^{\diamond}}
\def\Uo{U^{\diamond}}
\def\Uom_#1{U^{\diamond,-}_{#1}}

\def\GQT{G}
\def\BQT{B}
\def\UQT{U}
\def\TQT{T}

\def\Fcalb{{\overline{\Fcal}}}

\def\Rsf_#1^#2{R_{#1,#2}^{\sf}}
\def\eval{\operatorname{eval}}

\def\Rtpv{\Rtp_{v''}^{[\ut,w_0]}}
\def\hjh{\hjmp_h}

\def\xrasim{\xrightarrow{\sim}}


\def\chr{\operatorname{char}}

\let \oldlabel \label
\let\oldref\ref
\let\oldcref\cref


\def\Lie{\operatorname{Lie}\;}
\def\U{{\mathcal U}}
\def\B{{\mathcal B}}
\def\Xaff{\accentset{\circ}{\Xcal}}
\def\Xaffcl{\Xcal}
\def\Richaff_#1^#2{\accentset{\circ}{\mathcal{R}}_{#1}^{#2}}
\def\G{\Gcal}
\def\Gmin{\G^{\minn}}
\def\Umin{\U^{\minn}}
\def\Bmin{\B^{\minn}}
\def\U{\Ucal}
\def\T{\Tcal}
\def\Caff{{\mathcal{C}}}
\def\GCMaff{\tilde{A}}

\def\bs{\backslash}
\def\Phiaff{\Delta}
\def\re{\operatorname{re}}
\def\im{\operatorname{im}}
\def\Phire{\Phiaff_{\re}}
\def\Phiim{\Phiaff_{\im}}
\def\Waff{{\tilde{W}}}

\def\FPS{{\mathcal{A}}}
\def\CRL{{\rootL^\vee}}
\def\lch{\l}
\let\oldtau\tau
\def\tau_#1{\oldtau_{#1}}
\def\dtau_#1{\dot{\oldtau}_{#1}}
\def\Iaff{\tilde{I}}
\def\minn{{\operatorname{min}}}
\def\Cast{\C^\ast}
\def\hfr{\mathfrak{h}}
\def\gfr{\mathfrak{g}}
\def\hfrast{\mathfrak{h}^\ast}
\def\rank{\operatorname{rank}}
\def\rt{\tilde{r}}
\def\dh{\dot{h}}
\def\dg{\dot{g}}
\def\df{\dot{f}}

\def\tl{\tau_{\lch}}
\def\dtl{\dtau_{\lch}}
\def\tul{\tau_{u\lch}}
\def\twl{\tau_{w\lch}}
\def\dtul{\dtau_{u\lch}}
\def\vtw{v\tau_\lch w^{-1}}
\def\wji{(\wj)^{-1}}
\def\tmin{\tau_\lch^J}

\def\Ical{\mathcal{I}}
\def\tauk{\tau_k}
\def\dtauk{\dtau_k}
\def\leqop{\leq^\op}
\def\Povar_#1{\accentset{\circ}{\Pi}_{#1}}
\def\Povarcl_#1{\Pi_{#1}}
\def\RPovar_#1{\accentset{\circ}{\Pi}^\R_{#1}}
\def\RPovarcl_#1{\Pi^\R_{#1}}
\def\Povtp_#1{\Pi_{#1}^{>0}}
\def\Povtnn_#1{\Pi_{#1}^{\geq0}}

\def\Boundkn{\operatorname{Bound}(k,n)}

\def\FSpro{\pi}
\def\FStra{\varrho}
\def\FSstr{\operatorname{str}}
\def\FSdil{\vartheta}
\def\FSiso{\bar\nu}

\def\Star_#1{\operatorname{Star}_{#1}}
\def\Startnn_#1{\operatorname{Star}^{\geq0}_{#1}}
\def\SCone{\operatorname{SCone}}

\def\FSstr{\operatorname{str}}
\def\FSdil{\vartheta}

\def\Ofg{{\O_{f,g}}}
\def\Og{\O_{g}}
\def\O{\mathcal{O}}

\def\BAcomb{\psi}
\def\BAgeom{\bar\varphi_u}
\def\BAgeo{\varphi_u}

\def\tht{t}

\def\Ng{{N_g}}
\def\dist{\operatorname{dist}_b}
\def\strix{t_1(x)}
\def\strixn{t_1(x_n)}
\def\Bcl{\overline{B}}
\def\pB{\partial B}
\def\epsb{\eps_B}
\def\Sepsb{S_{g}}

\def\conept{c}

\def\Link{\operatorname{Lk}}
\def\Lkx_#1{\Link_{#1}}
\def\Lkxx_#1^#2{\accentset{\circ}{\Link}_{#1}^{#2}}
\def\Lkg{\Lkx_g}
\def\Lkgh{\Lkxx_g^h}
\def\Lkghp{\Lkxx_g^{h'}}
\def\Lktxx_#1^#2{\Link^{>0}_{#1,#2}}
\def\Starxx_#1^#2{\operatorname{Star}_{#1,#2}}
\def\Startxx_#1^#2{\operatorname{Star}^{\geq0}_{#1,#2}}

\def\sctnn_#1{\sc^{\geq0}_{#1}}
\def\sctnng{\sctnn_g}
\def\sctnngh{\sctnn_{g,h}}
\def\sctp_#1^#2{\sc^{>0}_{#1,#2}}
\def\sctpgh{\sctp_g^h}

\def\eps{\varepsilon}
\def\Seps_#1{S_{#1}}

\def\Lktpe_#1^#2{\Link^{>0}_{#1,#2}}
\def\Lktnne_#1{\Link^{\geq0}_{#1}}
\def\Lktnnge{\Lktnne_g}
\def\Lktnnghe{\Lktnne_{g,h}}
\def\Lkge{\Lkx_g(\eps)}
\def\Lkghe{\Lkxx_g^h(\eps)}
\def\Lktnnghpe{\Lktnne_{g,h'}}

\def\Lktp_#1^#2{\Link^{>0}_{#1,#2}}
\def\Lktnn_#1{\Link^{\geq0}_{#1}}
\def\Lktnng{\Lktnn_g}
\def\Lktnngh{\Lktnn_{g,h}}

\def\axref#1{\ref{#1}}
\def\sc{Z}
\def\scg{\sc_g}
\def\sch{\sc_h}
\def\schp{\sc_{h'}}
\def\sco_#1^#2{\accentset{\circ}{\sc}_{#1,#2}}
\def\scogh{\sco_g^h}
\def\sccl_#1^#2{\sc_{#1}^{#2}}
\def\base{0}
\def\baseg{0}
\def\baseh{0}
\def\basehp{0}
\def\sphere{S_{\base}}

\def\Y{\mathcal{Y}}
\def\Yo_#1{\accentset{\circ}{\Y}_{#1}}
\def\Ycl_#1{\Y_{#1}}
\def\Ytnn{\Y^{\geq0}}
\def\Ytp_#1{\Y_{#1}^{>0}}
\def\YQ{Q}
\def\leqY{\leqJ}
\def\lessY{\lessJ}
\def\gessY{\gessJ}
\def\geqY{\geqJ}
\def\gradY{\dim}
\def\YQhat{\widehat{\YQ}}
\def\FSisox#1{\FSiso^{#1}}
\def\YQmin{\YQ_\minn}

\def\dilg{\FSdil_g}
\def\strg(#1){\normg{#1}}
\def\isog{\FSiso_g}
\def\isoga{\FSiso_{g,1}}
\def\isogb{\FSiso_{g,2}}
\def\isoh{\FSiso_h}
\def\isoha{\FSiso_{h,1}}
\def\isohb{\FSiso_{h,2}}

\def\norm#1{\|#1\|}
\def\normg#1{\|#1\|}

\def\ssan{4}
\def\ssak{2}
\def\ssau{s_3s_2}
\def\ssauk{\{1,4\}}
\def\ssax{\smat{
1 &   &   &   \\
x_{1} & x_{2} & -1 &   \\
x_{3} & x_{4} &   & -1 \\
  & 1 &   &  
}}
\def\ssaGRx{\smat{
1 &   \\
x_{1} & x_{2} \\
x_{3} & x_{4} \\
  & 1
}}
\def\ssaGRgjAu{\smat{
1 &   \\
  & x_{2} \\
  & x_{4} \\
  & 1
}}
\def\ssaGRgjBu{\smat{
1 &   \\
x_{1} &   \\
x_{3} &   \\
  & 1
}}
\def\ssagjA{\smat{
1 &   &   &   \\
  & 1 &   & x_{2} \\
  &   & 1 & x_{4} \\
  &   &   & 1
}}
\def\ssagjB{\smat{
1 &   &   &   \\
x_{1} & 1 &   &   \\
x_{3} &   & 1 &   \\
  &   &   & 1
}}
\def\ssadtul{\smat{
\frac{1}{z} &   &   &   \\
  & 1 &   &   \\
  &   & 1 &   \\
  &   &   & \frac{1}{z}
}}
\def\ssagjBi{\smat{
1 &   &   &   \\
-x_{1} & 1 &   &   \\
-x_{3} &   & 1 &   \\
  &   &   & 1
}}
\def\ssabageom{\smat{
\frac{1}{z} &   &   &   \\
-x_{1} & 1 &   & \frac{x_{2}}{z} \\
-x_{3} &   & 1 & \frac{x_{4}}{z} \\
  &   &   & \frac{1}{z}
}}
\def\ssaBm{\smat{
1 &   & \frac{x_{2}}{{\left(x_{1} x_{4} - x_{2} x_{3}\right)} z} & \frac{x_{4}}{x_{3} z} \\
  & 1 &   &   \\
  & \frac{x_{4}}{x_{2}} & 1 &   \\
  & \frac{1}{x_{2}} & \frac{x_{1}}{x_{1} x_{4} - x_{2} x_{3}} & 1
}}
\def\ssatauk{\smat{
  &   & \frac{1}{z} &   \\
  &   &   & \frac{1}{z} \\
1 &   &   &   \\
  & 1 &   &  
}}
\def\ssaBp{\smat{
\frac{x_{1} x_{4} - x_{2} x_{3}}{x_{2}} & -\frac{x_{4}}{x_{2}} & 1 &   \\
  & \frac{x_{3}}{x_{1} x_{4} - x_{2} x_{3}} & -\frac{x_{1}}{x_{1} x_{4} - x_{2} x_{3}} &   \\
  &   & \frac{1}{x_{3}} &   \\
-x_{1} z & z &   & x_{2}
}}
\def\ssaBmx{\smat{
1 & -\frac{1}{x_{1} z} &   & \frac{x_{4}}{x_{3} z} \\
  & 1 &   &   \\
  &   & 1 &   \\
  & -\frac{x_{3}}{x_{1} x_{4}} & \frac{1}{x_{4}} & 1
}}
\def\ssahx{\smat{
  &   & \frac{1}{z} &   \\
1 &   &   &   \\
  &   &   & \frac{1}{z} \\
  & 1 &   &  
}}
\def\ssaBpx{\smat{
-x_{1} & 1 &   &   \\
  & \frac{x_{3}}{x_{1} x_{4}} & -\frac{1}{x_{4}} &   \\
  &   & \frac{1}{x_{3}} &   \\
-x_{3} z &   & z & x_{4}
}}
\def\ssaBmxx{\smat{
1 & -\frac{1}{x_{1} z} & -\frac{1}{x_{3} z} &   \\
  & 1 &   &   \\
  & \frac{x_{3}}{x_{1}} & 1 &   \\
  &   &   & 1
}}
\def\ssahxx{\smat{
  &   & \frac{1}{z} &   \\
1 &   &   &   \\
  & 1 &   &   \\
  &   &   & \frac{1}{z}
}}
\def\ssaBpxx{\smat{
-x_{1} & 1 &   &   \\
  & -\frac{x_{3}}{x_{1}} & 1 &   \\
  &   & \frac{1}{x_{3}} &   \\
  &   &   & 1
}}
\def\ssagy{\smat{
-x_{1} & 1 &   & \frac{x_{2}}{z} \\
  &   &   & \frac{1}{z} \\
1 &   &   &   \\
-x_{3} z &   & z & x_{4}
}}
\def\ssaBmg{\smat{
1 &   &   & \frac{x_{2}}{x_{4} z} \\
-\frac{x_{3}}{x_{1} x_{4} - x_{2} x_{3}} & 1 &   & \frac{1}{x_{4} z} \\
-\frac{x_{4}}{x_{1} x_{4} - x_{2} x_{3}} & \frac{x_{4}}{x_{3}} & 1 &   \\
  &   &   & 1
}}
\def\ssahg{\smat{
1 &   &   &   \\
  & 1 &   &   \\
  &   & 1 &   \\
  &   &   & 1
}}
\def\ssaBpg{\smat{
-\frac{x_{1} x_{4} - x_{2} x_{3}}{x_{4}} & 1 & -\frac{x_{2}}{x_{4}} &   \\
  & \frac{x_{3}}{x_{1} x_{4} - x_{2} x_{3}} & -\frac{x_{1}}{x_{1} x_{4} - x_{2} x_{3}} &   \\
  &   & \frac{1}{x_{3}} &   \\
-x_{3} z &   & z & x_{4}
}}
\def\ssatrMA{\smat{
1 &   \\
  & x_{2} \\
  & x_{4} \\
  & 1
}}
\def\ssatrMB{\smat{
x_{1} & x_{2} \\
x_{3} & x_{4} \\
  & 1 \\
-1 &  
}}
\def\ssatrMC{\smat{
x_{3} & x_{4} \\
  & 1 \\
-1 &   \\
  &  
}}
\def\ssatrMD{\smat{
  & 1 \\
-1 &   \\
  &   \\
  &  
}}
\def\ssatrminA{x_{4}}
\def\ssatrminB{x_{1} x_{4} - x_{2} x_{3}}
\def\ssatrminC{x_{3}}
\def\ssatrminD{1}

\def\ssagUigi{\smat{
1 & -\frac{x_{4}}{{\left(x_{1} x_{4} - x_{2} x_{3}\right)} z} &   & \frac{x_{4}}{x_{3} z} \\
  & 1 & \frac{x_{2}}{x_{4}} &   \\
  &   & 1 &   \\
  & -\frac{x_{3}}{x_{1} x_{4} - x_{2} x_{3}} & \frac{1}{x_{4}} & 1
}}
\def\ssayA{\smat{
1 &   &   &   \\
  & 1 & -\frac{x_{2}}{x_{4}} &   \\
  &   & 1 &   \\
  &   &   & 1
}}
\def\ssayAi{\smat{
1 &   &   &   \\
  & 1 & \frac{x_{2}}{x_{4}} &   \\
  &   & 1 &   \\
  &   &   & 1
}}
\def\ssaxB{\smat{
1 & -\frac{x_{4}}{{\left(x_{1} x_{4} - x_{2} x_{3}\right)} z} &   & \frac{x_{4}}{x_{3} z} \\
  & 1 &   &   \\
  &   & 1 &   \\
  & -\frac{x_{3}}{x_{1} x_{4} - x_{2} x_{3}} & \frac{1}{x_{4}} & 1
}}

\def\ssayAbageom{\smat{
\frac{1}{z} &   &   &   \\
-\frac{x_{1} x_{4} - x_{2} x_{3}}{x_{4}} & 1 & -\frac{x_{2}}{x_{4}} &   \\
-x_{3} &   & 1 & \frac{x_{4}}{z} \\
  &   &   & \frac{1}{z}
}}
\def\ssayAbageomB{\smat{
\frac{1}{z} &   &   &   \\
-\frac{x_{1} x_{4} - x_{2} x_{3}}{x_{4}} & 1 &   &   \\
-x_{3} &   & 1 & \frac{x_{4}}{z} \\
  &   &   & \frac{1}{z}
}}
\def\ssapigx{\smat{
1 &   \\
\frac{x_{1} x_{4} - x_{2} x_{3}}{x_{4}} &   \\
x_{3} & x_{4} \\
  & 1
}}

\def\ssavikappax{\smat{
1 &   &   &   \\
  & x_{4} &   & -1 \\
  & -x_{2} & 1 &   \\
  & 1 &   &  
}}
\def\ssaetaL{\smat{
1 &   &   &   \\
  & 1 &   &   \\
  & -\frac{x_{2}}{x_{4}} & 1 &   \\
  & \frac{1}{x_{4}} &   & 1
}}
\def\ssaeta{\smat{
1 &   &   &   \\
  & x_{4} &   &   \\
  &   & 1 & -\frac{x_{2}}{x_{4}} \\
  &   &   & \frac{1}{x_{4}}
}}
\def\ssaetaR{\smat{
1 &   &   &   \\
  & 1 &   & -\frac{1}{x_{4}} \\
  &   & 1 &   \\
  &   &   & 1
}}

\def\ssazeta{\smat{
1 &   &   &   \\
x_{1} & \frac{x_{2}}{x_{4}} & -1 & -x_{2} \\
x_{3} & 1 &   & -x_{4} \\
  & \frac{1}{x_{4}} &   &  
}}

\def\ssazetaw{\smat{
  &   & 1 &   \\
-1 & -x_{2} & x_{1} & \frac{x_{2}}{x_{4}} \\
  & -x_{4} & x_{3} & 1 \\
  &   &   & \frac{1}{x_{4}}
}}

\def\ssazetawA{\frac{1}{x_{4}}}
\def\ssazetawB{\frac{x_{3}}{x_{4}}}
\def\ssazetawC{\frac{x_{1} x_{4} - x_{2} x_{3}}{x_{4}}}

\def\ssayBbageomA{\smat{
1 &   &   &   \\
  & 1 & \frac{x_{2}}{x_{4}} &   \\
  &   & 1 &   \\
  &   &   & 1
}}
\def\ssayBbageomAt{\smat{
1 &   &   &   \\
  & 1 & \frac{t x_{2}}{x_{4}} &   \\
  &   & 1 &   \\
  &   &   & 1
}}

\def\ssaGRxt{\smat{
1 &   \\
x_{1}+(t-1)\frac{x_2x_3}{x_4} & tx_{2} \\
x_{3} & x_{4} \\
  & 1
}}


\def\ssbn{5}
\def\ssbk{2}
\def\ssbu{s_2}
\def\ssbuk{\{1,3\}}
\def\ssbwp{s_2s_1s_4s_3s_2}
\def\ssbvp{s_1}
\def\ssbwpk{\{3,5\}}
\def\ssbw{s_2s_1s_4s_3s_2}
\def\ssbv{s_2s_1}
\def\ssbwk{\{3,5\}}
\def\ssbx{\smat{
  & -1 &   &   &   \\
1 &   &   &   &   \\
t_{1} & t_{5} & 1 &   &   \\
  & t_{4} t_{5} & t_{4} & 1 &   \\
  & t_{3} t_{4} t_{5} & t_{3} t_{4} & t_{3} & 1
}}
\def\ssbdeovpwp{y_2(t_{1})\ds_{1}y_4(t_{3})y_3(t_{4})y_2(t_{5})}
\def\ssbGRx{\smat{
  & -1 \\
1 &   \\
t_{1} & t_{5} \\
  & t_{4} t_{5} \\
  & t_{3} t_{4} t_{5}
}}
\def\ssbgju{\smat{
1 &   &   &   &   \\
\frac{t_{5}}{t_{1}} & \frac{1}{t_{1}} & -1 &   &   \\
  & 1 &   &   &   \\
-t_{4} t_{5} &   &   & 1 &   \\
-t_{3} t_{4} t_{5} &   &   &   & 1
}}
\def\ssbGRgju{\smat{
1 &   \\
\frac{t_{5}}{t_{1}} & \frac{1}{t_{1}} \\
  & 1 \\
-t_{4} t_{5} &   \\
-t_{3} t_{4} t_{5} &  
}}
\def\ssbhjx{\smat{
1 &   &   &   &   \\
-\frac{t_{5}}{t_{1}} & 1 &   &   &   \\
  &   & 1 &   &   \\
t_{4} t_{5} &   &   & 1 &   \\
t_{3} t_{4} t_{5} &   &   &   & 1
}}
\def\ssbhjxx{\smat{
  & -1 &   &   &   \\
1 & \frac{t_{5}}{t_{1}} &   &   &   \\
t_{1} & t_{5} & 1 &   &   \\
  &   & t_{4} & 1 &   \\
  &   & t_{3} t_{4} & t_{3} & 1
}}
\def\ssbg{[2, 4, 8, 5, 6]}
\def\ssbh{[3, 4, 7, 5, 6]}
\def\ssbgneck{[\{1,3\},\{2,3\},\{3,4\},\{4,8\},\{5,8\}]}
\def\ssbhneck{[\{1,2\},\{2,3\},\{3,4\},\{4,7\},\{5,7\}]}
\def\ssbpidup{\smat{
  & -1 &   &   &   \\
1 &   & -\frac{1}{t_{1}} &   &   \\
t_{1} & t_{5} &   &   &   \\
  & t_{4} t_{5} & t_{4} & 1 &   \\
  & t_{3} t_{4} t_{5} & t_{3} t_{4} & t_{3} & 1
}}
\def\ssbeta{\smat{
t_{1} & t_{5} &   &   &   \\
  & 1 &   &   &   \\
  &   & \frac{1}{t_{1}} &   &   \\
  &   & t_{4} & 1 &   \\
  &   & t_{3} t_{4} & t_{3} & 1
}}
\def\ssbzeta{\smat{
  & -1 &   &   &   \\
\frac{1}{t_{1}} & -\frac{t_{5}}{t_{1}} & -1 &   &   \\
1 &   &   &   &   \\
  & t_{4} t_{5} &   & 1 &   \\
  & t_{3} t_{4} t_{5} &   &   & 1
}}
\def\ssbetaB{\smat{
  & 1 &   &   &   \\
-1 &   &   &   &   \\
  &   & 1 &   &   \\
  &   &   & 1 &   \\
  &   &   &   & 1
}}
\def\ssbetaBi{\smat{
  & -1 &   &   &   \\
1 &   &   &   &   \\
  &   & 1 &   &   \\
  &   &   & 1 &   \\
  &   &   &   & 1
}}
\def\ssbzetaB{\smat{
  & -1 &   &   &   \\
\frac{1}{t_{1}} & -\frac{t_{5}}{t_{1}} & -1 &   &   \\
1 &   &   &   &   \\
  & t_{4} t_{5} &   & 1 &   \\
  & t_{3} t_{4} t_{5} &   &   & 1
}}
\def\ssbzetaw{\smat{
  &   &   &   & -1 \\
-1 &   & \frac{1}{t_{1}} &   & -\frac{t_{5}}{t_{1}} \\
  &   & 1 &   &   \\
  & 1 &   &   & t_{4} t_{5} \\
  &   &   & -1 & t_{3} t_{4} t_{5}
}}
\def\ssbzetawA{ t_{3} t_{4} t_{5} }
\def\ssbzetawB{ t_{4} t_{5} }
\def\ssbzetawC{ t_{4} t_{5} }
\def\ssbzetawD{ \frac{t_{5}}{t_{1}} }
\def\ssbzetawE{ 1 }

\def\ssbgAneck{13}
\def\ssbtrminA{ 1 }
\def\ssbgBneck{23}
\def\ssbtrminB{ \frac{t_{5}}{t_{1}} }
\def\ssbgCneck{34}
\def\ssbtrminC{ t_{4} t_{5} }
\def\ssbgDneck{48}
\def\ssbtrminD{ t_{4} t_{5} }
\def\ssbgEneck{58}
\def\ssbtrminE{ t_{3} t_{4} t_{5} }

\def\notblack{black}
\def\black{black}
\def\notblue{blue}
\def\blue{blue}
      \def\lw{1.2pt}
      \def\lblscl{0.67}

\def\sqlw{0.4pt}

\newcommand\crosssq[5]{
\pgfmathsetmacro{\xR}{#1+1}
\pgfmathsetmacro{\xL}{#1-1}
\pgfmathsetmacro{\yU}{#2+1}
\pgfmathsetmacro{\yD}{#2-1}
\pgfmathsetmacro{\xl}{#1-0.5}
\pgfmathsetmacro{\xr}{#1+0.5}
\pgfmathsetmacro{\yu}{#2+0.5}
\pgfmathsetmacro{\yd}{#2-0.5}
\draw[line width=\sqlw,draw=#3] (\xR,#2)--(#1,\yU)--(\xL,#2)--(#1,\yD)--cycle;
\draw[line width=\lw,draw=#4] (\xl,\yu)--(\xr,\yd);
\draw[line width=\lw,draw=#4] (\xr,\yu)--(\xl,\yd);
\coordinate (LU#5) at (\xl,\yu);
\coordinate (LD#5) at (\xl,\yd);
\coordinate (RU#5) at (\xr,\yu);
\coordinate (RD#5) at (\xr,\yd);
}

\def\ledot#1#2{\draw[fill=black!30] (#1,#2) circle (4pt);}

\newcommand\uncrssq[5]{
\pgfmathsetmacro{\xR}{#1+1}
\pgfmathsetmacro{\xL}{#1-1}
\pgfmathsetmacro{\yU}{#2+1}
\pgfmathsetmacro{\yD}{#2-1}
\pgfmathsetmacro{\xl}{#1-0.5}
\pgfmathsetmacro{\xr}{#1+0.5}
\pgfmathsetmacro{\yu}{#2+0.5}
\pgfmathsetmacro{\yd}{#2-0.5}
\draw[line width=\sqlw,draw=#3] (\xR,#2)--(#1,\yU)--(\xL,#2)--(#1,\yD)--cycle;
\draw[line width=\lw,draw=#4] (\xl,\yu) to[out=-45, in=45] (\xl,\yd);
\draw[line width=\lw,draw=#4] (\xr,\yu) to[out=-135,in=135] (\xr,\yd);
\ledot{#1}{#2}
\coordinate (LU#5) at (\xl,\yu);
\coordinate (LD#5) at (\xl,\yd);
\coordinate (RU#5) at (\xr,\yu);
\coordinate (RD#5) at (\xr,\yd);
}

    \newcommand\drawLe[3]{
\pgfmathtruncatemacro{\xshift}{(#1)*5}
\begin{scope}[xshift=\xshift cm]
  \foreach \i in {1,2,...,5}
  {
\pgfmathtruncatemacro{\xi}{\i+(#1)*5}

    \node[draw=#2,  scale=0.3,fill=#2] (u\xi) at (\i,6) {};
    \node[draw=#2,  scale=0.3,fill=#2] (mm\xi) at (\i,1) {};
    \node[draw=#2,  scale=0.3,fill=#2] (m\xi) at (\i,-1) {};
    \node[label=below:{\scalebox{\lblscl}{$\xi$}},draw=#2,  scale=0.3,fill=#2] (d\xi) at (\i,-6) {};
    \coordinate (ux\xi) at (\i,7.28);
    \node[anchor=north] (XXX) at (ux\xi.90) {\scalebox{\lblscl}{$\xi$}};
  }
  \foreach \i in {3,4,5}
  {
\pgfmathtruncatemacro{\xi}{\i+(#1)*5}
\draw[line width=\lw,draw=#3] (mm\xi) -- (m\xi);
}
\pgfmathtruncatemacro{\one}{1+(#1)*5}
\pgfmathtruncatemacro{\two}{2+(#1)*5}
\pgfmathtruncatemacro{\three}{3+(#1)*5}
\pgfmathtruncatemacro{\four}{4+(#1)*5}
\pgfmathtruncatemacro{\five}{5+(#1)*5}
\crosssq{1.5}{3.5}{#2}{#3}{A}
\crosssq{2.5}{2.5}{#2}{#3}{B}
\crosssq{2.5}{4.5}{#2}{#3}{C}
\crosssq{3.5}{3.5}{#2}{#3}{D}
\crosssq{4.5}{4.5}{#2}{#3}{E}
\draw[line width=\lw,draw=#3] (u\one)--(LUA);
\draw[line width=\lw,draw=#3] (u\two)--(LUC);
\draw[line width=\lw,draw=#3] (u\three)--(RUC);
\draw[line width=\lw,draw=#3] (u\four)--(LUE);
\draw[line width=\lw,draw=#3] (u\five)--(RUE);

\draw[line width=\lw,draw=#3] (mm\one)--(LDA);
\draw[line width=\lw,draw=#3] (mm\two)--(LDB);
\draw[line width=\lw,draw=#3] (mm\three)--(RDB);
\draw[line width=\lw,draw=#3] (mm\four)--(RDD);
\draw[line width=\lw,draw=#3] (mm\five)--(RDE);

\crosssq{1.5}{-3.5}{#2}{#3}{A}
\uncrssq{2.5}{-2.5}{#2}{#3}{B}
\uncrssq{2.5}{-4.5}{#2}{#3}{C}
\uncrssq{3.5}{-3.5}{#2}{#3}{D}
\uncrssq{4.5}{-4.5}{#2}{#3}{E}

\draw[line width=\lw,draw=#3] (d\one)--(LDA);
\draw[line width=\lw,draw=#3] (d\two)--(LDC);
\draw[line width=\lw,draw=#3] (d\three)--(RDC);
\draw[line width=\lw,draw=#3] (d\four)--(LDE);
\draw[line width=\lw,draw=#3] (d\five)--(RDE);

\draw[line width=\lw,draw=#3] (m\one)--(LUA);
\draw[line width=\lw,draw=#3] (m\two)--(LUB);
\draw[line width=\lw,draw=#3] (m\three)--(RUB);
\draw[line width=\lw,draw=#3] (m\four)--(RUD);
\draw[line width=\lw,draw=#3] (m\five)--(RUE);

\end{scope}

}

\newcommand\drawLine[2]{
\pgfmathtruncatemacro{\xshift}{(#1)*5}
\begin{scope}[xshift=\xshift cm]
  \foreach \i in {1,2}
  {
\pgfmathtruncatemacro{\xi}{\i+(#1)*5}
\pgfmathtruncatemacro{\xxi}{\i+(#1+1)*5}
\draw[line width=\lw,draw=#2] (mm\xi) -- (m\xxi);
  }
\end{scope}
}

\def\sqt{0.5}

\newcommand\Lesq[3]{
\pgfmathsetmacro{\xl}{#1-\sqt}
\pgfmathsetmacro{\xr}{#1+\sqt}
\pgfmathsetmacro{\yu}{#2+\sqt}
\pgfmathsetmacro{\yd}{#2-\sqt}
\draw[line width=\sqlw,draw=#3] (\xl,\yu)--(\xr,\yu)--(\xr,\yd)--(\xl,\yd)--cycle;
}

\newcommand\ledott[2]{\draw[fill=black!30] (#1,#2) circle (2.83pt);}

\newcommand\Lesqd[3]{
\pgfmathsetmacro{\xl}{#1-\sqt}
\pgfmathsetmacro{\xr}{#1+\sqt}
\pgfmathsetmacro{\yu}{#2+\sqt}
\pgfmathsetmacro{\yd}{#2-\sqt}
\draw[line width=\sqlw,draw=#3] (\xl,\yu)--(\xr,\yu)--(\xr,\yd)--(\xl,\yd)--cycle;
\draw[fill=black!30] (#1,#2) circle (2.83pt);
}
\newcommand\Lesql[4]{
\Lesq{#1}{#2}{#3}
\node[scale=1] (SSS) at (#1,#2) {$s_{#4}$};
}

\def\figureLe{

\begin{tabular}{ccc}

\begin{tikzpicture}[scale=0.565,baseline=(zero.base)]
\coordinate (zero) at (0,0.5);
\Lesqd{0}{-2}{\black}
\Lesqd{1}{-2}{\black}
\Lesqd{2}{-2}{\black}
\Lesq{0}{-3}{\black}
\Lesqd{1}{-3}{\black}

\Lesql{0}{3}{\black}{2}
\Lesql{1}{3}{\black}{3}
\Lesql{2}{3}{\black}{4}
\Lesql{0}{2}{\black}{1}
\Lesql{1}{2}{\black}{2}
\end{tikzpicture}
& \qquad\qquad\qquad
&
\begin{tikzpicture}[xscale=0.4,yscale=0.4,baseline=(zero.base)]
\coordinate (zero) at (0,0);
   \begin{scope}[even odd rule]
  \clip (-1.4,-7) rectangle (12.4,7);
\drawLe 0{\black}{\blue}
\drawLe 1{\notblack}{\notblue}
\drawLe 2{\notblack}{\notblue}
\drawLe 3{\notblack}{\notblue}
\drawLe{-1}{\notblack}{\notblue}
\drawLine{-1}{\notblue}
\drawLine{0}{\blue}
\drawLine{1}{\notblue}
\drawLine{2}{\notblue}
      \end{scope}
\node[anchor=west,scale=1.2] (A) at (12.6,3.5) {$w^{-1}$};
\node[anchor=west,scale=1.2] (B) at (12.6,0) {$\tl$};
\node[anchor=west,scale=1.2] (C) at (12.6,-3.5) {$v$};
\node[anchor=west,scale=1.1] (D1) at (12.13,6.62) {$\cdots$};
\node[anchor=east,scale=1.1] (D1) at (-1.13,6.62) {$\cdots$};
\node[anchor=west,scale=1.1] (D1) at (12.13,-6.78) {$\cdots$};
\node[anchor=east,scale=1.1] (D1) at (-1.13,-6.78) {$\cdots$};
\end{tikzpicture}
\end{tabular}

}

\def\figureFlag{
\def\lw{1.2pt}
\def\lww{1.6pt}
\def\lws{0.6pt}
\def\pointscl{0.3}
\def\pointscll{0.3}
\def\redx{red}
\setlength{\tabcolsep}{12pt}
\begin{tabular}{ccc}
\begin{tikzpicture}[scale=2,baseline=(zero.base)]
\coordinate (zero) at (0,0);
\coordinate (ss2) at (-30:1);
\coordinate (ss21) at (150:1);
\draw[dashed, line width=\lw,opacity=0.7] (ss2) to[bend right=-20] (ss21);
\shade [ball color=white,opacity=0.4] (0,0) circle [radius=1];
\draw[line width=\lw] (0,0) circle (1);
\draw[draw=\redx,line width=\lww] (-150:1) arc (-150:-210:1);
\node[label=below:$\id$,draw,circle,fill,scale=\pointscl] (id) at (-90:1) {};
\node[label=-150:$s_1$,draw,circle,fill,scale=\pointscl] (s1) at (-150:1) {};
\node[label=-30:$s_2$,draw,circle,fill,scale=\pointscl] (s2) at (-30:1) {};
\node[label=30:$s_1s_2$,draw,circle,fill,scale=\pointscl] (s12) at (30:1) {};
\node[label=150:$s_2s_1$,draw,circle,fill,scale=\pointscl] (s21) at (150:1) {};
\node[label=above:$w_0$,draw,circle,fill,scale=\pointscl] (w0) at (90:1) {};
\draw[line width=\lw] (s1) to[bend right=20] (s12);
\node[anchor=east,color=\redx] (g) at (180:1) {$\Povtp_g$};
\end{tikzpicture}
  & \scalebox{1.3}{$\overset{\isog}{\longrightarrow}$} &
\begin{tikzpicture}[xscale=2.5,yscale=2,baseline=(zero.base)]
\coordinate (zero) at (0,0.5);
\coordinate (w0) at (-0.2,1);
\coordinate (s2) at (0.2,1.3);
\coordinate (s21) at (0.8,1);
\coordinate (id) at (1.2,1.3);
\coordinate (ss1) at (1,0);
\coordinate (ss12) at (0,0);

\fill[opacity=0.1] (w0)--(s2)--(id)--(ss1)--(ss12)--(w0);
\fill[opacity=0.2] (s2) -- (id)--(ss1)--(ss12)--(s2);
\fill[opacity=0.3] (w0) -- (s21)--(ss1)--(ss12)--(w0);

\node[label=below:$s_1$,draw,circle,fill=white,scale=\pointscll] (s1) at (1,0) {};
\node[label=below:$s_2s_1$,draw,circle,fill=white,scale=\pointscll] (s12) at (0,0) {};
\draw[line width=\lww,draw=\redx] (s1)--(s12);
\node[anchor=north,color=\redx] (g) at (0.5,0) {$\Povtp_g$};
\draw[dashed,line width=\lw,opacity=0.7] (s12) -- (w0);
\draw[dashed,line width=\lw,opacity=0.7] (s12) -- (s2);
\draw[dashed,line width=\lw,opacity=0.7] (s1) -- (s21);
\draw[dashed,line width=\lw,opacity=0.7] (s1) -- (id);
\draw[dashed,line width=\lws,opacity=0.2] (w0) -- (s2)--(id)--(s21)--(w0);

\node[anchor=south] (A) at (w0) {$w_0$};
\node[anchor=south] (A) at (s2) {$s_2$};
\node[anchor=south] (A) at (id) {$\id$};
\node[anchor=south] (A) at (s21) {$s_1s_2$};
\end{tikzpicture}

\end{tabular}
\setlength{\tabcolsep}{6pt}
}

\section{Introduction}\label{sec:introduction}

Let $G$ be a semisimple algebraic group, split over $\R$, and let $P\subset G$ be a parabolic subgroup.  Lusztig~\cite{LusGB} introduced the totally nonnegative part of the partial flag variety $G/P$, denoted $\GPtnn$, which he called a ``remarkable polyhedral subspace''. He conjectured and Rietsch proved~\cite{Rie99} that $\GPtnn$ has a decomposition into open cells. We prove the following conjecture of Williams~\cite{Wil}:

\begin{theorem}\label{thm:main_intro}
The cell decomposition of $\GPtnn$ forms a regular CW complex. Thus the closure
 of each cell is homeomorphic to a closed ball.
\end{theorem}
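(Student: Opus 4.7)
The plan is to prove that each closed cell $\overline{\Pi}^{>0}_g$ of $\GPtnn$ is homeomorphic to a closed ball, proceeding by induction on $\dim g$. I would split the argument into three ingredients: a contraction of each closed cell onto a chosen base point, an identification of links of cells with totally nonnegative parts of smaller Richardson varieties, and a general topological assembly.

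First I would set up the \emph{inductive framework}. A cell complex is regular if and only if every closed cell is a ball, and a compact, contractible space whose boundary (in the sense of cells of smaller dimension) is a sphere is a closed ball. So if one knows by induction that closed cells of dimension $<d$ are balls, then to conclude that a $d$-cell is a ball it suffices to show (a) $\overline{\Pi}^{>0}_g$ is contractible, and (b) its boundary assembles from smaller-dimensional balls into a sphere, which follows once one shows that for each face $\Pi^{>0}_h$ of $\overline{\Pi}^{>0}_g$ the \emph{link} $\Lk^{\geq 0}_{g,h}$ is a ball (of dimension $\dim g-\dim h-1$). Via a projection $G/B\to G/P$, it is standard that it is enough to handle the full flag variety; for $G/B$ the cells are indexed by pairs $v\leq w$ in the Weyl group and correspond to the totally positive Richardson strata $R^{>0}_{v,w}$.

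Next I would address \emph{contractibility}. Using the Marsh--Rietsch parametrization $\R^{\ell(w)-\ell(v)}_{>0}\xrightarrow{\sim} R^{>0}_{v,w}$, one scales the coordinates uniformly to $0$ (or to a chosen boundary point in a lower-dimensional face), producing a continuous deformation retract of the closure onto a single base point. This handles (a) and simultaneously singles out a canonical base point at which to take links.

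The technical heart is a \emph{local model for links}. For a face $\Pi^{>0}_h\subset\overline{\Pi}^{>0}_g$, I would construct a homeomorphism between a small neighborhood of a base point $b\in\Pi^{>0}_h$ in $\overline{\Pi}^{>0}_g$ and the product $\Pi^{>0}_h\times\Cone(\Lk^{\geq 0}_{g,h})$, where the link $\Lk^{\geq 0}_{g,h}$ is identified with the totally nonnegative part of a Richardson variety in an \emph{affine} flag variety $\tFl$ — this is where the affine Kac--Moody group from the title enters: the transverse slice to $\Pi^{>0}_h$ at $b$ is modeled on a stratum in $\tFl$ whose nonnegative part, by the definitions made earlier in the paper (the maps $\hjmap$, $\Ftmap$, $\zetamap$), is again of the form $R^{\geq 0}_{v',w'}$ but with strictly smaller dimension. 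By the inductive hypothesis applied inside $\tFl$, this link is a closed ball and its boundary a sphere. Combined with the product structure, this shows that the $d$-dimensional closure is the mapping cone of a sphere, hence a ball, completing the induction.

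The main obstacle will be constructing the local product structure rigorously: writing down the slice transverse to $\Pi^{>0}_h$ at $b$, identifying it (as a stratified semialgebraic set, not merely topologically) with the nonnegative part of a Richardson in $\tFl$, and verifying that this identification preserves all the closure relations indexed by the order $\leqop$. Managing the affine-to-finite reduction, controlling the choice of $b$ so that the homeomorphism extends continuously as $h$ varies, and showing that the cone structure interpolates correctly between neighboring strata, is where the bulk of the technical effort — in particular the careful use of Marsh--Rietsch parametrizations and of the Bruhat factorizations $U=U^-\cdot T\cdot U^+$ internal to the affine group — will need to be concentrated.
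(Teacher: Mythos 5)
Your overall scheme---local product/cone structure at each cell, a ``link'' space that should be a ball, and a topological assembly using contractibility, Bj\"orner's theorem, and the Poincar\'e conjecture---is indeed the skeleton of the paper's argument (the ``Fomin--Shapiro atlas'' and ``link induction''). However, there are two genuine gaps at the heart of the plan that would cause it to fail, plus a reduction step that is not valid.

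First, the contraction argument is the one the paper explicitly disavows. Scaling Marsh--Rietsch coordinates uniformly to a base point gives a contraction of the \emph{open} cell $\PRtp_v^w\cong\R_{>0}^d$, but it does not extend to a deformation retract of the \emph{closure}: the parametrization lives only on the open stratum, and singularities of lower-dimensional positroid cells obstruct a continuous contracting vector field on closures (this is precisely the remark in \cref{sec:introduction} about why the contractive flow of~\cite{GKL,GKL2} only handles the top-dimensional cell). Instead, the paper obtains contractibility of $\PRtnn_v^w$ as a \emph{consequence} of its being a compact topological manifold with boundary whose interior $\PRtp_v^w$ is contractible (\cref{prop:Cinterior}, Brown's collar theorem).

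Second, your inductive step is circular. You propose to identify $\Lktnnghe$ with the totally nonnegative part of a Richardson variety inside the affine flag variety $\G/\B$ and then ``apply the inductive hypothesis inside $\tFl$.'' But the paper never proves a regularity theorem for Kac--Moody flag varieties---that is stated as a \emph{conjecture} in \cref{sec:further}---and there is no independent ``totally nonnegative'' stratification of $\scg\subset\Xaff^{\BAcomb(g)}$ to appeal to. The affine flag variety supplies only the \emph{product} structure $\isog:\Og\xrasim(\Yo_g\cap\Og)\times\scg$ (by conjugating the Kazhdan--Lusztig-type projection~\eqref{eq:intro_KWY_aff} through the Bruhat atlas $\BAgeom$). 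The stratification of the cone $\sctnngh$ is then \emph{pulled back from $(G/P)_{\geq0}$} via $\isogb$, not inherited from an affine-TNN structure. The actual induction (\cref{thm:Lk_ball}) is on $d=\gradY(h)-\gradY(g)-1$ entirely within $G/P$: the local structure lemma (\cref{lemma:links_local_homeo}) shows a neighborhood of $x_\gg\in\Lktpe_g^{\gg}$ inside $\Lktnnghe$ is homeomorphic to $\sctnn_{\gg,h}\times\R^{d'}$ for a \emph{larger} cell $\gg$ of the same $G/P$, and $\sctnn_{\gg,h}\cong\Cone(\Lktnne_{\gg,h})$ has strictly smaller link dimension.

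Finally, it is \emph{not} ``standard that it is enough to handle the full flag variety.'' The projected-Richardson cells of $(G/P)_{\geq0}$ are indexed by $Q_J$ and the map $\pi_J$ identifies $\Rtp_v^w\xrasim\PRtp_v^w$ for $(v,w)\in Q_J$, but closures and boundary structure are not simply transported: $\pi_J$ collapses families of Richardson cells and is not a stratified fibration over the TNN part. The paper handles $G/P$ directly as an abstract TNN space. You also do not mention the needed poset input (gradedness, thinness, shellability of $\widehat Q_J$ due to Williams~\cite{Wil}), which is essential to invoke Bj\"orner's theorem for the boundary-sphere step, nor the single hardest technical point in the construction: showing that the chart $\Og$ actually contains $\Startnn_g$ (axiom~\axref{FS:Startnn}), which occupies \cref{sec:subtraction-free,sec:KWY_TP} and culminates in \cref{thm:zeta}.
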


A special case of particular interest is when $G/P$ is the Grassmannian $\Gr(k,n)$ of $k$-dimensional linear subspaces of $\R^n$. In this case, $\GPtnn$ becomes the \emph{totally nonnegative Grassmannian} $\Grtnn(k,n)$, introduced by  Postnikov~\cite{Pos} as the subset of $\Gr(k,n)$ where all Pl\"ucker coordinates are nonnegative. He gave a stratification of $\Grtnn(k,n)$ into \emph{positroid cells} according to which Pl\"ucker coordinates are zero and which are strictly positive, and conjectured that the closure of each positroid cell is homeomorphic to a closed ball. Postnikov's conjecture follows as a special case of \cref{thm:main_intro}:
\begin{corollary}\label{cor:positroids_intro}
The decomposition of $\Grtnn(k,n)$ into positroid cells forms a regular CW complex. Thus the closure of each positroid cell is homeomorphic to a closed ball.
\end{corollary}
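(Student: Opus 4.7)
The plan is to derive \cref{cor:positroids_intro} as a direct specialization of \cref{thm:main_intro} to a particular choice of group and parabolic subgroup. All the substantive topological content lives in \cref{thm:main_intro}; what remains is the identification of the positroid stratification of $\Grtnn(k,n)$ with Lusztig's cell decomposition of $\GPtnn$ for the appropriate $P$.

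First I would set $G = \SL_n$, split over $\R$, with standard Borel $B$ of upper-triangular matrices, and let $P = P_k$ be the maximal parabolic subgroup whose simple roots omit the $k$-th simple root $\alpha_k$. Under the map $gP \mapsto g \cdot \Span(e_1,\ldots,e_k)$, the partial flag variety $G/P$ is canonically identified with $\Gr(k,n)$. In particular, \cref{thm:main_intro} applies to this pair $(G,P)$ and produces some regular CW complex structure on $\GPtnn$; the task is to match the data.

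Next I would invoke the identification of the two totally nonnegative parts. By work of Lusztig, Rietsch, Marsh--Rietsch, Postnikov, and Talaska--Williams, under the isomorphism $G/P \cong \Gr(k,n)$ Lusztig's $\GPtnn$ coincides with Postnikov's $\Grtnn(k,n)$, the locus where all Pl\"ucker coordinates are nonnegative; and Rietsch's cells, indexed by pairs $(u,w)$ with $u \leq w$ and $u$ a minimal-length coset representative in $W^J$, are in bijection with positroids via the standard dictionary with bounded affine permutations, decorated permutations, and $\Le$-diagrams. One then checks that the open Rietsch cell labeled by $(u,w)$ coincides set-theoretically with the positroid cell cut out by the corresponding vanishing/non-vanishing pattern of Pl\"ucker coordinates, for instance by comparing the Marsh--Rietsch parametrization to Postnikov's boundary-measurement parametrization, or via the Deodhar decomposition of the open Richardson varieties as developed by Knutson--Lam--Speyer.

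With these identifications in place, applying \cref{thm:main_intro} to $(G,P)=(\SL_n,P_k)$ immediately yields that $\Grtnn(k,n)$ with its positroid decomposition is a regular CW complex and that every positroid cell closure is homeomorphic to a closed ball. The only ``obstacle'' here is bookkeeping: one must confirm that the identifications above are homeomorphisms of stratified spaces rather than mere set-theoretic bijections on points. Since this is already established in the cited literature, no additional topological input is required beyond \cref{thm:main_intro} itself.
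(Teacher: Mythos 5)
Your proposal is correct and follows the same route as the paper: specialize \cref{thm:main_intro} to $(G,P)=(\SL_n,P_k)$ with $G/P\cong\Gr(k,n)$, then invoke the known set-theoretic identifications $\GPtnn=\Grtnn(k,n)$ (Rietsch, recorded as \eqref{eq:Gr_Grtnn_minors}) and $\PRtp_v^w=\Povtp_{f_{v,w}}$ via the poset isomorphism $Q_J\cong\Boundkn$ of Knutson--Lam--Speyer (\cref{thm:Povar_PR}). The paper records these identifications in \cref{sec:Gr_total-positivity} and \cref{sec:positroid-varieties} and deduces the corollary exactly as you describe; your final caveat about stratified homeomorphisms is unnecessary since the identifications are literal equalities of subsets of $\Gr(k,n)(\R)$.
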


When $k = 1$, $\Grtnn(1,n)$ is the standard $(n-1)$-dimensional simplex $\Delta_{n-1} \subset \P^{n-1}$. Simplices, and more generally convex polytopes, are prototypical examples of regular CW complexes. While the spaces $\GPtnn$ and $\Grtnn(k,n)$ are not themselves homeomorphic to polytopes, our results confirm that they have the simplest possible topology.

\subsection{History and motivation}
A matrix is called \emph{totally nonnegative} if all its minors are nonnegative. The theory of such matrices originated in the 1930's~\cite{Schoenberg,GK37}. Later, Lusztig~\cite{LusGB} was motivated by a question of Kostant to consider connections between totally nonnegative matrices and his theory of canonical bases for quantum groups \cite{LusCan}. This led him to introduce the totally nonnegative part $G_{\ge 0}$ of a split semisimple $G$.  Inspired by a result of Whitney~\cite{Whitney}, he defined $G_{\ge 0}$ to be generated by exponentiated Chevalley generators with positive real parameters, and generalized many classical results for $G = \SL_n$ to this setting. He introduced the totally nonnegative part $\GPtnn$ of a partial flag variety $G/P$, and showed~\cite[\S 4]{Lus_intro} that $G_{\geq0}$ and $\GPtnn$ are contractible.

Fomin and Shapiro~\cite{FS} realized that Lusztig's work may be used to address a longstanding problem in poset topology. Namely, the Bruhat order of the Weyl group $W$ of $G$ had been shown to be shellable by Bj\"orner and Wachs~\cite{BW}, and by general results of Bj\"orner~\cite{Bjo2} it followed that there exists a ``synthetic'' regular CW complex whose face poset coincides with $(W,\leq)$. The motivation of~\cite{FS} was to answer a natural question due to Bernstein and Bj\"orner of whether such a regular CW complex exists ``in nature''. Let $U \subset G$ be the unipotent radical of the standard Borel subgroup, and let $U_{\ge 0} := U\cap G_{\geq0}$ be its totally nonnegative part.  For $G = \SL_n$, $U_{\ge 0}$ is the semigroup of upper-triangular unipotent matrices with all minors nonnegative. The work of Lusztig~\cite{LusGB} implies that $U_{\ge 0}$ has a cell decomposition whose face poset is $(W, \le)$. The space $U_{\ge 0}$ is not compact, but Fomin and Shapiro~\cite{FS} conjectured that taking the link of the identity element in $U_{\ge 0}$, which also has $(W, \le)$ as its face poset, gives the desired regular CW complex. Their conjecture was confirmed by Hersh~\cite{Her}.  Hersh's theorem also follows as a corollary to our proof of \cref{thm:main_intro}; see \cref{rmk:U}. 
\begin{corollary}[{\cite{Her}}] \label{cor:Her}
The link of the identity in $U_{\geq0}$ is a regular CW complex.
\end{corollary}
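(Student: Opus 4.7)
The plan is to apply the main theorem to $P=B$, which gives that $\GBtnn$ is a regular CW complex, and then to identify the link of the identity in $\Utnn$ with the link of a particular vertex in that complex.

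I would first embed $U$ into $G/B$ via $\iota\colon U \hookrightarrow G/B$, $u \mapsto u \dot w_0 B$, where $\dot w_0$ is a lift of the longest element $w_0 \in W$. This is an isomorphism onto the big Bruhat cell $B\dot w_0 B/B$, sending $e$ to the point $\dot w_0 B$, which is the zero-dimensional Richardson cell $\Rtp_{w_0}^{w_0}$ of $\GBtnn$. More generally, $\iota$ carries the Lusztig cell $U_w^{>0} \subset \Utnn$ onto the Richardson cell $\Rtp_{w_0 w^{-1}}^{w_0}$, whose dimension $\ell(w_0) - \ell(w_0 w^{-1}) = \ell(w)$ matches. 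Hence $\iota(\Utnn)$ equals the union of cells $\Rtp_{u}^{w_0}$ for $u \in W$; under the closure order on Rietsch cells, these are precisely the cells whose closure contains the vertex $\iota(e)$, so $\iota(\Utnn)$ is the open star of $\iota(e)$ in the CW complex $\GBtnn$.

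Next I would invoke the standard fact that the link of a vertex in a regular CW complex is itself a regular CW complex (for instance via barycentric subdivision, which realizes any regular CW complex as a simplicial complex with simplicial vertex links). Combined with the main theorem, this gives that the link of $\iota(e)$ in $\GBtnn$ is a regular CW complex. Since $\iota$ is an open embedding identifying a neighborhood of $e$ in $\Utnn$ with a neighborhood of $\iota(e)$ in $\GBtnn$, this link is homeomorphic to the link of $e$ in $\Utnn$, yielding the corollary.

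The main obstacle is to verify that the Fomin--Shapiro topological link of $e$ in $\Utnn$, defined via a specific norm-like function on $U$, agrees as a topological space with the link of $\iota(e)$ regarded as a vertex of the regular CW complex $\GBtnn$. This should follow from matching the natural cone structure on $\Utnn$ at $e$ induced by conjugation with the positive cocharacter $\rho^\vee(t)$, $t>0$ (which scales the Chevalley generators and realizes $\Utnn$ as a topological cone over its link), with the corresponding local cone structure of $\GBtnn$ at $\iota(e)$ arising from the regular CW structure.
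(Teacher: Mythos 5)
There is a genuine gap in the step where you invoke ``the standard fact that the link of a vertex in a regular CW complex is itself a regular CW complex.'' This is not a standard fact. Barycentric subdivision shows that a regular CW complex is triangulable, so the topological type of a vertex link is well-defined; but the corollary needs the link with a \emph{specific} cell decomposition (the intersections with the cells $U^{>0}_w$, equivalently with the cells of the ambient complex) to be a regular CW complex. For a general regular CW complex $X$ and a vertex $v$, the pieces $C \cap \Link(v)$ need not be open balls, and their closures need not be closed balls; these are extra conditions which do not follow from the regular CW axioms and which barycentric subdivision destroys (it produces a \emph{different}, finer decomposition). Establishing precisely this is the hard content of the Fomin--Shapiro conjecture and of Hersh's theorem, and in the paper it is \cref{thm:Lk_ball}, proved by link induction using the smooth cone structure of the Fomin--Shapiro atlas. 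Note also that the paper's logical flow is the reverse of yours: the main theorem \cref{thm:FS_intro} is itself deduced \emph{from} \cref{thm:Lk_ball}, whereas you are trying to deduce the link statement from the main theorem, which requires the very fact that is missing. The paper's own argument for \cref{cor:Her} (in \cref{rmk:U}) therefore goes directly through $\FSiso_{(\id,\id)}$ and \cref{thm:Lk_ball}, identifying the link of $e$ in $U_{\geq 0}$ with $\Lktnn_{(\id,\id),(\id,w)}$, rather than through any general principle about vertex links.

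Two smaller remarks. First, the cell identification should read $\iota(U^{>0}_w) = \Rtp_{ww_0}^{w_0}$ rather than $\Rtp_{w_0w^{-1}}^{w_0}$: for $u \in \Utp(w) \subset B_-\dw B_-$ one has $B_- u\dw_0 B = B_-\dw B_-\dw_0 B = B_-\dw\dw_0 B$, so the lower index is $ww_0$ (the dimensions happen to agree since $\ell(ww_0) = \ell(w_0 w^{-1}) = \ell(w_0) - \ell(w)$, which is why the error is not detectable by a dimension count). Second, your closing paragraph correctly flags that the Fomin--Shapiro link (built from a norm) must be matched with a local cone structure near $\iota(e)$, but deriving a suitable cone structure ``from the regular CW structure'' of $\GBtnn$ is exactly the missing ingredient; that cone structure comes from the atlas maps $\isog$ and $\dilg$, not from the abstract CW structure.
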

\noindent
For recent related developments, see~\cite{DHM}.

Meanwhile, Postnikov~\cite{Pos} defined the totally nonnegative Grassmannian $\Grtnn(k,n)$, decomposed it into positroid cells, and showed that each positroid cell is homeomorphic to an open ball. Motivated by work of Fomin and Zelevinsky~\cite{FZ} on double Bruhat cells, he conjectured~\cite[Conjecture 3.6]{Pos} that this decomposition forms a regular CW complex.  It was later realized (see~\eqref{eq:Gr_Grtnn_minors}) that the space $\Gr_{\ge 0}(k,n)$ and its cell decomposition coincide with the one studied by Lusztig and Rietsch in the special case that $G/P = \Gr(k,n)$. Williams~\cite[\S 7]{Wil} extended Postnikov's conjecture from $\Grtnn(k,n)$ to $\GPtnn$.

There has been much progress towards proving these conjectures. Williams~\cite{Wil} showed that the face poset of $(G/P)_{\ge 0}$ (and hence of $\Gr_{\ge 0}(k,n)$) is graded, thin, and shellable, and therefore by~\cite{Bjo2} is the face poset of some regular CW complex. Postnikov, Speyer, and Williams~\cite{PSW} showed that $\Gr_{\ge 0}(k,n)$ is a CW complex, and their result was generalized to $(G/P)_{\ge 0}$ by Rietsch and Williams~\cite{RW08}. Rietsch and Williams~\cite{RW10} also showed that the closure of each cell in $(G/P)_{\ge 0}$ is contractible.  
In previous work~\cite{GKL,GKL2}, we showed that the spaces $\Gr_{\ge 0}(k,n)$ and $(G/P)_{\ge 0}$ are homeomorphic to closed balls,  which is the special case of \cref{thm:main_intro} for the top-dimensional cell of $\GPtnn$.  We remark that our proof of \cref{thm:main_intro} uses different methods than those employed in~\cite{GKL,GKL2}, in which we relied on the existence of a vector field on $G/P$ contracting $\GPtnn$ to a point in its interior. Singularities of lower-dimensional positroid cells give obstructions to the existence of a continuous vector field with analogous properties.

The topology of a regular CW complex is completely determined by the combinatorial structure of its associated cell closure poset, as observed by Bj\"{o}rner \cite{Bjo2}. Therefore one may regard spaces such as $\Utnn$ and $\Grtnn(k,n)$ as canonical topological realizations of natural posets arising in combinatorics. We expect this phenomenon to hold more broadly for other spaces appearing in total positivity, as we discuss in \cref{sec:further}.



Totally positive spaces have also attracted a lot of interest due to their appearances in other contexts such as cluster algebras~\cite{FZclust} and the physics of scattering amplitudes~\cite{ABCGPT}. Our original motivation for studying the topology of spaces arising in total positivity was to better understand the \emph{amplituhedra} of Arkani-Hamed and Trnka~\cite{AT}, and more generally the Grassmann polytopes of the third author~\cite{Lam16}. Faces of these geometric objects are linear projections of closures of positroid cells, and we expect that \cref{cor:positroids_intro} will play an essential role in developing a theory of Grassmann polytopes.


\subsection{Stars, links, and the Fomin--Shapiro atlas}
Rietsch~\cite{Rie99,Rie} defined a certain poset $(Q_J,\leqJ)$, and established the decomposition $\GPtnn=\bigsqcup_{g\in Q_J} \Povtp_g$ into open balls $\Povtp_g$ indexed by $g \in Q_J$.  She showed that for $h\in Q_J$, the closure $\Povtnn_h$ of $\Povtp_h$ is given by $\Povtnn_h=\bigsqcup_{g\leqJ h} \Povtp_h$. When $\GPtnn$ is the totally nonnegative Grassmannian $\Grtnn(k,n)$, this is the positroid cell decomposition of~\cite{Pos}. 

Given $g\in Q_J$, define the \emph{star} of $g$ in $\GPtnn$ by 
\begin{equation}\label{eq:star_tnn_intro}
  \Startnn_g:=\bigsqcup_{h\geqY g} \Povtp_h.
\end{equation}
In \cref{sec:links}, we define another space $\Lktnng$ (the \emph{link} of $g$) stratified as $\Lktnng=\bigsqcup_{h\gessJ g}\Lktp_g^h$. We later show in \cref{thm:Lk_ball} that $\Lktnng$ is a regular CW complex homeomorphic to a closed ball.

At the core of our approach is a collection of (stratification-preserving) homeomorphisms
\begin{equation}\label{eq:isog_intro}
\isog: \Startnn_g\xrasim \Povtp_g\times \Cone(\Lktnng),
\end{equation}
one for each $g \in Q_J$.  Here $\Cone(A):=(A\times \R_{\geq0})/(A\times \{0\})$ denotes the \emph{open cone} over $A$.  The homeomorphisms $\{ \isog \mid g \in Q_J\}$ are part of the data of what we call a \emph{Fomin--Shapiro atlas}; cf.~\cref{dfn:FS_atlas}.  Our construction is inspired by similar maps introduced in~\cite{FS} for the unipotent radical $U_{\geq 0}$.

\begin{figure}
\figureFlag
  \caption{\label{fig:flag} The map $\isog$ for the case $G=\SL_3$ and $P=B$ from \cref{ex:flag_intro}.}
\end{figure}

\def\PRSteven_#1^#2{\Povtp_{(#1,#2)}}
\begin{example}\label{ex:flag_intro}
When $G=\SL_n$ and $P=B$ is the standard Borel subgroup, $G/B$ is the \emph{complete flag variety} consisting of flags in $\R^n$, and the Weyl group $W$ is the group $S_n$ of permutations of $n$ elements. The face poset $Q_J$ of $\GBtnn$ is the set  $\{(v,w)\in S_n\times S_n\mid v\leq w\}$ of Bruhat intervals in $S_n$, and the cell $\PRSteven_v^w\subset \GBtnn$ indexed by $(v,w)\in Q_J$ has dimension $\ell(w) - \ell(v)$. For example, when $n=3$, this gives a cell decomposition of a $3$-dimensional ball; see Figure~\ref{fig:flag} (left). For $g:=(s_1,s_2s_1)$, $\Povtp_g$ is an open line segment, and $\Startnn_g$ consists of $4$ cells: a line segment $\Povtp_g=\PRSteven_{s_1}^{s_2s_1}$, two open square faces $\PRSteven_{s_1}^{w_0}$ and $\PRSteven_{\id}^{s_2s_1}$, and an open $3$-dimensional ball $\PRSteven_{\id}^{w_0}$. This union is indeed homeomorphic to $\Povtp_g\times \Cone(\Lktnng)$ shown in Figure~\ref{fig:flag} (right). Here $\Lktnng$ is a closed line segment whose endpoints are $\Lktp_g^{(s_1,w_0)}$ and $\Lktp_g^{(\id,s_2s_1)}$, and whose interior is $\Lktp_g^{(\id,w_0)}$.
\end{example}

In \cref{dfn:TNNspace}, we introduce the abstract notion of a \emph{(shellable) totally nonnegative space}, which captures several known combinatorial and geometric properties of $\GPtnn$ used in our proof. This includes the shellability of $Q_J$ due to Williams~\cite{Wil}, and some topological results~\cite{Rie,KLS} on Richardson varieties.

In \cref{sec:topological-results}, we prove (\cref{thm:FS_intro}) that every shellable totally nonnegative space that admits a Fomin--Shapiro atlas is a regular CW complex.  Our argument proceeds by induction on the dimension of $\Lktp_g^h$, and depends on a delicate interplay between objects in smooth and topological categories.  We use crucially that the maps \eqref{eq:isog_intro} in a Fomin--Shapiro atlas are restrictions of smooth maps.  On the topological level, we rely on the generalized Poincar\'e conjecture~\cite{Sma,Fre,Per1} combined with some general results on poset topology.

The bulk of the paper is devoted to the construction of the Fomin--Shapiro atlas.  For each $g\in Q_J$ we give an isomorphism $\BAgeom$ between an open dense subset $\Og\subset G/P$ and a certain subset of the \emph{affine flag variety} $\G/\B$ of the loop group $\G$ associated to $G$. The map $\BAgeom$, which we call an \emph{affine Bruhat atlas}, sends the projected Richardson stratification~\cite{KLS} of $G/P$ to the affine Richardson stratification of its image inside $\G/\B$. The hardest part of the proof consists of showing that the subset $\Og\subset G/P$ contains $\Startnn_g$. See \cref{sec:plan-proof} for a more in-depth overview of the construction of $\BAgeom$.

\begin{remark}\label{rmk:HKL}
The map $\BAgeom$ generalizes the map of Snider~\cite{Sni} from $\Gr(k,n)$ to all $G/P$; see \cref{rmk:Snider}. A different approach to give such a  generalization is due to He, Knutson, and Lu~\cite{HKL}, which led them to the notion of a \emph{Bruhat atlas}. See~\cite{Elek} for the definition. We call our map $\BAgeom$ an affine Bruhat atlas since its target space is always an affine flag variety, while the Bruhat atlases of~\cite{HKL} necessarily involve more general \emph{Kac--Moody flag varieties}. A similar map has been independently constructed by Huang~\cite{Huang}.
\end{remark}

\begin{remark}
The method of link induction that we use in \cref{sec:link_induction} has appeared before in e.g.~\cite{GLMS,Hersh_poinc}. When applied to the problem at hand, this method immediately runs into the difficulty of showing that the closure of each cell is a topological manifold. Our strategy for overcoming this issue is based on combining technical topological results in \cref{sec:topological-results} with the approach of~\cite{FS}. The crucial new algebraic ingredient is that the factorizations of~\cite{FS} happen inside the unipotent group $U$, while we utilize an embedding into the affine flag variety for that purpose. This embedding is defined on an open dense subset of $G/P$, but surprisingly, this subset turns out to contain the whole totally nonnegative part of the star of the corresponding cell. In order to show this result, we develop a toolbox of \emph{subtraction-free parametrizations} in \cref{sec:subtraction-free}. This machinery also reveals intriguing properties of $\GPtnn$ such as \cref{prop:truncation}, which may be interesting to explore further in their own right.
\end{remark}

\subsection{Outline}
In \cref{sec:overview}, we introduce   totally nonnegative spaces and define Fomin--Shapiro atlases. We state in \cref{thm:FS_intro} that every shellable totally nonnegative space that admits a Fomin--Shapiro atlas is a regular CW complex, and prove it in \cref{sec:topological-results}. We give background on $G/P$ in \cref{sec:gp:-basic}, and study subtraction-free Marsh--Rietsch parametrizations in \cref{sec:subtraction-free}. We then apply our results on such parametrizations to prove \cref{thm:zeta}, that will later imply that the above open subset $\Og$ contains $\Startnn_g$. We introduce affine Bruhat atlases in \cref{sec:kac-moody-groups} and use them to construct a Fomin--Shapiro atlas for $G/P$ in \cref{sec:BA_implies_FS}. \Cref{thm:main_intro2} (which implies our main result \cref{thm:main_intro}) is proved in \cref{sec:FS_proof}. \Cref{sec:type_A} is devoted to specializing our construction to type $A$ (when $G=\SL_n$), with a special focus on the totally nonnegative Grassmannian $\Grtnn(k,n)$. We illustrate many of our constructions by examples in  \cref{sec:type_A}, and we encourage the reader to consult this section while studying other parts of the paper. We discuss some conjectures and further directions in \cref{sec:further}. Finally, we give additional background on Kac--Moody flag varieties in \cref{sec:KM}.

\subsection*{Acknowledgments} We thank Sergey Fomin, Patricia Hersh, Alex Postnikov, and Lauren Williams for stimulating discussions. We are also grateful to George Lusztig and Konni Rietsch for their comments on the first version of this manuscript. We thank the anonymous referees for their help with improving the presentation of the paper.

\section{Overview of the proof}\label{sec:overview}
We formulate our results in the abstract language of \emph{totally nonnegative spaces}, since we expect that they can be applied in other contexts; see~\cref{sec:further}.

\subsection{Totally nonnegative spaces}\label{sec:totally-nonn-spac}
We refer the reader to \cref{top:background} for background on posets and regular CW complexes. For a finite poset $(\YQ,\leqY)$, we denote by $\YQhat:=\YQ\sqcup \{\hat0\}$ the poset obtained from $\YQ$ by adjoining a minimum $\hat0$. Bj\"orner showed~\cite[Proposition~4.5(a)]{Bjo2} that if $\YQhat$ is \emph{graded}, \emph{thin}, and \emph{shellable}, then $\YQ$ is isomorphic to the face poset of some regular CW complex. If $\YQhat$ is a graded poset, we let $\gradY:\YQ\to \Z_{\geq0}$ denote the rank function of $Q$.

\begin{definition}\label{dfn:TNNspace}
We say that a triple $(\Y,\Ytnn,\YQ)$ is a \emph{totally nonnegative space} (or \emph{TNN space} for short) if the following conditions are satisfied.
\begin{enumerate}[label={(TNN\arabic*)},leftmargin=*]
\item\label{TNN:graded_max} The poset $(\YQhat,\leqY)$ is graded and contains a unique maximal element $\hat1$.
\item\label{TNN:smooth} $\Y$ is a smooth manifold, stratified into embedded submanifolds $\Y=\bigsqcup_{g\in \YQ} \Yo_g$, and for each $h\in \YQ$,  $\Yo_h$ has dimension $\gradY(h)$ and closure $\Ycl_h:=\bigsqcup_{g\leqY h } \Yo_g$.
\item\label{TNN:Ytnn_compact} $\Ytnn$ is a compact subset of $\Y$.
\item \label{TNN:Ytp} For $g\in\YQ$, $\Ytp_g:=\Yo_g\cap \Ytnn$ is a connected  component of $\Yo_g$ diffeomorphic to~$\R_{>0}^{\gradY(g)}$.
\item \label{TNN:Ytp_cl} The closure of $\Ytp_h$ inside $\Y$ equals $\Ytnn_h:=\bigsqcup_{g\leqY h} \Ytp_g$.
\end{enumerate}
We say that a TNN space $(\Y,\Ytnn,\YQ)$ is \emph{shellable} if it additionally satisfies the following.
\begin{enumerate}[label={(TNN\arabic*')},leftmargin=*]
\item\label{TNN:thin_shell} The poset $(\YQhat,\leqY)$ is thin and shellable.
\end{enumerate}
\end{definition}

\noindent For the case $\Y=G/P$, the smooth submanifolds $\Yo_g$ are the \emph{open projected Richardson varieties} of~\cite{KLS}. 

\begin{definition}\label{dfn:sc}
Let $N\geq0$, and denote by $\norm\cdot$ the Euclidean norm on $\R^N$.  We say that a pair $(\sc,\FSdil)$ is a \emph{smooth cone} if $\sc\subset \R^N$ is a closed embedded submanifold and  $\FSdil:\R_{>0}\times\R^N\to\R^N$ a smooth map such that
\begin{enumerate}[label={\textup{(SC\arabic*)}},leftmargin=*]
\item\label{SC:R_+_action}\label{SC:base_lim} $\FSdil$ gives an $(\R_{>0},\cdot)$-action on $\R^N$ that restricts to an $(\R_{>0},\cdot)$-action on $\sc$.
\item\label{SC:str_derivative} $\frac{\partial}{\partial t} \norm{\FSdil(t,x)}>0$ for all $t\in\R_{>0}$ and $x\in\R^N\setminus\{\base\}$.
\end{enumerate}
\end{definition}
\noindent The map $\FSdil$ is a smooth analog of a \emph{contractive flow} of~\cite{GKL}; see~\cref{lemma:cflow}.

For $g\in \YQ$, define $\Star_g:=\bigsqcup_{h\geqY g} \Yo_h$ and $\Startnn_g := \Star_g \cap \Ytnn = \bigsqcup_{h\geqY g} \Ytp_h$; cf.~\eqref{eq:star_tnn_intro}.

\begin{definition}\label{dfn:FS_atlas}
We say that a TNN space $(\Y,\Ytnn,\YQ)$ admits a \emph{Fomin--Shapiro atlas} if for each $g\in \YQ$, there exists an open subset $\Og\subset \Star_g$, a smooth cone $(\scg,\dilg)$, and a diffeomorphism
\begin{equation}\label{eq:FSiso_intro}
\isog:\Og\xrasim (\Yo_g\cap \Og)\times \scg
\end{equation}
satisfying the following conditions.
\begin{enumerate}[label={(FS\arabic*)},leftmargin=*]
\item\label{FS:mnf_strat} For all $h\geqY g$, we are given $\sco_g^h\subset \scg$ such that $\scg=\bigsqcup_{h\geqY g} \sco_g^h$ and $\sco_g^g=\{\baseg\}$.
\item\label{FS:mnf_dil} For all $h\geqY g$ and $t\in\R_{>0}$, we have $\dilg(t,\sco_g^h)=\sco_g^h$.
\item\label{FS:image_of_Yh} For all $h\geqY g$, we have $\isog(\Yo_h\cap\Og)=(\Yo_g\cap \Og)\times \sco_g^h$. 
\item\label{FS:Yg_id} For all $y\in \Yo_g\cap \Og$, we have   $\isog(y)=(y,\baseg)$.
\item\label{FS:Startnn} $\Startnn_g\subset \Og$.
\end{enumerate}
\end{definition}
We will prove the following result in~\cref{sec:link_induction}, using \emph{link induction}.
\begin{theorem}\label{thm:FS_intro}
Suppose that $(\Y,\Ytnn,\YQ)$ is a shellable TNN space that admits a Fomin--Shapiro atlas. Then $\Ytnn=\bigsqcup_{h\in \YQ} \Ytp_h$ is a regular CW complex. In particular, for each $h\in \YQ$, $\Ytnn_h$ is homeomorphic to a closed ball of dimension $\gradY(h)$.
\end{theorem}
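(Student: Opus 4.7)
The plan is to give each smooth cone $\scg$ a topological open-cone structure via $\dilg$, combine this with $\isog$ to exhibit a stratification-preserving homeomorphism $\Startnn_g \cong \Ytp_g \times \Cone(\Lktnng)$, and then run a \emph{link induction} on dimension, using the generalized Poincar\'e conjecture~\cite{Sma,Fre,Per1} together with the shellability consequences of (TNN1). For the first step, axioms (SC1) and (SC2) together imply that for each $x \in \scg \setminus \{\base\}$ the orbit $t \mapsto \dilg(t,x)$ is a smooth curve of strictly increasing norm, so it meets the unit sphere $S^{N-1}$ transversely in a unique point; the orbit map is then a diffeomorphism $\scg \setminus \{\base\} \xrasim (\scg \cap S^{N-1}) \times \R_{>0}$ extending continuously to $\scg \cong \Cone(\scg \cap S^{N-1})$. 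By (FS2) each orbit lies in a single $\sco_g^h$, so the cone respects the stratification of (FS1). Composing with $\isog$ and restricting to $\Startnn_g \subset \Og$ (which holds by (FS5)) then yields
\begin{equation*}
\Startnn_g \;\xrasim\; \Ytp_g \times \Cone(\Lktnng), \qquad \Lktnng = \bigsqcup_{h \gessY g} \Lktp_g^h,
\end{equation*}
and further restriction to $\Ytnn_h$ gives $\Ytnn_h \cap \Startnn_g \cong \Ytp_g \times \Cone(\Lktnn_{g,h})$ with $\Lktnn_{g,h} := \bigsqcup_{g \lessY h' \leqY h} \Lktp_g^{h'}$.

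For the main induction, I would prove by induction on $n \geq 0$ the joint statement: every $\Ytnn_h$ with $\gradY(h) \leq n$ is a regular CW ball of dimension $\gradY(h)$, and every link $\Lktnn_{g,h}$ with $\gradY(h)-\gradY(g)-1 < n$ is a regular CW ball of the analogous dimension. The base case is immediate. In the inductive step, one uses the local product decompositions above applied at each lower stratum $h' \lessY h$: the inductive hypothesis identifies $\Cone(\Lktnn_{h',h})$ with a closed half-disk, so $\Ytnn_h$ is locally a half-space near each $\Ytp_{h'}$ and Euclidean near $\Ytp_h$. Hence $\Ytnn_h$ is a compact topological manifold of dimension $\gradY(h)$ whose boundary is $\bigsqcup_{h' \lessY h} \Ytp_{h'}$. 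The augmented face poset of this boundary is graded, thin, and shellable by (TNN1), so Bj\"orner's criterion~\cite{Bjo2} provides an abstract regular CW sphere realizing it; a parallel induction identifies this abstract sphere with the geometric boundary. Combined with contractibility of $\Ytnn_h$ (from the cone structure at any minimal $g \leqY h$), the generalized Poincar\'e conjecture then yields that $\Ytnn_h$ is homeomorphic to a closed ball, and the inductive hypothesis makes each attaching map a homeomorphism so the CW structure is regular. The same argument applied to $\Lktnn_{g,h}$ handles the link portion, and setting $h = \hat 1$ recovers the theorem.

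The principal obstacle is verifying that the atlases $\isog$ fit together compatibly enough to endow $\Ytnn_h$ with the structure of a stratified \emph{topological manifold with boundary} whose boundary is genuinely a sphere (not merely a homology sphere). Concretely, one must show that the local cone decomposition at a point $y \in \Ytp_{h'}$ induced by $\isog$ agrees topologically with the one induced by the atlas at $h'$, so that the candidate boundary strata assemble into a topological $(\gradY(h)-1)$-sphere. The smoothness of $\isog$ and $\dilg$ built into the Fomin--Shapiro axioms is decisive here, providing the controlled limiting behavior near lower strata needed to pass from Bj\"orner's abstract regular CW sphere to the geometric boundary and apply Poincar\'e.
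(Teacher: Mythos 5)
Your proposal follows essentially the same route as the paper: a cone structure on $\scg$ via $\dilg$ and the unit-sphere slice, a stratification-preserving homeomorphism $\Startnn_{g,h}\cong \Ytp_g\times \Cone(\Lktnnghe)$, and a link induction combining Bj\"orner's criterion with the generalized Poincar\'e conjecture.

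A genuine gap remains in the step you flag as the ``principal obstacle,'' and I would reframe what that obstacle actually is. Your argument for $\Ytnn_h$ being a manifold-with-boundary near $\Ytp_{h'}$ uses the product decomposition $\Startnn_{h',h}\cong \Ytp_{h'}\times\Cone(\Lktnne_{h',h})$ together with the inductive hypothesis on $\Lktnne_{h',h}$; that part is sound. But when you say ``the same argument applied to $\Lktnn_{g,h}$ handles the link portion,'' there is no analogous product formula already on the table. To show $\Lktnnghe$ is a manifold-with-boundary near a point $x'\in \Lktpe_g^{g'}$ with $g\lessY g'\lessY h$, one must show it is locally homeomorphic to $\sctnn_{g',h}\times\R^{d'}$, where $d'=\gradY(g')-\gradY(g)-1$. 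The difficulty is that $\sctnn_{g',h}$ lives in the $g'$-chart while $\Lktnnghe$ sits in $\scg\cap\Seps_g$ in the $g$-chart, so one must transport the $g'$-structure into the $g$-chart. This is not a ``compatibility'' of atlases nor an ``agreement'' of cone decompositions: the paper fixes $x_g\in\Ytp_g$, considers the smooth map $x\mapsto \FSiso_{g',2}\bigl(\isog^{-1}(x_g,x)\bigr)$ from a neighborhood of $x'$ in $\scg\cap\Seps_g$ to $\sc_{g'}$, proves it is a \emph{submersion} at $x'$ (using \axref{FS:Yg_id}, \axref{FS:Startnn}, \axref{TNN:Ytp}, and transversality of $\sctp_g^{g'}$ with $\Seps_g$), and then applies the submersion theorem; this is \cref{lemma:links_local_homeo}. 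Your appeal to ``controlled limiting behavior'' gestures at the right idea but supplies no mechanism; without such a lemma one cannot assert that $\Lktnnghe$ is even a topological manifold, let alone match its boundary with Bj\"orner's abstract sphere. Once it is in place, the rest of your outline is correct; the paper happens to establish the link statement (\cref{thm:Lk_ball}) before the ambient statement rather than jointly, but the two organizations are interchangeable.
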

\noindent Thus  \cref{thm:main_intro} follows as a corollary of \cref{thm:FS_intro} and the following result:
\begin{theorem}\label{thm:main_intro2}
  $(G/P,\GPtnn,Q_J)$ is a shellable TNN space that admits a Fomin--Shapiro atlas.
\end{theorem}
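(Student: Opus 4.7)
The plan has two parts: verify that $(G/P, \GPtnn, Q_J)$ satisfies the axioms of \cref{dfn:TNNspace}, and then construct a Fomin--Shapiro atlas in the sense of \cref{dfn:FS_atlas}, after which \cref{thm:FS_intro} applies.

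For the TNN axioms, the first (TNN1) is Williams' theorem that the face poset of $\GPtnn$ (with a formal minimum adjoined) is graded, thin, and shellable, with a unique maximum corresponding to the top projected Richardson cell. Axiom (TNN2) is the fact, due to Knutson--Lam--Speyer, that $G/P$ is smooth and stratified by open projected Richardson varieties of the correct dimension, whose closures are given by the order ideals of $Q_J$. Compactness of $\GPtnn$ (TNN3) is immediate since $G/P$ is projective. Axioms (TNN4) and (TNN5) are encoded in Rietsch's cell decomposition theorem: she showed that each $\Povtp_g$ is a connected component of the corresponding projected Richardson variety with the expected closure relations, and the Marsh--Rietsch parametrization exhibits an explicit diffeomorphism from $\R_{>0}^{\gradY(g)}$ onto $\Povtp_g$.

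For the Fomin--Shapiro atlas, for each $g \in Q_J$ the plan is to construct an open neighborhood $\Og \subset G/P$ of $\Povtp_g$ together with an \emph{affine Bruhat atlas} $\BAgeom$ mapping $\Og$ isomorphically onto an open subset of the affine flag variety $\G/\B$ of the loop group associated with $G$. The construction is engineered so that $\BAgeom$ carries the projected Richardson stratification on its source to the restriction of the affine Richardson stratification on its target, generalizing the construction of Snider in the Grassmannian case. Inside the affine flag variety, any Schubert cell has a canonical product decomposition into the stratum through the base point times a transverse slice, and the latter inherits an $\R_{>0}$-action from the loop-rotation torus of $\G$; this gives a smooth cone $(\scg, \dilg)$ satisfying the axioms of \cref{dfn:sc}. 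Pulling this product decomposition back through $\BAgeom$ produces the diffeomorphism $\isog : \Og \xrasim (\Yo_g \cap \Og) \times \scg$, and conditions (FS1)--(FS4) hold by construction because $\BAgeom$ is stratification-preserving and the transverse slice passes through the base point of the Schubert cell.

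The main obstacle, and where the bulk of the work lies, is (FS5), the requirement that the totally nonnegative star $\Startnn_g$ lies entirely in $\Og$. The containment $\Povtp_g \subset \Og$ is automatic, but for $h$ strictly above $g$ one must show that every nonnegative point of $\Povtp_h$ is still inside the open set on which $\BAgeom$ is defined. My plan is to use \cref{thm:zeta}, which describes the behavior of subtraction-free Marsh--Rietsch parametrizations as one degenerates parameters from $\Povtp_h$ towards $\Povtp_g$. Concretely, every point of $\Povtnn_h$ admits a subtraction-free expression in positive parameters that extends continuously to the closure, and \cref{thm:zeta} is designed precisely to guarantee that this expression lands inside the coordinate chart underlying $\BAgeom$. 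Carrying this verification out uniformly for all strata $h \succeq g$, and checking compatibility with the affine Bruhat chart, is the technical heart of the argument. Once (FS5) is established, all hypotheses of \cref{thm:FS_intro} are in force and the theorem follows.
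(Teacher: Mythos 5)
Your proposal matches the paper's proof essentially line for line: Corollary~\ref{cor:GP_TNN_space} verifies the TNN axioms from Williams~\cite{Wil}, Knutson--Lam--Speyer~\cite{KLS}, Rietsch~\cite{Rie99,Rie}, and Marsh--Rietsch~\cite{MR}, and the Fomin--Shapiro atlas is built in Section~\ref{sec:FS_proof} exactly as you describe — pulling back the product decomposition of Proposition~\ref{prop:FS_iso} through the affine Bruhat atlas $\BAgeom$ of Theorem~\ref{thm:BA}, with the smooth cone and $\dilg$ coming from loop rotation, and with~(FS5) reducing to Theorem~\ref{thm:BA}(ii), which rests on Theorem~\ref{thm:zeta}. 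The one point of imprecision is minor: $\BAgeom$ is defined on the chart $\Cuj$ attached to a chosen minimal element $(u,u)\leqJ g$, and $\Og$ is then the further open preimage of $\Caff_{\BAcomb(g)}$, but this is consistent with what you wrote.
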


\def\faff{{\tilde{f}}}
\def\gaff{{\tilde{g}}}
\def\haff{{\tilde{h}}}
\def\fbac{{\BAcomb(f)}}
\def\gbac{{\BAcomb(g)}}
\def\hbac{{\BAcomb(h)}}

\subsection{Plan of the proof}\label{sec:plan-proof}
We give a brief outline of the proof of \cref{thm:main_intro2}. See \cref{sec:gp:-basic} for background on $G/P$, and see \cref{sec:kac-moody-groups,sec:KM} for background on $\G/\B$. We deduce that $(G/P,\GPtnn,Q_J)$ is a shellable TNN space from known results in \cref{cor:GP_TNN_space}. In order to construct a Fomin--Shapiro atlas, we consider the (infinite-dimensional) \emph{affine flag variety} $\G/\B$ associated to $G$. It is stratified into (finite-dimensional) affine Richardson varieties $\G/\B=\bigsqcup_{\haff\leq \faff\in \Waff}\Richaff_\haff^\faff$, where $\Waff$ is the affine Weyl group and $\leq$ denotes its Bruhat order. There exists an order-reversing injective map $\BAcomb:Q_J\to \Waff$, defined in~\cite{HL}; see~\eqref{eq:BA_comb}. The set of minimal elements of $Q_J$ equals $\{(u,u)\mid u\in W^J\}$, where $W^J$ is the set of minimal length parabolic coset representatives of the Weyl group; see \cref{sec:parab-subgr-w_j}. For each minimal element $f:=(u,u)\in Q_J$, $\BAcomb$ identifies the interval $[f,\hat1]$ of $Q_J$ with (the dual of) a certain interval $[\tmin,\tul]\subset \Waff$. For the case $G/P=\Gr(k,n)$, elements of $Q_J$ are in bijection with \emph{\Le -diagrams} of~\cite{Pos}, and $\BAcomb$ sends a \Le-diagram indexing a positroid cell to the corresponding \emph{bounded affine permutation} of~\cite{KLS}; see \cref{ex:Le}.

In \cref{sec:bruh-atlas-proj}, we lift $\BAcomb$ to the geometric level: given a minimal element $f:=(u,u)\in Q_J$, we introduce a map $\BAgeom:\Cuj\to \G/\B$ defined on an open dense subset $\Cuj\subset G/P$. We show in \cref{thm:BA} that for $g\in Q_J$ such that $g\geqJ f$, $\BAgeom$ sends $\Cuj\cap \Povar_g$ isomorphically to the affine Richardson cell $\Richaff_{\gbac}^{\fbac}$.

For every $\gaff\in \Waff$, we consider an open dense subset $\Caff_{\gaff}\subset \G/\B$ defined by $\Caff_\gaff:=\gaff\cdot \B_-\cdot \B/\B$, as well as affine Schubert and opposite Schubert cells $\Xaff^\gaff=\bigsqcup_{\haff\leq\gaff}\Richaff_\haff^\gaff$, $\Xaff_\gaff=\bigsqcup_{\gaff\leq\faff}\Richaff_\gaff^\faff$. In \cref{prop:FS_iso}, we give a natural isomorphism
\begin{equation}\label{eq:intro_KWY_aff}
\Caff_\gaff\xrasim \Xaff_\gaff\times\Xaff^\gaff,\quad\text{which restricts to}\quad (\Caff_\gaff\cap \Richaff_\haff^\faff)\xrasim  \Richaff_\gaff^\faff\times \Richaff_\haff^\gaff\quad \text{for all $\haff\leq \gaff\leq \faff$.}
\end{equation}
A finite-dimensional analog of this map is due to~\cite{KWY}, and similar maps have been considered in~\cite{KL,FS}. The action of $\FSdil$ on $\Xaff^\gaff$ essentially amounts to multiplying by an element of the affine torus, and thus preserves $\Richaff_\haff^\gaff$ for all $\haff\leq \gaff$.

Let us now fix $g\in Q_J$, and choose some minimal element $f:=(u,u)\in Q_J$ such that $f\leqJ g$. Then the map $\BAgeom$ is defined on an open dense subset $\Cuj\subset G/P$, and let us denote by $\Og\subset \Cuj$ the preimage of $\Caff_\gbac$ under $\BAgeom$. The diffeomorphism~\eqref{eq:FSiso_intro} is obtained by conjugating  the isomorphism~\eqref{eq:intro_KWY_aff} by  the map $\BAgeom$. The smooth cone $(\scg,\dilg)$ is extracted from the corresponding structure on $\Xaff^\gbac$. As we have already mentioned, the hardest step in the proof consists of showing~\axref{FS:Startnn}. To achieve this, we study subtraction-free parametrizations of partial flag varieties in \cref{sec:subtraction-free}, and then use them to show that some generalized minors of a particular group element $\zetamap(x)$ from \cref{sec:KWY_TP} do not vanish for all $x\in \Startnn_g$. The definition of $\zetamap(x)$ is quite technical, but we conjecture in \cref{sec:type_A} that in the Grassmannian case, these generalized minors specialize to simple functions on $\Gr(k,n)$ that we call \emph{$u$-truncated minors}. We complete the proof of \cref{thm:main_intro2} in \cref{sec:FS_proof}.

\section{Topological results}\label{sec:topological-results}
Throughout this section, we assume that $(\Y,\Ytnn,\YQ)$ is a TNN space that admits a Fomin--Shapiro atlas.  Thus for each $g\in \YQ$, we have the objects $\Og$, $\scg$, $\dilg$,  and $\isog$ from \cref{dfn:FS_atlas}.  Additionally, we assume some familiarity with basic theory of smooth manifolds; see e.g.~\cite{Lee}.

\subsection{Links}\label{sec:links}
Throughout, we denote the two components of the map $\isog$ from~\eqref{eq:FSiso_intro} by $\isog=(\isoga,\isogb)$, where $\isoga:\Og\to \Yo_g\cap \Og$ and $\isogb:\Og\to\scg$. We set $\Startnn_{g,h}:=\Ytnn_h\cap \Startnn_g=\bigsqcup_{g\leqY g'\leqY h}\Ytp_{g'}$. Let $\Ng$ be the integer from \cref{dfn:sc} such that $\scg\subset \R^\Ng$.

\begin{definition}
Let  $g\leqY h\in\YQ$. Denote
\begin{align*}
  \sctnng&:=\isogb \left(\Startnn_g\right),&\sctnngh&:=\isogb \left(\Startnn_{g,h}\right),  &\sctpgh&:=\sctnng\cap \sco_g^h,\\
\Seps_g&:=\{x\in \R^\Ng: \strg(x)=1\},&\Lktnnghe&:=\sctnngh\cap \Seps_g,&\Lktpe_g^h&:=\sctpgh\cap \Seps_g.
\end{align*}
\end{definition}
\noindent Note that by~\axref{FS:image_of_Yh}, we have
\begin{align}\label{eq:links_stratif}
\sctnngh = \bigsqcup_{g\leqY g'\leqY h}\sctp_g^{g'}, \qquad \Lktnnghe=\bigsqcup_{g\lessY g'\leqY h}\Lktpe_g^{g'}.
\end{align}
In the latter disjoint union, we have $\Lktpe_g^g=\emptyset$ since $\sco_g^g=\{\baseg\}$ by~\axref{FS:mnf_strat}.

\begin{lemma}\label{lemma:links} Let $g\lessY h\in\YQ$.
\begin{theoremlist}
\item\label{links:sctnn} For all $x\in \Og$, we have $x\in\Ytp_h$ if and only if  $\isog(x)\in \Ytp_g\times \sctpgh$.
\item\label{links:smooth} $\sctpgh$ is an embedded submanifold of $\scg$ of dimension $\dim(h) - \dim(g)$ that intersects $\Seps_g$  transversely. For all $t\in\R_{>0}$ and $x\in\sctpgh$, we have $\FSdil(t,x) \in \sctpgh$.
\item\label{links:contractible} $\Lktpe_g^h$ is a contractible smooth manifold of dimension $\gradY(h)-\gradY(g)-1$.
\item\label{links:compact} $\Lktnnghe$ is a compact subset of $\scg$.
\end{theoremlist}
\end{lemma}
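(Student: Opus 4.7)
The plan is to use the diffeomorphism $\isog$ of \textup{(FS3)} to transport information between $\Og$ and the product $(\Yo_g \cap \Og) \times \scg$. The central structural observation, which drives all four parts, is that $\isog$ identifies $\Ytp_h$ with $\Ytp_g \times \sctpgh$, and that $\sctpgh$ is a connected component of $\sco_g^h$.

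For \textup{(i)}, I would first note that $\Ytp_h$ is a connected component of $\Yo_h$ by \textup{(TNN4)}, and since $\Ytp_h \subset \Startnn_g \subset \Og$ by \textup{(FS5)}, it is also a connected component of the relatively open subset $\Yo_h \cap \Og$ of $\Yo_h$. By \textup{(FS3)}, $\isog$ restricts to a diffeomorphism $\Yo_h \cap \Og \xrasim (\Yo_g \cap \Og) \times \sco_g^h$, so $\isog(\Ytp_h)$ is a connected component of the product, which splits as $C_1 \times C_2$ with $C_i$ components of the respective factors. To pin down the factors, pick any $y_0 \in \Ytp_g$; by \textup{(TNN5)}, $y_0 \in \overline{\Ytp_h}$, so there is a sequence $b_n \in \Ytp_h$ with $b_n \to y_0$, and \textup{(FS4)} gives $\isoga(b_n) \to y_0 \in \Ytp_g$, forcing $C_1 = \Ytp_g$. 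Unwinding the definition $\sctpgh = \sctnng \cap \sco_g^h$ and combining the disjointness in \textup{(FS1)} with \textup{(FS3)} yields $C_2 = \isogb(\Ytp_h) = \sctpgh$.

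Part \textup{(ii)} then follows: the diffeomorphism in \textup{(FS3)} forces $\sco_g^h$ to be an embedded submanifold of $\scg$ of dimension $\gradY(h) - \gradY(g)$, and hence so is its open subset $\sctpgh$. Flow-invariance holds because for any $x \in \sctpgh$, the curve $t \mapsto \dilg(t,x)$ is a continuous path in $\sco_g^h$ (by \textup{(FS2)}) through $x$, hence remains in the component $\sctpgh$. Transversality with $\Seps_g$ uses \textup{(SC2)}: the velocity vector $\partial_t\dilg(t,x)|_{t=1}$ lies in $T_x\sctpgh$ and is not annihilated by $d\|\cdot\|_x$, so it is not tangent to $\Seps_g$, giving $T_x\sctpgh + T_x\Seps_g = T_x\R^{\Ng}$. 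A dimension count then yields $\dim\Lktpe_g^h = \gradY(h)-\gradY(g)-1$ for \textup{(iii)}. Since \textup{(SC2)} also implies that $0$ is the only fixed point of $\dilg$, and $0 \notin \sctpgh$ because $g \lessY h$, a standard orbit argument shows that $t \mapsto \|\dilg(t,x)\|$ is a smooth increasing bijection $\R_{>0} \to \R_{>0}$ for each $x \in \sctpgh$. The implicit function theorem then produces a diffeomorphism $\Lktpe_g^h \times \R_{>0} \xrasim \sctpgh$, $(l,t) \mapsto \dilg(t,l)$. Combined with \textup{(i)}, $\Ytp_h \cong \Ytp_g \times \R_{>0} \times \Lktpe_g^h$; since $\Ytp_h$ is contractible by \textup{(TNN4)}, and retracts of contractible spaces are contractible, so is $\Lktpe_g^h$.

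The step that requires the most care is \textup{(iv)}: the set $\Startnn_{g,h}$ is not a priori compact, and $\isogb$ may exhibit complicated behavior near $\partial\Og$. The key observation is the identity $\Og \cap \Ytnn = \Startnn_g$: the inclusion $\subseteq$ follows from $\Og \subset \Star_g$ combined with the identification $\Ytp_{h'} = \Yo_{h'} \cap \Ytnn$ from \textup{(TNN4)}, while $\supseteq$ is exactly \textup{(FS5)}. Therefore $\Startnn_{g,h} = \Startnn_g \cap \Ytnn_h = \Og \cap \Ytnn_h$ is closed in $\Og$, since $\Ytnn_h$ is closed in $\Y$ by \textup{(TNN5)}. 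Under the diffeomorphism $\isog$, the image $\isog(\Startnn_{g,h}) = \Ytp_g \times \sctnngh$ is closed in $(\Yo_g \cap \Og) \times \scg$; slicing at any point of $\Ytp_g$ shows that $\sctnngh$ is closed in $\scg$. Since $\scg$ is closed in $\R^{\Ng}$, $\Lktnnghe = \sctnngh \cap \Seps_g$ is a closed subset of the compact unit sphere $\Seps_g$, hence compact.
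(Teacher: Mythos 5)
Your proof is correct and follows essentially the same strategy as the paper's: transport via $\isog$, identify $\sctpgh$ as a connected component of $\sco_g^h$, use $\dilg$-invariance for transversality and retraction, and derive compactness from $\Startnn_g = \Og\cap\Ytnn$. Two of your steps are realized differently. In \itemref{links:sctnn}, you split a connected component of the product as $C_1\times C_2$ and pin down $C_1=\Ytp_g$ with a sequential limit argument using \axref{TNN:Ytp_cl} and \axref{FS:Yg_id}; the paper instead observes that $\Startnn_g$ is connected (it contains the dense top cell $\Ytp_{\hat1}$) to get $\isoga(\Startnn_g)=\Ytp_g$ in one stroke, and it states \itemref{links:sctnn} for all $g\leqY h$ including $g=h$, which the paper then reuses in \itemref{links:compact} for $\isog(\Startnn_{g,h})=\Ytp_g\times\sctnngh$ — your \itemref{links:compact} also quietly needs the $g'=g$ case, which is immediate from \axref{FS:Yg_id}. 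In \itemref{links:contractible}, you construct an explicit diffeomorphism $\Lktpe_g^h\times\R_{>0}\xrasim\sctpgh$ via the implicit function theorem, a slightly stronger product decomposition, whereas the paper only needs the continuous deformation retraction $x\mapsto\dilg(\strix,x)$; your route additionally asserts that $t\mapsto\normg{\dilg(t,x)}$ surjects onto $\R_{>0}$, which is true (it follows from the $(\R_{>0},\cdot)$-action structure and \axref{SC:str_derivative}) but is one step more than the retraction requires. Parts \itemref{links:smooth} and \itemref{links:compact} essentially match the paper.
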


Before we prove these properties, let us state some preliminary results on smooth manifolds. Given smooth manifolds $A,B$ and a smooth map $f:A\to B$, a point $a\in A$ is called a \emph{regular point} of $f$ if the differential of $f$ at $a$ is surjective. Similarly, $b\in B$ is called a \emph{regular value} of $f$ if $f^{-1}(b)$ consists of regular points. In this case $f^{-1}(b)$ is a closed embedded submanifold of $A$ of dimension $\dim(A)-\dim(B)$~\cite[Corollary 5.14]{Lee}.

\begin{lemma}\label{lemma:smooth_manifolds_embedded}
Suppose that $A,B$ are smooth manifolds and $B'\subset B$ is such that $A\times B'$ is an embedded submanifold of $A\times B$. Then $B'$ is an embedded submanifold of $B$.
\end{lemma}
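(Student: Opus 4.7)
The plan is to reduce the statement to a single application of the transversality preimage theorem. I would fix a point $a_0 \in A$ (the claim is vacuous if $A = \emptyset$) and consider the smooth map $\iota : B \to A \times B$, $\iota(b) = (a_0, b)$, whose image is the slice $\{a_0\} \times B$ and whose preimage $\iota^{-1}(A \times B')$ is exactly $B'$. Since $A \times B'$ is assumed to be an embedded submanifold of $A \times B$, it suffices to show that $\iota$ is transversal to $A \times B'$ at every $b \in B'$; the preimage theorem then outputs $B'$ as an embedded submanifold of $B$ of the expected codimension.

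To verify transversality at $b \in B'$, I need
$$d\iota_b(T_b B) + T_{(a_0,b)}(A \times B') = T_{(a_0,b)}(A \times B) = T_{a_0}A \oplus T_b B.$$
Since $d\iota_b(T_b B) = \{0\} \oplus T_b B$, this reduces to showing the inclusion $T_{a_0} A \oplus \{0\} \subseteq T_{(a_0,b)}(A \times B')$. The key geometric observation is that, for each $b \in B'$, the entire slice $A \times \{b\}$ lies inside $A \times B'$. Both $A \times \{b\}$ and $A \times B'$ are embedded submanifolds of $A \times B$, so the inclusion $A \times \{b\} \hookrightarrow A \times B'$ is automatically smooth; taking differentials at $(a_0,b)$ yields $T_{a_0}A \oplus \{0\} = T_{(a_0,b)}(A \times \{b\}) \subseteq T_{(a_0,b)}(A \times B')$, as desired.

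There is no serious obstacle in this argument. The only point that needs any care is the smoothness of the inclusion $A \times \{b\} \hookrightarrow A \times B'$, which is a routine consequence of the standard fact that a continuous map into an embedded submanifold is smooth whenever its composition with the ambient inclusion is smooth.
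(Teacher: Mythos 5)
Your proof is correct and is essentially the same argument as the paper's, just packaged dually: the paper applies the regular value theorem to the projection $A \times B' \to A$ (with the slice observation hidden behind the word ``Clearly''), while you apply the transversality preimage theorem to the inclusion $b \mapsto (a_0, b)$. In both cases the key point is that the slice $A \times \{b\} \subset A \times B'$ provides the tangent-space inclusion $T_{a_0}A \oplus \{0\} \subseteq T_{(a_0,b)}(A \times B')$ that verifies the regularity or transversality hypothesis.
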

\begin{proof}
Choose $a\in A$.  Clearly $a$ is a regular value of the projection $A\times B'\to A$, so $\{a\}\times B'$ is an embedded submanifold of $A\times B'$, and hence of $\{a\}\times B$.
\end{proof}

We also recall some facts about $\FSdil$ from \cite{GKL}.
\begin{lemma}\label{lemma:cflow}
Let $\FSdil:\R_{>0}\times\R^N\to\R^N$ be a smooth map satisfying~\axref{SC:base_lim} and~\axref{SC:str_derivative}.
\begin{theoremlist}
\item\label{cflow:0} We have $\FSdil(t,0) = 0$ for all $t\in\R_{>0}$.
\item\label{cflow:limit} We have $\lim_{t\to 0+}\FSdil(t,x) = 0$ for all $x\in\R^N$.
\item\label{cflow:t1} For all $x\in\R^N\setminus\{0\}$, there exists a unique $t\in\R_{>0}$ such that $\|\FSdil(t,x)\| = 1$, which we denote by $\strix$. The function $t_1 : \R^N\setminus\{0\}\to\R_{>0}$ is continuous.
\end{theoremlist}
\end{lemma}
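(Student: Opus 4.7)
The plan is to prove (i), (ii), (iii) in that order, with the single recurring tool being the observation that axiom~\axref{SC:str_derivative} forces $\FSdil(t,x) \neq 0$ whenever $t > 0$ and $x \neq 0$: if $\FSdil(t_*,x)$ vanished, then $\|\FSdil(\cdot,x)\|$ would attain a minimum at $t_*$, where its derivative could not be strictly positive.

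For (i), I will argue by contradiction. Suppose $y := \FSdil(t_0, 0) \neq 0$ for some $t_0 > 0$. Then by the action axiom~\axref{SC:R_+_action},
\[
\FSdil(1/t_0,\, y) \;=\; \FSdil(1/t_0,\, \FSdil(t_0,\,0)) \;=\; \FSdil(1,\,0) \;=\; 0,
\]
contradicting the ``no-vanishing'' observation applied to the nonzero vector $y$.

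For (ii), the case $x = 0$ follows from (i), while for $x \neq 0$ the function $f(t) := \|\FSdil(t,x)\|$ is strictly increasing on $\R_{>0}$ by~\axref{SC:str_derivative}, so $L := \lim_{t \to 0+} f(t) \geq 0$ exists and equals $\inf_{t > 0} f(t)$. Assuming $L > 0$, the set $\{\FSdil(t,x) : 0 < t \leq 1\}$ is bounded, so by Bolzano--Weierstrass there exists a sequence $t_n \downarrow 0$ with $\FSdil(t_n,x) \to y$ for some $y$ with $\|y\| = L > 0$. Then, for each fixed $s > 0$, joint continuity of $\FSdil$ combined with the action axiom yields
\[
\FSdil(s, y) \;=\; \lim_{n} \FSdil(s,\, \FSdil(t_n, x)) \;=\; \lim_n \FSdil(s\, t_n,\, x),
\]
whose norm tends to $L$ since $s t_n \to 0$. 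Hence $\|\FSdil(s,y)\| \equiv L$ for all $s > 0$, contradicting strict monotonicity on the orbit of the nonzero vector $y$. So $L = 0$, giving (ii).

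For (iii), the preceding parts provide continuity and strict monotonicity of $f$ with $\lim_{t \to 0+} f(t) = 0$, so existence and uniqueness of $t_1(x)$ reduce via the intermediate value theorem to showing $\lim_{t \to \infty} f(t) = +\infty$. The dual of the argument for (ii) supplies this: if instead $L' := \lim_{t \to \infty} f(t) < \infty$, pass to a convergent subsequence $\FSdil(t_n,x) \to y$ with $\|y\| = L' \geq \|x\| > 0$, and derive $\|\FSdil(s,y)\| \equiv L'$, a contradiction. Continuity of $t_1$ (in fact, smoothness) then follows from the implicit function theorem applied to $F(t,x) := \|\FSdil(t,x)\|^2 - 1$, which is smooth on $\R_{>0} \times (\R^N \setminus \{0\})$ and satisfies $\partial F/\partial t > 0$ on $\{F = 0\}$ by~\axref{SC:str_derivative}. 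The only mild obstacle I anticipate is the blow-up step in (iii), since it cannot be reduced to compactness on a bounded orbit but genuinely requires the symmetry $t \leftrightarrow 1/t$ of the action together with the no-vanishing observation.
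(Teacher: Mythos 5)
Your proof is correct, and it takes a genuinely different route from the paper's. The paper simply reparametrizes $t\mapsto e^{-t}$ to turn $\FSdil$ into a ``contractive flow'' in the sense of their earlier work~\cite{GKL}, and cites Lemma~2.2 and part of the proof of Lemma~2.3 of that paper; the three statements are thus proved by reduction. You instead give a self-contained argument. Your central ``no-vanishing'' observation (that $\FSdil(t,x)\neq 0$ for $t>0$, $x\neq 0$, because $\norm{\FSdil(\cdot,x)}$ would otherwise attain its minimum where \axref{SC:str_derivative} forces a strictly positive derivative) is the key to~\itemref{cflow:0}; the compactness-plus-action argument for~\itemref{cflow:limit}, reused dually for the blow-up step of~\itemref{cflow:t1}, is sound: if $L>0$ (resp.\ $L'<\infty$) the orbit restricted to a one-sided interval is bounded, a convergent subsequence $\FSdil(t_n,x)\to y\neq 0$ exists, and the action identity $\FSdil(s,\FSdil(t_n,x))=\FSdil(st_n,x)$ forces $\norm{\FSdil(\cdot,y)}$ to be constant, violating strict monotonicity. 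The IFT applied to $F(t,x)=\norm{\FSdil(t,x)}^2-1$ gives continuity (indeed smoothness) of $t_1$. Your closing sentence, however, is a red herring: the blow-up step in~\itemref{cflow:t1} \emph{is} reduced to compactness on a bounded orbit, exactly as you wrote it, and it requires neither the $t\leftrightarrow 1/t$ symmetry nor the no-vanishing observation beyond the use of~\axref{SC:str_derivative} to rule out a constant $\norm{\FSdil(\cdot,y)}$; you seem to be second-guessing an argument that already works. What your approach buys is independence from~\cite{GKL}; what the paper's buys is brevity and the avoidance of re-deriving facts already on record.
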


\begin{proof}
The function $f : \R \times \R^N \to \R^N$ defined by $f(t,x) = \FSdil(e^{-t},x)$ is a contractive flow, as defined in~\cite[Definition~2.1]{GKL}. Therefore the statements follow from~\cite[Lemma~2.2]{GKL} and the claim in the proof of \cite[Lemma~2.3]{GKL}.
\end{proof}

\begin{proof}[Proof of \cref{lemma:links}.]
\itemref{links:sctnn}: We prove this more generally for $g\leqY h$. The set $\Startnn_g$ is connected since it contains a connected dense subset $\Ytp_{\hat1}$.  Therefore $\isoga(\Startnn_g)$ is a connected subset of $\Yo_g\cap \Og$. By~\axref{FS:Yg_id}, it contains $\Ytp_g$, and therefore $\isoga(\Startnn_g)=\Ytp_g$ by~\axref{TNN:Ytp}. By definition, $\isogb(\Startnn_{g,h})=\sctnngh$, and thus $\isog(\Startnn_{g,h})\subset \Ytp_g\times \sctnngh$. By~\axref{FS:image_of_Yh}, we get $\isog(\Ytp_h)\subset \Ytp_g\times \sctpgh$. In particular, $\sctpgh=\isogb(\Ytp_h)$ is a connected subset of $\sco_g^h$. Let $C$ be the connected component of $\sco_g^h$ containing $\sctpgh$. By~\axref{FS:image_of_Yh}, $\isog^{-1}(\Ytp_g\times C)$ is a connected subset of $\Yo_h\cap\Og$, which contains $\Ytp_h$ as we have just shown. Therefore we must have $\isog^{-1}(\Ytp_g\times C)=\Ytp_h$ by~\axref{TNN:Ytp}, which shows that $\sctpgh=C$ is a connected component of $\sco_g^h$. Thus indeed $\isog(\Ytp_h)=\Ytp_g\times \sctpgh$. 

\itemref{links:smooth}: By~\axref{TNN:Ytp} and~\axref{TNN:smooth}, $\Ytp_h$ is an embedded submanifold of $\Y$. Applying $\isog$ and using~\itemref{links:sctnn}, we get that $\Ytp_g\times\sctpgh$ is an embedded submanifold of $\Ytp_g\times\scg$, of dimension $\dim(h) - \dim(g)$. By \cref{lemma:smooth_manifolds_embedded}, $\sctpgh$ is an embedded submanifold of $\scg$. Moreover, it follows from~\axref{FS:mnf_dil} that $\dilg(t,\sctpgh)=\sctpgh$ for all $t\in\R_{>0}$, since $\sctpgh$ is a connected component of $\sco_g^h$. Thus $1$ is a regular value of the restriction $\strg(\cdot):\sctpgh\to \R_{>0}$, so the manifolds $\Seps_g$ and $\sctpgh$ intersect transversely inside~$\R^\Ng$.

\itemref{links:contractible}: By~\itemref{links:smooth}, $\Lktpe_g^h=\sctpgh\cap \Seps_g$ is an embedded submanifold of $\scg$ of dimension $\gradY(h)-\gradY(g)-1$. To show that it is contractible, we use the fact that a retract of a contractible space is contractible~\cite[Exercise~0.9]{Hat}. Since $\Ytp_h$ is contractible (by~\axref{TNN:Ytp}), so is $\isog(\Ytp_h)=\Ytp_g\times \sctpgh$. Then $\{x\}\times \sctpgh$ is a retract of $\Ytp_g\times \sctpgh$ for any $x\in\Ytp_g$, so $\sctpgh$ is contractible. Finally, by~\itemref{links:smooth} and~\cref{cflow:t1}, the map $x\mapsto \dilg(\strix,x)$ gives a retraction $\sctpgh\to \Lktpe_g^h$.

\itemref{links:compact}: By~\axref{FS:Startnn}, $\Startnn_{g,h}=\Ytnn_h\cap \Startnn_g=\Ytnn_h\cap \Og$ is a closed subset of $\Og$. Thus $\isog(\Startnn_{g,h})$ is a closed subset of $\Ytp_g\times \scg$. Since $\isog(\Startnn_{g,h})=\Ytp_g\times \sctnngh$ (by~\itemref{links:sctnn} and~\eqref{eq:links_stratif}), we get that $\sctnngh$ is a closed subset of $\scg$. It follows that $\Lktnnghe=\sctnngh\cap \Seps_g$ is a closed and bounded subset of $\scg$, which is closed in $\R^\Ng$ by \cref{dfn:sc}.
\end{proof}

Recall that $\Cone(A):=(A\times \R_{\geq0})/(A\times \{0\})$ is the open cone over $A$.  We denote by $\conept:=(\ast,0)\in\Cone(A)$ its \emph{cone point}.
\begin{proposition} Let $g\lessY h\in\YQ$.
\begin{theoremlist}
\item\label{links:cone} We have a homeomorphism $\sctnngh\xrasim \Cone(\Lktnnghe)$ sending $\baseg$ to the cone point~$\conept$, and sending $\sctp_g^{g'}$ to $\Lktpe_g^{g'}\times \R_{>0}$ for all $g\lessY g'\leqY h$.
\item\label{links:TNN_homeo} We have a homeomorphism $\Startnn_{g,h}\xrasim \Ytp_g\times \Cone(\Lktnnghe)$ sending $\Ytp_g$ to $\Ytp_g \times \{\conept\}$.
\end{theoremlist}
\end{proposition}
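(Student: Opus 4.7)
The plan is to construct the homeomorphism in (i) explicitly from the flow $\dilg$, and then assemble (ii) from (i) together with the diffeomorphism $\isog$. Throughout I will rely on the $\dilg$-invariance of each $\sctp_g^{g'}$ (\cref{links:smooth}), the compactness of $\Lktnnghe$ (\cref{links:compact}), and the properties of $\dilg$ collected in \cref{lemma:cflow}.

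For (i), I define $\phi\colon \sctnngh \to \Cone(\Lktnnghe)$ by $\phi(\baseg) := \conept$ and, for $x \neq \baseg$,
\[
\phi(x) := \bigl[\, \dilg(t_1(x), x),\; 1/t_1(x)\, \bigr],
\]
where $t_1(x) \in \R_{>0}$ is the unique time with $\|\dilg(t_1(x), x)\| = 1$ from \cref{cflow:t1}. The $\dilg$-invariance of the strata coming from \cref{links:smooth} guarantees that $\dilg(t_1(x), x) \in \Lktpe_g^{g'}$ whenever $x \in \sctp_g^{g'}$, so $\phi$ sends each $\sctp_g^{g'}$ bijectively onto $\Lktpe_g^{g'} \times \R_{>0}$. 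A two-sided inverse is $[s, \tau] \mapsto \dilg(\tau, s)$ for $\tau > 0$, extended by $\conept \mapsto \baseg$; well-definedness uses only that $\dilg$ is a multiplicative $\R_{>0}$-action.

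The main subtlety, which I expect to be the hardest step, is continuity of $\phi$ at $\baseg$ and of $\phi^{-1}$ at $\conept$. For $\phi$ at $\baseg$: if some sequence $x_n \to \baseg$ had $t_1(x_n) \leq T$ for some $T$, then by the monotonicity of $t \mapsto \|\dilg(t, x)\|$ coming from \axref{SC:str_derivative} one would have $\|\dilg(T, x_n)\| \geq 1$, contradicting $\dilg(T, x_n) \to \dilg(T, \baseg) = \baseg$ (using \cref{cflow:0}); hence $t_1(x_n) \to \infty$ and $\phi(x_n) \to \conept$ in the cone topology. For $\phi^{-1}$ at $\conept$ I must show that $\dilg(\tau_n, s_n) \to \baseg$ whenever $\tau_n \to 0+$ and $s_n \in \Lktnnghe$. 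Since $t \mapsto \|\dilg(t, s)\|$ is monotone in $t$ (\axref{SC:str_derivative}), converges pointwise to $0$ as $t \to 0+$ (\cref{cflow:limit}), and $\Lktnnghe$ is compact (\cref{links:compact}), Dini's theorem delivers the needed uniform convergence. This is precisely where the compactness hypothesis is essential.

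For (ii), \cref{links:sctnn} together with \eqref{eq:links_stratif} shows that $\isog$ restricts to a bijection $\Startnn_{g,h} \to \Ytp_g \times \sctnngh$; as the restriction of the diffeomorphism $\isog$ between corresponding subsets, it is automatically a homeomorphism. Composing with $\id_{\Ytp_g} \times \phi$ from (i) yields the desired homeomorphism $\Startnn_{g,h} \xrasim \Ytp_g \times \Cone(\Lktnnghe)$. By \axref{FS:Yg_id}, $\isog$ sends $\Ytp_g$ onto $\Ytp_g \times \{\baseg\}$, and $\phi(\baseg) = \conept$, so $\Ytp_g$ lands on $\Ytp_g \times \{\conept\}$, completing the proof.
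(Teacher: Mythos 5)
Your proof is correct and takes essentially the same approach as the paper: define the cone map via the unique scaling time $t_1(x)$, verify it respects the stratification via the $\dilg$-invariance from the preceding lemma, and handle the two nontrivial continuity checks (at $\baseg$ and at $\conept$) by the monotonicity axiom and Dini's theorem, respectively. The only cosmetic difference is that you apply Dini's theorem on the compact set $\Lktnnghe$ directly, while the paper applies it to the full unit sphere $S_g$ and then restricts; both are valid.
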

\begin{proof}

\def\conehomeo{\xi}
\itemref{links:cone}: Define a map $\conehomeo:\sctnngh\to \Cone(\Lktnnghe)$ sending $\baseg$ to $\conept$ and  $x$ to $\left(\dilg(\strix,x),\frac1{\strix}\right)$ for $x\in \sctnngh\setminus\{\baseg\}$, where $\strix$ is defined in~\cref{cflow:t1} and $\dilg(\strix,x)\in\Lktnnghe$ by~\cref{links:smooth}. We claim that $\conehomeo$ is a homeomorphism. Note that $\conehomeo$ has an inverse $\conehomeo^{-1}$, which sends $\conept$ to $\baseg$ and $(y,t)$ to $\dilg(t,y)$ for $(y,t)\in \Cone(\Lktnnghe)\setminus\{\conept\}= \Lktnnghe\times \R_{>0}$. By~\cref{cflow:t1}, $\conehomeo$ is continuous on $\sctnngh\setminus \{\baseg\}$ and $\conehomeo^{-1}$ is continuous on $\Lktnnghe\times \R_{>0}$. It remains to show that $\conehomeo$ is continuous at $0$ and that $\conehomeo^{-1}$ is continuous at $c$.

Suppose that $(x_n)_{n\geq0}$ is a sequence in $\sctnngh\setminus \{\baseg\}$ converging to $\baseg$. We claim that $\strixn\to\infty$ as $n\to\infty$. Otherwise, after passing to a subsequence, we may assume that there exists $R\in\R_{>0}$ such that $\strixn\leq R$ for all $n\geq0$. Then~\axref{SC:str_derivative} implies that $\normg{\dilg(R,x_n)}\geq \normg{\dilg(\strixn,x_n)}=1$ for all $n\geq0$. Taking $n\to\infty$ gives $\normg{\dilg(R,0)}\geq 1$, contradicting~\cref{cflow:0}. This shows that $\conehomeo$ is continuous at $\baseg$.

Suppose now that $((y_n,t_n))_{n\geq0}$ is a sequence in $\Lktnnghe\times \R_{>0}$ converging to $\conept$, i.e., $t_n\to 0$. The function $D(t):=\max_{x\in\Seps_g} \normg{\dilg(t,x)}$ is increasing in $t$, by compactness of $\Seps_g$ and~\axref{SC:str_derivative}.  We have $\lim_{t\to0+}D(t)=0$ by~\cref{cflow:limit} and compactness of $\Seps_g$ (more precisely, by \emph{Dini's theorem}). Therefore $\conehomeo^{-1}(y_n,t_n)=\dilg(t_n,y_n)$ converges to $0$ as $n\to\infty$, showing that $\conehomeo^{-1}$ is continuous at $\conept$.

\itemref{links:TNN_homeo}: By \cref{links:sctnn}, $\isog$ restricts to a homeomorphism $\Startnn_{g,h}\xrasim \Ytp_g\times \sctnngh$, which by~\axref{FS:Yg_id} sends $\Ytp_g$ to $\Ytp_g\times\{0\}$. The result follows from~\itemref{links:cone}.
\end{proof}

Our next aim is to analyze the local structure of the space $\Lktnnghe$. For two topological spaces $A$ and $B$ and $a\in A$, $b\in B$, a \emph{local homeomorphism between $(A,a)$ and $(B,b)$} is a homeomorphism from an open neighborhood of $a$ in $A$ to an open neighborhood of $b$ in $B$ which sends $a$ to $b$.

\def\hh{h}
\def\gg{p}
\def\scgg{\sc_\gg}
\def\basegg{0}
\def\Ogg{\O_\gg}
\def\Hgg{H_\gg}
\def\sctpggg{\sctp_g^\gg}
\def\isogga{\FSiso_{\gg,1}}
\def\isoggb{\FSiso_{\gg,2}}
\def\Lktnnghhe{\Lktnne_{g,\hh}}

\begin{lemma}\label{lemma:links_local_homeo}
Let $g\lessY \gg\leqY \hh\in \YQ$, $x_\gg\in \Lktpe_g^\gg$, and set $d:=\gradY(\gg)-\gradY(g)-1$. Then there exists a local homeomorphism between $\left(\Lktnnghhe,x_\gg\right)$ and $\left(\sctnn_{\gg,\hh}\times \R^d,(\basegg,0)\right).$
\end{lemma}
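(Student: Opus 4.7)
The plan is to use the Fomin--Shapiro atlases at both $g$ and $\gg$ together. First I would fix any $y\in\Ytp_g$ and set $z:=\isog^{-1}(y,x_\gg)$. Since $x_\gg\in\Lktpe_g^\gg\subset\sctp_g^\gg$, \cref{links:sctnn} gives $z\in\Ytp_\gg\subset\Startnn_\gg$, and then $z\in\Ogg$ by~\axref{FS:Startnn} applied at~$\gg$. Thus $\isog$ and $\isoh$ are both defined on a common open neighborhood of $z$, and their composition $\Phi:=\isoh\circ\isog^{-1}$ is a diffeomorphism between open neighborhoods of $(y,x_\gg)$ in $(\Yo_g\cap\Og)\times\scg$ and of $(z,\basegg)$ in $(\Yo_\gg\cap\Ogg)\times\scgg$.

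Using finiteness of $\YQ$ and that each $\Ytnn_{g'}$ is closed (\axref{TNN:Ytp_cl}), I would shrink to a neighborhood of $z$ where $\Startnn_{g,\hh}$ coincides with $\Startnn_{\gg,\hh}$: any $g'\in\YQ$ with $\gg\not\leqY g'$ satisfies $z\notin\Ytnn_{g'}$, so removing these finitely many closed sets produces the required open set. The heart of the proof is the claim that $\Phi_2(y,\cdot)\colon\scg\to\scgg$ is a smooth submersion at $x_\gg$ whose fiber over $\basegg$ is locally $\sctp_g^\gg$. Axiom~\axref{FS:Yg_id} ensures $\isoh_2$ vanishes on $\Ytp_\gg$, and \cref{links:sctnn} gives that $\isog$ restricts to a diffeomorphism $\Ytp_\gg\xrasim\Ytp_g\times\sctp_g^\gg$. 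Combining these, the kernel of $d\Phi_2$ at $(y,x_\gg)$ equals $d\isog(T_z\Ytp_\gg)=T_y\Ytp_g\oplus T_{x_\gg}\sctp_g^\gg$, of dimension $\gradY(\gg)$. Since $\dim\scg=\gradY(\hat1)-\gradY(g)$ and $\dim\scgg=\gradY(\hat1)-\gradY(\gg)$ (forced by the product factorizations in~\axref{FS:Yg_id}), a rank count shows that the restriction of $d\Phi_2$ to $T_{x_\gg}\scg$ is surjective with kernel $T_{x_\gg}\sctp_g^\gg$ of dimension $d+1$.

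Applying the submersion theorem yields a local diffeomorphism $\Psi\colon E_0\times V\xrasim W\subset\scg$, where $E_0$ is a neighborhood of $x_\gg$ in $\sctp_g^\gg$, $V$ of $\basegg$ in $\scgg$, and $W$ of $x_\gg$ in $\scg$, satisfying $\Psi(u,\basegg)=u$ and $\Phi_2(y,\Psi(u,v))=v$. Because $\Phi$ preserves strata and $\Phi_2(y,\cdot)^{-1}(\sctp_\gg^{g'})=\sctp_g^{g'}$ locally, $\Psi$ bijects $E_0\times(\sctp_\gg^{g'}\cap V)$ with $\sctp_g^{g'}\cap W$ for each $g'\geqY\gg$; together with the local-finiteness step this gives $\Psi\bigl(E_0\times(\sctnn_{\gg,\hh}\cap V)\bigr)=\sctnn_{g,\hh}\cap W$. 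I would next parametrize $E_0$ using the dilation: $(\xi,\tau)\mapsto\dilg(e^\tau,\xi)$ is a diffeomorphism $\Lktpe_g^\gg\times\R\xrasim\sctp_g^\gg$ sending $(x_\gg,0)\mapsto x_\gg$. In the resulting coordinates $(\xi,\tau,v)$, the identity $\Psi(u,\basegg)=u$ together with~\axref{SC:str_derivative} gives $\partial_\tau\|\Psi(\dilg(e^\tau,\xi),v)\|>0$ at the basepoint, so the implicit function theorem presents $\Psi^{-1}(\Seps_g)$ locally as a smooth graph $\tau=h(\xi,v)$.

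Assembling these pieces,
\[
\Psi^{-1}\bigl(\Lktnnghhe\cap W\bigr)=\bigl\{(\xi,h(\xi,v),v):\xi\in\Lktpe_g^\gg,\ v\in\sctnn_{\gg,\hh}\cap V\bigr\},
\]
which is homeomorphic to $\Lktpe_g^\gg\times(\sctnn_{\gg,\hh}\cap V)$ via the projection $(\xi,\tau,v)\mapsto(\xi,v)$. Since $\Lktpe_g^\gg$ is a smooth $d$-dimensional manifold by \cref{links:smooth} and hence locally diffeomorphic to $\R^d$ near $x_\gg$, this yields the desired local homeomorphism $(\Lktnnghhe,x_\gg)\to(\sctnn_{\gg,\hh}\times\R^d,(\basegg,0))$. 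The main obstacle I expect is the submersion claim in the second paragraph, which demands a careful bookkeeping of how the two atlases interact at the level of differentials; once it is in hand, the rest is a routine application of the submersion and implicit function theorems, exploiting only that the dilation supplies a distinguished transverse direction to the link slice.
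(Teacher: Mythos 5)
Your overall strategy matches the paper's: conjugate the two Fomin--Shapiro atlases at $g$ and at $\gg$, show that the second component of $\FSiso_\gg\circ\isog^{-1}(y,\cdot)$ is a submersion (your rank count packages the same underlying transversality as \cref{links:smooth}), apply the submersion theorem, and identify strata. The mechanics differ slightly: the paper restricts to the sphere $\Seps_g$ at the outset and applies the submersion theorem once, whereas you first straighten the unrestricted map $\Phi_2(y,\cdot)\colon\scg\to\scgg$ and afterwards carve out $\Seps_g$ via the dilation coordinates and the implicit function theorem. This costs a little extra machinery but is sound.

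There is, however, a gap at the crux. The assertion that ``$\Phi_2(y,\cdot)^{-1}(\sctp_\gg^{g'})=\sctp_g^{g'}$ locally'' is exactly where the work is, and it is stated without proof. Preservation of the open strata $\sco_g^{g'}\leftrightarrow\sco_\gg^{g'}$ is immediate from \axref{FS:image_of_Yh} applied at $g$ and at $\gg$, but one must also check that the correct \emph{connected components} (the totally nonnegative ones) correspond. Your local-finiteness shrinking does not supply this: removing the closed sets $\Ytnn_{g'}$ with $g'\not\geqY\gg$ is in fact unnecessary (since $\Ogg\subset\Star_\gg$ is built into \cref{dfn:FS_atlas}, any nearby point $\isog^{-1}(y,x)$ already lies in some $\Yo_{g'}$ with $g'\geqY\gg$), and more to the point it says nothing about which component of $\sco_g^{g'}$ a given $x$ belongs to. What is needed --- and what the paper supplies --- is a connectedness argument: pass to a connected neighborhood $W$ of $x_\gg$ in $\scg$ on which $\isog^{-1}(y,\cdot)$ takes values in $\Ogg$; then $\isogga(\isog^{-1}(\{y\}\times W))$ is a connected subset of $\Yo_\gg\cap\Ogg$ containing $z\in\Ytp_\gg$, hence lies entirely in $\Ytp_\gg$ by \axref{TNN:Ytp}. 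With this in hand, for $x\in W$ lying in $\sco_g^{g'}$, \cref{links:sctnn} applied at $\gg$ gives $\isog^{-1}(y,x)\in\Ytp_{g'}\iff\Phi_2(y,x)\in\sctp_\gg^{g'}$, and \cref{links:sctnn} applied at $g$ gives $\isog^{-1}(y,x)\in\Ytp_{g'}\iff x\in\sctp_g^{g'}$, which is precisely your claim. Once this step is supplied, the rest of your outline goes through.
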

\begin{proof}
\def\mapg{\theta_{g,\gg}}
\def\Diff#1{\operatorname{D}{#1}}
\def\Dmapg{\Diff{\mapg}}
\def\H{H_\gg}
\def\preim{F}
\def\mapdif{\beta}
\def\xgg{\bar x_{\gg}}
Choose some $x_g\in \Ytp_g$ and consider the open subset $\H\subset \scg$ defined by $\H:=\{x\in\scg\mid \isog^{-1}(x_g,x)\in \Ogg\}$. Introduce a map 
\begin{align*}
\mapg: \H\cap \Seps_g \to \scgg,\quad x\mapsto\isoggb (\isog^{-1}(x_g,x)).
\end{align*}
Since $x_\gg\in \Lktpe_g^\gg\subset \sctpggg$ and $x_g\in \Ytp_g$, we get $\xgg:=\isog^{-1}(x_g,x_\gg)\in \Ytp_\gg$ by \cref{links:sctnn}. By \axref{FS:Startnn}, we have $\Ytp_\gg\subset \Startnn_\gg\subset \Ogg$, and thus $x_\gg\in\H$. Since $\H$ is open in $\scg$, $\H\cap \Seps_g$ is an open subset of $\scg\cap \Seps_g$, which is nonempty since it contains $x_\gg$. We have $\mapg(x_\gg)=\basegg$ by~\axref{FS:Yg_id}.

We claim that $x_\gg$ is a regular point of $\mapg$. By~\axref{FS:Yg_id}, the differential of $\isoggb:\Ogg\to \scgg$ is surjective at $\xgg$, and its kernel is the tangent space of $\Yo_\gg$ at $\xgg$. By~\axref{TNN:Ytp} and~\axref{FS:Startnn}, $\Ytp_\gg$ is a connected component of $\Yo_\gg\cap \Ogg$, and it contains $\xgg=\isog^{-1}(x_g,x_\gg)$ as we have shown above. Therefore $x_\gg$ is a regular point of $\mapg$ if and only if the manifolds $\Ytp_\gg$ and $\preim:=\isog^{-1}\left(\{x_g\}\times (\H\cap \Seps_g)\right)$ intersect transversely at $\xgg$. By \cref{links:sctnn}, we have $\isog(\Ytp_\gg)=\Ytp_g\times \sctpggg$, and clearly $\isog(\preim)=\{x_g\}\times (\H\cap \Seps_g)$. These two manifolds intersect transversely at $(x_g,x_\gg)$ by \cref{links:smooth}. We have shown that $x_\gg$ is a regular point of $\mapg$.

By the \emph{submersion theorem} (see e.g.~\cite[Corollary~A(1.3)]{Kos}), there exist local coordinates centered at $x_\gg\in \H\cap \Seps_g$ and at $\basegg\in \scgg$ in which $\mapg$ is just the canonical projection $\R^{\dim(\H\cap\Seps_g)}\to\R^{\dim(\scgg)}$. Recall that $\YQ$ contains a unique  maximal element $\hat1$, and by~\eqref{eq:FSiso_intro} we have $\dim(\scg)=\codim(g):=\gradY(\hat1)-\gradY(g)$. Thus $\dim(\H\cap\Seps_g)=\codim(g)-1$, $\dim(\scgg)=\codim(\gg)$, and $\dim(\H\cap\Seps_g)-\dim(\scgg)=d$. We have shown that there exist open neighborhoods $U$ of $x_\gg$ in $\H\cap \Seps_g$ and $V$ of $\basegg$ in $\scgg$ and a diffeomorphism $\mapdif:U\xrasim V\times \R^d$ sending $x_\gg$ to $(\basegg,0)$ such that the first component of $\mapdif$ coincides with the restriction $\mapg:U\to V$.

\def\rr{r}
In order to complete the proof, we need to show that the image $\mapdif(U\cap \Lktnnghhe)$ equals $(V\cap \sctnn_{\gg,\hh})\times \R^d$. We may assume that $U$ is connected. Suppose we are given $x\in U$, and let $\rr\in \YQ$ be such that $x':=\isog^{-1}(x_g,x)\in \Yo_{\rr}$. Since $U\subset \H$, $x'$ belongs to $\Ogg\subset \Star_\gg$ by \cref{dfn:FS_atlas}, and therefore $\gg\leqY \rr$. By \cref{links:sctnn}, we have $x\in U\cap \Lktpe_g^{\rr}$ if and only if $x'\in \Ytp_{\rr}$. On the other hand, $\isogga(\isog^{-1}(\{x_g\}\times U))$ is a connected subset of $\Yo_\gg\cap \Ogg$ that contains $\isogga(\xgg)\in \Ytp_\gg$. Thus $\isogga(\isog^{-1}(x_g,U))\subset \Ytp_\gg$ by \axref{TNN:Ytp}. It follows that $x'\in \Ytp_{\rr}$ if and only if $\mapg(x)=\isoggb(x')$ belongs to $\sctp_\gg^{\rr}$. The result follows by taking the union over all $\gg\leqY \rr\leqY \hh$, using~\eqref{eq:links_stratif}.
\end{proof}

\subsection{Topological background}\label{top:background}

\subsubsection{Regular CW complexes}\label{top:regular_CW}

We refer to \cite{Hat,LW} for background on CW complexes.~
\begin{definition}\label{dfn:cw}
Let $X$ be a Hausdorff space. We call a finite disjoint union $X = \bigsqcup_{\alpha\in \YQ}X_\alpha$
a \emph{regular CW complex} if it satisfies the following two properties.
\begin{enumerate}[label={(CW\arabic*)},leftmargin=*]
\item\label{cond:cw:regularity} For each $\alpha\in \YQ$, there exists a homeomorphism from the closure $\overline{X_\alpha}$ to a closed ball $\Bcl$ which sends $X_\alpha$ to the interior of $\Bcl$.
\item\label{cond:cw:normality} For each $\alpha\in \YQ$, there exists $\YQ'\subset \YQ$ such that $\overline{X_\alpha} = \bigsqcup_{\beta\in \YQ'} X_\beta$.
\end{enumerate}
The \emph{face poset} of $X$ is the poset $(\YQ, \leqY)$, where $\beta \leqY \alpha$ if and only if $X_\beta \subset \overline{X_\alpha}$.
\end{definition}

The condition~\axref{cond:cw:normality} is often omitted from the definition of a regular CW complex, but is necessary in order to apply the arguments of~\cite{Bjo2}. We remark that the cell decomposition of $\Ytnn$ satisfies~\axref{cond:cw:normality} by~\axref{TNN:Ytp_cl}. 

\subsubsection{Posets}\label{top:posets}

We review the definitions of \emph{graded}, \emph{thin}, and \emph{shellable} for finite posets, though we will not need to work with them in our arguments. We refer to~\cite{Bjo1,Sta} for background.

A finite poset $(\YQ,\leqY)$ is called \emph{graded} if every maximal chain in $\YQ$ has the same length $\ell$, in which case we denote $\rank(\YQ):=\ell$. For $x\leq z\in\YQ$, we denote by $[x,z]:=\{y\in \YQ\mid x\leq y\leq z\}$ the corresponding \emph{interval}. Note that the intervals in a graded poset $\YQ$ are also graded, and we call $\YQ$ \emph{thin} if every interval of rank $2$ has exactly $4$ elements.

The \emph{order complex} of a graded poset $\YQ$ is the pure $(\rank(\YQ) - 1)$-dimensional simplicial complex whose vertices are the elements of $\YQ$, and whose faces are the chains in $\YQ$. We say that $\YQ$ is \emph{shellable} if its order complex is shellable, i.e., its maximal faces can be ordered as $F_1,\dots, F_n$ so that for $2\leq k\leq n$,  $F_k\cap \left(\bigcup_{1\leq i<k}F_i\right)$ is a nonempty union of $(\rank(\YQ)-2)$-dimensional faces of $F_k$.
\begin{proposition}[{\cite[Proposition~4.2]{Bjo1}}]\label{prop:shellable_int}
If a graded poset is shellable, then so are each of its intervals.
\end{proposition}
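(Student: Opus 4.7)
The plan is to deduce shellability of an interval $[x,z] \subseteq \YQ$ from shellability of $\YQ$ itself by working at the level of order complexes and exploiting two classical facts about shellable simplicial complexes: that shellability passes to links of faces, and that a join of pure complexes is shellable if and only if each factor is. Neither fact is specific to order complexes, and both are standard results in the shellability literature.

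I would first invoke the fact that if $\Delta$ is a shellable simplicial complex and $\sigma \in \Delta$ is any face, then $\Link_\Delta(\sigma)$ is shellable. Explicitly, given a shelling $F_1 < F_2 < \cdots < F_n$ of $\Delta$, the facets of $\Link_\Delta(\sigma)$ are the sets $F_i \setminus \sigma$ for those $i$ with $\sigma \subseteq F_i$, and in the induced order they form a shelling. Applied to the $1$-face $\sigma = \{x,z\}$ in the order complex $\Delta(\YQ)$, this yields shellability of $\Link_{\Delta(\YQ)}(\{x,z\})$. Next, a chain of $\YQ$ containing both $x$ and $z$ splits canonically into a chain strictly below $x$, a chain in the open interval $(x,z)$, and a chain strictly above $z$, and these three pieces can be chosen independently. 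Consequently,
\[
\Link_{\Delta(\YQ)}(\{x,z\}) \;\cong\; \Delta(\YQ_{<x}) \ast \Delta((x,z)) \ast \Delta(\YQ_{>z}).
\]
Applying the join fact, each factor is shellable; in particular $\Delta((x,z))$ is shellable.

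Finally, the order complex of the closed interval can be written as $\Delta([x,z]) = \{x\} \ast \Delta((x,z)) \ast \{z\}$, which is an iterated cone over $\Delta((x,z))$ (the join with a single-vertex complex produces a cone with that vertex as apex). Since coning trivially preserves shellability, $\Delta([x,z])$ is shellable, which is the desired conclusion. The degenerate case where $(x,z)$ is empty (i.e.\ $z$ covers $x$) is trivial, as $[x,z]$ is then a single edge. The principal obstacle in the above plan is the first ingredient --- verifying the shelling axiom on the induced ordering of the link's facets --- which requires careful tracking of the restriction sets of the original shelling but is entirely classical.
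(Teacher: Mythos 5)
The paper cites this statement to Bj\"orner and supplies no in-paper proof, so there is no internal argument to compare against. Your proposal is correct, and it is the standard argument: shellability of intervals is deduced from shellability of the whole poset via the preservation of shellability under passage to links in the order complex.

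Two remarks. First, your route through the triple join and the biconditional ``a join of pure complexes is shellable iff each factor is'' is correct but can be shortened. Instead of linking the $1$-face $\{x,z\}$, link the larger face $\sigma := c_- \cup \{x,z\} \cup c_+$, where $c_-$ is a maximal chain of $\YQ_{<x}$ and $c_+$ is a maximal chain of $\YQ_{>z}$: maximality of $c_\pm$ forces any chain compatible with $\sigma$ to lie entirely in $(x,z)$, so $\Link_{\Delta(\YQ)}(\sigma) = \Delta\bigl((x,z)\bigr)$ on the nose, and then coning with $x$ and $z$ gives $\Delta\bigl([x,z]\bigr)$ exactly as you say. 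This avoids invoking the ``only if'' half of the join equivalence, which is true but less commonly cited, and which in any case one proves by the same link device (each factor is the link of a facet of the other). Second, the step you flagged as the main obstacle --- checking that the induced ordering of $\{F_i\setminus\sigma \mid \sigma\subseteq F_i\}$ really shells $\Link_\Delta(\sigma)$ --- does go through cleanly. Using the characterization that $F_1<\cdots<F_n$ is a shelling iff for all $l<j$ there exist $m<j$ and a vertex $v\in F_j$ with $F_l\cap F_j\subseteq F_m\cap F_j = F_j\setminus\{v\}$, apply it to two facets $F_l, F_j$ both containing $\sigma$. Since $\sigma\subseteq F_l\cap F_j\subseteq F_j\setminus\{v\}$, automatically $v\notin\sigma$, hence $\sigma\subseteq F_m\cap F_j\subseteq F_m$; thus the witness $F_m$ lies in the sublist of $\sigma$-containing facets, and $F_m\setminus\sigma$ witnesses the shelling condition in the link. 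Purity of all the relevant subcomplexes follows from $\YQ$ being graded, so there are no hidden issues there either.
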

 
See \cite[Sections~2 and 3]{Bjo2} for the proof of the following result.
\begin{theorem}[{\cite{LW,DK,Bjo2}}]\label{thm:bjorner}
Suppose that $X$ is a regular CW complex with face poset $\YQ$. If $\YQ\sqcup \{\hat{0},\hat{1}\}$ (obtained by adjoining a minimum $\hat{0}$ and a maximum $\hat{1}$ to $Q$) is graded, thin, and shellable, then $X$ is homeomorphic to a sphere of dimension $\rank(\YQ)-1$.
\end{theorem}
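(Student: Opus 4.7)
The plan is to combine two classical ingredients: the fact that a regular CW complex is determined up to homeomorphism by its face poset, and the fact that a shellable pseudomanifold without boundary is a sphere. I would first invoke the realization theorem of Lundell and Weingram \cite{LW}: any regular CW complex $X$ with face poset $\YQ$ admits a canonical homeomorphism $X \cong |\Delta(\YQ)|$, where $\Delta(\YQ)$ denotes the order complex of $\YQ$ (the simplicial complex whose simplices are the chains of $\YQ$). This homeomorphism is constructed inductively over skeleta: by~\axref{cond:cw:regularity}, the closure of each cell is a closed ball whose boundary sphere, by the induction hypothesis, is already identified with the realization of the order complex of its proper face poset, and one extends radially by coning from an interior point. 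This reduces the problem to showing $|\Delta(\YQ)| \cong S^{\rank(\YQ)-1}$.

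Next, I would verify the hypotheses of the theorem of Danaraj and Klee \cite{DK}, which asserts that a shellable pseudomanifold of dimension $d$ without boundary is homeomorphic to $S^d$. Purity of $\Delta(\YQ)$ (every facet has dimension $\rank(\YQ)-1$) is immediate from the gradedness of $\YQhat := \YQ \sqcup \{\hat 0,\hat 1\}$. For the pseudomanifold condition, a codimension-one simplex in $\Delta(\YQ)$ corresponds to a chain in $\YQhat$ of the form $\hat 0 < x_1 < \cdots < x_{i-1} < x_{i+1} < \cdots < x_n < \hat 1$ obtained from a maximal chain by omitting its rank-$i$ element; the facets of $\Delta(\YQ)$ containing it correspond bijectively to elements of the open rank-$2$ interval $(x_{i-1}, x_{i+1}) \subset \YQhat$, of which there are exactly two by thinness. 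For shellability of $\Delta(\YQ)$, I would observe that a shelling of $\Delta(\YQhat)$ restricts to one of $\Delta(\YQ)$ by deleting the common cone vertices $\hat 0$ and $\hat 1$ from each facet: facets in the two order complexes biject naturally, and the intersection condition defining a shelling is preserved under this deletion.

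Applying the Danaraj--Klee theorem with $d = \rank(\YQ) - 1$ then gives $|\Delta(\YQ)| \cong S^{\rank(\YQ)-1}$, which together with the Lundell--Weingram homeomorphism yields the conclusion. The main obstacle in this plan is the realization step: constructing $X \cong |\Delta(\YQ)|$ rigorously requires a delicate inductive extension at each attaching stage, since the triangulation induced on the boundary sphere of a newly attached ball must match coherently with the triangulation already built on the lower skeleton. The remaining steps are essentially combinatorial verifications of conditions that follow directly from the definitions of graded, thin, and shellable.
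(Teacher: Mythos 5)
Your proposal is correct and is precisely the argument in Bj\"orner's cited reference \cite{Bjo2} (which the paper invokes without reproving): identify $X$ with the order complex of its face poset via Lundell--Weingram, verify that $\Delta(\YQ)$ is a shellable pseudomanifold without boundary using gradedness, thinness, and the fact that $\Delta(\YQhat)$ is a double cone over $\Delta(\YQ)$, and conclude with Danaraj--Klee. The individual verifications (shellings descend through the double cone, ridges lie in exactly two facets by thinness of rank-two intervals in $\YQhat$) are exactly the standard ones.
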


\subsubsection{Poincar\'e conjecture}\label{sec:poincare}
Recall that an \emph{$n$-dimensional topological manifold with boundary} is a Hausdorff space $C$ such that every point $x\in C$ has an open neighborhood homeomorphic either to $\R^n$, or to $\R_{\geq0}\times \R^{n-1}$ via a homeomorphism which takes $x$ to a point in $\{0\} \times \R^{n-1}$. In the latter case, we say that $x$ belongs to the \emph{boundary} of $C$, denoted $\partial C$.

The following is a well-known consequence of the \emph{(generalized) Poincar\'e conjecture} due to Smale~\cite{Sma}, Freedman~\cite{Fre}, and Perelman~\cite{Per1}. We refer to~\cite[Theorem~10.3.3(ii)]{Dav} for this formulation.
\begin{theorem}[{\cite{Sma,Fre,Per1}}]\label{thm:Poincare}
Let $C$ be a compact contractible $n$-dimensional topological manifold with boundary, such that its boundary $\partial C$ is homeomorphic to an $(n-1)$-dimensional sphere. Then $C$ is homeomorphic to an $n$-dimensional closed ball.
\end{theorem}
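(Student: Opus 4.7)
The plan is the classical doubling reduction to the generalized Poincar\'e conjecture. First, I would form the double $DC := C \sqcup C / {\sim}$, identifying the two copies of $\partial C$ via the identity map. By Brown's collar neighborhood theorem, $\partial C$ admits a collar in $C$, so $DC$ inherits the structure of a compact topological $n$-manifold without boundary, containing the two canonical copies $C_+, C_- \cong C$ as closed subsets with $C_+ \cap C_- = \partial C$, and with $\partial C$ bicollared in $DC$.

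Next I would compute the homotopy type of $DC$. Applying van Kampen to the cover $DC = \mathring{C}_+ \cup \mathring{C}_-$ (mild thickenings of the two halves using the collar, whose intersection deformation retracts onto $\partial C \cong S^{n-1}$) gives $\pi_1(DC) = 1$ for $n \geq 3$, since each $C_\pm$ is contractible and $S^{n-1}$ is connected. A Mayer--Vietoris calculation with the same cover, again using that $C_\pm$ are contractible and $H_*(\partial C) = H_*(S^{n-1})$, yields $H_*(DC) \cong H_*(S^n)$. By Whitehead's theorem, $DC$ is a simply connected closed $n$-manifold homotopy equivalent to $S^n$. The main obstacle is then upgrading this to an actual homeomorphism $DC \xrasim S^n$: this is Perelman for $n = 3$, Freedman for $n = 4$, and Smale (together with standard smoothing or PL-to-TOP arguments) for $n \geq 5$. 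This deep input is precisely the generalized Poincar\'e conjecture being cited.

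Once $DC \cong S^n$, the embedded $(n-1)$-sphere $\partial C \subset DC$ is bicollared, hence locally flat, so by the topological Schoenflies theorem of Brown--Mazur it bounds a closed $n$-ball on each side. Since $C_+$ is one of the two closures of the components of $S^n \setminus \partial C$, it is homeomorphic to a closed $n$-ball, proving the claim for $n \geq 3$. The remaining low-dimensional cases can be dispatched classically: $n = 0$ is trivial ($C$ must be a single point, $\partial C = \emptyset$), $n = 1$ uses the classification of compact $1$-manifolds (a contractible one with boundary $S^0$ is an interval), and $n = 2$ uses the classification of surfaces with boundary (a contractible compact surface with boundary $S^1$ is a disk). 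This completes the reduction; essentially all of the mathematical content is absorbed into the cited Poincar\'e conjecture.
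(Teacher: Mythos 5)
Your proof is correct and follows essentially the same route as the paper's sketch: use Brown's collar theorem to close $C$ up to a closed manifold, show that manifold is a simply connected homology sphere via van Kampen and Mayer--Vietoris, invoke the generalized Poincar\'e conjecture to identify it with $S^n$, and finish with the Brown--Mazur Schoenflies theorem applied to the bicollared copy of $\partial C$. The one cosmetic difference is that the paper caps off $C$ with a closed $n$-ball whereas you form the double $C \sqcup_{\partial C} C$; since both pieces of your cover are contractible, the algebraic-topology computation is identical, and either closed manifold does the job.
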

\noindent For $n\geq6$, \cref{thm:Poincare} can be proved using the \emph{topological $h$-cobordism theorem}~\cite{Milnor,KS}. We sketch another standard argument for deducing \cref{thm:Poincare} from the Poincar\'e conjecture when $n$ is arbitrary. The boundary of $C$ is \emph{collared} by~\cite[Theorem~2]{Brown_Collar}, i.e., there exists a homeomorphism from an open neighborhood of $\partial C$ in $C$ to $\partial C \times [0,1)$, which takes $\partial C$ to $\partial C \times \{0\}$. Thus we can attach the (collared) boundary of an $n$-dimensional closed ball to the (collared) boundary of $C$, obtaining a topological manifold $C'$. By van Kampen's theorem, $C'$ is simply connected. It is easy to see from the Mayer–Vietoris sequence that $C'$ is a homology sphere. Thus $C'$ must be homeomorphic to a sphere by the Poincar\'e conjecture. Therefore $C$ is homeomorphic to a closed ball by Brown's Schoenflies theorem~\cite{Brown_Schoenflies}.

The following is also a consequence of Brown's collar theorem~\cite[Theorem~2]{Brown_Collar}.
\begin{proposition}\label{prop:Cinterior}
Suppose that $C$ is a topological manifold with boundary $\partial C$.  Then $C$ is homotopy equivalent to its interior $C\setminus \partial C$.
\end{proposition}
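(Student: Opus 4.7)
The plan is to derive the homotopy equivalence $C\simeq C\setminus\partial C$ directly from Brown's collar theorem \cite{Brown_Collar}, which was already invoked in the proof of \cref{thm:Poincare}. Recall it provides a homeomorphism $\phi\colon \partial C\times[0,1)\xrasim U$ onto an open neighborhood of $\partial C$ in $C$ with $\phi(p,0)=p$ for all $p\in \partial C$. The approach is to use $\phi$ to write down an explicit homotopy inverse to the inclusion $i\colon C\setminus\partial C\hookrightarrow C$ by sliding a half-collar of $\partial C$ slightly into the interior, and then to observe that the sliding homotopy restricts to one on $C\setminus\partial C$.

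Concretely, fix a continuous cutoff $\lambda\colon[0,1)\to[0,1]$ with $\lambda(0)=1$ and $\lambda|_{[1/2,1)}\equiv 0$, and define $H\colon C\times[0,1]\to C$ by
\[
H(\phi(p,t),s)=\phi\!\bigl(p,\,t+\tfrac{s}{4}\lambda(t)\bigr),\qquad H(x,s)=x\text{ for }x\notin \phi(\partial C\times[0,1/2)).
\]
The two prescriptions agree on their overlap $\phi(\partial C\times[1/2,1))$ because $\lambda$ vanishes there, so $H$ is well-defined. Setting $r(\cdot):=H(\cdot,1)$, the image lies in $C\setminus\partial C$: for $\phi(p,t)\in U$ the new collar coordinate $t+\lambda(t)/4$ is strictly positive, and points in $C\setminus U$ already avoid $\partial C\subset U$. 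By construction $H$ is a homotopy from $\id_C$ to $i\circ r$. Moreover, $H$ preserves strict positivity of the collar coordinate (since $t>0$ implies $t+\tfrac{s}{4}\lambda(t)>0$), so $H$ restricts to a homotopy on $C\setminus\partial C$ from $\id_{C\setminus\partial C}$ to $r\circ i$, giving the required homotopy equivalence.

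The main obstacle is verifying continuity of $H$ at points $x\in \overline{U}\setminus U$, where the two pieces of the definition must glue with the identity on $C\setminus U$. When $\partial C$ is compact this is automatic, because any sequence $\phi(p_n,t_n)\to x\notin U$ must have $t_n\to 1$, so eventually $\lambda(t_n)=0$ and $H$ is literally the identity along the sequence. For the general (possibly noncompact) case, the collar should first be preshrunk: postcompose $\phi$ with a diffeomorphism $\partial C\times[0,1)\to \partial C\times[0,\rho(\cdot))$ for a continuous $\rho\colon\partial C\to(0,1)$ chosen so that the resulting half-collar has closed image in $C$; after this reduction the pasting lemma applies and the rest of the argument goes through unchanged.
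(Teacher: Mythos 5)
Since the paper does not give a proof of this proposition (it only cites Brown's collar theorem), there is no in-paper argument to compare against; I evaluate your proof on its own terms. Your approach is the standard one — push a half-collar inward along the collar coordinate using a cutoff — and it is essentially correct. You also correctly isolate the one real subtlety: continuity of the glued homotopy $H$ at points of $\overline U\setminus U$. Your argument is complete whenever $\partial C$ is compact, which in fact covers every application of this proposition in the paper (it is applied only to the compact spaces $\Lktnnghe$ and $\Ytnn_h$).

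For noncompact $\partial C$, there remains a genuine but small gap: the existence of a continuous $\rho\colon \partial C\to(0,1)$ for which the resulting closed half-collar is closed in $C$ (equivalently, has closure contained in the open collar $U$) is asserted without proof, and it is not part of Brown's theorem. This is, however, all that remains: once such a $\rho$ is in hand, the gluing works exactly as you describe, since any sequence in the half-collar converging in $C$ then converges inside $U$, where both pieces of the formula for $H$ are manifestly continuous, while every point outside the closed half-collar has a neighborhood on which $H$ is the identity. The required shrinking fact is true and standard, but it is not automatic: one proof uses local compactness of $C$ together with paracompactness of the manifold $\partial C$ (cover $\partial C$ by relatively compact open sets $W_i$ with $\phi(\overline{W_i}\times[0,\epsilon_i])$ a compact subset of $U$, take a locally finite refinement with subordinate partition of unity $\{\mu_i\}$, and set $\rho:=\tfrac12\sum_i\mu_i\,\epsilon_i'$ where $\epsilon_i'$ is the minimum of $\epsilon_j$ over the finitely many $j$ whose supports meet that of $\mu_i$). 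You should either supply this lemma or cite a reference for it, as it carries the whole weight of the noncompact case. Finally, a cosmetic slip: replace ``diffeomorphism'' by ``homeomorphism'' — the category here is topological, not smooth.
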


\subsection{Link induction}\label{sec:link_induction}
\begin{theorem}\label{thm:Lk_ball}
Suppose that $(\Y,\Ytnn,\YQ)$ is a shellable TNN space that admits a Fomin--Shapiro atlas, and let $g\lessY h\in\YQ$. Then $\Lktnnghe = \bigsqcup_{g\lessY g'\leqY h} \Lktpe_g^{g'}$ (cf.~\eqref{eq:links_stratif}) is a regular CW complex homeomorphic to a closed ball of dimension $\gradY(h)-\gradY(g)-1$.
\end{theorem}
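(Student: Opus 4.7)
My plan is to induct on $d := \gradY(h) - \gradY(g) - 1$, simultaneously proving that $\Lktnnghe$ is homeomorphic to a closed $d$-ball and that its stratification~\eqref{eq:links_stratif} forms a regular CW decomposition. The base case $d=0$ is immediate: the only stratum is $\Lktpe_g^h$, which by~\cref{links:contractible} is a contractible $0$-manifold, i.e.\ a single point.

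For the inductive step, the core of the argument is to show that $\Lktnnghe$ is a compact topological $d$-manifold with boundary whose manifold-interior is exactly the top stratum $\Lktpe_g^h$. Points of $\Lktpe_g^h$ have smooth Euclidean neighborhoods by~\cref{links:smooth}, and compactness is~\cref{links:compact}. For a point $x_{g'}$ in a lower stratum $\Lktpe_g^{g'}$ with $g \lessY g' \lessY h$, I would invoke~\cref{lemma:links_local_homeo} to reduce to the local model $\sctnn_{g',h} \times \R^{d'}$ near $(\base, 0)$, where $d' = \gradY(g') - \gradY(g) - 1$. By~\cref{links:cone} and the inductive hypothesis (applicable since $\gradY(h) - \gradY(g') - 1 < d$), $\sctnn_{g',h}$ is identified with the open cone over a closed ball of dimension $\gradY(h) - \gradY(g') - 1$. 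A straightening map such as $(y, t) \mapsto (y, t - \|y\|)$, applied to the realization $\{(y, t) : \|y\| \leq t,\ 0 \leq t < 1\}$ of this cone, shows that near the cone point it is homeomorphic to a neighborhood of $0$ in $\R^{\gradY(h) - \gradY(g')}_{\geq 0}$, with the cone point sitting on the boundary hyperplane. Crossing with $\R^{d'}$ endows $x_{g'}$ with a neighborhood in $\Lktnnghe$ homeomorphic to $\R^d_{\geq 0}$ at a boundary point.

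Next, I would identify the manifold-boundary as $\partial \Lktnnghe = \bigsqcup_{g \lessY g' \lessY h} \Lktpe_g^{g'}$. Using the density of $\Ytp_{g'}$ in $\Ytnn_{g'}$ from~\axref{TNN:Ytp_cl} together with the $\R_{>0}$-action of $\dilg$ preserving $\sctp_g^{g'}$ (\cref{links:smooth}), one checks that the closure of $\Lktpe_g^{g'}$ in $\scg$ is $\Lktnne_{g,g'}$ for every $g' \leqY h$. By the inductive hypothesis applied to each $g' \lessY h$, this closure is a regular CW ball, so $\partial \Lktnnghe$ acquires a regular CW structure with face poset the open interval $(g, h)$. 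Adjoining $\hat 0, \hat 1$ recovers $[g, h]$, which is graded, thin (inherited from $\YQhat$), and shellable (\cref{prop:shellable_int}), so~\cref{thm:bjorner} gives $\partial \Lktnnghe \cong S^{d-1}$. Since $\Lktnnghe$ is homotopy equivalent to its manifold-interior $\Lktpe_g^h$ by~\cref{prop:Cinterior} and hence contractible by~\cref{links:contractible}, the generalized Poincar\'e conjecture~\cref{thm:Poincare} yields a homeomorphism from $\Lktnnghe$ to a closed $d$-ball, which completes both the induction and the regular CW structure (the cell $\Lktpe_g^{g'}$ is mapped to the manifold-interior of its closure $\Lktnne_{g,g'}$ in each case).

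The main obstacle is the first step: verifying the half-space structure at every boundary point. This requires delicately combining the smooth local product structure of~\cref{lemma:links_local_homeo} with the inductively-produced ball structure on the smaller link, together with the elementary topological fact that the open cone on a closed ball is locally a closed half-space at the cone point. Once this manifold-with-boundary structure is in hand, the remainder of the argument is a relatively mechanical application of~\cref{thm:bjorner} and~\cref{thm:Poincare}.
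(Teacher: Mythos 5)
Your proposal is correct and follows essentially the same route as the paper: induction on $d=\gradY(h)-\gradY(g)-1$, establishing the compact manifold-with-boundary structure via \cref{lemma:links_local_homeo}, \cref{links:cone}, and the inductive ball structure on the smaller link; then using \cref{prop:Cinterior} and \cref{links:contractible} for contractibility, \cref{thm:bjorner} for the boundary sphere, and \cref{thm:Poincare} to conclude. The only difference is cosmetic: where the paper simply asserts the homeomorphism $\Cone(\Lktnne_{g',h})\xrasim\R_{\geq0}\times\R^{d-d'-1}$, you spell out an explicit straightening map, and where the paper infers that $\Lktnne_{g,g'}$ is the closure of $\Lktpe_g^{g'}$ directly from the inductive hypothesis (the interior of a closed ball is dense in it), you give a self-contained density argument via the $\dilg$-action — both routes are valid and the latter is strictly unnecessary given the IH. One minor imprecision worth flagging: the set $\{(y,t):\|y\|\leq t,\ 0\leq t<1\}$ is not a realization of the entire open cone $\Cone(B^m)$, only of an open neighborhood of its cone point (the full cone is $\{(y,t):\|y\|\leq t,\ t\geq 0\}$ via $(a,t)\mapsto(ta,t)$); this does not affect your argument since you only need a local homeomorphism.
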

\begin{proof}
We proceed by induction on $d := \gradY(h)-\gradY(g)-1$. For the base case $d=0$, we have $\Lktnnghe=\Lktpe_g^h$, which is a $0$-dimensional contractible manifold by \cref{links:contractible}. Thus $\Lktnnghe$ is a point, and we are done with the base case. Assume now that $d>0$ and that the result holds for all $d' < d$. We need to verify~\axref{cond:cw:regularity} and~\axref{cond:cw:normality} when $X_\alpha = \Lktpe_g^h$ (the other cases follow from the induction hypothesis).

We claim that $\Lktnnghe$ is a topological manifold with boundary $\partial\Lktnnghe$, where
\begin{align}\label{eq:Lktnn_boundary}
\partial \Lktnnghe=\bigsqcup_{g\lessY g'\lessY h} \Lktpe_g^{g'}.
\end{align}
Let $x\in \Lktnnghe$. By~\eqref{eq:links_stratif}, we have $x\in \Lktpe_g^{g'}$ for a unique $g\lessY g'\leqY h$. If $g'=h$, then $x$ has an open neighborhood in $\Lktnnghe$ homeomorphic to $\R^{d}$ by \cref{links:contractible}. If $g'\lessY h$, then by \cref{lemma:links_local_homeo} we have a local homeomorphism $(\Lktnnghe, x)\xrasim(\sctnn_{g',h}\times \R^{d'}, (0,0))$, where $d' := \gradY(g')-\gradY(g)-1$. By \cref{links:cone}, we have a homeomorphism $\sctnn_{g',h}\xrasim\Cone(\Lktnne_{g',h})$ which sends $0$ to the cone point $c$. By the induction hypothesis, $\Lktnne_{g',h}$ is homeomorphic to a $(d-d'-1)$-dimensional closed ball, and so we have a homeomorphism $\Cone(\Lktnne_{g',h}) \xrasim \R_{\geq0}\times\R^{d-d'-1}$ which sends $c$ to $(0,0)$. Composing gives a local homeomorphism $(\Lktnnghe, x) \xrasim (\R_{\geq0}\times \R^{d-d'-1}\times \R^{d'}, (0,0,0))$.  Thus indeed $\Lktnnghe$ is a topological manifold with boundary given by~\eqref{eq:Lktnn_boundary}.

By \cref{links:compact}, $\Lktnnghe$ is compact. By \cref{links:contractible} and \cref{prop:Cinterior}, $\Lktnnghe$ is contractible. Thus $\Lktnnghe$ is a compact contractible topological manifold with boundary. In addition, the boundary $\partial \Lktnnghe$ is a regular CW complex by the induction hypothesis. Its face poset is the interval $(g,h):=[g,h]\setminus\{g,h\}$ in $\YQ$. The interval $[g,h]$ is graded, thin, and shellable by~\axref{TNN:graded_max}, \axref{TNN:thin_shell}, and \cref{prop:shellable_int}, and thus $\partial \Lktnnghe$ is homeomorphic to a $(d-1)$-dimensional sphere by \cref{thm:bjorner}. By \cref{thm:Poincare}, we get a homeomorphism from $\Lktnnghe$ to a $d$-dimensional closed ball $\Bcl$. By~\eqref{eq:Lktnn_boundary}, it sends the interior $\Lktpe_g^h$ to the interior of $\Bcl$. This proves~\axref{cond:cw:regularity}, and~\axref{cond:cw:normality} follows from~\eqref{eq:Lktnn_boundary}. This completes the induction.
\end{proof}

\begin{proof}[Proof of \cref{thm:FS_intro}.]
The proof follows the same structure as the proof of \cref{thm:Lk_ball}. We proceed by induction on $\gradY(h)$. If $\gradY(h)=0$, then $\Ytnn_h=\Ytp_h$ is a point by~\axref{TNN:Ytp}, which finishes the base case.

Let $\gradY(h)>0$. We want to show that $\Ytnn_h$ is a topological manifold with boundary
\begin{equation}\label{eq:Ytnn_boundary}
\partial \Ytnn_h=\bigsqcup_{g\lessY h} \Ytp_g.
\end{equation}
Let $x\in \Ytnn_h$. By~\axref{TNN:Ytp_cl}, we have $x\in \Ytp_g$ for a unique $g\leqY h$. If $g=h$, then $x$ has an open neighborhood in $\Ytnn_h$ homeomorphic to $\R^{\gradY(h)}$ by~\axref{TNN:Ytp}. If $g\lessY h$, then $\Startnn_g$ is an open subset of $\Ytnn$ (its complement is $\bigcup_{g'\not\geqY g} \Ytnn_{g'}$, which is closed by~\axref{TNN:Ytp_cl}). Thus $\Startnn_{g,h}$ is an open neighborhood of $x$ in $\Ytnn_h$.  By \cref{links:TNN_homeo}, \axref{TNN:Ytp}, and \cref{thm:Lk_ball}, we get a homeomorphism $\Startnn_{g,h}\xrasim\R_{\geq0}\times \R^{\gradY(h)-1}$ whose first component sends $x\in \Ytp_g$ to $0\in \R_{\geq0}$. This shows that $\Ytnn_h$ is a topological manifold with boundary given by~\eqref{eq:Ytnn_boundary}.

By \axref{TNN:Ytnn_compact} and~\axref{TNN:Ytp_cl}, $\Ytnn_h$ is compact. By~\axref{TNN:Ytp} and \cref{prop:Cinterior}, $\Ytnn_h$ is contractible. Thus $\Ytnn_h$ is a compact contractible topological manifold with boundary. In addition, the boundary $\partial \Ytnn_h$ is a regular CW complex by the induction hypothesis. Its face poset is the interval $(\hat0,h)$ in $\YQhat$. The interval $[\hat0,h]$ is graded, thin, and shellable by~\axref{TNN:graded_max},~\axref{TNN:thin_shell}, and \cref{prop:shellable_int}, and thus $\partial \Ytnn_h$ is homeomorphic to a $(d-1)$-dimensional sphere by \cref{thm:bjorner}. We are done by \cref{thm:Poincare}, as in the proof of \cref{thm:Lk_ball}.
\end{proof}

\begin{remark}\label{rmk:U}
We note that \cref{thm:main_intro2,thm:Lk_ball} imply the result of Hersh \cite{Her} (see \cref{cor:Her}) that the link of the identity in the Bruhat decomposition of $U_{\ge 0}$ is a regular CW complex. (Recall that $U$ is the unipotent radical of the standard Borel subgroup $B \subset G$.) Indeed, let $B_- \subset G$ denote the opposite Borel subgroup. Then the natural inclusion $U \hookrightarrow G/B_-$ sends $U$ to the opposite Schubert cell $\Star_{(\id,\id)}$ indexed by $\id \in W$, and the composition of this map with $\FSiso_{(\id,\id)}$ sends the link of the identity in $\overline{U^w_{>0}}$ homeomorphically to $\Lktnn_{(\id,\id),(\id,w)}$ for all $w \in W$.
\end{remark}

\section{\texorpdfstring{$G/P$}{G/P}: preliminaries}\label{sec:gp:-basic}

We give some background on partial flag varieties. Throughout, $\K$ denotes an algebraically closed field of characteristic $0$, and $\Kast:=\K\setminus\{0\}$ denotes its multiplicative group.

\subsection{Pinnings}\label{sec:pinnings}
We recall some standard notions that can be found in e.g.~\cite[\S1]{LusGB}. Suppose that $G$ is a simple and simply connected algebraic group over $\K$, with $T\subset G$ a maximal torus. Let $B,B_-$ be opposite Borel subgroups satisfying $T=B\cap B_-$. We identify $G$ with its set of $\K$-valued points.  When $\K=\C$, we assume that $G$ and $T$ are split over $\R$, and denote by $\GR\subset G$ and $\TR\subset T$ the sets of their $\R$-valued points. (Thus what was denoted by $G$ in \cref{sec:introduction} is from now on denoted by $\GR$. We are also assuming that $G$ is a simple algebraic group, rather than semisimple; our results for the case of a general semisimple group reduce to the simple case by taking products.)

 Let $X(T):=\Hom(T,\Kast)$ be the \emph{weight lattice}, and for a weight $\gamma\in X(T)$ and $a\in T$, we denote the value of $\gamma$ on $a$ by $a^\gamma$. Let $\Phi\subset X(T)$ be the set of \emph{roots}. We have a decomposition $\Phi=\Phi^+\sqcup\Phi^-$ of $\Phi$ as a union of positive and negative roots corresponding to the choice of $B$; see~\cite[\S27.3]{Hum}. For $\alpha\in \Phi$, we write $\alpha>0$ if $\alpha\in\Phi^+$ and $\alpha<0$ if $\alpha\in\Phi^-$. Let $\{\alpha_i\}_{i\in I}$ be the simple roots corresponding to the choice of $\Phi^+$. For every $i\in I$, we have a homomorphism $\phi_i:\SL_2\to G$, and denote
\begin{equation}\label{eq:x_i(t)}
x_i(t):=\phi_i \begin{pmatrix}
1&t\\
0&1
\end{pmatrix}, \quad y_i(t):=\phi_i \begin{pmatrix}
1&0\\
t&1
\end{pmatrix},\quad  \ds_i:=\phi_i \begin{pmatrix}
0&-1\\
1&0
\end{pmatrix}=y_i(1)x_i(-1)y_i(1).
\end{equation}

The data $(T,B, B_-,x_i,y_i;i\in I)$ is called a \emph{pinning} for $G$. Let $W:=N_G(T)/T$ be the Weyl group, and for $i\in I$, let $s_i\in W$ be represented by $\ds_i$ above. Then $W$ is generated by $\{s_i\}_{i\in I}$, and $(W,\{s_i\}_{i\in I})$ is a finite Coxeter group. For $w\in W$, the \emph{length} $\ell(w)$ is the minimal $n$ such that $w=s_{i_1}\cdots s_{i_n}$ for some $i_1,\dots,i_n\in I$. When $n=\ell(w)$, we call $\bi:=(i_1,\dots,i_n)$ a \emph{reduced word} for $w$. The representatives $\{\ds_i\}_{i\in I}$ satisfy the braid relations~\cite[Proposition 9.3.2]{Spr}, so we set $\dw:=\ds_{i_1}\cdots \ds_{i_n}\in G$, and this representative does not depend on the choice of $\bi$.

Let $Y(T):=\Hom(\Kast,T)$ be the \emph{coweight lattice}. For $i\in I$, we have a simple coroot $\alphacheck_i(t):=\phi_i\begin{pmatrix}
t&0\\0&t^{-1}
\end{pmatrix}\in Y(T)$. Denote by $\<\cdot,\cdot\>:Y(T)\times X(T)\to \Z$ the natural pairing so that for $\gamma\in X(T)$, $\beta\in Y(T)$, and $t\in \Kast$, we have $(\beta(t))^\gamma=t^{\<\beta,\gamma\>}$. Let $\{\omega_i\}_{i\in I}\subset X(T)$ be the \emph{fundamental weights}. They form a dual basis to $\{\alphacheck_i\}_{i\in I}$: $\<\alphacheck_j,\omega_i\>=\delta_{ij}$ for $i,j\in I$.

The Weyl group $W$ acts on $T$ by conjugation, which induces an action on $Y(T)$, $X(T)$, and $\Phi$. For $\gamma\in X(T)$, $t\in \Kast$, $a\in T$, and $w\in W$, we have~\cite[(1.2) and~(2.5)]{FZ}
\begin{equation}\label{eq:torus_conj}
(\dw^{-1} a\dw)^\gamma=a^{w\gamma},\quad ax_i(t)a^{-1}=x_i(a^{\alpha_i}t),\quad ay_i(t)a^{-1}=y_i(a^{-\alpha_i}t).
\end{equation}

Following~\cite[(1.6) and~(1.7)]{BZ} (see also~\cite[(2.1) and~(2.2)]{FZ}), we define two involutive anti-automorphisms $x\mapsto x^T$ and $x\mapsto x^\iota$ of $G$ by 
\begin{align}\label{eq:transpose}
 (x_i(t))^T&=y_i(t),& (y_i(t))^T&=x_i(t),& \dw^T&= \dw^{-1},& a^T&= a,\\
\label{eq:iota}
 (x_i(t))^\iota&=x_i(t),& (y_i(t))^\iota&=y_i(t),& \dw^\iota&= \dz,& a^\iota&= a^{-1}
\end{align}
for all $i\in I$, $t\in \Kast$, $a\in T$, and $w\in W$, where $z:=w^{-1}$. We note that when $z=w^{-1}\in W$ and $\bi=(i_1,\dots, i_n)$ is a reduced word for $w$ then $\dw^{-1}=\ds_{i_n}^{-1}\cdots \ds_{i_1}^{-1}$ while $\dz=\ds_{i_n}\cdots \ds_{i_1}$.

\subsection{Subgroups of \texorpdfstring{$U$}{U}}\label{sec:subgroups-u}
We say that a subset $\Theta\subset \Phi$ is \emph{bracket closed} if whenever $\alpha,\beta\in \Theta$ are such that $\alpha+\beta\in\Phi$, we have $\alpha+\beta\in \Theta$.  For a bracket closed subset $\Theta\subset \Phi^+$, define $U(\Theta)\subset U$ to be the subgroup generated by $\{U_\alpha\mid \alpha\in \Theta\}$, where $U_\alpha$ is a \emph{root subgroup} of $G$; see~\cite[Theorem~26.3]{Hum}. For a bracket closed subset $\Theta\subset \Phi^-$, let $U_-(\Theta):=U(-\Theta)^T\subset U_-$. 

Given closed subgroups $H_1,\dots,H_n$ of an algebraic group $H$, we say that $H_1,\cdots, H_n$ \emph{directly span} $H$ if the multiplication map $H_1\times\dots\times H_n\to H$ is a biregular isomorphism.

\begin{lemma}[{\cite[Proposition~28.1]{Hum}}]Let $\Theta\subset \Phi^+$ be a bracket closed subset.
\begin{theoremlist} 
\item \label{U(R)_generated} If $\Theta=\bigsqcup_{i=1}^n \Theta_i$ and $\Theta,\Theta_1,\dots,\Theta_n\subset \Phi^+$ are bracket closed then $U(\Theta)$ is directly spanned by $U(\Theta_1),\dots,U(\Theta_n)$.
\item \label{U(alpha)_iso} In particular, $U(\Theta)$ is directly spanned by $\{U_\alpha\mid \alpha\in\Theta\}$ in any order, and therefore $U(\Theta)\cong \K^{|\Theta|}$.
\end{theoremlist}
\end{lemma}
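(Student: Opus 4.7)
The plan is to invoke the standard structure theory for unipotent subgroups of $G$, essentially reproducing the argument of Humphreys's Proposition~28.1, keeping track of the bracket-closed partition hypothesis. Part~\itemref{U(alpha)_iso} will be an immediate consequence of part~\itemref{U(R)_generated}: picking any enumeration $\alpha_1,\dots,\alpha_m$ of $\Theta$, each singleton $\{\alpha_j\}$ is trivially bracket closed, so \itemref{U(R)_generated} gives a biregular isomorphism $U_{\alpha_1}\times\cdots\times U_{\alpha_m}\xrasim U(\Theta)$. Since each root subgroup $U_{\alpha_j}$ is a one-parameter subgroup biregularly isomorphic to $\K$, taking the product yields $U(\Theta)\cong \K^{|\Theta|}$.

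For \itemref{U(R)_generated}, the key ingredient is Chevalley's commutator formula: for $\alpha,\beta\in\Phi^+$ with $\alpha\neq\beta$,
\[
[x_\alpha(s),x_\beta(t)]\in U(\Theta_{\alpha,\beta}),\qquad \Theta_{\alpha,\beta}:=\{i\alpha+j\beta\in \Phi^+\mid i,j>0\},
\]
the commutator being an explicit monomial product in the root subgroups $U_{i\alpha+j\beta}$. Because $\Theta$ is bracket closed and contains $\alpha,\beta$, we have $\Theta_{\alpha,\beta}\subset \Theta$. This is what is needed to show that the multiplication map
\[
\mu:U(\Theta_1)\times\cdots\times U(\Theta_n)\to U(\Theta)
\]
has image in $U(\Theta)$, and more importantly it provides a rewriting mechanism: any product of generators $x_{\gamma_1}(t_1)\cdots x_{\gamma_r}(t_r)$ with $\gamma_\ell\in\Theta$ can be reordered so that all factors from $\Theta_1$ come first, then $\Theta_2$, and so on. Each swap introduces extra factors in $U(\Theta_{\gamma_\ell,\gamma_{\ell+1}})\subset U(\Theta)$, which we iteratively absorb using induction on $|\Theta|$; this proves surjectivity of $\mu$.

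Injectivity will be handled by an inductive dimension and uniqueness argument. Ordering the root subgroups inside each $U(\Theta_i)$, we obtain a morphism from $\K^{|\Theta_1|}\times\cdots\times\K^{|\Theta_n|}=\K^{|\Theta|}$ to $U(\Theta)$. Using the commutator formula again, one shows that whenever $u_1\cdots u_n=u_1'\cdots u_n'$ with $u_i,u_i'\in U(\Theta_i)$, comparing $T$-weight components (after picking a height grading on $\Theta$) forces $u_i=u_i'$ for each $i$. Equivalently, one may verify that $\mu$ is a bijective morphism of smooth irreducible affine varieties of the same dimension $|\Theta|$, and conclude by a standard argument (e.g.\ the unipotent group $U(\Theta)$ is isomorphic to affine space, so Zariski's main theorem makes the bijective morphism biregular).

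The main obstacle is the bookkeeping in the reordering step: after commuting a factor from $U_\alpha$ ($\alpha\in\Theta_j$) past one from $U_\beta$ ($\beta\in\Theta_i$, $i<j$), the commutator term lies in $U(\Theta_{\alpha,\beta})$, which in general is split across several $\Theta_k$'s rather than entirely inside one of them. The point is that $\Theta_{\alpha,\beta}\subset\Theta$ strictly refines the "height" of $\alpha$ and $\beta$ under any grading compatible with the root system, so an induction on the total height of the product terminates. This is exactly the content of Humphreys's argument, and our hypothesis that each $\Theta_i$ is bracket closed is only used to ensure that $\mu$ is well-defined as a morphism into $U(\Theta)$ and that the inductive step stays inside the prescribed subgroups.
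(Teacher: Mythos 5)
Your proposal is correct, and it essentially reproduces the argument that the paper delegates to by citing Humphreys, Proposition~28.1 (the paper itself gives no proof). One small caveat worth noting: the injectivity step as written is compressed — unipotent elements are not $T$-weight vectors, so ``comparing $T$-weight components'' should be understood as working in the graded pieces of the height filtration of $U(\Theta)$, and your alternative route via bijective morphism plus the exponential map is a characteristic-$0$ shortcut (legitimate here since the paper assumes $\operatorname{char}\K=0$) that Humphreys's characteristic-free argument avoids.
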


For $\alpha\in \Phi$ and $w\in W$, we have $\dw U_\alpha \dw^{-1}= U_{w\alpha}$. For $w\in W$, define $\Inv(w):=(w^{-1}\Phi^-)\cap \Phi^+$. The subsets $\Inv(w)$ and $\Phi^+\setminus \Inv(w)$ are bracket closed~\cite[\S28.1]{Hum}, and 
\begin{equation}\label{eq:U(Inv)}
U(\Inv(w))=\dw^{-1} U_-\dw \cap U.
\end{equation}

\subsection{Bruhat projections}
Let $\Theta\subset \Phi^+$ be bracket closed, and let $w\in W$. Define $\Theta_1:=\Theta\cap \Inv(w)$ and $\Theta_2:=\Theta\setminus \Inv(w)$. Thus both sets are bracket closed and 
\[\dw U(\Theta)\dw^{-1}\cap U_-=U_-(w\Theta_1),\quad \dw U(\Theta)\dw^{-1}\cap U=U(w\Theta_2).\]
Denote $U_1:=U_-(w\Theta_1)$ and $U_2:=U(w\Theta_2)$. Then by \cref{U(R)_generated}, the multiplication map gives isomorphisms $\mu_{12}:U_1\times U_2\to \dw U(\Theta) \dw^{-1}$ and $\mu_{21}:U_2\times U_1\to \dw U(\Theta) \dw^{-1}$. Denote by $\nu_1:\dw U(\Theta) \dw^{-1}\to U_1$ and $\nu_2:\dw U(\Theta) \dw^{-1}\to U_2$ the second component of $\mu_{21}^{-1}$ and $\mu_{12}^{-1}$, respectively. In other words, given $g\in \dw U(\Theta) \dw^{-1}$, $\nu_1(g)$ is the unique element in $U_1\cap U_2g$ and $\nu_2(g)$ is the unique element in $U_2\cap U_1g$. 

\begin{lemma}[{\cite[Lemma~2.2]{KWY}}]\label{lemma:KWY} 
The map $(\nu_1,\nu_2):\dw U(\Theta) \dw^{-1}\to U_1\times U_2$ is a biregular isomorphism.
\end{lemma}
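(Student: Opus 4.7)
The plan is to exhibit $(\nu_1,\nu_2)$ as a bijective morphism of affine varieties with an explicit inverse morphism. First I would record that $H := \dw U(\Theta)\dw^{-1}$ is a closed subgroup of $G$, since $U(\Theta)$ is. By \cref{U(R)_generated} the multiplication maps $\mu_{12}\colon U_1 \times U_2 \to H$ and $\mu_{21}\colon U_2 \times U_1 \to H$ are biregular isomorphisms, so $\nu_1 = \pi_2\circ \mu_{21}^{-1}$ and $\nu_2 = \pi_2\circ \mu_{12}^{-1}$ are morphisms, with $\pi_2$ the second projection.

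Next, injectivity of $(\nu_1,\nu_2)$ reduces to $U_1 \cap U_2 = \{e\}$, which holds because $w\Theta_1\subset \Phi^-$ and $w\Theta_2\subset \Phi^+$ give $U_1 \subset U_-$ and $U_2 \subset U$. Indeed, if $\nu_1(g) = \nu_1(g')$, writing $g = a\,\nu_1(g)$ and $g' = a'\,\nu_1(g)$ with $a,a' \in U_2$ gives $g(g')^{-1} \in U_2$; symmetrically, $\nu_2(g) = \nu_2(g')$ forces $g(g')^{-1}\in U_1$, hence $g = g'$.

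The main step is to construct the inverse morphism $U_1\times U_2 \to H$. The key observation is that because $H$ is a group, for any $(u_1,u_2) \in U_1 \times U_2$ the element $u_1 u_2^{-1}$ lies in $H$, so it admits a unique decomposition $u_1 u_2^{-1} = \tilde u_2 \tilde u_1$ with $\tilde u_2 \in U_2$, $\tilde u_1 \in U_1$ obtained from $\mu_{21}^{-1}$. I would then set
\[ g := \tilde u_2^{-1} u_1 = \tilde u_1 u_2 \in H. \]
The first factorization of $g$ shows $\nu_1(g) = u_1$ and the second shows $\nu_2(g) = u_2$, so $(u_1,u_2)\mapsto g$ is a morphism (assembled from $\mu_{21}^{-1}$, group inversion, and multiplication) inverse to $(\nu_1,\nu_2)$.

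The only real subtlety lies in exploiting the group structure of $H$: naïvely forming $u_1 u_2$ or $u_2 u_1$ recovers only one of the two components, and one must pass through the alternate direct-span decomposition of $u_1 u_2^{-1}$ to produce the correct $g$. Once this is in place, both injectivity and the biregular inverse are formal consequences of the two direct-span isomorphisms $\mu_{12}$ and $\mu_{21}$.
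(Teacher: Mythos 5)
Your proof is correct, and it matches (essentially verbatim) the argument the paper gives for the affine analog of this lemma in \cref{prop:FS_iso}: there the inverse is built by solving $y_1 x_2 = y_2 x_1$, equivalently $y_2^{-1}y_1 = x_1x_2^{-1}$, via the two-sided factorization of $x_1x_2^{-1}$ — exactly your passage through $u_1u_2^{-1} = \tilde u_2\tilde u_1$ and $g := \tilde u_2^{-1}u_1 = \tilde u_1 u_2$. The paper itself does not reprove the finite-dimensional statement, instead citing \cite[Lemma~2.2]{KWY}, so your write-up correctly reconstructs the intended argument. Two small remarks: the injectivity step is a nice sanity check but is already implied by your explicit two-sided inverse; and you might note explicitly that $U_1\cap U_2 = \{e\}$ also follows directly from the direct-span isomorphism $\mu_{12}$ itself, without invoking $U_1\subset U_-$, $U_2\subset U$.
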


\subsection{Commutation relations}
Let $a,b\in W$ be such that $\ell(ab)=\ell(a)+\ell(b)$. Then
\begin{equation}\label{eq:Inv(ab)_sqcup}
  \Inv(b)\subset \Inv(ab),\quad  b^{-1}\Inv(a)\subset\Phi^+,\quad\text{and}\quad\Inv(ab)=\left(b^{-1}\Inv(a)\right)\sqcup \Inv(b).
\end{equation}
Thus by \cref{U(R)_generated}, the multiplication map gives an isomorphism
\begin{equation}\label{eq:Inv(ab)_iso}
  \db^{-1}U(\Inv(a))\db\times U(\Inv(b))\xrightarrow{\sim}U(\Inv(ab)).
\end{equation}
We will later need the following consequences of~\eqref{eq:Inv(ab)_iso}: if $\ell(ab)=\ell(a)+\ell(b)$ then 
\begin{align}\label{eq:Inv(ab)_commute}
\db^{-1} \cdot (U_-\cap \da^{-1}U\da) &\subset (U_-\cap (\da\db)^{-1} U\da\db) \cdot \db^{-1},\\
\label{eq:Inv(ab)_commute2}
(U\cap \da^{-1}U_-\da)\cdot \db & \subset \db \cdot (U\cap (\da\db)^{-1}U_-\da\db).
\end{align}
Multiplying both sides of~\eqref{eq:Inv(ab)_commute2} by $\db^{-1}$ on the left, we get $\db^{-1}U(\Inv(a))\db\subset U(\Inv(ab))$, which follows from~\eqref{eq:Inv(ab)_sqcup}. We obtain~\eqref{eq:Inv(ab)_commute} from~\eqref{eq:Inv(ab)_commute2} by applying the map $x\mapsto x^T$; see~\eqref{eq:transpose}.

\begin{lemma}\label{commute_alpha_beta}
Let $\alpha\in\Phi^+$ and $i\in I$ be such that $\alpha\neq\alpha_i$. Let $\Psi\subset \Phi$ denote the set of all roots of the form $m\alpha-r\alpha_i$ for integers $m>0$, $r\geq0$.  Then $\Psi$ is a bracket closed subset of $\Phi^+$, and for all $t\in \K$ we have $y_i(t)U_\alpha y_i(-t)\subset U(\Psi)$.
\end{lemma}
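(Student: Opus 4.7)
The plan is to verify the three claims in order: $\Psi\subset\Phi^+$, bracket closedness of $\Psi$, and the conjugation inclusion.

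First I would check that $\Psi\subset\Phi^+$. Since $\alpha\in\Phi^+$ and $\alpha\neq\alpha_i$, expanding $\alpha=\sum_{j\in I}c_j\alpha_j$ with $c_j\geq 0$ gives some $j\neq i$ with $c_j>0$ (otherwise $\alpha$ would be a positive multiple of $\alpha_i$, forcing $\alpha=\alpha_i$). Then for any $m>0$ and $r\geq0$, the element $m\alpha-r\alpha_i=(mc_i-r)\alpha_i+\sum_{j\neq i}mc_j\alpha_j$ has a strictly positive coefficient on $\alpha_j$. Since every root of $\Phi$ is either a nonnegative or a nonpositive integer combination of simple roots, any such element lying in $\Phi$ must lie in $\Phi^+$.

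Next, bracket closedness is immediate: if $\beta_1=m_1\alpha-r_1\alpha_i$ and $\beta_2=m_2\alpha-r_2\alpha_i$ are in $\Psi$ and $\beta_1+\beta_2\in\Phi$, then $\beta_1+\beta_2=(m_1+m_2)\alpha-(r_1+r_2)\alpha_i$ with $m_1+m_2>0$ and $r_1+r_2\geq 0$, hence in $\Psi$.

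Finally, for the conjugation inclusion, I would invoke Chevalley's commutator formula. Recalling that $y_i(t)\in U_{-\alpha_i}$, for any $x_\alpha(s)\in U_\alpha$ the identity $y_i(t)x_\alpha(s)y_i(-t)=x_\alpha(s)\cdot[x_\alpha(-s),y_i(t)]$ together with Chevalley's formula
\begin{equation*}
[x_\alpha(-s),y_i(t)]=\prod_{\substack{m,n\geq 1\\ m\alpha-n\alpha_i\in\Phi}}x_{m\alpha-n\alpha_i}\bigl(C_{mn}(-s)^{m}t^{n}\bigr)
\end{equation*}
(with a fixed ordering of factors) shows that $y_i(t)x_\alpha(s)y_i(-t)$ is a product of elements of $U_{m\alpha-n\alpha_i}$ for $m\geq 1$, $n\geq 0$ (the $n=0$ factor being $x_\alpha(s)$ itself) with $m\alpha-n\alpha_i\in\Phi$. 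Every such root lies in $\Psi\subset\Phi^+$ by the first step, and $U(\Psi)$ is directly spanned by the corresponding root subgroups in any order by \cref{U(alpha)_iso}, so each such product lies in $U(\Psi)$. Since these elements generate $y_i(t)U_\alpha y_i(-t)$ (as $s$ varies), we obtain $y_i(t)U_\alpha y_i(-t)\subset U(\Psi)$. The main (very minor) obstacle is just bookkeeping with Chevalley's formula to confirm that only roots of the stated form appear; the positivity statement and bracket closedness are formal.
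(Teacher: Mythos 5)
Your proof is correct, and it uses the same essential tool as the paper's (Chevalley's commutator formula, \cite[Lemma~32.5]{Hum}), but the organization is genuinely different. You apply the commutator formula directly to the pair of roots $\alpha$ and $-\alpha_i$, noting that it is valid for any linearly independent pair, and then verify separately that all resulting roots $m\alpha-n\alpha_i$ ($m\geq1$, $n\geq0$) lie in $\Phi^+$ by the standard coefficient argument. The paper instead conjugates by $\ds_i$ first, so that the commutator is computed for the pair $\alpha_i$ and $s_i\alpha$ (both positive); the set $\Psi'$ of roots $ms_i\alpha+r\alpha_i$ with $m,r>0$ is then \emph{manifestly} a subset of $\Phi^+$ with no further argument, and positivity of $\Psi$ is inherited via the fact that $s_i$ permutes $\Phi^+\setminus\{\alpha_i\}$. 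The trade-off: your route spares the conjugation bookkeeping (transferring $\Psi'$ to $\Psi''=s_i\Psi'$ and observing $\Psi=\Psi''\sqcup\{\alpha\}$) at the cost of the explicit positivity check, which you carry out correctly. Both are clean; neither has a gap.
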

\begin{proof}
Let $x\in U_{\alpha}$ and $x':=\ds_i^{-1} x\ds_i\in U_{s_i\alpha}$. By~\cite[Lemma~4.4.3]{BB}, $s_i$ permutes $\Phi^+\setminus \{\alpha_i\}$ (in particular, $s_i\alpha>0$). Write
\[y_i(t)\cdot x\cdot  y_i(-t)=\ds_ix_i(-t)\ds_i^{-1} \cdot x \cdot \ds_i x_i(t)\ds_i^{-1}= \ds_i x_i(-t)\cdot x'\cdot  x_i(t)\ds_i^{-1}.\]
Denote by $\Psi'\subset \Phi$ the set of all roots of the form $ms_i\alpha+r\alpha_i$ for integers $m,r>0$. It is clear that $\Psi'\subset \Phi^+\setminus \{\alpha_i,s_i\alpha\}$ is a bracket closed subset.  By~\cite[Lemma~32.5]{Hum}, we have $x_i(-t) x' x_i(t) x'^{-1}\in U(\Psi')$, so $x_i(-t) x' x_i(t) \in U(\Psi')x'$. Thus $\Psi'':=s_i\Psi'$ is also a bracket closed subset of $\Phi^+\setminus \{\alpha_i,\alpha\}$, and we have $\ds_i U(\Psi')x'\ds_i^{-1}= U(\Psi'')x$. Clearly, $\Psi=\Psi''\sqcup\{\alpha\}$. We thus have $y_i(t)U_\alpha y_i(-t)\subset U(\Psi'')U_\alpha = U(\Psi)$.
\end{proof}

\subsection{Flag variety and Bruhat decomposition}\label{sec:flag-variety-bruhat}
Let $G/B$ be the \emph{flag variety} of $G$ (over $\K$). We recall some standard properties of the Bruhat decomposition that can be found in e.g.~\cite[\S28]{Hum}. Define open Schubert, opposite Schubert, and Richardson varieties:
\begin{equation}\label{eq:Schubert_intro}
\X^w:=B \dw B/B,\quad  \X_v:=B_- \dv B/B,\quad \Rich_v^w:=\X_v\cap\X^w\quad (\text{for $v\leq w\in W$}).
\end{equation}

Recall the Bruhat and Birkhoff decompositions:
\begin{align}
\label{eq:bruhat}
  &G=\bigsqcup_{w\in W} B\dw B=\bigsqcup_{v\in W} B_-\dv B,\quad\text{where}\\
\label{eq:BvB_cap_BwB_empty}
  &B_-\dv B\cap B\dw B=\emptyset \quad \text{and}\quad \X_v\cap \X^w=\emptyset \quad\text{for $v\not\leq w\in W$}.
\end{align}

 Let $\Xcl_v$ denote the (Zariski) closure of $\X_v$. Similarly, let $\Xcl^w$ denote the closure of $\X^w$, and then $\Richcl_v^w=\Xcl_v\cap \Xcl^w$ is the closure of $\Rich_v^w$ in $G/B$. We have
\begin{align}
\label{eq:Schubert_closure}
&\Xcl_v=\bigsqcup_{v\leq v'} \X_{v'},\quad &&\Xcl^w=\bigsqcup_{w'\leq w} \X^{w'},\\
\label{eq:GB_closure}
&G/B=\bigsqcup_{v\leq w} \Rich_v^w,\quad  &&\Richcl_v^w=\bigsqcup_{v\leq v'\leq w'\leq w} \Rich_{v'}^{w'}.
\end{align}

For any $w\in W$, $i\in I$, and $t\in \K^\ast$, we have 
\begin{align}\label{eq:y_i_Bs_iB}
&x_i(t)\in  B_-\ds_iB_-,\quad y_i(t)\in B\ds_iB,\\
\label{eq:Bruhat_s_i_BwB}
B\ds_iB\cdot B\dw B&\subset 
\begin{cases}
  B\ds_i\dw B, &\text{if $s_iw>w$,}\\
B\ds_i\dw B\sqcup B\dw B,&\text{if $s_iw<w$,}
\end{cases}\\
\label{eq:Bruhat_s_i_B-wB}
B_-\ds_iB_-\cdot B_-\dw B&\subset 
\begin{cases}
  B_-\ds_i\dw B, &\text{if $s_iw<w$,}\\
B_-\ds_i\dw B\sqcup B_-\dw B,&\text{if $s_iw>w$,}
\end{cases}\\
\label{eq:Bruhat_length_add} B\dv B\cdot B\dw B&\subset B\dv \dw B\quad \text{for $v\in W$ such that  $\ell(vw)=\ell(v)+\ell(w)$.}
\end{align}
For $\t=(t_1,\dots,t_n)\in(\K^\ast)^n$ and a reduced word $\bi=(i_1,\dots,i_n)$  for $w\in W$, define 
\begin{equation}\label{eq:bx_bi_by_bi}
\bx_\bi(\t):=x_{i_1}(t_1)\cdots x_{i_n}(t_n),\quad\text{and}\quad\by_\bi(\t):=y_{i_1}(t_1)\cdots y_{i_n}(t_n).
\end{equation}
 It follows from~\eqref{eq:y_i_Bs_iB}, \eqref{eq:Bruhat_s_i_BwB}, and~\eqref{eq:transpose} that
\begin{equation}\label{eq:U_BwB_over_K}
\bx_\bi(\t)\in B_-\dw B_-,\quad \by_\bi(\t)\in B\dw B.
\end{equation}

\subsection{Parabolic subgroup \texorpdfstring{$W_J$}{W\textunderscore J} of \texorpdfstring{$W$}{W}}\label{sec:parab-subgr-w_j}
We give a description of the poset $Q_J$ studied in~\cite{Rie,GY,KLS,HL} in a form adapted to our needs in this paper.

Let $J\subset I$, and denote by $W_J\subset W$ the subgroup generated by $\{s_i\}_{i\in J}$. Let $W^J$ be the set of minimal-length coset representatives of $W/W_J$; see~\cite[\S2.4]{BB}. Let $\woj$ be the longest element of $W_J$, and $\wj:=w_0\woj$ be the maximal element of $W^J$. Let $\Phi_J\subset \Phi$ consist of roots that are linear combinations of $\{\alpha_i\}_{i\in J}$. Define 
\[\Phi_J^+:=\Phi_J\cap \Phi^+,\quad \Phi_J^-:=\Phi_J\cap \Phi^-,\quad \Phij_+:=\Phi^+\setminus \Phi_J^+,\quad\Phij_-:=\Phi^-\setminus \Phi_J^-.\]
The sets $\Phi_J^+$, $\Phij_+$, $\Phi_J^-$, $\Phij_-$  are clearly bracket closed, so consider subgroups
\[U_J=U(\Phi_J^+),\quad U_J^-=U_-(\Phi_J^-),\quad \Uj=U(\Phij_+),\quad \Uj_-=U_-(\Phij_-).\]
In fact, we have
\begin{equation}\label{eq:dwoj_U_J}
\Phi_J^+=\Inv(\woj),\quad \Phij_+=\Inv(\wj),\quad \dwoj U_J^-\dwoj^{-1}=U_J.
\end{equation}

Let $W^J_\maxx:=\{w\woj\mid w\in W^J\}$. By~\cite[Proposition~2.4.4]{BB}, every $w\in W$ admits a unique \emph{parabolic factorization} $w=w_1w_2$ for $w_1\in W^J$ and $w_2\in W_J$, and this factorization is length-additive. We state some standard facts on parabolic factorizations for later use.
\begin{lemma}\leavevmode
\begin{theoremlist}
\item \label{s_iW^J_still_W^J} If $u\in W^J$ and $s_iu<u$, then $s_iu\in W^J$.
\item \label{ur_leq_ur'} Given $u\in W^J$ and $r,r'\in W_J$, we have $ur\leq ur'$ if and only if $r\leq r'$.
\end{theoremlist}
\end{lemma}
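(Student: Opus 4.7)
Both statements are standard Coxeter-theoretic facts, and the plan is to derive them from two things already implicit in the paper: the characterization $u\in W^J$ iff $\ell(us_j)>\ell(u)$ for every $j\in J$, and the length-additivity of the parabolic factorization, i.e.\ $\ell(ur)=\ell(u)+\ell(r)$ whenever $u\in W^J$ and $r\in W_J$ (which is the content of the factorization stated just above the lemma).

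For~\itemref{s_iW^J_still_W^J} I would argue purely by lengths. Assume $u\in W^J$ and $s_iu<u$, so $\ell(s_iu)=\ell(u)-1$. To show $s_iu\in W^J$ it suffices to check $\ell(s_iu s_j)>\ell(s_iu)$ for every $j\in J$. Since $u\in W^J$, $\ell(us_j)=\ell(u)+1$; since left multiplication by a simple reflection changes length by exactly $\pm 1$, $\ell(s_iu s_j)\geq \ell(us_j)-1=\ell(u)$, which already exceeds $\ell(s_iu)=\ell(u)-1$. This is all that is needed.

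For~\itemref{ur_leq_ur'}, the forward direction $r\leq r'\Rightarrow ur\leq ur'$ follows directly from the subword characterization of Bruhat order together with length-additivity: pick a reduced word for $r'$ containing a reduced subword for $r$, prepend a reduced word for $u$, and length-additivity guarantees that both the full expression and the distinguished sub-expression remain reduced, exhibiting $ur$ as a reduced subword of a reduced expression for $ur'$. The converse $ur\leq ur'\Rightarrow r\leq r'$ is the real content, and I plan to prove it by induction on $\ell(r')$. The base case $r'=e$ is forced by length-additivity, since $ur\leq u$ forces $\ell(u)+\ell(r)\leq \ell(u)$. For the inductive step, assume $ur<ur'$ (equality being trivial) and pick a right descent $s_j$ of $r'$; necessarily $j\in J$, and by length-additivity $s_j$ is also a right descent of $ur'$. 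Apply the Lifting Property of Bruhat order to $ur\leq ur'$ with descent $s_j$. If $s_j$ is not a right descent of $ur$, the Lifting Property gives $ur\leq u(r's_j)$ and induction produces $r\leq r's_j<r'$. If $s_j$ is a right descent of $ur$, then length-additivity forces $s_j$ to be a right descent of $r$, and the Lifting Property gives $u(rs_j)\leq u(r's_j)$; induction yields $rs_j\leq r's_j$, and appending $s_j$ to both sides of a witnessing reduced subword (which remains reduced because $\ell(r)=\ell(rs_j)+1$ and $\ell(r')=\ell(r's_j)+1$) upgrades this to $r\leq r'$ via the subword property.

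The only delicate step is the converse in~\itemref{ur_leq_ur'}: a reduced subword of a reduced word for $ur'$ that realizes $ur$ need not respect the boundary between the $u$-part and the $r'$-part, so one cannot simply "restrict" the subword to $W_J$. The Lifting Property combined with the induction on $\ell(r')$ is the standard device to circumvent this obstacle.
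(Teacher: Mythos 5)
Your proof is correct. For part (i), your argument is essentially the same length-counting idea as the paper's: both observe that $\ell(us_j)=\ell(u)+1$ for $j\in J$ and that left multiplication by $s_i$ changes length by exactly $\pm1$; the paper packages this as a short proof by contradiction ($s_ius_j<s_iu<u<us_j$ would force a length gap of at least $3$), while you run it forward to verify $\ell(s_ius_j)>\ell(s_iu)$ directly. These are interchangeable. For part (ii), the paper simply cites Bj\"orner--Brenti, Exercise 2.21, so there is no in-text proof to compare against; your argument — the forward direction via length-additivity and the subword property, the converse via the Lifting Property with induction on $\ell(r')$, splitting on whether $s_j$ is a right descent of $ur$ — is the standard solution to that exercise and is complete. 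One small remark: your final step in the descent case (appending $s_j$ to a witnessing reduced subword for $rs_j\leq r's_j$ to get $r\leq r'$) is itself just the upward direction of the Lifting Property and could be cited rather than re-derived, but the explicit subword argument you give is valid.
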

\begin{proof}
For~\itemref{s_iW^J_still_W^J} suppose that $s_iu\notin W^J$, so that $s_ius_j<s_iu$ for some $j\in J$. Then $s_ius_j<s_iu<u<us_j$, which contradicts $\ell(us_j)=\ell(s_ius_j)\pm1$. For~\itemref{ur_leq_ur'}, see~\cite[Exercise~2.21]{BB}. 
\end{proof}

\begin{lemma} 
\label{Inv_W^J} For any $w\in W^J$, we have $\Inv(w)\subset\Phij_+$. In particular, $w\Phi_J^+\subset\Phi^+$, $\dw U_J\dw^{-1}\subset U$, and $\dw U_J^- \dw^{-1}\subset U^-$. 
\end{lemma}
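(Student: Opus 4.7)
\textbf{Proof plan for \cref{Inv_W^J}.} The plan is to reduce everything to the standard characterization of $W^J$, namely that $w \in W^J$ if and only if $\ell(ws_j) > \ell(w)$ for every $j \in J$, which in turn is equivalent to $w\alpha_j \in \Phi^+$ for every $j \in J$ (see e.g.\ \cite[\S2.4]{BB}). From this, the claim $\Inv(w) \subset \Phij_+$ is a short linear-algebra argument in the root lattice, and the three assertions in the ``in particular'' clause follow mechanically.

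First, I would prove that $\Inv(w) \cap \Phi_J^+ = \emptyset$. Take any $\alpha \in \Phi_J^+$ and expand it in the simple root basis as $\alpha = \sum_{j \in J} c_j \alpha_j$ with $c_j \in \Z_{\geq 0}$; this is possible because $\Phi_J^+ = \Phi_J \cap \Phi^+$ and $\Phi_J$ is the root system spanned by $\{\alpha_j\}_{j \in J}$. Applying $w$ gives
\[
w\alpha = \sum_{j \in J} c_j\, (w\alpha_j).
\]
By the characterization above, each $w\alpha_j \in \Phi^+$, so when each $w\alpha_j$ is written in the simple root basis it has only non-negative coefficients; hence $w\alpha$ itself has non-negative coefficients in that basis. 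Being a root, $w\alpha$ must therefore lie in $\Phi^+$, so $\alpha \notin \Inv(w)$. Since $\Inv(w) \subset \Phi^+$ by definition, this gives $\Inv(w) \subset \Phi^+ \setminus \Phi_J^+ = \Phij_+$.

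The remaining assertions are immediate consequences. The containment $w\Phi_J^+ \subset \Phi^+$ is a direct restatement of the computation above, since we showed $w\alpha \in \Phi^+$ for every $\alpha \in \Phi_J^+$. For $\dw U_J \dw^{-1} \subset U$, recall that $U_J = U(\Phi_J^+)$ is directly spanned by the root subgroups $U_\alpha$ with $\alpha \in \Phi_J^+$ (\cref{U(alpha)_iso}), and that $\dw U_\alpha \dw^{-1} = U_{w\alpha}$ as noted in \cref{sec:subgroups-u}; since each $w\alpha \in \Phi^+$, each $U_{w\alpha} \subset U$, and conjugation by $\dw$ is a group homomorphism, so the whole subgroup $\dw U_J \dw^{-1}$ lies in $U$. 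The statement $\dw U_J^- \dw^{-1} \subset U^-$ follows in the same way, either by applying $x \mapsto x^T$ from \eqref{eq:transpose} together with $\dw^T = \dw^{-1}$, or directly by noting that $-\Phi_J^+ = \Phi_J^-$ and $w(\Phi_J^-) \subset \Phi^-$.

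The argument is essentially routine once the characterization of $W^J$ in terms of simple roots is in hand; there is no real obstacle, but the one point to state carefully is that ``a non-negative combination of positive roots, if itself a root, must be a positive root,'' so that the computation $w\alpha = \sum c_j (w\alpha_j)$ actually forces $w\alpha \in \Phi^+$ rather than merely lying in the positive cone.
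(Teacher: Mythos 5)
Your proof is correct and uses essentially the same approach as the paper: the paper shows directly that if $\alpha\in\Phi^+$ has $w\alpha<0$ then $\alpha$ must involve some $\alpha_i$ with $i\notin J$, while you argue the contrapositive ($\alpha\in\Phi_J^+$ implies $w\alpha>0$), and you additionally spell out the ``in particular'' consequences which the paper leaves implicit. The extra care you take to note that a root with nonnegative simple-root coefficients must be positive is a genuine gap in the paper's terser phrasing that you've correctly filled.
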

\begin{proof}
Let $\alpha\in\Phi^+$ be a positive root. Then it can be written as $\alpha=\sum_{i\in I} c_i\alpha_i$ for $c_i\in \Z_{\geq0}$. Since $w\in W^J$, we have $w\alpha_i>0$ for all $i\in J$. Thus if $w\alpha<0$, we must have $c_i\neq0$ for some $i\notin J$, so $\alpha\in \Phij_+$.
\end{proof}

\begin{lemma}[{\cite{He}}]\label{lemma:x*y}
Let $x,y\in W$.
\begin{theoremlist}
\item\label{x*y:dfn} The set $\{uv\mid u\leq x,\  v\leq y\}$ contains a unique maximal element, denoted $x*y$. The set $\{xv \mid v \le y\}$ contains a unique minimal element, denoted $x \triangleleft y$.
\item\label{x*y:u_v} There exist elements $u'\leq x$ and $v'\leq y$ such that $x*y=xv'=u'y$, and these factorizations are both length-additive.
\item\label{x*y:compare} If $x' \le x$, then $x'*y \le x*y$ and $x' \triangleleft y \le x \triangleleft y$.
\item\label{x*y:length_add} If $xy$ is length-additive, then $x*y=xy$ and $(xy)\triangleleft y^{-1} = x$.
\end{theoremlist}
\end{lemma}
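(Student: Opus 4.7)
I will establish the two existence claims in \itemref{x*y:dfn} inductively on $\ell(y)$, and then derive \itemref{x*y:u_v}--\itemref{x*y:length_add} from the inductive construction together with the lifting property of the Bruhat order (see, e.g., \cite[Proposition~2.2.7]{BB}).

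For the $*$-operation, write $y = y's$ with $\ell(y) = \ell(y')+1$; by the lifting property,
\[ \{v : v \le y\} \;=\; \{v : v \le y'\} \;\cup\; \{v's : v' \le y',\ v's > v'\}. \]
Assuming by induction that $x*y'$ is the unique Bruhat-maximum of $\{uv : u \le x,\, v \le y'\}$, one checks (using lifting once more to compare $ws$ with $(x*y')s$ when $w \le x*y'$) that the union $\{uv : u \le x,\, v \le y\}$ has unique maximum equal to $(x*y')s$ if $(x*y')s > x*y'$, and equal to $x*y'$ otherwise. This intrinsic characterization as a maximum gives well-definedness (independence of the reduced word of $y$) for free. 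The length-additive factorization $x*y = xv'$ in \itemref{x*y:u_v} is built up during the induction by appending $s$ at each step when the length strictly increases, and the analogous induction on a reduced word of $x$ from the left yields the symmetric factorization $x*y = u'y$.

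The $\triangleleft$-operation is treated by the dual induction. Writing $y = sy'$ with $\ell(y) = \ell(y')+1$, the lifting property gives
\[ \{xv : v \le y\} \;=\; \{xv : v \le y'\} \;\cup\; \{(xs)v : v \le y'\}, \]
so by induction both pieces have unique minima $x \triangleleft y'$ and $(xs) \triangleleft y'$. I would then show, again via the lifting property and distinguishing the cases $xs > x$ and $xs < x$, that one of these two candidates is Bruhat-below the other and that the smaller one is the minimum of the full union. The monotonicity statement for $\triangleleft$ in \itemref{x*y:compare} reduces to the case $x' = xs$ with $\ell(x') = \ell(x)+1$ and follows from the same case analysis. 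For the length-additive claim in \itemref{x*y:length_add}, I would directly verify $xyv \ge x$ for every $v \le y^{-1}$ by induction on $\ell(y)$, splitting $v$ via lifting: in the subcase where $(xsy'v')s < xsy'v'$, lifting applied to $xs \le xsy'v'$ yields $xs \le xsy'v's$, and hence $x \le xs \le xsy'v's$. The corresponding length-additive claim for $*$ follows from the length bound $\ell(uv) \le \ell(x)+\ell(y) = \ell(xy)$ together with the inequality $x*y \ge xy$.

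The main obstacle is the uniqueness step for $\triangleleft$. Unlike the $*$-case, where inclusion of sets immediately transports maxima, here the two candidate minima $x \triangleleft y'$ and $(xs) \triangleleft y'$ come from a priori different subsets, and both the comparison between them and the verification that no third element of the union lies strictly below them require the lifting property to be invoked somewhat delicately. This analysis simultaneously yields monotonicity in the left argument, so \itemref{x*y:compare} for $\triangleleft$ comes essentially for free as a by-product of the proof of \itemref{x*y:dfn}.
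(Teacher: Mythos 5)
The paper's proof of this lemma is essentially a citation: parts (i)--(iii) are attributed to~\cite[\S 1.3]{He}, and part (iv) is said to follow from the definitions. Your proposal instead develops a self-contained inductive argument from the lifting property of Bruhat order, which is a genuinely different (and more elementary) route. The overall plan --- a simultaneous induction on $\ell(y)$ establishing existence of the minimum and monotonicity in the first argument at the same time --- is sound, and the observation that the $*$-case is easier because the union of sets straightforwardly transports maxima is correct.

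However, there is a genuine flaw in the stated reduction for the $\triangleleft$-monotonicity. You write that \itemref{x*y:compare} for $\triangleleft$ ``reduces to the case $x' = xs$ with $\ell(x') = \ell(x)+1$.'' That reduction is false: covering relations in Bruhat order are given by multiplication by an arbitrary reflection, not a simple one, so one cannot in general connect $x' \le x$ by a chain in which consecutive elements differ by right multiplication by a simple generator (for instance $s_2 \lessdot s_1 s_2$ in $S_3$, where the cover is $s_2 \mapsto s_1 s_2$, is left multiplication). The good news is that the reduction is not needed. The correct inductive step for \itemref{x*y:compare} proves general monotonicity at level $\ell(y)=n$ from general monotonicity at level $n-1$: writing $y = sy'$, one uses $x \triangleleft y = \min(x\triangleleft y',\,(xs)\triangleleft y')$, distinguishes the four cases determined by $xs \gtrless x$ and $x's \gtrless x'$, and applies the lifting property to $x' \le x$ together with the full inductive hypothesis applied to various pairs such as $x' \le xs$ (which need not be adjacent). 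Your exposition should make this explicit rather than invoking an adjacent-pair reduction.

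A second, smaller issue is in the \itemref{x*y:length_add} argument for $\triangleleft$. In the subcase $(xsy'v')s < xsy'v'$, the lifting property applied to $xs \le xsy'v'$ (with $(xs)s = x < xs$ a descent) yields $x \le (xsy'v')s$ directly. Your conclusion ``$xs \le xsy'v's$'' is incorrect --- already false when $x=e$, $y'=v'=e$, so that $(xsy'v')s = e$ and $xs = s$ --- but since what is actually needed is $x \le (xsy'v')s$, the argument goes through once the statement is corrected.
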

\noindent The operations $*$ and $\triangleleft$ are called the \emph{Demazure product} and \emph{downwards Demazure product}.
\begin{proof}
The first three parts were shown in~\cite[\S 1.3]{He}, with the caveat that our $\triangleleft$ is the `mirror image' of He's $\triangleright$. Part~\itemref{x*y:length_add} follows from the definitions of $*$ and $\triangleleft$.
\end{proof}

\begin{definition}
\label{Q_J_relation} Let $ Q_J=\{(v,w)\in W\times W^J\mid v\leq w\}$. We define an order relation $\leqJ$ on $Q_J$ as follows: for $(v,w),(v',w')\in Q_J$, we write $(v,w)\leqJ (v',w')$ if and only if there exists $r\in W_J$ such that $vr$ is length-additive and $v'\leq vr\leq wr\leq w'$. For $(v,w)\in Q_J$, define
\[
\QJfilter vw:=\{(v',w')\in Q_J\mid (v,w)\leqJ (v',w')\}, \quad \QJideal vw:=\{(v',w')\in Q_J\mid (v',w')\leqJ (v,w)\}.
\]
\end{definition}

\begin{lemma} \label{lemma:from_KLS}\ 
  \begin{theoremlist} 
\item \label{Q_J_omit_length_add} Let $(v,w),(v',w')\in Q_J$, $r\in W_J$, and $v'\leq vr\leq wr\leq w'$. Then ${(v,w)\leqJ(v',w')}$.
\item \label{Q_J_charact} Let $(u,u), (v,w), (v',w') \in Q_J$. Then $(u,u)\leqJ (v,w)\leqJ(v',w')$ if and only if
\begin{align}\label{Q_J_charact_eqn}
v'\leq vr'\leq ur\leq wr'\leq w'\quad \text{for some $r,r'\in W_J$ such that $vr'$ is length-additive}.
\end{align}
\end{theoremlist}
\end{lemma}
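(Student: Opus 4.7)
Both parts rest on the interplay between parabolic factorizations and the Demazure product within $W_J$. For part~(i), given the chain $v' \le vr \le wr \le w'$ with $r \in W_J$ but without assuming $vr$ is length-additive, the plan is to produce a replacement $r^* \in W_J$ for which $vr^*$ is length-additive and the chain $v' \le vr^* \le wr^* \le w'$ still holds. Write the parabolic factorization $v = v_1 v_2$ with $v_1 \in W^J$ and $v_2 \in W_J$, and define $r^* \in W_J$ by $v_2 r^* = v_2 * r$, where $*$ is the Demazure product computed inside $W_J$. Then $r^* \le r$ by \cref{lemma:x*y}, and $vr^* = v_1(v_2 r^*)$ is length-additive because $v_1 \in W^J$ and $v_2 r^* \in W_J$ (so the parabolic factorization of $vr^*$ is $v_1 \cdot (v_2 r^*)$).

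The flanking inequalities both follow from \cref{ur_leq_ur'}: for $v' \le vr \le vr^*$, use that $v_1 \in W^J$ and $v_2 r \le v_2 * r = v_2 r^*$ in $W_J$; for $wr^* \le wr \le w'$, use $w \in W^J$ and $r^* \le r$. The central inequality $vr^* \le wr^*$ is the main technical step. It does not follow from $v \le w$ alone, since Bruhat order is not compatible with right multiplication in general. Instead, it must be deduced from the hypothesis $vr \le wr$ by a subword argument: a reduced expression for $wr$ (concatenation of reduced words for $w$ and $r$) contains one for $vr$ as a subword, and tracking which letters of the reduced word for $r$ are ``applied'' in the Demazure simplification from $r$ to $r^*$ transports this subword structure to a reduced expression for $wr^*$ exhibiting $vr^*$ as a subword.

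Part~(ii) then follows from part~(i) together with the definition of $\leqJ$. For the forward direction, the individual chains witnessing $(u,u)\leqJ(v,w)$ (giving some $r \in W_J$ with $ur$ length-additive and $v \le ur \le w$) and $(v,w)\leqJ(v',w')$ (giving $r' \in W_J$ with $vr'$ length-additive and $v' \le vr' \le wr' \le w'$) must be spliced into the joint chain $v' \le vr' \le ur \le wr' \le w'$. The two new middle inequalities $vr' \le ur$ and $ur \le wr'$ are verified from the given hypotheses $v \le ur$ and $ur \le w$ combined with \cref{ur_leq_ur'} applied to $u \in W^J$ and $w \in W^J$ respectively. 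For the converse direction, $(v,w)\leqJ(v',w')$ is immediate from the joint chain via $r'$, while $(u,u)\leqJ(v,w)$ is obtained by invoking part~(i) on the sub-chain extracted from $v \le ur \le wr'$ together with $u \in W^J$ to produce some $\tilde r \in W_J$ with $u\tilde r$ length-additive and $v \le u\tilde r \le w$.

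The main obstacle is the subword step establishing $vr^* \le wr^*$ in part~(i): the failure of right multiplication to preserve Bruhat order must be circumvented using the specific hypothesis $vr \le wr$ together with the structural facts $w \in W^J$ and $r^* \le r \in W_J$. Once this is in hand, both parts of the lemma follow from careful bookkeeping with the parabolic factorization of $v$ and \cref{ur_leq_ur'}.
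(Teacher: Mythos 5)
Your choice of $r^*$ in part~(i) is fine (it coincides with the element $v*r = vr'$ that the paper uses directly, so passing through the parabolic factorization of $v$ is a detour), but you then go astray on the central step. The inequality $vr^*\le wr^*$ needs no subword argument at all, and in particular does not use the hypothesis $vr\le wr$. Since $vr^*$ is length-additive, $vr^*=v*r^*$ by \cref{x*y:length_add}; and since $w\in W^J$, $r^*\in W_J$, the product $wr^*$ is also length-additive, so $wr^*=w*r^*$. Therefore \cref{x*y:compare} applied to $v\le w$ gives $vr^*=v*r^*\le w*r^*=wr^*$ at once. The subword argument you sketch is both superfluous and unverified — it is exactly the monotonicity of the Demazure product that you already use for the other flanking inequality.

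Part~(ii) has two genuine gaps. In the forward direction, you keep the witness $r$ of $(u,u)\leqJ(v,w)$ and claim $vr'\le ur$ follows from $v\le ur$ via \cref{ur_leq_ur'}. It does not: that lemma compares $us$ with $us'$ for the \emph{same} $u\in W^J$, and here $vr'$ and $ur$ have different $W^J$-parts. For instance, if $u=v=w=\id$ the only witness $r$ is $\id$, yet nothing forces $r'=\id$, and then $vr'=r'\not\le\id=ur$. The paper instead \emph{redefines} $r$ by $(ur'')*r'=ur$ (where $r''$ is the original witness) and applies $*r'$ on the right to $v\le ur''\le w$, deducing $vr'\le ur\le wr'$ from \cref{x*y:compare} and \cref{x*y:length_add}. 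In the converse direction, part~(i) cannot be invoked directly on $v'\le vr'\le ur\le wr'\le w'$: part~(i) requires a chain $a'\le as\le bs\le b'$ with a single right factor $s$, whereas here $r$ and $r'$ differ, and $ur\le wr'$ does not by itself yield $u\tilde r\le w$ for any $\tilde r$. The missing tool is the downward Demazure product: define $r''$ by $(ur)\triangleleft r'^{-1}=ur''$ and apply $\triangleleft\, r'^{-1}$ to $vr'\le ur\le wr'$, obtaining $v\le ur''\le w$ via \cref{x*y:compare} and \cref{x*y:length_add} and the length-additivity of $vr'$ and $wr'$.
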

\begin{proof}

\itemref{Q_J_omit_length_add}: By \cref{lemma:x*y}, there exists $r'\leq r$ such that $v*r=vr' \ge vr$, and $vr'$ is length-additive. We have $vr'\leq wr'$ by \cref{x*y:compare}, and $wr'\leq wr$ by \cref{ur_leq_ur'}. Therefore $v' \le vr \le vr' \le wr' \le wr \le w'$, so $(v,w) \leqJ (v',w')$.

\itemref{Q_J_charact} ($\Rightarrow$): Suppose that $(u,u)\leqJ (v,w) \leqJ (v',w')$.  Then by \cref{Q_J_relation}, there exist $r',r'' \in W_J$ such that $vr'$ is length-additive, $v' \leq vr' \leq wr' \leq w'$, and $v\leq ur''\leq w$. Define $r\in W_J$ by the equality $(ur'')*r' = ur$. Then applying ${}*r'$ on the right to $v \leq ur'' \leq w$, by \cref{x*y:compare}--\itemref{x*y:length_add}, we obtain $vr' \le ur \le wr'$. Therefore \eqref{Q_J_charact_eqn} holds.

\itemref{Q_J_charact} ($\Leftarrow$): Suppose that \eqref{Q_J_charact_eqn} holds. Then $(v,w) \leqJ (v',w')$. Define $r'' \in W_J$ by the equality $(ur) \triangleleft r'^{-1} = ur''$. Then applying ${}\triangleleft (r')^{-1}$ on the right to $vr' \le ur \le wr'$, by \cref{x*y:compare}--\itemref{x*y:length_add}, we obtain $v \le ur'' \le w$. Therefore $(u,u) \leqJ (v,w)$.
\end{proof}

\begin{remark}\label{rmk:Q_J_omit_ell_vr}
By \Cref{Q_J_omit_length_add}, \cref{Q_J_relation} remains unchanged if we omit the condition that $vr$ is length-additive. It follows that $Q_J$ coincides with the poset studied in~\cite[\S2.4]{HL}. Therefore by~\cite[Appendix]{HL}, $Q_J$ is also isomorphic to the posets studied in~\cite{Rie,GY,KLS}.
\end{remark}

\subsection{Partial flag variety \texorpdfstring{$G/P$}{G/P}}\label{sec:partial-flag-variety}
Fix $J\subset I$ as before. Let $P\subset G$ be the subgroup generated by $B$ and $\{y_i(t)\mid t\in\Kast,\ i\in J\}$. We denote by $G/P$ the \emph{partial flag variety} corresponding to $J$, and let $\pi_J:G/B\to G/P$ be the natural projection map. Let $L_J\subset P$ be the \emph{Levi subgroup} of $P$. It is generated by $T$ and $\{x_i(t),y_i(t)\mid i\in J,\ t\in\Kast\}$. Let $P_-$ be the parabolic subgroup opposite to $P$, with $L_J=P\cap P_-$.

For $(v,w)\in Q_J$ we introduce $\PR_v^w:=\pi_J(\Rich_v^w)\subset G/P$, and define the \emph{projected Richardson variety} $\PRcl_v^w\subset G/P$ to be the closure of $\PR_v^w$ in the Zariski topology. By~\cite[Proposition~3.6]{KLS}, we have
\begin{equation}
\label{eq:GP_closure}
G/P=\bigsqcup_{(v,w)\in Q_J}\PR_v^w,\quad\text{and}\quad \PRcl_v^w=\bigsqcup_{(v',w')\in \QJideal vw} \PR_{v'}^{w'}.
\end{equation}

Now let $\K=\C$. The varieties $\X^w$, $\X_v$, $X^w$, $X_v$, $\Rich_v^w$, and $\Richcl_v^w$ are defined over $\R$.  The map $\pi_J$ is defined over $\R$ as well, and thus so are  $\PR_v^w$ and $\PRcl_v^w$. We let
\begin{align*}
\GBR &:=\{gB\mid g\in\GR\}\subset G/B,& \RRich_v^w &:=\GBR\cap \Rich_v^w,& \RRichcl_v^w &:=\GBR\cap \Richcl_v^w, \\
\GPR &:=\{xP\mid x\in \GR\}\subset G/P,& \PRR_v^w &:=\PR_v^w\cap \GPR,& \PRRcl_v^w &:=\PRcl_v^w\cap \GPR.
\end{align*} 
It follows that the  decomposition~\eqref{eq:GP_closure} is valid over $\R$:
\begin{align}
\label{eq:GP_closure_intro}
&\GPR=\bigsqcup_{(v,w)\in Q_J}\PRR_v^w,\quad &&\PRRcl_v^w=\bigsqcup_{(v',w')\in \QJideal vw} \PRR_{v'}^{w'}.
\end{align}

\subsection{Total positivity}\label{sec:total-posit-intro}
Assume $\K=\C$ in this section. Recall from \cref{sec:pinnings} that for each $i\in I$, we have elements $x_i(t)$, $y_i(t)$ (for $t\in\C$) and $\alphacheck_i(t)$ (for $t\in \Cast$).

\begin{definition}[{\cite{LusGB}}]
\label{Gtnn_dfn} Let $\Gtnn\subset \GR$ be the submonoid generated by $x_i(t)$, $y_i(t)$, and $\alphacheck_i(t)$ for $t\in \R_{>0}$. Define $\GBtnn$ to be the closure of $(\Gtnn/B)\subset\GBR$ in the analytic topology. For $v\leq w\in W$, let $\Rtnn_v^w$ denote the closure of $\Rtp_v^w:=\Rich_v^w\cap \GBtnn$ inside $\GBtnn$.
\end{definition}

\begin{definition}[{\cite{LusGP,Rie99}}]\label{dfn:Rtnn_PRtnn}
Set $\GPtnn:=\pi_J(\GBtnn)$. For $(v,w)\in Q_J$, let $\PRtnn_v^w$ denote the closure of $\PRtp_v^w:=\pi_J(\Rtp_v^w)$ inside $\GPtnn$.
\end{definition}
\noindent Thus we denote by $\PRtp_v^w$ what was denoted by $\Povtp_{(v,w)}$ in \cref{ex:flag_intro}. We have decompositions
\begin{align}
\label{eq:GPtnn_closure_intro}
\quad 
&\GPtnn=\bigsqcup_{(v,w)\in Q_J}\PRtp_v^w,\quad &&\PRtnn_v^w=\bigsqcup_{(v',w')\in \QJideal vw} \PRtp_{v'}^{w'}.
\end{align}
\noindent As a special case of~\eqref{eq:GPtnn_closure_intro}, for $J=\emptyset$ we have
\begin{align} \label{eq:GBtnn_closure}
 &\GBtnn=\bigsqcup_{v\leq w}\Rtp_v^w,\quad &&\Rtnn_v^w=\bigsqcup_{v\leq v'\leq w'\leq w} \Rtp_{v'}^{w'}.
\end{align}

\begin{lemma}\label{KLS_pi_J_length_add} (Assume $\K=\C$.) Let $(v,w)\in Q_J$ and $r\in W_J$ be such that $vr$ is length-additive.
Then 
\begin{align}\label{eq:KLS_pi_J_length_add}
&\PR_v^w=\pi_J(\Rich_v^w)=\pi_J(\Rich_{vr}^{wr}), &&\PRtp_v^w=\pi_J(\Rtp_v^w)=\pi_J(\Rtp_{vr}^{wr}),\\
\label{eq:KLS_pi_J_length_add_closed}
&\PRcl_v^w=\pi_J(\Richcl_v^w)=\pi_J(\Richcl_{vr}^{wr}), &&\PRtnn_v^w=\pi_J(\Rtnn_v^w)=\pi_J(\Rtnn_{vr}^{wr}).
\end{align}
\end{lemma}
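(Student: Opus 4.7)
The equalities $\PR_v^w = \pi_J(\Rich_v^w)$ and $\PRtp_v^w = \pi_J(\Rtp_v^w)$ are immediate from the definitions in~\cref{sec:partial-flag-variety} and~\cref{dfn:Rtnn_PRtnn}. The closure equalities $\PRcl_v^w = \pi_J(\Richcl_v^w)$ and $\PRtnn_v^w = \pi_J(\Rtnn_v^w)$ follow because $\pi_J : G/B \to G/P$ is a proper morphism of projective varieties (hence closed in both the Zariski and analytic topologies), and because $\GBtnn$ is compact, so the image of a compact set under the continuous map $\pi_J$ is compact and therefore closed in $\GPtnn$. In each case, $\pi_J$ of a closed set is closed and contains $\PR_v^w$ (resp.\ $\PRtp_v^w$), hence contains $\PRcl_v^w$ (resp.\ $\PRtnn_v^w$); the reverse inclusion is automatic by continuity.

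The substance of the lemma is in the two equalities
\[\pi_J(\Rich_v^w)=\pi_J(\Rich_{vr}^{wr}) \quad \text{and} \quad \pi_J(\Rtp_v^w)=\pi_J(\Rtp_{vr}^{wr}).\]
First I verify that $(vr,wr)$ makes sense: since $w\in W^J$ and $r\in W_J$, the product $wr$ is length-additive, and combining this with the length-additivity of $vr$ and $v\le w$ via \cref{x*y:compare,x*y:length_add} gives $vr=v*r\le w*r=wr$. Thus $\Rich_{vr}^{wr}$ is a nonempty open Richardson variety of the same dimension $\ell(w)-\ell(v)$ as $\Rich_v^w$.

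Since $\dr\in P$, for any $g\in G$ we have $\pi_J(g\dr B)=g\dr P=gP=\pi_J(gB)$, so it suffices to exhibit, for each $gB\in\Rich_v^w$, an element $b\in B$ with $gb\dr\cdot B\in\Rich_{vr}^{wr}$, and conversely for each $hB\in\Rich_{vr}^{wr}$, an element $b'\in B$ with $hb'\dr^{-1}\cdot B\in\Rich_v^w$. Writing $g=b_-\dv c$ and using the identity $c\dr=\dr\cdot\dr^{-1}c\dr$ together with the direct product decomposition $\dr^{-1}B\dr=(\dr^{-1}B\dr\cap B)(\dr^{-1}B\dr\cap U_-)$, the condition $gb\dr\in B_-\dv\dr B$ reduces to $cb\in\dr B\dr^{-1}$; this can be arranged by choosing $b\in c^{-1}\dr B\dr^{-1}\cap B$, which is nonempty because $B\cap\dr B\dr^{-1}\supset T$. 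A symmetric analysis applies to the condition $gb\dr\in B\dw\dr B$, using the length-additivity of $wr$ together with~\eqref{eq:Bruhat_length_add}. For the totally positive analogue, I plan to invoke the Marsh--Rietsch parametrization of $\Rtp_v^w$: each $\t\in\R_{>0}^{\ell(w)-\ell(v)}$ yields an explicit representative $g(\t)\in\GR$, and right multiplication by $\dr\in \GR$ sends it to a representative of the corresponding point in $\Rtp_{vr}^{wr}$, with $g(\t)P=g(\t)\dr P$ matching the projections.

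The main obstacle is the \emph{simultaneous} compatibility of the two Bruhat decompositions of $g$: the element $b\in B$ making $gb\dr\in B_-\dv\dr B$ must also satisfy $gb\dr\in B\dw\dr B$, and a priori these are two independent constraints. Resolving the compatibility requires a careful analysis of the parabolic subgroup $B\cap\dr B\dr^{-1}$ (which is nontrivial since it contains $T$) together with the observation that the ambiguity in the decomposition $g=b_-\dv c$ (up to left-multiplication of $c$ by $B_-\cap\dv B\dv^{-1}$, conjugated) can be exploited to make the two choices of twist agree. I would carry this out along the lines of~\cite[Proposition~3.6]{KLS}, where the same compatibility is established via explicit coordinates on the open Richardson variety; the irreducibility of $\Rich_v^w$ then ensures that a consistent choice exists on a dense open subset and extends by closure to all of $\Rich_v^w$.
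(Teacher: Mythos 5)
Your first paragraph (the definitional equalities and the closure equalities via properness/compactness) is correct and essentially matches the paper, which notes that $\Richcl_a^b$ and $\Rtnn_a^b$ are compact so their $\pi_J$-images are closed.

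For the heart of the lemma, however, there is a real gap that you yourself flag: you need $gb\dr$ to land simultaneously in $B_-\dv\dr B$ and $B\dw\dr B$ for some single choice of $b\in B$, and your proposed resolution via irreducibility and ``extends by closure'' does not actually close this gap. A map defined on a dense open subset of $\Rich_v^w$ need not extend continuously (let alone as a bijection) to the whole open stratum, and irreducibility controls Zariski-closure behavior, not the extension of a set-theoretic matching. The paper sidesteps all of this by simply invoking \cite[Lemma~3.1]{KLS2}, which says outright that $\pi_J$ restricts to isomorphisms $\Rich_v^w\xrasim \PR_v^w$ and $\Rich_{vr}^{wr}\xrasim\PR_v^w$; the simultaneous compatibility you are worried about is exactly what that lemma packages up. Since you too end up deferring to a KLS-type result, it is cleaner to cite it at the outset rather than develop the explicit Bruhat manipulations.

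For the totally nonnegative part, your Marsh--Rietsch argument does work and is genuinely different from the paper's proof of this particular lemma. If $(\bv,\bw)\in\Red(v,w)$ and one appends a reduced word for $r$ to $\bw$, the positive subexpression for $vr$ is obtained by appending the full reduced word for $r$ (since $vr$ is length-additive), so $\gMR{\bv'}{\bw'}(\t)=\gMR\bv\bw(\t)\cdot\dr$, and thus $\gMR{\bv'}{\bw'}(\t)P=\gMR\bv\bw(\t)P$ while $\gMR{\bv'}{\bw'}(\t)B\in\Rtp_{vr}^{wr}$. This is precisely the computation the paper carries out one step later in \cref{cor:MR_projection}, whereas the paper's proof of \cref{KLS_pi_J_length_add} itself takes a more abstract route: it uses the fact that $\pi_J|_{\Rich_v^w}$ and $\pi_J|_{\Rich_{vr}^{wr}}$ are both isomorphisms onto $\PR_v^w$, combined with $\pi_J(\GBtnn)=\GPtnn$. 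Both routes are valid; yours is more hands-on and requires spelling out the positive-subexpression bookkeeping (which you gesture at but do not carry out), while the paper's is shorter but leans more heavily on the cited isomorphism.
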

\begin{proof}
By \cite[Lemma~3.1]{KLS}, we have $\pi_J(\Rich_v^w)=\pi_J(\Rich_{vr}^{wr}) = \PR_v^w$, and $\pi_J$ restricts to isomorphisms $\Rich_v^w \xrasim \PR_v^w$, $\Rich_{vr}^{wr} \xrasim \PR_v^w$. Thus $\pi_J(\Rtp_v^w)=\pi_J(\Rtp_{vr}^{wr}) = \PRtp_v^w$ follows from the equality $\pi_J(\GBtnn)=\GPtnn$, proving~\eqref{eq:KLS_pi_J_length_add}. To show~\eqref{eq:KLS_pi_J_length_add_closed}, note that $\Richcl_a^b$ and $\Rtnn_a^b$ are compact for any $a\leq b$, and therefore their images under $\pi_J$ are closed.
\end{proof}

Recall the definition of $\bx_\bi(\t)$ and $\by_\bi(\t)$ from~\eqref{eq:bx_bi_by_bi}. Choose a reduced word $\bi=(i_1,\dots,i_n)$ for $w\in W$ and define
\[\Utp(w):=\{\bx_\bi(\t)\mid \t\in\R_{>0}^n\},\quad \Utp^-(w):=\{\by_\bi(\t)\mid \t\in\R_{>0}^n\}.\]  
Let $\Utnn\subset U(\R)$ (respectively, $\Utnn^-\subset U_-(\R)$) be the submonoid generated by $x_i(t)$ (respectively, by $y_i(t)$) for $t\in\R_{>0}$. Then $\Utnn=\bigsqcup_{w\in W} \Utp(w)$ and  $\Utnn^-=\bigsqcup_{w\in W} \Utp^-(w)$. We have $\Utp(w)=\Utnn\cap B_-\dw B_-$ and $\Utp^-(w)=\Utnn^-\cap B\dw B$, and these sets do not depend on the choice of the reduced word $\bi$ for $w$; see~\cite[Proposition~2.7]{LusGB}.

\subsection{Marsh--Rietsch parametrizations}\label{sec:MR_param}
Assume that $\K$ is algebraically closed. Given $w\in W$, an \emph{expression} $\bw$ for $w$ is a sequence $\bw=(w\pd0,\dots,w\pd n)$ such that $w\pd0=\id$, $w\pd n=w$, and for $j=1,\dots,n$, either $w\pd j=w\pd{j-1}$ or $w\pd j=w\pd{j-1}s_{i_j}$ for some $i_j\in I$. In the latter case we require $w\pd{j-1}<w\pd j$, unlike in~\cite{MR}. We define $J_\bw^+:=\{1\leq j\leq n\mid w\pd {j-1}<w\pd{j}\}$ and $J_\bw^\circ:=\{1\leq j\leq n\mid w\pd {j-1}=w\pd{j}\}$ so that $J_\bw^+\sqcup J_\bw^\circ=\{1,2,\dots,n\}$. Every reduced word $\bi=(i_1,\dots,i_n)$ for $w$ gives rise to a \emph{reduced expression} $\bw=\bw(\bi)=(w\pd0,\dots,w\pd n)$ with $w\pd j=w\pd{j-1}s_{i_j}$ for $j=1,\dots, n$.
\begin{lemma}[{\cite[Lemma~3.5]{MR}}]
\label{lemma:positive_subexpression} Let $v\leq w\in W$, and consider a reduced expression $\bw=(w\pd0,\dots,w\pd n)$ for $w$ corresponding to a reduced word $\bi=(i_1,\dots,i_n)$. Then there exists a unique \emph{positive subexpression $\bv$ for $v$ inside} $\bw$, i.e., an expression $\bv=(v\pd0,\dots,v\pd n)$ for $v$ such that for $j=1,\dots,n$, we have $v\pd{j-1}<v\pd{j-1}s_{i_j}$. This positive subexpression can be constructed inductively by setting $v\pd n:=v$ and
\begin{equation}\label{eq:pos_subexpr_inductively}
v\pd{j-1}:=
  \begin{cases}
    v\pd j s_{i_j}, &\text{if $v\pd j s_{i_j}<v\pd j$,}\\ 
    v\pd j,&\text{otherwise},
  \end{cases}\quad \text{ for $j=n,\dots,1$.} 
\end{equation}
\end{lemma}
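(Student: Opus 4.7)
The plan is to prove existence by showing that the backward-inductive formula~\eqref{eq:pos_subexpr_inductively} produces a well-defined expression for $v$, and to prove uniqueness by showing that the positivity condition forces this formula at every step.

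For existence, I will establish by downward induction on $j$ (starting from $j=n$, where $v\pd n=v$) that the element $v\pd j$ defined by~\eqref{eq:pos_subexpr_inductively} satisfies $v\pd j\leq w\pd j$. Since $\bw$ is reduced, $w\pd{j-1}=w\pd j s_{i_j}<w\pd j$, so $s_{i_j}$ is a right descent of $w\pd j$. I will invoke the Lifting Property of the Bruhat order \cite[Prop.~2.2.7]{BB}: if $v\pd j s_{i_j}<v\pd j$, then applying the lifting property to the chain $v\pd j s_{i_j}<v\pd j\leq w\pd j$ yields $v\pd{j-1}=v\pd j s_{i_j}\leq w\pd j s_{i_j}=w\pd{j-1}$; if $v\pd j s_{i_j}>v\pd j$, then one must have $v\pd j<w\pd j$ strictly (otherwise $s_{i_j}$ would be simultaneously an ascent and a descent of $w\pd j$), and lifting gives $v\pd{j-1}=v\pd j\leq w\pd j s_{i_j}=w\pd{j-1}$. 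Taking $j=1$ forces $v\pd 0\leq\id$, hence $v\pd 0=\id$, so $\bv$ is a valid expression for $v$. The positivity condition $v\pd{j-1}<v\pd{j-1}s_{i_j}$ is then read off immediately from the two cases of~\eqref{eq:pos_subexpr_inductively}: if $v\pd{j-1}=v\pd j s_{i_j}$, then $v\pd{j-1}s_{i_j}=v\pd j>v\pd{j-1}$; if $v\pd{j-1}=v\pd j$, then $v\pd{j-1}s_{i_j}=v\pd j s_{i_j}>v\pd j=v\pd{j-1}$.

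For uniqueness, I will show that for any positive subexpression $\bv'$ of $\bw$ expressing $v$, the element $v'\pd{j-1}$ is determined by $v'\pd j$ and coincides with the value prescribed by~\eqref{eq:pos_subexpr_inductively}. Indeed, by the definition of an expression, $v'\pd j\in\{v'\pd{j-1},\,v'\pd{j-1}s_{i_j}\}$. In the first case, the positivity condition gives $v'\pd j s_{i_j}=v'\pd{j-1}s_{i_j}>v'\pd{j-1}=v'\pd j$, while in the second it gives $v'\pd j s_{i_j}=v'\pd{j-1}<v'\pd{j-1}s_{i_j}=v'\pd j$. Thus the sign of $v'\pd j s_{i_j}$ relative to $v'\pd j$ dictates which case holds, and $v'\pd{j-1}$ is therefore forced. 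A downward induction starting from $v'\pd n=v=v\pd n$ yields $\bv'=\bv$.

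I do not expect a significant obstacle: the Lifting Property performs essentially the entire verification, and the uniqueness argument is a direct unpacking of the definition of positive subexpression. The only mildly subtle point is handling the boundary case $v\pd j=w\pd j$ in the existence argument, which is dispatched by the single observation that an ascent of $v\pd j$ cannot coincide with a descent of $w\pd j$ when the two elements are equal.
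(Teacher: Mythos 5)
The paper does not give its own proof of this lemma; it cites it directly from Marsh--Rietsch, and their proof is essentially the argument you give: a downward induction maintaining $v\pd j\leq w\pd j$ via the Lifting Property, with uniqueness following because the positivity condition forces each step of the recursion. Your proof is correct, including the handling of the boundary case $v\pd j=w\pd j$ (where an ascent of $v\pd j$ cannot be a descent of $w\pd j$), so there is nothing to add.
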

\begin{corollary}
\label{cor:positive_sub_v_not_leq_w} In the setting above, if $v\pd 1=s_i$ for some $i\in I$ then $v\not\leq s_iw$.
\end{corollary}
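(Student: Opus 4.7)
The plan is to derive a contradiction from the uniqueness part of \cref{lemma:positive_subexpression}. First observe that since $\bv$ is an expression we have $v\pd{0}=\id$, so the hypothesis $v\pd{1}=s_i$ forces $v\pd{1}=v\pd{0}s_{i_1}=s_{i_1}$, and therefore $i=i_1$. In particular $s_iw=s_{i_1}w<w$, so $\bi':=(i_2,\dots,i_n)$ is a reduced word for $s_iw$ of length $n-1$.

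Now suppose for contradiction that $v\leq s_iw$. Let $\bw'=(w'\pd{0},\dots,w'\pd{n-1})$ denote the reduced expression for $s_iw$ corresponding to $\bi'$, and let $\bv'=(v'\pd{0},\dots,v'\pd{n-1})$ be the unique positive subexpression for $v$ inside $\bw'$ provided by \cref{lemma:positive_subexpression}. I would then produce a second positive subexpression for $v$ inside $\bw$ by prepending a trivial step: define $\tilde{\bv}=(\tilde{v}\pd{0},\dots,\tilde{v}\pd{n})$ by $\tilde{v}\pd{0}:=\id$ and $\tilde{v}\pd{j}:=v'\pd{j-1}$ for $j=1,\dots,n$. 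Verifying that $\tilde{\bv}$ is an expression and a positive subexpression for $v$ inside $\bw$ is a routine unpacking of definitions: at $j=1$ the step is trivial since $\tilde{v}\pd{0}=\tilde{v}\pd{1}=\id$, and the positivity condition $\id<s_{i_1}$ is automatic; for $j\geq 2$, the condition $\tilde{v}\pd{j-1}<\tilde{v}\pd{j-1}s_{i_j}$ translates directly to the positivity condition for $\bv'$ at position $j-1$, whose corresponding letter in $\bi'$ is $i_j$.

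Since $\tilde{v}\pd{1}=v'\pd{0}=\id\neq s_i=v\pd{1}$, the positive subexpressions $\tilde{\bv}$ and $\bv$ for $v$ inside $\bw$ are distinct, contradicting the uniqueness asserted in \cref{lemma:positive_subexpression}. There is no serious obstacle to this argument: the whole corollary amounts to the observation that positive subexpressions for $v$ inside $\bw$ whose first step is trivial are in bijection with positive subexpressions for $v$ inside $\bw'$, after which uniqueness closes the argument.
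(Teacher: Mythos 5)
Your proof is correct and proceeds by essentially the same route as the paper: both derive a contradiction by comparing positive subexpressions for $v$ inside $\bw$ and inside the truncated word $\bw(\i')$ where $\i'=(i_2,\dots,i_n)$, both built on \cref{lemma:positive_subexpression}. The only cosmetic difference is that the paper applies the explicit recursion~\eqref{eq:pos_subexpr_inductively} directly to conclude $v'\pd{0}=v\pd{1}=s_i\neq\id$, whereas you prepend a trivial step to $\bv'$ to manufacture a second positive subexpression for $v$ inside $\bw$ and then invoke uniqueness; these are equivalent readings of the same lemma.
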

\begin{proof}
Indeed, if $v\leq s_iw<w$ then there exists a positive subexpression $\bv'=(v'\pd{0},\dots,v'\pd{n-1})$ for $v$ inside $\bw(\i')$, where $\i'=(i_2,\dots,i_n)$. By~\eqref{eq:pos_subexpr_inductively}, we have $v'\pd j=v\pd {j+1}$ for $j=0,1,\dots,n-1$, which contradicts the fact that $v'\pd 0=1$ while $v\pd 1=s_i$.
\end{proof}

 For $w\in W$, let $\Red(w):=\{\bw\mid \text{$\bw$ is a reduced expression for $w$}\}$. For $v\leq w\in W$, let
\[\RedMR(v,w):=\{(\bv,\bw)\mid \bw\in\Red(w),\ \text{$\bv$ is a positive subexpression for $v$ inside $\bw$}\}.\]
Thus for all $v\leq w$, the sets $\Red(w)$ and $\Red(v,w)$ have the same cardinality. Let $v\leq w\in W$ and $(\bv,\bw)\in\RedMR(v,w)$. Given a collection $\t=(t_k)_{k\in J_\bv^\circ}\in(\K^\ast)^{J_\bv^\circ}$, define 
\begin{align}\label{eq:gMRvw}
\gMRvw(\t):=g_1\cdots g_n,\quad\text{where}\quad g_k:=
  \begin{cases}
y_{i_k}(t_k),    & \text{if $k\in J_\bv^\circ$,} \\
\ds_{i_k},     & \text{if $k\in J_\bv^+$.} 
  \end{cases}
\end{align}

\subsubsection{Marsh--Rietsch parametrizations of \texorpdfstring{$\GBtnn$}{G/B}}\label{sec:mr-param-gbtnn}
In this section, we assume $\K=\C$. Let $v$, $w$, $\bv$, and $\bw$ be as above. Define a subset $\GMRtpvw\subset \GR$ by
\[\GMRtpvw:=\{\gMRvw(\t)\mid \t\in \R_{>0}^{J_\bv^\circ}\}.\]

\begin{theorem}[{\cite[Theorem~11.3]{MR}}]\leavevmode
\label{thm:GMR_param} The map $\GR\to \GBR$ sending $g$ to $gB$ restricts to an isomorphism of real semialgebraic varieties
\[\GMRtpvw\xrightarrow{\sim} \Rtp_v^w.\]
\end{theorem}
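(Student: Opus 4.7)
My plan is to establish three things: (i) the image $\gMRvw(\t)B$ lies in $\Rich_v^w$; (ii) the image also lies in $\GBtnn$; (iii) the resulting map $\R_{>0}^{J_\bv^\circ}\to\Rtp_v^w$ is a semialgebraic homeomorphism.

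\textbf{Bruhat position.} Membership in $B\dw B$ is immediate: each factor $g_k$ lies in $B\ds_{i_k}B$ by \eqref{eq:y_i_Bs_iB}, and since $\bw$ is reduced, the length-additive Bruhat product rule \eqref{eq:Bruhat_length_add} telescopes the product into $B\dw B$. For membership in $B_-\dv B$ I would induct on $n$ by stripping off $g_1$. If $1\in J_\bv^\circ$ then $g_1=y_{i_1}(t_1)\in U_-\subset B_-$ and $v\pd 1=v$, so the claim reduces to the inductive hypothesis applied to the shortened expression. If $1\in J_\bv^+$ then $g_1=\ds_{i_1}$ with $v\pd 1=s_{i_1}v$ and $v<v\pd 1$; inductively $g_2\cdots g_n\in B_-\dot{v\pd 1}B$, and \eqref{eq:Bruhat_s_i_B-wB} applied in the length-decreasing case $s_{i_1}v\pd 1=v<v\pd 1$ gives $\ds_{i_1}\cdot B_-\dot{v\pd 1}B\subset B_-\dv B$.

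\textbf{Total nonnegativity.} This is the technical heart, and the step I expect to be the main obstacle. The plan is to rewrite each $\ds_{i_k}$ with $k\in J_\bv^+$ using $\ds_{i_k}=y_{i_k}(1)x_{i_k}(-1)y_{i_k}(1)$ and then commute every $x_i(\cdot)$ factor to the right past the subsequent $y_j(\cdot)$ factors via Chevalley-type identities (including the $i=j$ relation $x_i(a)y_i(b)=y_i(b/(1+ab))\,\alphacheck_i(1+ab)\,x_i(a/(1+ab))$), producing new $y$-factors whose parameters are rational expressions in $(t_k)_{k\in J_\bv^\circ}$. The positivity of $\t$ together with the positive-subexpression condition should force every such new parameter to be strictly positive: imitating the inductive scheme of~\cite{MR}, one shows that after full commutation $\gMRvw(\t)$ factors as $u\cdot b$ with $u\in\Utnn^-$ a positive $y$-product and $b\in B$. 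Passing to the quotient places $\gMRvw(\t)B$ in the image of $\Utnn^-$ in $\GBR$, which lies in $\GBtnn$ by \cref{Gtnn_dfn}.

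\textbf{Semialgebraic isomorphism.} For injectivity and construction of the inverse, I exploit the unique factorization $B\dw B=U(\Inv(w^{-1}))\cdot\dw\cdot B$: every coset in $\X^w$ corresponds to a unique $u\in U(\Inv(w^{-1}))$, and $U(\Inv(w^{-1}))$ has explicit polynomial Bott--Samelson coordinates induced by any reduced word for $w$. Expanding $\gMRvw(\t)$ in these coordinates yields polynomial formulas expressing the $t_k$'s in terms of $\gMRvw(\t)B$, proving injectivity and furnishing a semialgebraic inverse. Surjectivity onto $\Rtp_v^w$ follows by a dimension count: the source has real dimension $|J_\bv^\circ|=\ell(w)-\ell(v)=\dim\Rich_v^w$, the map is an injective continuous map into $\Rtp_v^w$, and $\Rtp_v^w$ is a connected semialgebraic cell of this dimension (by \cref{Gtnn_dfn}), so the image must fill it.

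\textbf{Main obstacle.} The hardest step is total nonnegativity: the Chevalley commutations that move $x_i$'s past $y_j$'s produce rational expressions in $\t$, and the positivity ``miracle'' must hold simultaneously for every reduced expression~$\bw$ and every positive subexpression $\bv$. This is precisely the delicate combinatorial content of the Marsh--Rietsch argument in~\cite{MR}, and I expect to imitate their induction on $\ell(w)-\ell(v)$ rather than find a conceptual shortcut.
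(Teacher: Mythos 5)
The paper does not prove this statement; it is cited directly as \cite[Theorem~11.3]{MR}, so there is no in-paper proof to compare against. That said, the paper's subtraction-free machinery in \cref{sec:subtraction-free} (particularly \cref{lemma:switch_MR_w0,lemma:param_sf,lemma:01}) reflects the Marsh--Rietsch/Rietsch strategy, which makes it possible to assess your sketch against the known line of argument.

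Your first step (membership in $\Rich_v^w$) is fine in spirit, though the cleaner formulation is the left-to-right induction of \cite[Proposition~5.2]{MR}: $g_1\cdots g_k\in U_-\dot{v}\pd{k}$, which yields $\gMRvw(\t)\in U_-\dv\subset B_-\dv B$ directly; your ``strip $g_1$'' induction runs into indexing trouble in the $J_\bv^+$ case because $(v\pd1,\dots,v\pd n)$ is not a positive subexpression from $\id$.

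The total-nonnegativity step, however, is where your proposal genuinely breaks. The plan to write $\ds_{i_k}=y_{i_k}(1)x_{i_k}(-1)y_{i_k}(1)$ and then push the $x_i(\cdot)$ factors right via the Chevalley relation $x_i(a)y_i(b)=y_i\!\left(\tfrac{b}{1+ab}\right)\alphacheck_i(1+ab)\,x_i\!\left(\tfrac{a}{1+ab}\right)$ fails immediately: the very first commutation one would perform is $x_{i_k}(-1)\,y_{i_k}(1)$, which sits exactly on the singular locus $1+ab=0$ of that relation, so the rewrite cannot even begin. More generally $x_i(-1)y_i(s)$ produces a negative $y$-parameter whenever $s>1$, so no amount of bookkeeping turns the product into a positive $y$-product times an element of $B$ by commuting alone. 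The actual argument (both in \cite{MR} and in the subtraction-free reformulation in \cref{sec:subtraction-free}) avoids the $\ds=y(1)x(-1)y(1)$ rewrite entirely: one first establishes the claim for a \emph{specific} reduced word for $w_0$ that ends in a reduced word for $v$ (so that the positive subexpression concentrates all the $\ds$-steps at the end, giving $\GMRsf\bv{\bw_0}=\Usf^-(w_0v^{-1})\cdot\dv$, cf.\ \cref{lemma:switch_MR_w0}), relates this to the big cell via the collision moves \eqref{eq:collision_x}--\eqref{eq:collision_xy}, and then transports the conclusion to arbitrary reduced words via braid-move invariance (\cref{prop:GMR_sf}, after \cite[Lemma~11.8]{MR}). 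This is a genuinely different mechanism, and it is the one that actually works.

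Your surjectivity argument is also incomplete and partly circular. You invoke that $\Rtp_v^w$ is ``a connected semialgebraic cell of dimension $\ell(w)-\ell(v)$'' and conclude that an injective continuous map from $\R_{>0}^{\ell(w)-\ell(v)}$ must fill it. First, that $\Rtp_v^w$ is a cell of this dimension is exactly what one is trying to establish via the parametrization, so it cannot be assumed. Second, an injective continuous map between connected manifolds of the same dimension is open (invariance of domain) but need not be closed, so ``image open + target connected'' does not give surjectivity without an additional properness or boundary argument. In the actual proof, surjectivity rests on the Deodhar decomposition: one shows that the only Deodhar stratum of $\Rich_v^w$ meeting $\GBtnn$ is the one indexed by the positive subexpression, and the parametrization of that stratum is a biregular isomorphism over $\C$; restricting to real points then gives the claim. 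Your sketch does not engage with this content.
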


\begin{proposition}[{\cite[Proposition~8.12]{LusGB}}]
\label{lemma:Gtnn_times_GBtnn} We have $\Gtnn\cdot \GBtnn\subset \GBtnn$.
\end{proposition}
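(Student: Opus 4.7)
The plan is to deduce the statement as a direct formal consequence of the monoid structure of $\Gtnn$ together with the continuity of left multiplication on $\GBR$. There is essentially nothing hard here: the assertion is a tautology once one unwinds the definitions.

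First I would observe that by \cref{Gtnn_dfn}, $\Gtnn$ is defined as a \emph{submonoid} of $\GR$. In particular, for any fixed $g\in\Gtnn$, we have $g\cdot\Gtnn\subset\Gtnn$, and hence, descending to the quotient,
\begin{equation*}
g\cdot(\Gtnn/B)\ \subset\ \Gtnn/B\ \subset\ \GBtnn.
\end{equation*}
Next, for fixed $g\in\GR$, the left-translation map $\mu_g\colon\GBR\to\GBR$, $xB\mapsto gxB$, is a homeomorphism in the analytic topology, with continuous inverse $\mu_{g^{-1}}$. In particular $\mu_g$ sends closures into closures.

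Applying $\mu_g$ to the definition $\GBtnn=\overline{\Gtnn/B}$ (closure in $\GBR$) and using continuity, I would conclude
\begin{equation*}
g\cdot\GBtnn\ =\ \mu_g\!\left(\overline{\Gtnn/B}\right)\ \subset\ \overline{\mu_g(\Gtnn/B)}\ =\ \overline{g\cdot(\Gtnn/B)}\ \subset\ \overline{\Gtnn/B}\ =\ \GBtnn.
\end{equation*}
Since this holds for every $g\in\Gtnn$, this gives $\Gtnn\cdot\GBtnn\subset\GBtnn$, as required.

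The only potential subtlety one might worry about is whether the definition of the action really gives a continuous map $\GR\times\GBR\to\GBR$; but this is standard, since the multiplication map $\GR\times\GR\to\GR$ and the projection $\GR\to\GBR$ are both continuous (indeed algebraic). Thus the main obstacle, such as it is, is merely bookkeeping: one could alternatively phrase the argument sequentially by choosing a sequence $h_n\in\Gtnn$ with $h_nB\to xB$ in $\GBR$ for a given $xB\in\GBtnn$, and noting that $gh_n\in\Gtnn$ so $gh_nB\to gxB$ lies in the closure $\GBtnn$. Either formulation completes the proof in a few lines.
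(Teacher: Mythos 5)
Your argument is correct given the paper's \cref{Gtnn_dfn}, under which $\GBtnn$ is \emph{by definition} the closure of $\Gtnn/B$ inside $\GBR$. With that definition the claim is indeed a few lines: $g\cdot(\Gtnn/B)\subset\Gtnn/B$ for $g\in\Gtnn$ because $\Gtnn$ is a submonoid, and $\mu_g(\overline{A})\subset\overline{\mu_g(A)}$ by continuity of the $\GR$-action on $\GBR$. The paper itself gives no proof of this proposition — it cites \cite[Proposition~8.12]{LusGB} — so there is no in-text argument to compare yours against.

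One caveat is worth being aware of. In Lusztig's source the totally nonnegative flag variety is not \emph{introduced} as the closure of $\Gtnn/B$ but via a positivity locus in the open cell (closure of $\Utp\cdot$point), and his Proposition~8.12 is a substantive statement in that setting rather than a tautology. The compatibility of that description with the one adopted in the present paper is itself a nontrivial fact, which has been absorbed here into the choice of definition. So your argument is a valid, self-contained replacement for the citation under this paper's conventions, but it should not be mistaken for a reconstruction of what the cited proposition actually establishes in its original form.
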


\begin{lemma}
\label{lemma:Demazure} Suppose that $g\in \Gtnn$ and $x\in G$ are such that $xB\in \Rtp_v^w$ for some $v\leq w\in W$. Then $gxB\in \Rtp_{v'}^{w'}$ for some $v'\leq v\leq w\leq w'$.
\end{lemma}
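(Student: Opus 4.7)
The plan is to induct on the length of $g$ as a product of the semigroup generators $x_i(t), y_i(t), \alphacheck_i(s)$ (with $t, s \in \R_{>0}$) of $\Gtnn$ from~\cref{Gtnn_dfn}. The base case $g = e$ is trivial. For the inductive step, factor $g = h g_0$ with $h$ a single generator and $g_0$ shorter; applying the induction hypothesis to $g_0 xB \in \Rtp_{v_0}^{w_0}$ (with $v_0 \leq v \leq w \leq w_0$) reduces, by transitivity of the required inequalities, to the case that $g = h$ is a single generator acting on some point of $\Rtp_v^w$.

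If $h = \alphacheck_i(s) \in T$, then $h$ normalizes both $B$ and $B_-$ and hence preserves each Richardson, so $hxB \in \Rtp_v^w$ by~\cref{lemma:Gtnn_times_GBtnn}. If $h = y_i(t)$, then $h \in B_-$ forces $hxB \in \X_v$, giving $v' = v$, while combining~\eqref{eq:y_i_Bs_iB} with~\eqref{eq:Bruhat_s_i_BwB} places $hxB$ in $\X^w \cup \X^{s_iw}$; the case $s_iw > w$ forces this inclusion to be an equality with $w' = s_iw \geq w$. The case $h = x_i(t)$ is dual (with $v$ and $w$ interchanged and~\eqref{eq:Bruhat_s_i_B-wB} in place of~\eqref{eq:Bruhat_s_i_BwB}), so it remains only to treat the residual subcase $h = y_i(t)$ with $s_iw < w$ (and symmetrically $h = x_i(t)$ with $s_iv > v$).

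I would handle the subcase $h = y_i(t)$, $s_iw < w$ by splitting on whether $v \leq s_iw$. If $v \not\leq s_iw$, then $\X_v \cap \X^{s_iw} = \emptyset$ by~\eqref{eq:BvB_cap_BwB_empty}, so $hxB \in \X^{s_iw}$ is ruled out and $hxB \in \X^w$, giving $w' = w$. If instead $v \leq s_iw$, choose a reduced word $\bi = (i_1, \ldots, i_n)$ for $w$ with $i_1 = i$ (available since $s_iw < w$) and apply \cref{thm:GMR_param} to write $x = \gMRvw(\t) \cdot b$ with $b \in B$, $\t \in \R_{>0}^{J_\bv^\circ}$, and $\bv$ the positive subexpression of $v$ inside the associated $\bw$. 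By \cref{cor:positive_sub_v_not_leq_w}, the assumption $v \leq s_iw$ forces $\bv\pd 1 = \id$, i.e.\ $1 \in J_\bv^\circ$, so $g_1 = y_i(t_1)$ and $y_i(t) \cdot \gMRvw(\t) = y_i(t+t_1) \cdot g_2 \cdots g_n$ remains of MR-form with positive parameters, giving $hxB \in \Rtp_v^w$. The dual subcase $h = x_i(t), s_iv > v$ is analogous, splitting on whether $s_iv \leq w$ (using~\eqref{eq:BvB_cap_BwB_empty} and~\eqref{eq:Bruhat_s_i_B-wB}) and, in the ``$\leq$'' branch, choosing a reduced word for $w$ whose corresponding positive subexpression $\bv$ for $v$ has an initial $y_i(t_1)$-factor to absorb $x_i(t)$.

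I do not foresee a serious obstacle: the dichotomy by $v \leq s_iw$ (resp.\ $s_iv \leq w$) cleanly separates a ``Bruhat-empty'' branch, settled by~\eqref{eq:BvB_cap_BwB_empty}, from an ``MR-positive'' branch, settled by absorbing the generator into the leading $y_i(t_1)$ (resp.\ $x_i(t_1)$) of the MR-product via \cref{thm:GMR_param}.
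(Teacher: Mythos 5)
Your treatment of the $h = y_i(t)$ case is essentially the paper's argument, up to a cosmetic reordering of the case split: you branch on $v \leq s_iw$ and then invoke \cref{cor:positive_sub_v_not_leq_w} to conclude $1 \in J_\bv^\circ$, whereas the paper branches on $v\pd 1 = s_i$ and then invokes the same corollary to conclude $v \not\leq s_iw$. These are equivalent, and the MR-absorption argument is identical.

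There is a small but real omission in the branches you settle by Bruhat-emptiness (and in the $s_iw > w$ branch): there you conclude $hxB \in \Rich_{v'}^{w'}$ but never argue $hxB \in \GBtnn$, which you need to conclude $hxB \in \Rtp_{v'}^{w'}$. The paper handles this once and for all at the very start by invoking \cref{lemma:Gtnn_times_GBtnn} to get $gxB \in \GBtnn$, after which it suffices to locate $gxB$ in a Richardson cell. You do cite that lemma in the torus case, so the fix is one line, but as written your proof only produces $\Rtp$-membership in the MR-absorption branch.

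The genuine gap is in the $h = x_i(t)$ case. You propose to choose an MR form whose initial factor is $y_i(t_1)$ and then ``absorb $x_i(t)$'' into it. This does not work: $x_i(t)y_i(t_1)$ is not of the form $y_i(\cdot)$ --- by~\eqref{eq:collision_xy} it equals $y_i(t'_+)x_i(t_+)a_+$ with an extra $x_i(t_+)a_+$ that you would then have to push through the rest of the product (an argument along the lines of \cref{lemma:MR_move_right}, which you do not supply). The paper instead uses the \emph{dual} Marsh--Rietsch parametrization $g_1\cdots g_n\dw_0 B$ with $g_k \in \{x_{i_k}(t_k),\ \ds_{i_k}^{-1}\}$, so that the absorption is $x_i(t)x_i(t_1) = x_i(t+t_1)$, which is trivial. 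Also note that in this dual form the reduced word is not one for $w$: it is a reduced word for $vw_0$ (viewing $\Rtp_v^w$ as $\Rtp_{(vw_0)w_0}^{(ww_0)w_0}$), and it is precisely because $\ell(s_i(vw_0)) < \ell(vw_0)$ iff $s_iv > v$ that the residual subcase $s_iv > v$ is exactly the one in which you can arrange the leading letter to be $i$. Your description (``reduced word for $w$'' with ``initial $y_i(t_1)$-factor'') gets both the word and the generator type wrong, and the argument as stated would not go through without the dual parametrization.
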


\begin{proof}
  By \cref{lemma:Gtnn_times_GBtnn}, we have $gxB\in \GBtnn$, so it suffices to show that $gx\in B\dw'B\cap B_-\dv'B$ for some $v'\leq v\leq w\leq w'$. Note that we have $x\in B\dw B\cap B_-\dv B$. By \cref{Gtnn_dfn}, it is enough to consider the cases $g=x_i(t)$ and $g=y_i(t)$ for $i\in I$ and $t\in\R_{>0}$. 

  Suppose that $g=y_i(t)$.  We clearly have $gx\in B_-\dv B$. If $s_iw>w$ then  by~\eqref{eq:Bruhat_s_i_BwB} we have $gx\in B\ds_i\dw B$. Thus we may assume that $s_iw<w$. By \cref{thm:GMR_param}, we can also assume $x=\gMR\bv\bw(\t)=g_1\cdots g_n$ for $\t\in\R_{>0}^{J_\bv^\circ}$  and some choice of $(\bv,\bw)\in \RedMR(v,w)$ such that $\bw=(w\pd 0,\dots, w\pd n)$ satisfies $w\pd 1=s_i$. Let $\bv=(v\pd 0,\dots,v\pd n)$. If $v\pd 1\neq s_i$ then $g_1=y_i(t')$, so $gx\in \GMRtpvw$ and we are done. If $v\pd1=s_i$ then by \cref{cor:positive_sub_v_not_leq_w} we have $v\not\leq s_iw$. Recall that $gx\in B_-\dv B$ and by~\eqref{eq:Bruhat_s_i_BwB}, $gx\in B\ds_i\dw B\sqcup B\dw B$. But $B_-\dv B\cap B\ds_i\dw B=\emptyset$ by~\eqref{eq:BvB_cap_BwB_empty}. Therefore we must have $gx\in B\dw B$, finishing the proof in this case.

The case $g=x_i(t)$ follows similarly using a ``dual'' Marsh--Rietsch parametrization~\cite[\S 3.4]{Rie}, where for $(\bv,\bw)\in\RedMR(v,w)$, every element of $\Rtp_{ww_0}^{vw_0}$ is parametrized as 
\[g_1\cdots g_n\dw_0B,\quad\text{where}\quad g_k:=
  \begin{cases}
x_{i_k}(t_k),    & \text{if $k\in J_\bv^\circ$,} \\
\ds_{i_k}^{-1},     & \text{if $k\in J_\bv^+$.} 
  \end{cases}\qedhere\]
\end{proof}

\noindent We will use the following consequence of \cref{thm:GMR_param} in \cref{sec:Gr_total-positivity}.
\begin{corollary}[{cf.~\cite[Proposition~3.3]{KLS}}] 
 \label{cor:MR_projection} 
Let $u\in W^J$, $r\in W_J$, and $v\in W$ be such that $v\leq ur$. Then 
\[\pi_J(\Rtp_v^{ur})=\pi_J(\Rtp_{v\hspace*{0.5pt}\triangleleft\hspace*{0.5pt} r^{-1}}^u)=\PRtp_{v\hspace*{0.5pt}\triangleleft\hspace*{0.5pt} r^{-1}}^u.\]
\end{corollary}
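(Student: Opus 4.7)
The plan is to apply the Marsh--Rietsch parametrization (\cref{thm:GMR_param}) with a reduced word for $ur$ adapted to the factorization $u\in W^J$, $r\in W_J$. Since $u\in W^J$, \cref{Inv_W^J} gives that $ur$ is length-additive, so I concatenate a reduced word $\bi_u=(i_1,\dots,i_m)$ for $u$ (with $m:=\ell(u)$) and a reduced word $\bi_r=(i_{m+1},\dots,i_n)$ for $r$ to form $\bi=\bi_u\cdot\bi_r$; all letters of $\bi_r$ lie in $J$. Let $\bw=\bw(\bi)$ and let $\bv=(v\pd 0,\dots,v\pd n)$ be the positive subexpression for $v$ from \cref{lemma:positive_subexpression}, and set $v_1:=v\pd m$.

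The first step is a purely Coxeter-theoretic lemma: $v_1\leq u$, there is a length-additive factorization $v=v_1 r'$ with $r'\in W_J$ and $r'\leq r$, and $v_1=v\triangleleft r^{-1}$. The truncation $(v\pd 0,\dots,v\pd m)$ is a positive subexpression of $\bw(\bi_u)$ ending at $v_1$, so $v_1\leq u$. The tail $(v\pd m,\dots,v\pd n)$ has its $+$-steps only at indices in $J$, each of which strictly increases length, so the ordered product $r'$ of those simple reflections satisfies $v=v_1 r'$ length-additively, and the associated word is automatically reduced, which gives $r'\leq r$ by the subword property. To identify $v_1=v\triangleleft r^{-1}$, I use that \eqref{eq:pos_subexpr_inductively} can be rewritten as $v\pd{j-1}=v\pd j\triangleleft s_{i_j}$; iterating from $j=n$ down to $j=m+1$ gives $v_1=v\triangleleft s_{i_n}\triangleleft\cdots\triangleleft s_{i_{m+1}}$, and since $s_{i_n}\cdots s_{i_{m+1}}=r^{-1}$ is length-additive, the associativity $x\triangleleft(ab)=(x\triangleleft a)\triangleleft b$ for length-additive $ab$ (a standard fact from the $0$-Hecke/Demazure monoid) collapses this iterated product to $v\triangleleft r^{-1}$.

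With $v_1$ identified, the equality of sets follows directly from the parametrization. By \cref{thm:GMR_param}, every element of $\Rtp_v^{ur}$ has the form $(g_1\cdots g_m)(g_{m+1}\cdots g_n)B$, where $g_k\in\{y_{i_k}(t_k),\ds_{i_k}\}$ according to $k\in J_\bv^\circ$ or $k\in J_\bv^+$, with the positive parameters ranging independently over $\R_{>0}$. For $k>m$ the index $i_k$ lies in $J$, and both $y_j(t)$ and $\ds_j=y_j(1)x_j(-1)y_j(1)$ lie in $P$ for $j\in J$, so $g_{m+1}\cdots g_n\in P$. Hence $\pi_J(g_1\cdots g_n B)=\pi_J(g_1\cdots g_m B)$, and as the head parameters vary, $g_1\cdots g_m$ traverses exactly the MR parametrization of $\Rtp_{v_1}^u$ associated to $(\bi_u,(v\pd 0,\dots,v\pd m))$. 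Both inclusions now follow, giving $\pi_J(\Rtp_v^{ur})=\pi_J(\Rtp_{v_1}^u)=\PRtp_{v_1}^u$, the last equality holding by definition since $(v_1,u)\in Q_J$.

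The main technical obstacle is the associativity $x\triangleleft(ab)=(x\triangleleft a)\triangleleft b$ used to identify $v\pd m$ with $v\triangleleft r^{-1}$; if a more self-contained route is preferred, one can instead derive $v_1=v\triangleleft r'^{-1}$ directly from $v=v_1 r'$ by \cref{x*y:length_add}, and then separately verify that the Bruhat minimum over the larger set $\{vs:s\leq r^{-1}\}$ is still attained at $v_1$ using the subword structure of $\bi_r$.
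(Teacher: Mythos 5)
Your proposal is correct and follows essentially the same path as the paper: choose a reduced word for $ur$ whose tail is a reduced word for $r$, observe that the tail factors of the Marsh--Rietsch parametrization lie in $P$ (so $\pi_J$ kills them), and identify the truncation $v\pd{\ell(u)}$ of the positive subexpression with $v\triangleleft r^{-1}$. The only difference is one of detail: the paper simply asserts that the truncated pair $(\bv',\bu)$ is a positive subexpression for $v\triangleleft r^{-1}$ inside $u$ (citing \cref{lemma:positive_subexpression} as making this ``clear''), whereas you actually unwind the recursion \eqref{eq:pos_subexpr_inductively} as iterated downward Demazure products $v\pd{j-1}=v\pd{j}\triangleleft s_{i_j}$ and invoke the $0$-Hecke associativity $x\triangleleft(ab)=(x\triangleleft a)\triangleleft b$ for length-additive $ab$ to collapse the iteration. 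That associativity is indeed standard, and your fallback route via \cref{x*y:length_add} is also sound, so the extra rigor is a net positive rather than a deviation.
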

\begin{proof}
Let $\bi=(i_1,\dots,i_n)$ be a reduced word for $w:=ur$, such that $(i_{\ell(u)+1},\dots,i_n)$ is a reduced word for $r$.  Let $(\bv,\bw)\in\Red(v,w)$ be such that $\bw$ corresponds to $\bi$. Then it is clear from \cref{lemma:positive_subexpression} that after setting $\bv':=(v\pd0,\dots,v\pd{\ell(u)})$ and $\bu:=(w\pd0,\dots,w\pd{\ell(u)})$, we get $(\bv',\bu)\in\Red(v\triangleleft r^{-1},u)$. Moreover, the indices $i_{\ell(u)+1},\dots,i_n$ clearly belong to $J$, so if $g_1\cdots g_n\in \GMRtpvw$ then $g_1\cdots g_{\ell(u)}\in\GMRtp{\bv'}\bu$ and $\pi_J(g_1\cdots g_nB)=\pi_J(g_1\cdots g_{\ell(u)}B)$. We are done by \cref{thm:GMR_param}.
\end{proof}

\subsection{\texorpdfstring{$G/P$}{G/P} is a shellable TNN space}

We show that the triple $(\GPR,\GPtnn,Q_J)$  is a shellable TNN space in the sense of \cref{dfn:TNNspace}. We start by recalling several known results.
\begin{theorem}\leavevmode\label{prop:smooth_conn_CW_shellable}
\begin{theoremlist}
\item \label{thin_shellable} The poset $\widehat{Q}_J:=Q_J\sqcup \{\hat0\}$ is graded, thin, and shellable.
\item \label{smooth_manif} $\GPR$ is a smooth manifold. Each $\PRR_v^w$ is a smooth embedded locally closed submanifold of~$\GPR$.
\item \label{conn_comps} For $(v,w)\in Q_J$, $\PRtp_v^w$ is a connected component of $\PRR_v^w$.
\end{theoremlist}
\end{theorem}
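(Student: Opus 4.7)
The proof will split cleanly along the three parts, each essentially assembled from results already in the literature. For part~(i), the plan is to cite~\cite{Wil} directly: Williams established that $\widehat Q_J$ is graded, thin, and shellable. The rank function on $Q_J$ is $(v,w)\mapsto\ell(w)-\ell(v)$ (extended by $\hat 0\mapsto -1$), which is patently a rank function for the order relation of \cref{Q_J_relation}. For part~(ii), I will observe that since the pinning is split over $\R$, the variety $G/P$ is smooth projective and defined over $\R$, so $\GPR$ is a smooth compact real manifold. Smoothness of each projected Richardson variety $\PR_v^w$ is due to Knutson--Lam--Speyer~\cite{KLS}; passing to real points yields the smooth embedded submanifolds $\PRR_v^w\subset\GPR$ of dimension $\ell(w)-\ell(v)$.

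Part~(iii) is the substantive step. The plan is to show that $\PRtp_v^w$ is simultaneously open and closed in $\PRR_v^w$; combined with the fact that $\PRtp_v^w$ is connected (being diffeomorphic to $\R_{>0}^{\ell(w)-\ell(v)}$ via any MR parametrization), this will force $\PRtp_v^w$ to be a single connected component of $\PRR_v^w$.

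For openness, I will combine \cref{thm:GMR_param}, which identifies $\Rtp_v^w$ diffeomorphically with $\R_{>0}^{\ell(w)-\ell(v)}$, with \cref{KLS_pi_J_length_add} (applicable because $w\in W^J$ for any $(v,w)\in Q_J$), which gives $\pi_J:\Rtp_v^w\xrightarrow{\sim}\PRtp_v^w$. The composition is a smooth bijection of $\R_{>0}^{\ell(w)-\ell(v)}$ onto $\PRtp_v^w$, landing in the smooth manifold $\PRR_v^w$ of the \emph{same} dimension $\ell(w)-\ell(v)$; invariance of domain will then force $\PRtp_v^w$ to be open in $\PRR_v^w$. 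For closedness, the key observation will be the set-theoretic identity
\begin{equation*}
\PRtp_v^w=\PRR_v^w\cap \GPtnn.
\end{equation*}
This will follow from the disjointness of both decompositions $\GPR=\bigsqcup_{(v',w')\in Q_J}\PRR_{v'}^{w'}$ of~\eqref{eq:GP_closure_intro} and $\GPtnn=\bigsqcup_{(v',w')\in Q_J}\PRtp_{v'}^{w'}$ of~\eqref{eq:GPtnn_closure_intro}: any point of $\PRR_v^w\cap\GPtnn$ lies in some stratum $\PRtp_{v'}^{w'}\subset\PR_{v'}^{w'}$, and since the complex strata $\PR_{v'}^{w'}$ are pairwise disjoint, this forces $(v',w')=(v,w)$. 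Closedness of $\GPtnn$ in $\GPR$ is immediate from the fact that $\GPtnn=\pi_J(\GBtnn)$ is the continuous image of the compact set $\GBtnn$, and $\GPR$ is Hausdorff. The most delicate ingredient is the openness step via invariance of domain; otherwise no step presents a serious obstacle, as each required piece has been prepared in the preceding subsections.
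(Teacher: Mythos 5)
Your parts~(i) and~(ii) match the paper's treatment: the paper likewise cites Williams~\cite{Wil} for~(i), and for~(ii) invokes that $\GPR$ is a homogeneous space of a real Lie group and that $\PR_v^w$ is smooth by~\cite{KLS}. Where you genuinely diverge is part~(iii). The paper simply attributes the statement to Rietsch~\cite{Rie99} without argument, whereas you supply a self-contained proof via the ``clopen plus connected'' strategy: openness of $\PRtp_v^w$ in $\PRR_v^w$ from invariance of domain (using \cref{thm:GMR_param} and the diffeomorphism $\pi_J:\Rich_v^w\xrasim\PR_v^w$ from \cref{KLS_pi_J_length_add}, together with the equidimensionality $\dim\PR_v^w=\ell(w)-\ell(v)$), and closedness from the identity $\PRtp_v^w=\PRR_v^w\cap\GPtnn$ (a clean consequence of the disjointness of both stratifications~\eqref{eq:GP_closure_intro} and~\eqref{eq:GPtnn_closure_intro}) plus compactness of $\GPtnn=\pi_J(\GBtnn)$. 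This argument is correct, and it is a nice gain in transparency: the reader sees explicitly how the $\geq 0$ locus sits as a connected component, instead of being sent to~\cite{Rie99}. It also relies only on material the paper has already set up by this point, so there is no circularity with the later use of~(iii) in the proof of \cref{cor:GP_TNN_space}. One small caution: invariance of domain needs $\PRtp_v^w$ to land in a manifold of the same dimension, so you are implicitly using $\dim\PR_v^w=\ell(w)-\ell(v)$; this is standard from~\cite{KLS} but you should cite it explicitly, since the paper never states the dimension formula on the nose.
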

\begin{proof}
Part~\itemref{thin_shellable} is due to Williams~\cite{Wil}. For~\itemref{smooth_manif}, $\GPR$ is a smooth manifold because it is a homogeneous space of a real Lie group. Each $\PRR_v^w$ is a smooth embedded manifold because it is the set of real points of a smooth algebraic subvariety $\PR_v^w$ of $G/P$; see~\cite[Corollary~3.2]{KLS} or~\cite{LusGP,Rie}. Part~\itemref{conn_comps} is due to Rietsch~\cite{Rie99}.
\end{proof}

\begin{corollary}\label{cor:GP_TNN_space}
$(\GPR,\GPtnn,Q_J)$ is a shellable TNN space.
\end{corollary}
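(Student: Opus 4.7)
The plan is to verify axioms \axref{TNN:thin_shell}--\axref{TNN:Ytp_cl} of \cref{dfn:TNNspace} in turn, assembling the results that have already been collected in \cref{sec:gp:-basic}. For \axref{TNN:thin_shell}, the graded, thin, and shellable properties of $\widehat{Q}_J$ are the content of \cref{thin_shellable}. The unique maximum of $Q_J$ is $(\id,\wj)$: for any $(v,w) \in Q_J$ one has $\id \le v \le w \le \wj$, so taking $r = \id$ in \cref{Q_J_relation} witnesses $(v,w) \leqJ (\id,\wj)$. Cover relations in $Q_J$ change $\ell(w)-\ell(v)$ by one, so the rank function on $Q_J$ is $\gradY(v,w) = \ell(w)-\ell(v)$.

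For \axref{TNN:smooth}, \cref{smooth_manif} makes $\GPR$ a smooth manifold stratified by the embedded submanifolds $\PRR_v^w$; the real dimension of $\PRR_v^w$ equals the complex dimension $\ell(w)-\ell(v)$ of the smooth variety $\PR_v^w$, matching $\gradY(v,w)$. The closure of $\PRR_v^w$ in the analytic topology of $\GPR$ is $\PRRcl_v^w = \bigsqcup_{(v',w') \leqJ (v,w)} \PRR_{v'}^{w'}$ by \eqref{eq:GP_closure_intro}, using that Euclidean and Zariski closures agree for constructible subsets of a complex projective variety. For \axref{TNN:Ytnn_compact}, $\GBR$ is compact as the real locus of a complete variety, so $\GBtnn$, defined in \cref{Gtnn_dfn} as a topological closure inside $\GBR$, is a closed, hence compact, subset; then $\GPtnn = \pi_J(\GBtnn)$ is compact as a continuous image.

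Axiom \axref{TNN:Ytp} follows by combining \cref{conn_comps} (which gives that $\PRtp_v^w$ is a connected component of $\PRR_v^w$) with \cref{thm:GMR_param} and \cref{KLS_pi_J_length_add}: the Marsh--Rietsch map yields a diffeomorphism $\R_{>0}^{\ell(w)-\ell(v)} \xrasim \GMRtpvw \xrasim \Rtp_v^w$, and $\pi_J$ restricts to an isomorphism $\Rtp_v^w \xrasim \PRtp_v^w$, hence a diffeomorphism onto a connected component of $\PRR_v^w$. For \axref{TNN:Ytp_cl}, since $\GPtnn$ is closed in $\GPR$ by the compactness established above, the closure of $\PRtp_v^w$ in $\GPR$ coincides with its closure in $\GPtnn$, which equals $\PRtnn_v^w = \bigsqcup_{(v',w') \leqJ (v,w)} \PRtp_{v'}^{w'}$ by \cref{dfn:Rtnn_PRtnn} and \eqref{eq:GPtnn_closure_intro}.

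No step poses a serious obstacle, since every ingredient has been prepared in advance; the most delicate bookkeeping lies in \axref{TNN:smooth}, where one must confirm that the abstract rank function on $\widehat{Q}_J$ matches the manifold dimension and that the relevant closures in the analytic topology coincide with their algebraic counterparts stated in \eqref{eq:GP_closure_intro}.
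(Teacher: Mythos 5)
Your proof is correct and follows essentially the same route as the paper's: each axiom of \cref{dfn:TNNspace} is checked against the corresponding ingredient from \cref{prop:smooth_conn_CW_shellable}, \eqref{eq:GP_closure_intro}, \eqref{eq:GPtnn_closure_intro}, \cref{thm:GMR_param}, and \cref{KLS_pi_J_length_add}. You supply a few details that the paper's proof leaves implicit (the explicit witness $r=\id$ showing $(\id,\wj)$ is the maximum, routing compactness through $\GBtnn\subset\GBR$, and invoking \cref{KLS_pi_J_length_add} to push the Marsh--Rietsch diffeomorphism down to $\PRtp_v^w$), which is fine; one small caveat is that your parenthetical justification for \axref{TNN:smooth} — that ``Euclidean and Zariski closures agree for constructible subsets of a complex projective variety'' — addresses the complex picture rather than the real locus $\PRR_v^w$ where the equality is actually needed, but since the paper treats this point with the same brevity (simply citing \eqref{eq:GP_closure_intro}), it does not constitute a gap relative to the paper.
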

\begin{proof}
Let us check each part of \cref{dfn:TNNspace}.

\axref{TNN:graded_max} and \axref{TNN:thin_shell}: These follow from \cref{thin_shellable}. The maximal element $\hat1\in Q_J$ is given by $(\id,\wj)$; see~\cref{sec:parab-subgr-w_j}.

\axref{TNN:smooth}: This follows from \cref{smooth_manif} and~\eqref{eq:GP_closure_intro}.

\axref{TNN:Ytnn_compact}: This holds since $\GPR$ is compact and $\PRtnn_v^w\subset G/P$ is closed.

\axref{TNN:Ytp}: This follows from \cref{conn_comps} combined with \cref{thm:GMR_param}.

\axref{TNN:Ytp_cl}: This result is due to Rietsch~\cite{Rie}; see~\eqref{eq:GPtnn_closure_intro}.
\end{proof}

\subsection{Gaussian decomposition}
Assume $\K$ is algebraically closed. Let us define 
\[\Gomp:=B_-B,\quad \Gopm:=BB_-.\]
For $i\in I$, let $\Deltamp_i:\Gomp\to\K$ and $\Deltapm_i:\Gopm\to\K$ be defined as follows. Given $(x_-,x_0,x_+)\in U_-\times T\times U$,  we have $x_-x_0x_+\in \Gomp$ and $x_+x_0x_-\in\Gopm$, and we set $\Deltamp_i(x_-x_0x_+):=x_0^{\omega_i}$, $\Deltapm_i(x_+x_0x_-):=x_0^{w_0\omega_i}$. For a finite set $A$, let $\Proj^A$ denote the $(|A|-1)$-dimensional projective space over $\K$, with coordinates indexed by elements of $A$.
\begin{lemma}\leavevmode
\begin{theoremlist}
\item \label{G_0_uniquely} The multiplication map gives biregular isomorphisms
\[  U_-\times T\times U\xrightarrow{\sim} \Gomp, \quad U\times T\times U_-\xrightarrow{\sim} \Gopm.\]
\item \label{gen_minors_regular} The maps $\Deltamp_i$ and $\Deltapm_i$ extend to regular functions $G\to \K$.
\item \label{lemma:G_0} $\Gomp=\{x\in G\mid \Deltamp_i(x)\neq0\ \text{ for all } i\in I\}$, $\Gopm=\{x\in G\mid \Deltapm_i(x)\neq0\ \text{ for all } i\in I\}.$
\item \label{gen_minors_flag} Fix $i\in I$ and let $W\omega_i:=\{w\omega_i\mid w\in W\}$ denote the $W$-orbit of the corresponding fundamental weight. Then there exists a regular map $\minormap_i:G/B\to \Proj^{W\omega_i}$ such that for $w\in W$ and $x\in G$, the $w\omega_i$-th coordinate of $\minormap_i(xB)$ equals $\Deltamp_i(\dw^{-1}x)$.
\end{theoremlist}
\end{lemma}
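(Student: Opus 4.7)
The plan is to handle the four parts sequentially, leveraging the highest-weight theory of fundamental representations of $G$. For part~\itemref{G_0_uniquely}, I would first observe that the multiplication map $U_-\times T\times U\to G$ is injective because $U_-\cap B=\{1\}$ and $B=T\ltimes U$; its image is $U_-TU=B_-B=\Gomp$ by definition. Biregularity reduces to étaleness: at the identity, the differential is the triangular decomposition $\gfr=\mathfrak{u}_-\oplus \mathfrak{t}\oplus \mathfrak{u}$, which is a linear isomorphism, and equivariance under left multiplication by $U_-\times T$ and right multiplication by $U$ propagates étaleness to all points, so injectivity upgrades to a biregular isomorphism onto $\Gomp$. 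The $\pm$ case is symmetric using $\Gopm=BB_-$.

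For part~\itemref{gen_minors_regular}, I would realize each $\Deltamp_i$ as a matrix coefficient of the fundamental representation $V(\omega_i)$. Fix a highest-weight vector $v^+$ and let $\pi$ denote the projection onto $\K\cdot v^+$ along the other weight spaces. The function $x\mapsto \pi(xv^+)$ is regular on all of $G$. On $\Gomp$, writing $x=u_-tu$ with $uv^+=v^+$, $tv^+=t^{\omega_i}v^+$, and $u_-v^+=v^+ + (\text{strictly lower weights})$, this matrix coefficient evaluates to $t^{\omega_i}=\Deltamp_i(x)$, giving the required regular extension. The case of $\Deltapm_i$ is analogous, using the lowest-weight vector $\dw_0v^+$ of weight $w_0\omega_i$.

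For part~\itemref{lemma:G_0}, the ``only if'' direction is immediate from the formula above. For ``if'', I would combine the Birkhoff decomposition $G=\bigsqcup_{w\in W} B_-\dw B$ with a weight argument: for $x=b_-\dw b_+\in B_-\dw B$, one computes $\Deltamp_i(x)=(b_+)^{\omega_i}\cdot \pi(b_-\dw v^+)$. Since $\dw v^+$ has weight $w\omega_i$ and $B_-$ contributes only strictly lower weights, the projection $\pi$ recovers a nonzero $v^+$-component exactly when $w\omega_i=\omega_i$. Requiring nonvanishing for all $i\in I$ forces $w=1$, so $x\in B_-B=\Gomp$; the $\Deltapm_i$ case is symmetric.

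For part~\itemref{gen_minors_flag}, I would define $\minormap_i(xB)$ to have $w\omega_i$-coordinate equal to the component of $xv^+\in V(\omega_i)$ along the extremal weight vector $\dw v^+$. Pulling this component back by $\dw^{-1}$ and using the matrix-coefficient formula of part~\itemref{gen_minors_regular}, it equals precisely $\Deltamp_i(\dw^{-1}x)$. The one point requiring verification is that these coordinates never all vanish, so that $\minormap_i$ genuinely lands in $\Proj^{W\omega_i}$; this follows from the Bruhat decomposition $G=\bigcup_w \dw\cdot \Gomp$, since for each $x$ some $\dw^{-1}x$ lies in $\Gomp$, and part~\itemref{lemma:G_0} then yields $\Deltamp_i(\dw^{-1}x)\neq 0$. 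The main thing to be careful about throughout is matching weight-vector conventions consistently with the Gaussian decomposition; no deep obstacle arises once that bookkeeping is in place.
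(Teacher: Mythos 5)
Your proof is correct and follows essentially the same approach as the paper: part (i) is the standard Gaussian decomposition (the paper cites Humphreys, you reproduce the étale-plus-equivariance argument), and parts (ii)--(iv) use the highest-weight module $V(\omega_i)$ exactly as the paper does, with $\Deltamp_i$ realized as the matrix coefficient $r^+(g v^+)$ and $\minormap_i$ as the projection onto the span of the extremal weight vectors. The one place where you supply a detail the paper only asserts is the non-vanishing of $\pi_1(gv^+)$ in part (iv); your argument via $G=\bigcup_{w}\dw\,\Gomp$ (from the Bruhat decomposition $x\in B\dw B = U\dw B$, so $\dw^{-1}x\in (\dw^{-1}U\dw)B\subset U_-B\subset\Gomp$) together with part (iii) is a clean way to close that gap.
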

\begin{proof}
For~\itemref{G_0_uniquely}, see~\cite[Proposition~28.5]{Hum}. Parts~\itemref{gen_minors_regular} and~\itemref{lemma:G_0} are well known when $\K=\C$; see~\cite[Proposition~2.4 and Corollary~2.5]{FZ}. We give a proof for arbitrary algebraically closed $\K$, using a standard argument that relies on representation theory. We refer to~\cite[\S31]{Hum} for the necessary notation and background. 

We have $\Gopm=\dw_0^{-1} \Gomp \dw_0$ and $\Deltapm_i(\dw_0^{-1}g\dw_0)=\Deltamp_i(g)$ for all $g\in \Gomp$. Thus it suffices to give a proof for $\Deltamp_i$ and $\Gomp$. For $i\in I$, there exists a regular function $c_{\omega_i}:G\to \K$ that coincides with $\Deltamp_i$ on $\Gomp$; see~\cite[\S31.4]{Hum}. This shows~\itemref{gen_minors_regular}. Explicitly, $c_{\omega_i}$ is given as follows: consider the highest weight module $V_{\omega_i}$ for $G$, and let $v_+\in V_{\omega_i}$ be its highest weight vector. We have a direct sum of vector spaces $V_{\omega_i}=\K v_+\oplus V'$, where $V'$ is spanned by weight vectors of weights other than $\omega_i$. Letting $r^+:V_{\omega_i}\to \K$ denote the linear function such that $r^+(v_+)= 1$ and $r^+(V')=\{0\}$, we have $c_{\omega_i}(g):=r^+(gv_+)$ for all $g\in G$. The decomposition $V_{\omega_i}=\K v_+\oplus V'$ is such that for $(x_-,x_0,x_+)\in U_-\times T\times U$ and $w\in W$, we have $x_+v_+=v_+$, $x_0v_+=Mv_+$ for some $M\in\Kast$, $x_-v_+\in v_++V'$, $x_-V'\subset V'$, and $\dw v_+\in V'$ if $w\omega_i\neq\omega_i$.  Thus if $g\in \Gomp$ then $c_{\omega_i}(g)\neq0$ for all $i\in I$. 
Conversely, if $g\notin \Gomp$ then by~\eqref{eq:bruhat}, there exists a unique $w\neq \id\in W$ such that $g\in U_-\dw T U$. For $i\in I$ such that $w\omega_i\neq\omega_i$, we get $c_{\omega_i}(g)=0$. This proves~\itemref{lemma:G_0}. For~\itemref{gen_minors_flag}, let $V_{\omega_i}=V_1\oplus V_2$ where $V_1$ is spanned by all weight vectors of weights in $W\omega_i$, and $V_2$ is spanned by the remaining weight vectors. Let $\pi_1:V_{\omega_i}\to V_1$ denote the projection along $V_2$. It follows that for all $g\in G$, $\pi_1(gv_+)\neq0$. Then $\minormap_i$ is the natural morphism $G/B \to \Proj(V_1)$, sending $gB$ to $[\pi_1(gv_+)]$.
\end{proof}

\begin{lemma} Define  $\Goj:=P_-P$ (with notation as in \cref{sec:partial-flag-variety}).
  \begin{theoremlist}
\item \label{G_0^J}We have $\Goj=P_-B$ and $P=\bigsqcup_{r\in W_J}B \dr B$.
\item \label{unip_radical} For $p\in P$, we have $p\Uj p^{-1}= \Uj$. Similarly, for $p\in P_-$, we have $p\Uj_-p^{-1}=\Uj_-$. In particular, for $p\in L_J$, we have $p\Uj p^{-1}= \Uj$ and $p \Uj_-p^{-1}= \Uj_-$.
\item \label{lemma:bracketsJ} The multiplication map gives a biregular isomorphism $\Uj_-\times  L_J\times  \Uj\xrightarrow{\sim} \Goj$. In particular, every element $x\in \Goj$ can be uniquely factorized as $[x]_-\pj \cdot [x]_J\cdot  [x]_+\pj\in U_-\pj\cdot  L_J \cdot U\pj$. The map $\Goj\to L_J$ sending $x$ to $[x]_J$ satisfies $[p_-xp_+]_J=[p_-]_J[x]_J[p_+]_J$ for all $x\in \Goj$, $p_-\in P_-$, and $p_+\in P$.
\item \label{bracketsJ_homo} The map $b\mapsto [b]_J$ gives group homomorphisms $U\to U_J$ and $U_-\to U_J^-$, such that
\[x_i(t)\mapsto [x_i(t)]_J=
  \begin{cases}
    x_i(t), &\text{if $i\in J$,}\\
    1,&\text{otherwise,}
  \end{cases}\quad y_i(t)\mapsto [y_i(t)]_J=
  \begin{cases}
    y_i(t), &\text{if $i\in J$,}\\
    1,&\text{otherwise.}
  \end{cases}\]
\end{theoremlist}
\end{lemma}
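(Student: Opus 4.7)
The plan is to treat the four parts in sequence, with each building on the previous. For \itemref{G_0^J}, I would establish $\Goj=P_-B$ via the Levi decomposition $P=L_J\cdot\Uj$: since $L_J\subset P_-$ and $\Uj\subset B$, we get $P\subset P_-B$, and combined with $B\subset P$ this yields both inclusions. The second equality $P=\bigsqcup_{r\in W_J}B\dr B$ is the standard Bruhat decomposition of the parabolic $P$, whose Weyl group is $W_J$; it follows from the Bruhat decomposition of $G$ and the fact that $P$ is generated by $B$ together with $\ds_i$ for $i\in J$, using \eqref{eq:Bruhat_s_i_BwB} to close up under multiplication. For \itemref{unip_radical}, $\Uj$ is by construction the unipotent radical of the parabolic $P$ and hence normal in $P$; alternatively, one can verify directly that $\Phij_+$ is stable under the generators of $P$ (translation by arbitrary positive roots, and under $s_i$ for $i\in J$, which permutes $\Phi^+\setminus\{\alpha_i\}$ and stabilizes $\Phi_J^+$). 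The statement for $\Uj_-$ inside $P_-$ is symmetric, and the $L_J$ case follows from $L_J\subset P\cap P_-$.

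For \itemref{lemma:bracketsJ}, the three-fold biregular decomposition $\Uj_-\times L_J\times\Uj\xrasim\Goj$ is the main substance. My plan is to factor it through two biregular isomorphisms: first $\Uj_-\times P\xrasim P_-P$, then $L_J\times\Uj\xrasim P$ (the Levi decomposition used above). For the first, I would observe that the projection $\pi\colon G\to G/P$ identifies $P_-P/P$ with the unique open $P_-$-orbit on $G/P$, and that the restriction $\Uj_-\to P_-P/P$, $u\mapsto uP$, is a biregular isomorphism: injectivity reduces to $\Uj_-\cap P\subset\Uj_-\cap U_-\cap P=\Uj_-\cap U_J^-=\{e\}$, which holds because $\Phi\pj_-$ and $\Phi_J^-$ are disjoint subsets of $\Phi^-$, while surjectivity uses $P_-=\Uj_-\cdot L_J\subset\Uj_-\cdot P$. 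Uniqueness of the factorization $x=[x]_-\pj\cdot[x]_J\cdot[x]_+\pj$ is then immediate from biregularity. The equivariance formula will follow by writing $p_-=[p_-]_-\pj\,[p_-]_J$ and $p_+=[p_+]_J\,[p_+]_+\pj$, expanding $p_-xp_+$, and sliding the $L_J$-factors into the middle using part \itemref{unip_radical}, which guarantees $[p_-]_J\,[x]_-\pj\,[p_-]_J^{-1}\in\Uj_-$ and $[p_+]_J^{-1}\,[x]_+\pj\,[p_+]_J\in\Uj$; reading off the $L_J$-component of the result gives $[p_-]_J[x]_J[p_+]_J$.

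For \itemref{bracketsJ_homo}, the partition $\Phi^+=\Phi_J^+\sqcup\Phij_+$ into bracket-closed subsets gives, via \cref{U(R)_generated}, a biregular direct-product isomorphism $U_J\times\Uj\xrasim U$; moreover $\Uj$ is normal in $U$ by the same argument as in \itemref{unip_radical}. Thus for $b\in U\subset\Goj$, part \itemref{lemma:bracketsJ} factors $b$ as $1\cdot b_{U_J}\cdot b_{\Uj}$, so $[b]_J=b_{U_J}$ is precisely the Levi projection $U\to U_J$, and is a group homomorphism by normality of $\Uj$. The values at $x_i(t)$ follow because $x_i(t)\in U_J$ iff $\alpha_i\in\Phi_J^+$ iff $i\in J$, and otherwise $x_i(t)\in\Uj$; the argument for $y_i(t)$ is symmetric, using $U_-=U_J^-\cdot\Uj_-$. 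I expect the main obstacle to be the biregularity in \itemref{lemma:bracketsJ}, but this should reduce cleanly to the well-known biregular identification $P_-P/P\xrasim\Uj_-$ of the open Schubert cell in $G/P$ combined with the Levi decomposition of $P$.
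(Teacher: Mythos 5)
Your proposal is correct and takes essentially the same approach as the paper: normality of the unipotent radicals for \itemref{unip_radical}, the factorization of $\Goj$ through $\Uj_-\times P\xrasim\Goj$ and the Levi decomposition $L_J\times\Uj\xrasim P$ for \itemref{lemma:bracketsJ}, and the same sliding-via-normality argument for the equivariance formula and for \itemref{bracketsJ_homo}. The paper simply cites \cite[\S\S30.1--30.2]{Hum} and \cite[Proposition~14.21(iii)]{Bor} for the decompositions where you instead sketch the open-cell/injectivity/surjectivity reasoning directly; otherwise the structure is identical.
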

\begin{proof}
By~\cite[\S30.2]{Hum}, $\Uj$ is the \emph{unipotent radical} (in particular, a normal subgroup) of $P$ and $\Uj_-$ is the unipotent radical of $P_-$. This shows~\itemref{unip_radical}. It follows that $P=L_J\Uj=L_JB$, and therefore $\Goj=P_-B$. By~\cite[\S30.1]{Hum} and~\eqref{eq:bruhat}, $P=\bigsqcup_{r\in W_J}B \dr B$, which proves~\itemref{G_0^J}.

By~\cite[Proposition~14.21(iii)]{Bor}, the multiplication map gives a biregular isomorphism $\Uj_-\times P\to \Goj$. By~\cite[\S30.2]{Hum}, the multiplication map gives a biregular isomorphism $L_J\times \Uj \to P$. Thus we get a biregular isomorphism $\Uj_-\times  L_J\times  \Uj\xrightarrow{\sim} \Goj$. It is clear from the definition that $[p_-xp_+]_J=[p_-]_J[x]_J[p_+]_J$, since we can factorize $p_-=[p_-]_-\pj [p_-]_J$ and $p_+=[p_+]_J[p_+]_+\pj$. Thus we are done with~\itemref{lemma:bracketsJ}, and~\itemref{bracketsJ_homo} follows by repeatedly applying~\itemref{lemma:bracketsJ}.
\end{proof}

\subsection{Affine charts}\label{sec:isomorphisms}
For $u\in W^J$, define $\Cuj:=\du \Goj/P\subset G/P$.  The following maps are biregular isomorphisms for $u\in W^J$ and $v,w\in W$ (see~\cite[Proposition~14.21(iii)]{Bor}, \cite[Proposition~8.5.1(ii)]{Spr}, and~\cite[Corollary~23.60]{FH}):
\begin{align}
\label{eq:Cuj_to_Uj}
  \du\Uj_-\du^{-1}&\xrightarrow{\sim} \Cuj,  &&\gj\mapsto \gj\du P,\\
\label{eq:X_v_coinv(w)}
\dv U_-\dv^{-1}\cap U_-&\xrightarrow{\sim} \X_v, &&g\mapsto g\dv B,\\
\label{eq:X^w_inv(w)}
\dw U_-\dw^{-1}\cap U&\xrightarrow{\sim} \X^w, &&g\mapsto g\dw B.
\end{align}
As a consequence of~\eqref{eq:X_v_coinv(w)} and~\eqref{eq:X^w_inv(w)}, we get
\begin{equation}\label{eq:BuB_uU_cap_Uu}
B_-\dv B=(\dv U_-\cap U_-\dv)\cdot B,\quad B\dw B=(\dw U_-\cap U\dw) \cdot B.
\end{equation}

The isomorphism in~\eqref{eq:Cuj_to_Uj} identifies an open dense subset $\Cuj$ of $G/P$ with the group $\du \Uj_-\du^{-1}$. We now combine this with \cref{lemma:KWY}.

\begin{definition}
\label{dfn:gj_and_stuff} Let $\Uj_1:=\du\Uj_-\du^{-1}\cap U$ and $\Uj_2:=\du\Uj_-\du^{-1}\cap U_-$. For $x\in\du\Goj$, consider the element $\gj\in\du\Uj_-\du^{-1}$ such that $\gj\du\in xP\cap \du\Uj_-$, which is unique by~\eqref{eq:Cuj_to_Uj}. Further, let $\hj_1,\gj_1\in\Uj_1$ and $\hj_2,\gj_2\in\Uj_2$ be the elements such that $\hj_2\gj=\gj_1$ and $\hj_1\gj=\gj_2$. By~\eqref{eq:Cuj_to_Uj}, the map $x\mapsto \gj$ is regular, and the map $\gj\mapsto (\gj_1,\gj_2,\hj_1,\hj_2)$ is regular by \cref{lemma:KWY}. Let us denote by $\hjmap:\du\Goj\to \Uj_2$ the map $x\mapsto \hjmp_x:=\hj_2$. It descends to a regular map $\hjmap:\Cuj\to \Uj_2$ sending $xP$ to $\hjmp_x$.
\end{definition}

\section{Subtraction-free parametrizations}\label{sec:subtraction-free}
We study subtraction-free analogs of Marsh--Rietsch parametrizations~\cite{MR} of $\GBtnn$.
\subsection{Subtraction-free subsets}
Given some fixed collection $\t$ of variables of size $|\t|$, let $\R[\t]$ be the ring of polynomials in $\t$, and $\R_{>0}[\t]\subset\R[\t]$ be the semiring of nonzero polynomials in $\t$ with positive real coefficients. Let $\Fcal:=\R(\t)$ be the field of rational functions in $\t$. Define
\[\Qsfs:=\{R(\t)/Q(\t)\mid R(\t),Q(\t)\in\R_{>0}[\t]\},\quad \Qsf:=\{0\}\sqcup\Qsfs,\] 
\[\Fcalo:=\{R(\t)/Q(\t)\mid R(\t)\in\R[\t],\ Q(\t)\in\R_{>0}[\t]\}.\]
We call elements of $\Qsf$ \emph{subtraction-free rational expressions in $\t$}. In this section, we assume that $\K=\Fcalb$ is the algebraic closure of $\Fcal$.

\begin{definition}
\label{dfn:Uo} Let $\Tsf\subset \TQT$ be the subgroup generated by $\alphacheck_i(t)$ for $i\in I$ and $t\in \Qsfs$. Let $\Go\subset G$ be the subgroup generated by 
\[\{x_i(t), y_i(t)\mid i\in I,\  t\in \Fcalo\}\cup\{\dw\mid w\in W\}\cup \Tsf.\]
We define subgroups $\Uo:=U\cap \Go$, $\Uo_-:=U_-\cap \Go$, $\Bsf:=\Tsf \Uo=\Uo\Tsf$ and $\Bsf_-:=\Tsf \Uo_-=\Uo_-\Tsf$ (cf.~\cref{Tsf_commutes} below). We also put $\Uo(\Theta):=\Uo\cap U(\Theta)$ (respectively, $\Uo_-(\Theta):=\Uo_-\cap U_-(\Theta)$) for a bracket closed subset $\Theta$ of $\Phi^+$ (respectively, of $\Phi^-$).  Given a reduced word $\i$ for $w\in W$, define
\begin{equation}\label{eq:Usf_dfn}
\Usf(w):=\{\bx_\bi(\t')\mid \t'\in\QsfsN n\},\quad \Usf^-(w):=\{\by_\bi(\t')\mid \t'\in\QsfsN n\}.
\end{equation}
\end{definition}
\noindent These subsets do not depend on the choice of $\i$; see~\cite[\S3]{BZ}.

For two subsets $H_1$ and $H_2$ of $G$, we say that $H_1$ \emph{commutes setwise with} $H_2$ if $H_1\cdot H_2=H_2\cdot H_1$. We say that $H_1$ \emph{commutes setwise with} $g\in G$ if $H_1\cdot g=g\cdot H_1$.

\begin{lemma}\label{Tsf_commutes} $\Tsf$ commutes setwise with $\Bsf$, $\UQT$, $\UQT_-$, $\Uo(\Theta)$, $\Uo_-(\Theta)$, $\Usf(w)$, $\Usf^-(w)$, and $\dw$.
\end{lemma}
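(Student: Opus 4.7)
The plan is to reduce the entire lemma to a single observation: for any $a \in \Tsf$ and any $i \in I$, the scalar $a^{\pm\alpha_i}$ lies in $\Qsfs$. Indeed, $a$ is a product of generators $\alphacheck_j(t_j)$ with $t_j \in \Qsfs$, and the pairing $\langle \alphacheck_j, \alpha_i \rangle$ is a Cartan integer, so $a^{\pm\alpha_i}$ is a monomial in the $t_j$ with integer exponents. Since $\Qsfs$ is closed under reciprocals (the inverse of $R/Q$ is $Q/R$), it is closed under integer powers. Combined with~\eqref{eq:torus_conj}, this gives $a x_i(t) a^{-1} = x_i(a^{\alpha_i} t)$, with $a^{\alpha_i} t$ remaining in $\Fcalo$ when $t \in \Fcalo$ and in $\Qsfs$ when $t \in \Qsfs$ (using that $\Fcalo$ is a ring containing $\Qsfs$), and analogously for $y_i(t)$.

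With this in hand, I verify each commutation claim in turn. For $U$ and $U_-$, the statement is immediate since the full torus $T \supset \Tsf$ normalizes both subgroups. For $\Uo = U \cap \Go$ and $\Uo_- = U_- \cap \Go$, conjugation by $a \in \Tsf \subset \Go$ preserves both $U$ and $\Go$, hence preserves their intersection. For $\Uo(\Theta) = \Uo \cap U(\Theta)$, I use that $T$ normalizes each root subgroup $U_\alpha$ and therefore $U(\Theta)$, and intersect with the $\Tsf$-stable $\Uo$; the case of $\Uo_-(\Theta)$ is symmetric.

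For $\Usf(w)$ and $\Usf^-(w)$, the central observation directly yields
\[
a\,\bx_\bi(t'_1, \ldots, t'_n)\,a^{-1} \;=\; \bx_\bi\bigl(a^{\alpha_{i_1}} t'_1, \ldots, a^{\alpha_{i_n}} t'_n\bigr),
\]
whose arguments remain in $\Qsfs$, and the argument for $\Usf^-(w)$ is analogous. For $\dw$, it suffices to show $\dw^{-1}\Tsf\dw \subset \Tsf$ (applying this to $w^{-1}$ in place of $w$ gives the reverse inclusion). Since $\Tsf$ is generated by the $\alphacheck_i(t)$, I need only check that $\dw^{-1} \alphacheck_i(t) \dw = (w^{-1}\alphacheck_i)(t)$ lies in $\Tsf$; expanding $w^{-1}\alphacheck_i = \sum_j c_j \alphacheck_j$ in the simple coroot basis with $c_j \in \Z$ gives $(w^{-1}\alphacheck_i)(t) = \prod_j \alphacheck_j(t^{c_j}) \in \Tsf$. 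Finally, for $\Bsf = \Tsf \Uo$, I combine commutation with $\Uo$ and the abelianness of $\Tsf$; the same reasoning shows $\Tsf \Uo = \Uo \Tsf$, justifying the two equivalent expressions for $\Bsf$ in \cref{dfn:Uo}. There is no serious obstacle: the entire proof is a bookkeeping exercise powered by the closure of $\Qsfs$ under integer powers.
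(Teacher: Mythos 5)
Your proof is correct and follows essentially the same route as the paper: both reduce everything to the observation that conjugation by $a \in \Tsf$ scales root-subgroup parameters by elements of $\Qsfs$ (via~\eqref{eq:torus_conj}), plus $W$-stability of the coroot lattice for the $\dw$ case. Your only notable deviation is in the $\Uo(\Theta)$ case, where the paper explicitly introduces $x_\alpha(t) := \dw x_i(t)\dw^{-1}$ and computes $ax_\alpha(t)a^{-1} = x_\alpha(a^\alpha t)$, whereas you observe more softly that $\Tsf \subset \Go$ normalizes $\Go$ while $T$ normalizes $U(\Theta)$, so $\Tsf$ normalizes the intersection $\Uo(\Theta) = \Uo \cap U(\Theta)$ --- a slightly cleaner argument that avoids parametrizing $\Uo(\Theta)$.
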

\begin{proof}
It follows from~\eqref{eq:torus_conj} that $\Tsf$ commutes setwise with $\Bsf$, $\UQT$, $\UQT_-$, $\Usf(w)$, $\Usf^-(w)$, and $\dw$. For $\Uo(\Theta)$, $\Uo_-(\Theta)$, we use a generalization of~\eqref{eq:torus_conj}: for $\alpha\in\Phi^+$, $i\in I$, and $w\in W$ such that $w\alpha_i=\alpha$, write $x_\alpha(t):=\dw x_i(t)\dw^{-1}\in \Uo(\{\alpha\})$ and $y_\alpha(t):=\dw y_i(t)\dw^{-1}\in \Uo_-(\{-\alpha\})$ for $t\in\Fcalo$. Then~\eqref{eq:torus_conj} implies $ax_\alpha(t)a^{-1}=x_\alpha(a^{\alpha}t)$ and $ay_\alpha(t)a^{-1}=y_\alpha(a^{-\alpha}t)$.
\end{proof}

Let us now introduce subtraction-free analogs of Marsh--Rietsch parametrizations. Let $v\leq w\in W$ and $(\bv,\bw)\in\RedMR(v,w)$. Recall that for $\t'=(t'_k)_{k\in J_\bv^\circ}\in(\Kast)^{J_\bv^\circ}$,  $\gMRvw(\t')=g_1\cdots g_n$ is defined in~\eqref{eq:gMRvw}. Define $\GMRsf\bv\bw:=\{\gMRvw(\t')\mid \t'\in \QsfsN{J_\bv^\circ}\}\subset \Go$. The following result is closely related to~\cite[Lemma~11.8]{MR}.

\begin{lemma}
\label{lemma:MR_move_right} Let $v\leq w\in W$ and $(\bv,\bw)\in\RedMR(v,w)$. Let $\gMRvw(\t')$ be as in~\eqref{eq:gMRvw} for $\t'\in\QsfsN{J_\bv^\circ}$. Then for each $k=0,1,\dots,n$ and for all $x\in \Uo\cap \dv\pd {k}^{-1}U_-\dv\pd {k}$, we have
\begin{equation}\label{eq:MR_move_right}
g_1\cdots g_k \cdot x\cdot  g_{k+1}\cdots g_n\in g_1\cdots g_n \cdot \Uo.
\end{equation}
\end{lemma}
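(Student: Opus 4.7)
The plan is downward induction on $k$, starting from the trivial base case $k=n$ (where the right product in \eqref{eq:MR_move_right} is empty and the inclusion reduces to the hypothesis $x \in \Uo$). For the inductive step I would prove the one-step commutation: for every $x \in \Uo \cap \dv\pd k^{-1} U_- \dv\pd k$ there exists $x' \in \Uo \cap \dv\pd{k+1}^{-1} U_- \dv\pd{k+1}$ with $x \cdot g_{k+1} = g_{k+1} \cdot x'$. Applying the induction hypothesis at level $k+1$ to $x'$ then yields \eqref{eq:MR_move_right} at level $k$, since $g_1 \cdots g_k \cdot x \cdot g_{k+1} \cdots g_n = g_1 \cdots g_{k+1} \cdot x' \cdot g_{k+2} \cdots g_n$.

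Throughout both cases, the positivity of the subexpression $\bv$ forces $v\pd k < v\pd k s_{i_{k+1}}$, i.e.\ $v\pd k \alpha_{i_{k+1}} \in \Phi^+$, hence $\alpha_{i_{k+1}} \notin \Inv(v\pd k)$. If $k+1 \in J_\bv^+$, then $g_{k+1} = \ds_{i_{k+1}}$ and $\dv\pd{k+1} = \dv\pd k \ds_{i_{k+1}}$, and I would set $x' := \ds_{i_{k+1}}^{-1} x \ds_{i_{k+1}}$. Decomposing $x$ into $U_\alpha$-factors for $\alpha \in \Inv(v\pd k)$ via \cref{U(alpha)_iso} and \eqref{eq:U(Inv)}, the conjugate $x'$ is a product of factors in $U_{s_{i_{k+1}}\alpha}$, and since $s_{i_{k+1}}$ permutes $\Phi^+ \setminus \{\alpha_{i_{k+1}}\}$, each such factor lies in $U$. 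Together with $x, \ds_{i_{k+1}} \in \Go$, this gives $x' \in U \cap \Go = \Uo$. The condition $\dv\pd{k+1} x' \dv\pd{k+1}^{-1} \in U_-$ is then immediate from $\dv\pd{k+1}\ds_{i_{k+1}}^{-1} = \dv\pd k$.

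The delicate case, and the main obstacle, is $k+1 \in J_\bv^\circ$, where $g_{k+1} = y_{i_{k+1}}(t'_{k+1})$ and $v\pd{k+1} = v\pd k$, because conjugation by $y_{i_{k+1}}(\pm t'_{k+1})$ does not a priori preserve $U$. I would apply \cref{commute_alpha_beta} with $i = i_{k+1}$ and $t = -t'_{k+1}$ to each $U_\alpha$-factor of $x$: since $\alpha \in \Inv(v\pd k)$ while $\alpha_{i_{k+1}} \notin \Inv(v\pd k)$, we have $\alpha \neq \alpha_{i_{k+1}}$, and the lemma produces $y_{i_{k+1}}(-t'_{k+1}) x_\alpha(s_\alpha) y_{i_{k+1}}(t'_{k+1}) \in U(\Psi_\alpha)$ with $\Psi_\alpha := \{m\alpha - r\alpha_{i_{k+1}} : m > 0,\ r \geq 0\} \cap \Phi$. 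The crux is to verify $\Psi_\alpha \subseteq \Inv(v\pd k)$: each $m\alpha - r\alpha_{i_{k+1}} \in \Phi$ is positive (because $\alpha$ carries a nonzero coefficient on some simple root different from $\alpha_{i_{k+1}}$, forcing the same for $m\alpha - r\alpha_{i_{k+1}}$), and it is sent to a negative root by $v\pd k$ since $v\pd k \alpha < 0$ while $v\pd k \alpha_{i_{k+1}} > 0$. Combining the factor-by-factor commutations then produces $x' := y_{i_{k+1}}(-t'_{k+1}) x y_{i_{k+1}}(t'_{k+1}) \in U(\Inv(v\pd k)) = U(\Inv(v\pd{k+1}))$, and since $\pm t'_{k+1} \in \Fcalo$ we also have $x' \in \Go$, whence $x' \in \Uo$, completing the induction.
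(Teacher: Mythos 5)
Your proof is correct and follows essentially the same route as the paper: downward induction on $k$ with base case $k=n$, a split into the cases $k+1\in J_\bv^+$ and $k+1\in J_\bv^\circ$, and in the latter case an invocation of \cref{commute_alpha_beta} combined with the verification $\Psi_\alpha\subseteq\Inv(v\pd k)$ using $v\pd k\alpha<0$, $v\pd k\alpha_{i_{k+1}}>0$. The only superficial difference is in the $J_\bv^+$ case, where the paper cites \eqref{eq:Inv(ab)_commute2} while you rederive the same inclusion from the root-subgroup decomposition and the fact that $s_{i_{k+1}}$ permutes $\Phi^+\setminus\{\alpha_{i_{k+1}}\}$; both are valid and of comparable length.
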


\begin{proof}
We prove this by induction on $k$. For $k=n$, the result is trivial, so suppose that $k<n$. Let $x\in \Uo\cap \dv\pd k^{-1}U_-\dv\pd k$. If $g_{k+1}= \ds_i$ for some $i\in I$ then $\ell(v\pd{k+1})=\ell(v\pd k)+\ell(s_i)$, so we use~\eqref{eq:Inv(ab)_commute2} to show that $x\cdot g_{k+1}=g_{k+1}\cdot x'$ for some $x'\in U\cap \dv\pd {k+1}^{-1}U_-\dv\pd {k+1}$. Since $x'=\ds_i^{-1} x\ds_i$ and each term belongs to $\Go$, we see that $x'\in \Uo\cap \dv\pd {k+1}^{-1}U_-\dv\pd {k+1}$, so we are done by induction.

Suppose now that $g_{k+1}=y_i(t)$ for some $i\in I$ and $t\in \Qsfs$. Write 
\[x\cdot g_{k+1}=g_{k+1}\cdot g_{k+1}^{-1}xg_{k+1}=g_{k+1}\cdot y_i(-t)xy_i(t).\] 
By~\eqref{eq:U(Inv)}, $\Uo\cap \dv\pd k^{-1}U_-\dv\pd k=\Uo(\Inv(v\pd k))$. Clearly again $y_i(-t)xy_i(t)\in\Go$, and we claim that $y_i(-t) xy_i(t)\in U(\Inv (v\pd k))$ for all $x\in U(\Inv (v\pd k))$. First, using \cref{U(alpha)_iso}, we can assume that $x\in U_\alpha$ for some $\alpha\in \Inv (v\pd k)$. Since $v\pd k s_i>v\pd k$, we have $\alpha_i\notin \Inv(v\pd k)$, so $\alpha\neq\alpha_i$. Let $\Psi=\{m\alpha-r\alpha_i\}\subset \Phi^+$ be the set of roots as in \cref{commute_alpha_beta}. Our goal is to show that $\Psi\subset \Inv(v\pd k)$. Let $\gamma:=m\alpha-r\alpha_i\in \Psi$ for some $m>0$ and $r\geq0$. We now show that $\gamma\in \Inv (v\pd k)$, which is equivalent to saying that $v\pd k \gamma<0$. Indeed, $v\pd k\gamma=mv\pd k\alpha-rv\pd k\alpha_i$. Since $\alpha\in \Inv(v\pd k)$,  $v\pd k\alpha<0$. Since $\alpha_i\notin \Inv(v\pd k)$,  $v\pd k\alpha_i>0$. Thus $v\pd k\gamma<0$, because $-v\pd k\gamma$ is a positive linear combination of positive roots. We have shown that $\Psi\subset \Inv(v\pd k)$, and thus by  \cref{commute_alpha_beta}, we find $y_i(-t) xy_i(t)\in U(\Inv(v\pd k))$. Since $v\pd k=v\pd{k+1}$, we get
\[y_i(-t) xy_i(t)\in \Uo(\Inv(v\pd k))=\Uo\cap \dv\pd k^{-1}U_-\dv\pd k=\Uo\cap \dv\pd {k+1}^{-1}U_-\dv\pd {k+1},\] 
and we are done by induction.
\end{proof}
\begin{proposition}
\label{prop:GMR_sf} For $v\leq w\in W$, the set $\GMRsf\bv\bw \cdot \Uo\subset \Go$ does not depend on the choice of $(\bv,\bw)\in\RedMR(v,w)$. In other words: let $(\bv_0,\bw_0),(\bv_1,\bw_1)\in\RedMR(v,w)$. Then for any $\t_0\in \QsfsN{J_{\bv_0}^\circ}$ there exist $\t_1\in   \QsfsN{J_{\bv_1}^\circ}$ and $x\in \Uo$ such that $\gMR{\bv_0}{\bw_0}(\t_0)=\gMR{\bv_1}{\bw_1}(\t_1)\cdot x$.
\end{proposition}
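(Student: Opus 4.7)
The plan is to reduce to an elementary braid move and then establish a local subtraction-free identity inside the window where the move takes place. By Matsumoto's theorem (see~\cite[Theorem~3.3.1]{BB}), any two reduced words for $w$ are connected by a chain of braid moves, so by transitivity it suffices to handle the case in which $\bw_1$ is obtained from $\bw_0$ by a single braid move. Concretely, suppose the underlying reduced words $\bi_0=(i_1,\dots,i_n)$ and $\bi_1$ agree outside a window $[k,k+m-1]$, where $m$ is the order of $s_is_j$ in $W$, and inside the window $\bi_0$ reads $(i,j,i,j,\dots)$ while $\bi_1$ reads $(j,i,j,i,\dots)$. Let $\bv_0=(v\pd 0,\dots,v\pd n)$ and $\bv_1=(v'\pd 0,\dots,v'\pd n)$ be the unique positive subexpressions for $v$ inside $\bw_0$ and $\bw_1$. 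By \cref{lemma:positive_subexpression} (whose inductive formula~\eqref{eq:pos_subexpr_inductively} runs from right to left), we have $v\pd\ell=v'\pd\ell$ for all $\ell\geq k+m-1$ and also for all $\ell<k$, so the two subexpressions can differ only on the window.

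Inside the window the product factorizes as $g_k\cdots g_{k+m-1}$ where each $g_\ell$ is either $\ds_{i_\ell}$ (if $\ell\in J_{\bv_0}^+$) or $y_{i_\ell}(t_\ell)$ with $t_\ell\in\Qsfs$ (if $\ell\in J_{\bv_0}^\circ$), and analogously for $\bv_1$. The key step is to prove a local identity of the form
\begin{equation}\label{eq:local_braid_plan}
g_k^{(0)}\cdots g_{k+m-1}^{(0)}\;=\;g_k^{(1)}\cdots g_{k+m-1}^{(1)}\cdot x,
\end{equation}
where the parameters of the right-hand side lie in $\Qsfs$ and depend subtraction-freely on the parameters of the left-hand side, and where $x\in \Uo\cap \dv\pd{k+m-1}^{-1}U_-\dv\pd{k+m-1}$. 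Once \eqref{eq:local_braid_plan} is in hand, \cref{lemma:MR_move_right} applied to the sub-expression $g_{k+m}\cdots g_n$ of $\gMR{\bv_1}{\bw_1}(\t_1)$ lets us absorb $x$ into an element of $\Uo$ placed on the far right, yielding the required factorization $\gMR{\bv_0}{\bw_0}(\t_0)=\gMR{\bv_1}{\bw_1}(\t_1)\cdot x'$ with $x'\in\Uo$.

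To establish \eqref{eq:local_braid_plan}, I would enumerate the possible ``types'' of positive subexpression across the window. The two extreme cases are easy: if every position in the window lies in $J_{\bv_0}^+$, then $v\pd{k+m-1}=v\pd{k-1}\cdot s_is_j\cdots$ has length $\ell(v\pd{k-1})+m$, the same subexpression-type must occur in $\bv_1$ by~\eqref{eq:pos_subexpr_inductively}, and \eqref{eq:local_braid_plan} reduces to the braid identity $\ds_i\ds_j\cdots=\ds_j\ds_i\cdots$ for the Tits representatives~\cite[Proposition~9.3.2]{Spr} with $x=1$; if every position lies in $J_{\bv_0}^\circ$, then the same holds for $\bv_1$ and \eqref{eq:local_braid_plan} becomes the classical subtraction-free braid identity (e.g., $y_i(a)y_j(b)y_i(c)=y_j(\tfrac{bc}{a+c})y_i(a+c)y_j(\tfrac{ab}{a+c})$ for $m=3$) again with $x=1$. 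The main technical obstacle is the finite but intricate case-analysis for the ``mixed'' windows, where some positions of the window are in $J^+$ and others in $J^\circ$, and where the subexpression-types on the two sides may genuinely differ.

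For each mixed case I would derive~\eqref{eq:local_braid_plan} by repeatedly applying the subtraction-free identity $x_i(s)y_i(t)=y_i(\tfrac{t}{1+st})\,\alphacheck_i(1+st)\,x_i(\tfrac{s}{1+st})$ (valid in $\Go$), the commutation relation $\ds_i=y_i(1)x_i(-1)y_i(1)$ from~\eqref{eq:x_i(t)}, and \cref{commute_alpha_beta} to push $x_i$-type factors past the $\ds$'s on the right; each move contributes a factor in $\Uo\cdot \Tsf$ whose $\Uo$-part survives as $x$ and whose $\Tsf$-part is absorbed using \cref{Tsf_commutes}. All resulting parameters are rational expressions in the original $t_\ell$'s with positive-coefficient numerators and denominators, because the only scalars appearing are $1$, the $t_\ell$, and expressions built from them by addition, multiplication, and division. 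Symmetry of the statement in $(\bv_0,\bw_0)$ and $(\bv_1,\bw_1)$ then gives both inclusions, completing the induction and hence the proposition.
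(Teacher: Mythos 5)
Your overall skeleton matches the paper's: reduce to a single braid move via Matsumoto's theorem, prove a local identity producing a residual $x\in\Uo\cap \dv\pd{k+m-1}^{-1}U_-\dv\pd{k+m-1}$, and then absorb $x$ to the right of $g_{k+m}\cdots g_n$ by \cref{lemma:MR_move_right}. The observation that the positive subexpressions agree outside the window (from~\eqref{eq:pos_subexpr_inductively}) is also correctly used. However, the proposal has two significant gaps, and one of them is precisely what the paper's proof is designed to avoid.

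First, the rank-$2$ case analysis you call ``the main technical obstacle'' is not actually carried out; you merely assert that the mixed windows would go through by repeatedly applying commutation relations, but you do not exhibit the identities, and in rank $2$ of type $B_2$ or $G_2$ these are genuinely nontrivial. The paper does not redo these computations: it outsources the subtraction-free coordinate change and the existence of $x\in U$ to~\cite[Proposition~7.2]{Rie2}, and only supplies the piece that Rietsch's argument leaves open.

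Second, and more importantly, the membership $x\in\dv\pd{k+m-1}^{-1}U_-\dv\pd{k+m-1}$ is not proved by your method. You write that ``each move contributes a factor in $\Uo\cdot\Tsf$ whose $\Uo$-part survives as $x$,'' but \cref{commute_alpha_beta} only controls single conjugations by $y_i(t)$, and tracking the accumulated $\Uo$-part through an arbitrary chain of moves so that it lands in the correct set of root subgroups $U(\Inv(v\pd{k+m-1}))$ is nontrivial; your sketch does not attempt it. The paper sidesteps this entirely with a short structural argument: by~\cite[Proposition~5.2]{MR}, both prefixes $h:=g_1\cdots g_{k+m}$ and $h':=g_1\cdots g_k g'_{k+1}\cdots g'_{k+m}$ lie in $U_-\dv\pd{k+m}$, so $x=h'^{-1}h\in\dv\pd{k+m}^{-1}U_-\dv\pd{k+m}$ automatically; and $x\in\Uo$ because $x\in U$ (Rietsch) while $h,h'\in\Go$. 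This global argument is what replaces your local bookkeeping and is the actual new content of the proposition beyond~\cite{Rie2}; without it (or an equivalent), the application of \cref{lemma:MR_move_right} is not justified.
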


\begin{proof}
Recall that for each $\bw_0\in\Red(w)$ there exists a unique positive subexpression $\bv_0$ for $v$ such that $(\bv_0,\bw_0)\in\RedMR(v,w)$. We need to show that choosing a different reduced expression $\bw_1$ for $w$ results in a subtraction-free coordinate change $\t_0\mapsto \t_1$ of the parameters in \cref{thm:GMR_param}. Any two reduced expressions for $w$ are related by a sequence of braid moves, so it suffices to assume that $\bw_0$ and $\bw_1$ differ by a single braid move.

 The explicit formulae for the corresponding coordinate transformations can be found in the proof of~\cite[Proposition~7.2]{Rie2}; however, an extra step is needed to show that those formulae indeed give the correct coordinate transformations. More precisely, suppose that $\Phi'$ is a root subsystem of $\Phi$ of rank $2$, and let $W'$ be its Weyl group. Then it was checked in the proof of~\cite[Proposition~7.2]{Rie2} that for any $v'\leq w'\in W'$, any  $(\bv'_0,\bw'_0),(\bv'_1,\bw'_1)\in \RedMR(v',w')$, and any $\t'_0\in \QsfsN{J_{\bv'_0}^\circ}$, there exist $\t'_1\in   \QsfsN{J_{\bv'_1}^\circ}$ and $x\in U$ such that $\gMR{\bv'_0}{\bw'_0}(\t'_0)=\gMR{\bv'_1}{\bw'_1}(\t'_1)\cdot x$.

Let us now complete the proof of \cref{prop:GMR_sf} (as well as of~\cite[Proposition~7.2]{Rie2}).\footnote{Alternatively, the proof of~\cite[Proposition~7.2]{Rie2} can be completed using~\cite[Theorem~7.1]{MR}. We thank Konni Rietsch for pointing this out to us.} Suppose that $\bw_0$ and $\bw_1$ differ by a braid move along a subword $g_{k+1}\cdots g_{k+m}$ of $g_1\cdots g_n$. Here $g_{k+1}\cdots g_{k+m}=\gMR{\bv'_0}{\bw'_0}(\t'_0)$ as above. Applying a move from~\cite{Rie2}, we transform $g_{k+1}\cdots g_{k+m}$ into $g_{k+1}'\cdots g_{k+m}' x$ for some $x\in U$ and $g_{k+1}'\cdots g_{k+m}'=\gMR{\bv'_1}{\bw'_1}(\t'_1)$. Thus 
\[g_1\cdots g_n=g_1\cdots g_k\cdot g_{k+1}'\cdots g_{k+m}'\cdot x\cdot g_{k+m+1}\cdots g_n.\]
By~\cite[Proposition~5.2]{MR}, the elements $h:=g_1\cdots g_{k+m}$ and $h':=g_1\cdots g_k\cdot g_{k+1}'\cdots g_{k+m}'$ belong to $U_-\dv\pd{k+m}$. Since $h=h'x$, we get $x\in \dv\pd{k+m}^{-1}U_-\dv\pd{k+m}$. Moreover, since $h,h'\in \Go$ and $x\in U$, we must have $x\in \Uo$. Thus by \cref{lemma:MR_move_right}, we have 
\[g_1\cdots g_n\in g_1\cdots g_{k}\cdot g_{k+1}'\cdots g_{k+m}'\cdot g_{k+m+1}\cdots g_n\cdot \Uo.\qedhere\]
\end{proof}
\begin{definition}
\label{Rsf} From now on we denote $\Rsf_v^w:=\GMRsf\bv\bw \Bsf\subset \Go$.  By \cref{prop:GMR_sf}, the set $\Rsf_v^w$ does not depend on the choice of $(\bv,\bw)\in\RedMR(v,w)$. As we discuss in \cref{eval}, $\Rsf_v^w$ is the ``subtraction-free'' analog of $\Rtp_v^w$.
\end{definition}

\subsection{Collision moves} Assume $\K=\Fcalb$. By~\cite[(2.13)]{FZ}, for each $t\in \Qsfs$ there exist  $t_+\in\Qsfs$, $a_+\in \Tsf$, and $t_-\in \Fcalo$ satisfying 
\begin{equation}\label{eq:collision_x}
\ds_i x_i(t)=a_+x_i(t_-)y_i(t_+),\quad x_i(t)\ds_i=y_i(t_+)x_i(t_-)a_+,
\end{equation}
\begin{equation}\label{eq:collision_y}
\ds_i^{-1}y_i(t)=a_+y_i(t_-)x_i(t_+),\quad y_i(t)\ds_i^{-1}=x_i(t_+)y_i(t_-)a_+.
\end{equation}
(Here, each of the four moves yields different $t_+,a_+,t_-$.) By~\cite[(2.11)]{FZ}, for each $t,t'\in \Qsfs$ there exist $t_+,t'_+\in\Qsfs$ and $a_+\in \Tsf$ satisfying
\begin{equation}\label{eq:collision_xy}
x_i(t)y_i(t')=y_i(t'_+)x_i(t_+) a_+,\quad y_i(t')x_i(t)=x_i(t_+)y_i(t'_+) a_+.
\end{equation}
By~\cite[(2.9)]{FZ}, we have
\begin{equation}\label{eq:xy_commute_ineqj}
x_i(t)y_j(t')=y_j(t')x_i(t),\quad \text{for $i\neq j$}.
\end{equation}

As a direct consequence of~\eqref{eq:collision_xy}, \eqref{eq:xy_commute_ineqj}, and \cref{Tsf_commutes}, for any $v,w\in W$ we get
\begin{equation}\label{eq:Usf_commute}
\Usf(v)\cdot\Usf^-(w)\cdot\Tsf=\Usf^-(w)\cdot\Usf(v)\cdot\Tsf.
\end{equation}

\begin{lemma}\leavevmode
\begin{theoremlist}
\item Let $w\in W$. Then
\begin{align}\label{eq:collision1}
 \Bsf_- \cdot \dw^{-1} \cdot \Usf^-(w)=\Bsf_- \cdot \Usf(w^{-1})\quad\text{and}\quad \Usf^-(w) \cdot \dw^{-1} \cdot \Bsf_-=\Usf(w^{-1}) \cdot \Bsf_-.
\end{align}
\item If $v,w\in W$ are such that $\ell(vw)=\ell(v)+\ell(w)$, then 
\begin{equation}\label{eq:collision_length_add}
\dw^{-1}\dv^{-1} \cdot \Usf^-(v)\subset \Bsf_-\cdot \dw^{-1}\cdot  \Usf(v^{-1}).
\end{equation}
\item Let $w_1,\dots,w_k\in W$ be such that $\ell(w_1\cdots w_k)=\ell(w_1)+\dots+\ell(w_k)$. Then for any $h\in \Usf^-(w_1\cdots w_k)$ there exist $b_1\in \Usf(w_1^{-1}),\dots, b_k\in \Usf(w_k^{-1})$ such that for each $1\leq i\leq k$, we have 
\begin{equation}\label{eq:collision_multi}
\dw_i^{-1}\cdots \dw_1^{-1} \cdot h\in \Bsf_-\cdot b_i\cdots b_1.
\end{equation}

\item Let $v\leq w\in W$. Then 
\begin{equation}\label{eq:collision2}
\dv^{-1} \cdot \Usf^-(w)\subset \Bsf_-\cdot \Usf(v^{-1}).
\end{equation}
\end{theoremlist}
\end{lemma}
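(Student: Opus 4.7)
The plan is to prove~\textbf{(i)} by induction on $\ell(w)$ via the subtraction-free collision~\eqref{eq:collision_y}, derive~\textbf{(ii)} from~\textbf{(i)} by commuting a $\Bsf_-$-factor through $\dw^{-1}$ using length-additivity, iterate~\textbf{(ii)} together with~\eqref{eq:Usf_commute} to obtain~\textbf{(iii)}, and deduce~\textbf{(iv)} using the positive-subexpression framework of \cref{lemma:positive_subexpression}.

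For~\textbf{(i)}, the base case $w=\id$ is trivial. For the inductive step, write $w=s_iw'$ length-additive; every $h\in\Usf^-(w)$ factors as $y_i(t)h'$ with $t\in\Qsfs$ and $h'\in\Usf^-(w')$. Apply~\eqref{eq:collision_y} to rewrite $\ds_i^{-1}y_i(t)=a_+y_i(t_-)x_i(t_+)$ with $a_+\in\Tsf$, $t_-\in\Fcalo$, and $t_+\in\Qsfs$; push $a_+$ leftward past $\dw'^{-1}$; and commute $x_i(t_+)$ rightward past $h'$ using~\eqref{eq:xy_commute_ineqj} and~\eqref{eq:collision_xy}. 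The inductive hypothesis then absorbs $\dw'^{-1}\cdot y_i(t_-)\cdot h'$ into $\Bsf_-\cdot\Usf(w'^{-1})$, and the $x_i(\Qsfs)$-factor on the right combines with $\Usf(w'^{-1})$ to form $\Usf(w^{-1})=\Usf(w'^{-1})\cdot x_i(\Qsfs)$ via the length-additive factorization $w^{-1}=w'^{-1}s_i$. A decisive auxiliary identity is $\dw\Tsf\dw^{-1}=\Tsf$ for every $w\in W$: since $\Qsfs$ is closed under inversion, any $\dw$-conjugate of $\alphacheck_i(t)$ equals $\prod_j\alphacheck_j(t^{c_j})$ with $c_j\in\Z$, which still lies in $\Tsf$. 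This lets $\Tsf$-factors produced by collision migrate through Weyl representatives without leaving $\Tsf$ and be absorbed into $\Bsf_-$. The reverse inclusion (needed for equality in~\textbf{(i)}) follows from the bijectivity of~\eqref{eq:collision_y} in the subtraction-free category. The second equation of~\textbf{(i)} follows from the first by applying the anti-automorphism $x\mapsto x^T$ of~\eqref{eq:transpose}, which exchanges $x_i\leftrightarrow y_i$ and sends $\dw\mapsto\dw^{-1}$.

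For~\textbf{(ii)}, apply~\textbf{(i)} to write $\dv^{-1}h=\beta\cdot b$ with $\beta\in\Bsf_-$ and $b\in\Usf(v^{-1})$, so that $\dw^{-1}\dv^{-1}h=\dw^{-1}\beta\cdot b$; using $\dw^{-1}\Tsf=\Tsf\dw^{-1}$ together with the Bruhat-theoretic commutation~\eqref{eq:Inv(ab)_commute} (applied with $(a,b)=(v,w)$), one rewrites $\dw^{-1}\beta$ as an element of $\Bsf_-\dw^{-1}$, yielding~\eqref{eq:collision_length_add}. For~\textbf{(iii)}, induct on $k$, with $k=1$ being~\textbf{(i)}. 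For $k\ge 2$, factor $h=h_1\cdot h''$ with $h_1\in\Usf^-(w_1)$ and $h''\in\Usf^-(w_2\cdots w_k)$ (possible by length-additivity of $w_1\cdots w_k$); apply~\textbf{(ii)} with $(v,w)=(w_1,w_2\cdots w_k)$ to handle $h_1$ and produce $b_1\in\Usf(w_1^{-1})$; apply the inductive hypothesis to $(w_2,\ldots,w_k)$ acting on $h''$ to produce $b_2,\ldots,b_k$; and use~\eqref{eq:Usf_commute} to commute $b_1$ past the intermediate $\Usf^-(w_j)$-factors, absorbing $\Tsf$-residues into $\Bsf_-$.

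For~\textbf{(iv)}, given $v\le w$, fix a reduced word $\bi=(i_1,\ldots,i_n)$ for $w$ and the positive subexpression $\bv$ for $v$ inside $\bw$ (\cref{lemma:positive_subexpression}); this yields a length-additive factorization $v=s_{i_{k_1}}\cdots s_{i_{k_p}}$ with $J_\bv^+=\{k_1<\cdots<k_p\}$. For any $h=y_{i_1}(t_1)\cdots y_{i_n}(t_n)\in\Usf^-(w)$, process $\dv^{-1}h = \ds_{i_{k_p}}^{-1}\cdots\ds_{i_{k_1}}^{-1}\cdot h$ iteratively: slide each $\ds_{i_{k_j}}^{-1}$ rightward past the intervening $y$-factors via the conjugation $\ds_i^{-1}y_{i'}(s)\ds_i\in\Uo_-$ (valid for $i'\ne i$ since $s_i$ permutes the non-simple positive roots), and then apply~\eqref{eq:collision_y} when $\ds_{i_{k_j}}^{-1}$ meets the matching $y_{i_{k_j}}(t_{k_j})$; the produced $x_{i_{k_j}}$-factor is pushed rightward via~\eqref{eq:xy_commute_ineqj} and~\eqref{eq:Usf_commute}. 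After processing all $p$ positions of $J_\bv^+$, the accumulated $\Tsf$- and $\Uo_-$-terms on the left form an element of $\Bsf_-$, and the $p$ $x$-factors on the right assemble into an element of $\Usf(v^{-1})$, yielding~\eqref{eq:collision2}. The \textbf{main obstacle} throughout is the subtraction-free bookkeeping: each collision and conjugation must produce parameters in the correct subsets ($\Qsfs$ for $x_i$-arguments, $\Fcalo$ for $y_i$-arguments on the $\Uo_-$ side), and torus factors transported through Weyl representatives must remain in $\Tsf$. The identity $\dw\Tsf\dw^{-1}=\Tsf$, which requires $\Qsfs$ to contain inverses, is the key technical ingredient making the entire argument close.
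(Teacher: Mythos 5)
Your overall plan is close in spirit to the paper's, but there is a genuine gap in your derivation of part (ii) that propagates to (iii), and your route to (iv) is noticeably harder to make precise than the paper's.

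The paper does not prove (i) directly and then bootstrap; instead it first proves, by induction on $\ell(v)$, the stronger ``master'' inclusion
\[\dv^{-1} \Usf^-(w)\subset \Tsf\cdot (\Uo_-\cap \dv^{-1} U \dv) \cdot \Usf^-(v_1)\cdot \Usf(v^{-1})\]
for any length-additive factorization $vv_1=w$. The crucial feature is that the $\Uo_-$-factor is shown to lie inside the specific subgroup $\Uo_-\cap\dv^{-1}U\dv=\Uo_-(\text{--}\Inv(v))$, not merely in $\Bsf_-$. This refinement is exactly what makes (ii) work: to commute $\dw^{-1}$ leftward past that factor one invokes \eqref{eq:Inv(ab)_commute}, which applies only to $U_-\cap\da^{-1}U\da$, not to arbitrary elements of $\Bsf_-$. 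Your argument for (ii) starts from the conclusion of (i), writes $\dv^{-1}h=\beta b$ with an unconstrained $\beta\in\Bsf_-$, and then asserts that \eqref{eq:Inv(ab)_commute} ``rewrites $\dw^{-1}\beta$ as an element of $\Bsf_-\dw^{-1}$''. This fails: for a generic $\beta\in\Bsf_-$ one does \emph{not} have $\dw^{-1}\beta\in\Bsf_-\dw^{-1}$ (for example, if $\beta$ involves $y_j(s)$ with $s_jw<w$, then $\dw^{-1}y_j(s)\dw$ lies in $U$, not $U_-$). Since your (iii) is derived from (ii), it inherits this gap; moreover, the statement of (iii) requires the same $b_1,\dots,b_k$ to work simultaneously for all $i$, and your induction on $k$ does not address this uniformity, whereas the paper extracts it from the master formula by noting that the $\Usf(w_1^{-1})$-component $b_1'$ of $\dw_1^{-1}h_1$ is determined independently of $i$.

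On the minor points: in (i), when you push $a_+$ past $\dw'^{-1}$ you must also move $y_i(t_-)$ (with $t_-\in\Fcalo$) left past $\dw'^{-1}$ before the inductive hypothesis can be applied, since $y_i(t_-)h'\notin\Usf^-(w')$; this move is legitimate because $s_iw'>w'$ forces $\dw'^{-1}y_i(t_-)\dw'\in\Uo_-\subset\Bsf_-$, but the step should be made explicit. Your appeal to ``bijectivity of the collision move'' for the reverse inclusion is recoverable but vague; the paper instead derives the reverse inclusion cleanly from \eqref{eq:collision1+}, which is a $T$-$\iota$-dual specialization of the master formula. For (iv), your plan of sliding the $\ds_{i_{k_j}}^{-1}$ factors through a Marsh--Rietsch-style positive subexpression is a genuinely different and, in principle, workable route, but it requires you to track group elements $\ds_i^{-1}y_{i'}(s)\ds_i$ in non-simple negative root directions and to argue that the accumulated product remains in $\Bsf_-$ after all slides; these non-simple-root group elements do not fit into any $\Usf^-(\cdot)$, so the bookkeeping is delicate. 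The paper's argument for (iv) is a shorter induction on $\ell(w)$ with two clean cases ($s_iv<v$ and $s_iv>v$), reusing the $\ell(v)=1$ step of the master induction; I would recommend that approach, and more generally that you first establish the refined master inclusion rather than (i) alone.
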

\begin{proof}
Let us prove the following claim: if $vv_1= w$ and $\ell(w)=\ell(v)+\ell(v_1)$, then
\begin{equation}\label{eq:collision_master}
\dv^{-1} \Usf^-(w)\subset \Tsf\cdot (\Uo_-\cap \dv^{-1} U \dv) \cdot \Usf^-(v_1)\cdot \Usf(v^{-1}).
\end{equation}
We prove this by induction on $\ell(v)$. If $\ell(v)=0$ then $v=\id$ and~\eqref{eq:collision_master} is trivial. Otherwise there exists an $i\in I$ such that $v':=s_iv<v$ and thus $w':=s_iw<w$. Let $\by_\bi(\t')\in \Usf^-(w)$. Using~\eqref{eq:collision_y}, we see that for some $t_1\in\Qsfs$, $t_+\in\Qsfs$ and $t_-\in \Fcalo$,
\[\dv^{-1} \cdot \by_\bi(\t')\in \dv'^{-1} \cdot \ds_i^{-1} y_i(t_1)\cdot \Usf^-(w')\subset \Tsf \dv'^{-1} \cdot y_i(t_-) x_i(t_+)\cdot \Usf^-(w').\]
By~\eqref{eq:Usf_commute}, $x_i(t_+)\cdot \Usf^-(w')\subset \Tsf \cdot \Usf^-(w')\cdot \Usf(s_i)$. Clearly $s_iv'>v'$, so $y':=\dv'^{-1}  y_i(t_-)\dv'\in U_-$. On the other hand,  $\dv y' \dv^{-1}= \ds_i^{-1}  y_i(t_-) \ds_i=x_i(-t_-)\in U$. Thus $y'\in U_-\cap \dv^{-1}U\dv$, and it is also clear that $y'\in\Go$. We have shown that 
\begin{equation}\label{eq:tmp_collision}
\dv^{-1} \cdot \by_\bi(\t')\in  \Tsf\cdot  y'\cdot  \dv'^{-1} \cdot \Usf^-(w')\cdot \Usf(s_i)\subset \Tsf \cdot  (\Uo_-\cap \dv^{-1}U\dv)\cdot  \dv'^{-1} \cdot \Usf^-(w')\cdot \Usf(s_i).
\end{equation}
We have $v'v_1= w'$, so by induction, 
\[\dv'^{-1}\cdot  \Usf^-(w')\subset \Tsf\cdot (\Uo_-\cap \dv'^{-1} U \dv') \cdot \Usf^-(v_1)\cdot \Usf(v'^{-1}). \]
Since $\Usf(v'^{-1})\cdot \Usf(s_i)=\Usf(v^{-1})$, we have shown that
\[\dv^{-1} \by_\bi(\t')\in  \Tsf \cdot  (\Uo_-\cap \dv^{-1}U\dv)\cdot(\Uo_-\cap \dv'^{-1}U\dv')\cdot  \Usf^-(v_1)\cdot \Usf(v^{-1}).\]
By~\eqref{eq:Inv(ab)_sqcup} applied to $a=s_i$, $b=v'$, $ab=v$, we get $\Inv(v')\subset \Inv(v)$, so $(\Uo_-\cap \dv'^{-1}U\dv')\subset (\Uo_-\cap \dv^{-1}U\dv)$, and we have finished the proof of~\eqref{eq:collision_master}.

Combining~\eqref{eq:collision_master} with~\eqref{eq:Inv(ab)_commute}, we obtain~\eqref{eq:collision_length_add}. 
Next, \eqref{eq:collision_multi} can be shown by induction: the case $k=0$ is trivial. For $k\geq1$, we can write $h=h_1\cdots h_k\in \Usf^-(w_1)\cdots\Usf^-(w_k)$. By~\eqref{eq:collision_length_add}, we have \[\dw_i^{-1}\cdots \dw_1^{-1}\cdot h_1\cdots h_k\in \Bsf_-\cdot \dw_i^{-1}\cdots \dw_2^{-1}\cdot b_1'\cdot h_2\cdots h_k\]
for some $b_1'\in\Usf(w_1)$ that does not depend on $i$. Using~\eqref{eq:Usf_commute}, we write $b_1'\cdot h_2\cdots h_k=h_2'\cdots h_k'\cdot b_1\in \Usf^-(w_2)\cdots \Usf^-(w_k)\cdot \Usf(w_1)$, and then proceed by induction.

Let us state several further corollaries of~\eqref{eq:collision_master}:
\begin{align}\label{eq:collision1_ref}
\dw^{-1} \cdot \Usf^-(w)&\subset  \Tsf\cdot (\Uo_-\cap \dw^{-1} U \dw) \cdot \Usf(w^{-1}),\\
\label{eq:collision1dual_ref}
\Usf^-(w)\cdot \dw^{-1}&\subset  \Usf(w^{-1})\cdot (\Uo_-\cap \dw U \dw^{-1}) \cdot\Tsf,\\
\label{eq:collision1+}
\dw\cdot \Usf(w^{-1})&\subset   (\Uo\cap \dw U_- \dw^{-1})\cdot \Usf^-(w)\cdot \Tsf.
\end{align}
Indeed, specializing~\eqref{eq:collision_master} to $v=w$, we obtain~\eqref{eq:collision1_ref}. We obtain~\eqref{eq:collision1dual_ref} from~\eqref{eq:collision1_ref} by replacing $w$ with $z:=w^{-1}$ and then applying the involution $x\mapsto x^\iota$ of~\eqref{eq:iota}, while~\eqref{eq:collision1+} is obtained from~\eqref{eq:collision1dual_ref} by applying the involution $x\mapsto x^T$ of~\eqref{eq:transpose}.

To show~\eqref{eq:collision1}, observe that the inclusion $ \Bsf_- \cdot \dw^{-1} \cdot \Usf^-(w)\subset\Bsf_- \cdot \Usf(w^{-1})$ follows from~\eqref{eq:collision1_ref}. To show the reverse inclusion, we use~\eqref{eq:collision1+} to write
\[\Bsf_- \cdot \Usf(w^{-1})=\Bsf_-\cdot \dw^{-1}\cdot \dw\cdot \Usf(w^{-1})\subset \Bsf_-\cdot \dw^{-1}\cdot (\Uo\cap \dw U_- \dw^{-1})\cdot \Usf^-(w).\]
Since $\dw^{-1}\cdot (\Uo\cap \dw U_- \dw^{-1})\subset \Uo_-\dw^{-1}$, we obtain $ \Bsf_- \cdot \dw^{-1} \cdot \Usf^-(w)=\Bsf_- \cdot \Usf(w^{-1})$, which is the first part of~\eqref{eq:collision1}. The second part follows by applying the involution $x\mapsto x^\iota$ of~\eqref{eq:iota}.

It remains to show~\eqref{eq:collision2}. We argue by induction on $\ell(w)$, and the base case $\ell(w)=0$ is clear. Suppose that $v\leq w$, and let $w':=s_iw<w$ for some $i\in I$. If $v':=s_iv<v$ then by the same argument as in the proof of~\eqref{eq:tmp_collision}, we get 
\[\dv^{-1}\cdot  \Usf^-(w)\subset  \Bsf_-\cdot  \dv'^{-1} \cdot \Usf^-(w')\cdot \Usf(s_i).\]
Since $v'\leq w'$, we can apply the induction hypothesis to write $\dv'^{-1} \cdot \Usf^-(w')\subset \Bsf_-\cdot \Usf(v'^{-1})$. We thus obtain 
\[\dv^{-1}\cdot  \Usf^-(w)\subset  \Bsf_- \cdot \Usf(v'^{-1})\cdot \Usf(s_i)= \Bsf_-\cdot \Usf(v^{-1}),\]
finishing the induction step in the case $s_iv<v$. But if $s_iv>v$ then $\dv^{-1}y_i(t_1)\dv\in \Uo_-$, so in this case we have $\dv^{-1} \Usf^-(w)\subset \Uo_-\cdot \dv^{-1} \cdot \Usf^-(w')$, and the result follows by applying the induction hypothesis to the pair $v\leq w'$.
\end{proof}

\subsection{Alternative parametrizations for the top cell}
The following two lemmas are subtraction-free versions of~\cite[Lemmas~4.2 and~4.3]{Rie}.
\begin{lemma}
\label{lemma:switch_MR_w0} Let $v\in W$. Then we have
\[\Rsf_v^{w_0}=\Usf(vw_0)\cdot\dw_0\cdot\Bsf.\]
\end{lemma}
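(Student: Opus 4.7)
By \cref{prop:GMR_sf}, the set $\Rsf_v^{w_0}$ is independent of the choice of pair $(\bv,\bw_0)\in\RedMR(v,w_0)$, so we are free to compute it using any convenient reduced word for $w_0$. The strategy is to pick one that puts $\Rsf_v^{w_0}$ into the form $\Usf^-(\cdot)\cdot\dv\cdot\Bsf$, and then use the subtraction-free collision identities from the previous subsection to rewrite this as $\Usf(vw_0)\cdot\dw_0\cdot\Bsf$.

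Specifically, pick a reduced word $\bi=(i_1,\dots,i_N)$ for $w_0$ (with $N:=\ell(w_0)$) whose last $\ell(v)$ letters form a reduced word for $v$; this is possible since $v\le w_0$. The positive subexpression $\bv$ for $v$ inside the associated $\bw_0$ is then the ``rightmost'' one: $v\pd k=\id$ for $k\le N-\ell(v)$ and $J_\bv^+=\{N-\ell(v)+1,\dots,N\}$. The MR product factors as $\gMR\bv{\bw_0}(\t')=\by_{\bi'}(\t'')\cdot\dv$, where $\bi':=(i_1,\dots,i_{N-\ell(v)})$ is a reduced word for $w_0v^{-1}$, giving the intermediate description
\[
\Rsf_v^{w_0} \;=\; \Usf^-(w_0v^{-1})\cdot\dv\cdot\Bsf.
\]
Since the decomposition $w_0v^{-1}\cdot v=w_0$ is length-additive, the Tits representatives satisfy $\d{(w_0v^{-1})}=\dw_0\cdot\dv^{-1}$.

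For the inclusion $\Rsf_v^{w_0}\subseteq\Usf(vw_0)\cdot\dw_0\cdot\Bsf$, apply~\eqref{eq:collision1dual_ref} with $w=w_0v^{-1}$ and multiply on the right by $\dw_0$; conjugating the correction subgroup $\Uo_-\cap\dw_0\dv^{-1}U\dv\dw_0^{-1}$ past $\dw_0$ turns it into $\Uo\cap\dv^{-1}U\dv=\Uo(\Phi^+\setminus\Inv(v))\subseteq\Bsf$, yielding the desired inclusion. For the reverse inclusion, apply the antiautomorphism $x\mapsto x^T$ of~\eqref{eq:transpose} to~\eqref{eq:collision1_ref} to obtain $\Usf(w^{-1})\cdot\dw\subseteq\Usf^-(w)\cdot(\Uo\cap\dw^{-1}U_-\dw)\cdot\Tsf$; apply this identity with $w=w_0v^{-1}$ and multiply on the right by $\dv$. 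The resulting correction factor $\Uo(\Phi^+\setminus\Inv(v^{-1}))\cdot\dv$ rewrites as $\dv\cdot\Uo(\Phi^+\setminus\Inv(v))\subseteq\dv\cdot\Bsf$ using the elementary root-system identity $v^{-1}(\Phi^+\setminus\Inv(v^{-1}))=\Phi^+\setminus\Inv(v)$, completing the proof.

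The step I expect to be most delicate is the bookkeeping of the various correction subgroups produced by each collision move: verifying that, after conjugation by $\dw_0$ or $\dv$, each of these subgroups becomes a subgroup of $\Uo$ (in fact, $\Uo(\Phi^+\setminus\Inv(v))$) that can be absorbed into $\Bsf$. Both inclusions ultimately reduce to this identification.
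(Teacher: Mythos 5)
Your proof is correct, and the skeleton is the same as the paper's: choose the reduced word for $w_0$ ending with one for $v$, obtain $\Rsf_v^{w_0}=\Usf^-(w_0v^{-1})\cdot\dv\cdot\Bsf$, and then rewrite using the collision machinery and the length-additive factorization $w_0=(w_0v^{-1})\cdot v$. The difference is efficiency, not substance. The paper invokes the two-sided equality~\eqref{eq:collision1}, namely $\Usf^-(w)\cdot\dw^{-1}\cdot\Bsf_-=\Usf(w^{-1})\cdot\Bsf_-$ together with $\Bsf_-\cdot\dw_0=\dw_0\cdot\Bsf$, which gives both inclusions at once in a single line. You instead re-derive each inclusion separately from the one-sided statements~\eqref{eq:collision1_ref} and~\eqref{eq:collision1dual_ref} (and your transpose of~\eqref{eq:collision1_ref} is a legitimate variant of~\eqref{eq:collision1+}), which is exactly the content that went into proving~\eqref{eq:collision1} in the first place. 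The bookkeeping you flag as delicate — tracking that after conjugation by $\dw_0$ or $\dv$ the correction subgroups become $\Uo(\Phi^+\setminus\Inv(v))\subset\Uo\subset\Bsf$, and the identity $v^{-1}(\Phi^+\setminus\Inv(v^{-1}))=\Phi^+\setminus\Inv(v)$ — all checks out. The one thing worth internalizing is that~\eqref{eq:collision1} was stated as an equality precisely so that it could be applied here (and in \cref{lemma:param_sf}) without re-establishing both directions; had you noticed that, the proof collapses to three lines.
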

\begin{proof}
Recall from \cref{Rsf} that $\Rsf_v^w=\GMRsf\bv\bw \cdot\Bsf$. We have $w=w_0$, so choose a reduced expression $\bw_0$ for $w_0$ that ends with $v$. With this choice, $\GMRsf\bv{\bw_0}=\Usf^-(w_0v^{-1}) \cdot\dv$. Thus we can write
\[\Rsf_v^{w_0}=\GMRsf\bv{\bw_0}\cdot \Bsf=\Usf^-(w_0v^{-1})\cdot\dv \cdot\Bsf=\Usf^-(w_0v^{-1})\cdot\dv \dw_0^{-1}\cdot\dw_0 \cdot \Bsf.\]
Let $z:=w_0v^{-1}$. Using~\eqref{eq:collision1} and $\Bsf_-\cdot\dw_0=\dw_0\cdot\Bsf$, we have
\[\Usf^-(w_0v^{-1})\cdot\dv \dw_0^{-1}\cdot\dw_0\cdot\Bsf=\Usf^-(z)\cdot\dz^{-1}\cdot\dw_0\cdot\Bsf= \Usf(z^{-1}) \cdot\dw_0\cdot\Bsf.\]
Combining the above equations, we find $\Rsf_v^{w_0}= \Usf(z^{-1})\cdot\dw_0\cdot\Bsf$, and it remains to note that $z^{-1}=vw_0^{-1}=vw_0$.
\end{proof}

\begin{lemma}
\label{lemma:param_sf} Let $v\leq w\in W$. Then we have 
\begin{equation}\label{eq:param_sf}
\Usf(v^{-1})\cdot \Usf^-(w_0w^{-1})\cdot \Rsf_v^w=\Rsf_\id^{w_0}=\Usf^-(w_0) \cdot\Bsf.
\end{equation}
\end{lemma}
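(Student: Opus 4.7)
The second equality $\Rsf_\id^{w_0}=\Usf^-(w_0)\cdot\Bsf$ is immediate from \cref{Rsf}: by the backward recursion~\eqref{eq:pos_subexpr_inductively}, the positive subexpression of $\id$ inside any reduced expression of $w_0$ stays at $\id$ throughout, so $J^+=\emptyset$ and $\GMRsf{\bv}{\bw_0}$ consists entirely of $y$-factors, giving exactly $\Usf^-(w_0)$.

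For the first equality, my plan is to reduce to \cref{lemma:switch_MR_w0} via the intermediate identity
\begin{equation}\label{eq:plan_intermediate}
\Rsf_v^{w_0}=\Usf^-(w_0 w^{-1})\cdot\Rsf_v^w.
\end{equation}
To prove~\eqref{eq:plan_intermediate}, I would choose a reduced expression of $w_0$ of the form $(\bj,\bw)$, where $\bj$ is reduced for $w_0w^{-1}$ and $\bw$ is reduced for $w$; this is possible because $w\leq w_0$ gives $\ell(w_0)=\ell(w_0w^{-1})+\ell(w)$. Running~\eqref{eq:pos_subexpr_inductively} backward from $v^{(N)}=v$ inside $(\bj,\bw)$, the $\bw$-portion reproduces the positive subexpression of $v$ inside $\bw$ alone (ending at $v^{(m)}=\id$ where $m=\ell(w_0w^{-1})$), and the $\bj$-portion then remains at $\id$ because $\id\cdot s_{j_k}>\id$ always. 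Hence every index of $\bj$ lies in $J^\circ$, the MR-element factors as $\by_\bj(\s)\cdot\GMRsf{\bv}{\bw}(\t')$ with $\s,\t'$ ranging independently, and~\eqref{eq:plan_intermediate} follows.

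With~\eqref{eq:plan_intermediate} in hand, I invoke \cref{lemma:switch_MR_w0} to rewrite $\Rsf_v^{w_0}=\Usf(vw_0)\cdot\dw_0\cdot\Bsf$. Left-multiplying by $\Usf(v^{-1})$, the length-additivity $\ell(v^{-1})+\ell(vw_0)=\ell(v)+(\ell(w_0)-\ell(v))=\ell(w_0)$ lets me concatenate any reduced word for $v^{-1}$ with one for $vw_0$ to obtain a reduced word for $w_0$. Using the independence of $\Usf(\cdot)$ from the choice of reduced word (after \eqref{eq:Usf_dfn}), this yields $\Usf(v^{-1})\cdot\Usf(vw_0)=\Usf(w_0)$. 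A final application of \cref{lemma:switch_MR_w0} at $v=\id$ identifies $\Usf(w_0)\cdot\dw_0\cdot\Bsf$ with $\Rsf_\id^{w_0}$, closing the chain
\[
\Usf(v^{-1})\cdot\Usf^-(w_0 w^{-1})\cdot\Rsf_v^w=\Usf(v^{-1})\cdot\Rsf_v^{w_0}=\Usf(v^{-1})\cdot\Usf(vw_0)\cdot\dw_0\cdot\Bsf=\Rsf_\id^{w_0}.
\]

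The main delicate step is verifying~\eqref{eq:plan_intermediate}: one has to confirm that the unique positive subexpression of $v$ inside the chosen factorized reduced expression $(\bj,\bw)$ decouples as an all-$y$ prefix times the positive subexpression of $v$ inside $\bw$, which relies on the particular shape of the recursion~\eqref{eq:pos_subexpr_inductively} and on the fact that $\bw$ (not $\bj$) is placed at the end of the reduced expression.
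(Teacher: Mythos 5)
Your proof is correct and follows essentially the same route as the paper: both reduce to the prepending identity $\Usf^-(w_0w^{-1})\cdot\Rsf_v^w=\Rsf_v^{w_0}$ and then apply \cref{lemma:switch_MR_w0} twice (once for general $v$, once for $v=\id$) together with the length-additive concatenation $\Usf(v^{-1})\cdot\Usf(vw_0)=\Usf(w_0)$. The only stylistic difference is that you carefully spell out why the positive subexpression of $v$ inside a reduced expression $(\bj,\bw)$ for $w_0$ decouples into an all-$y$ prefix and the positive subexpression inside $\bw$, which the paper leaves as "follows from the definition."
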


\begin{proof}
  It follows from the definition of $\GMRsf\bv\bw$ that if $w' w$ is length-additive then $\Usf^-(w')\Rsf_v^w=\Rsf_v^{w'w}$. Applying this to $w'=w_0w^{-1}$, we get $\Usf^-(w_0w^{-1})\cdot \Rsf_v^w=\Rsf_v^{w_0}$. By \cref{lemma:switch_MR_w0}, we have $\Rsf_v^{w_0}\cdot\Bsf=\Usf(vw_0)\cdot\dw_0\cdot\Bsf$. Thus $\Usf(v^{-1})\cdot \Usf(vw_0)\cdot\dw_0\cdot\Bsf=\Usf(w_0)\cdot\dw_0\cdot\Bsf$, so applying \cref{lemma:switch_MR_w0} again, we find $\Usf(w_0)\cdot\dw_0\cdot\Bsf=\Rsf_{\id}^{w_0}\cdot\Bsf$. The result follows since $\Rsf_{\id}^{w_0}=\Usf^-(w_0)\cdot \Bsf$.
\end{proof}

\subsection{Evaluation}
\label{eval} We explain the relationship between $\Rsf_v^w$ and $\Rtp_v^w$. 
Given $\t'\in\R_{>0}^{|\t|}$, we denote by $\eval_{\t'}:\Qsf\to \R_{>0}$ the evaluation homomorphism (of semifields) sending $f(\t)$ to $f(\t')$. It extends to a well-defined group homomorphism $\eval_{\t'}:\Go\to \GR$, and it follows from \cref{thm:GMR_param} that $\{\eval_{\t'}(g)B\mid g\in \Rsf_v^w\}=\Rtp_v^w$ as subsets of $\GBR$. It is clear that the following diagram is commutative.
  \begin{equation}\label{eq:eval_commute}
\begin{tikzcd}[row sep=15pt]
\Fcal \ar[d,"\eval_{\t'}"',dashed]&    \Go \ar[r, "\Deltapm_i"] \ar[l, "\Deltamp_i"'] \ar[d,"\eval_{\t'}"]& \Fcal \ar[d,"\eval_{\t'}",dashed]
    \\
\R  &  \GR \ar[r,"\Deltapm_i"'] \ar[l,"\Deltamp_i"]  & \R
\end{tikzcd}
  \end{equation}
Here  solid arrows denote regular maps, and dashed arrows denote maps defined on a subset $\Fcal'\subset \Fcal$ given by $\Fcal':=\{R(\t)/Q(\t)\mid R(\t),Q(\t)\in\R[\t],\ Q(\t')\neq0\}.$  Since the diagram~\eqref{eq:eval_commute} is commutative, it follows that the images $\Deltamp_i(\Go)$ and $\Deltapm_i(\Go)$ belong to $\Fcal'$.

Let $\t=(\t',\t'')$. Observe that any $f(\t',\t'')\in\Qsfs$ gives rise to a continuous function $\R_{>0}^{|\t'|}\times \R_{>0}^{|\t''|}\to \R_{>0}$. Moreover, if sending $\t''\to0$ in $f(\t',\t'')$ gives rise to a well-defined subtraction-free rational expression, then $f(\t',\t'')$ extends to a continuous function  $\R_{>0}^{|\t'|}\times \R_{\geq0}^{|\t''|}\to\R_{\geq0}$. Surprisingly, the converse is also true, as our next result shows.

\begin{lemma}
\label{lemma:SF_limit} Suppose that $f(\t',\t'')\in\Qsfs$ is such that the corresponding function $\R_{>0}^{|\t'|}\times \R_{>0}^{|\t''|}\to \R_{>0}$ extends to a continuous function  $\R_{>0}^{|\t'|}\times \R_{\geq0}^{|\t''|}\to\R_{\geq0}$. Then $\lim_{\t''\to 0}f(\t',\t'')$ can be represented (as a function $\R_{>0}^{|\t'|}\to\R_{\geq0}$) by a subtraction-free rational expression in $\t'$.
\end{lemma}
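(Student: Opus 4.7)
The plan is to induct on the number $m := |\t''|$ of variables being sent to zero. The base case $m = 0$ is vacuous, since then $f = f(\t')$ already lies in $\Qsf$. For the inductive step I reduce to the single-variable case $m = 1$ by stripping off one coordinate at a time.

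I first handle $m = 1$, writing $\t'' = t$ and $f = R/Q$ with $R, Q \in \R_{>0}[\t', t]$. Grouping by powers of $t$ gives
\begin{equation*}
R(\t',t) = \sum_i R_i(\t')\, t^i, \qquad Q(\t',t) = \sum_j Q_j(\t')\, t^j,
\end{equation*}
where each $R_i, Q_j \in \R[\t']$ has nonnegative coefficients. Setting $a := \min\{i : R_i \not\equiv 0\}$ and $b := \min\{j : Q_j \not\equiv 0\}$ (well-defined since $R$ and $Q$ are nonzero), we have $R_a, Q_b \in \R_{>0}[\t']$, and
\begin{equation*}
f(\t', t) = t^{a-b} \cdot \frac{R_a(\t') + t R_{a+1}(\t') + \cdots}{Q_b(\t') + t Q_{b+1}(\t') + \cdots}.
\end{equation*}
For each $\t' \in \R_{>0}^{|\t'|}$, the rightmost fraction is strictly positive and tends to $R_a(\t')/Q_b(\t') \in \R_{>0}$ as $t \to 0^+$. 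Therefore the limit is $0$ if $a > b$, equals the subtraction-free expression $R_a(\t')/Q_b(\t') \in \Qsfs$ if $a = b$, and is $+\infty$ if $a < b$, which contradicts the assumption that the continuous extension is $\R_{\geq 0}$-valued. In either admissible case the limit function is represented by an element of $\Qsf$.

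For the inductive step $m \geq 2$, applying the $m = 1$ case to $f$ viewed as a subtraction-free expression in the single variable $t''_1$ over the remaining variables $(\t', t''_2, \dots, t''_m)$ produces some $\tilde g \in \Qsf$ representing the pointwise limit $g(\t', t''_2, \dots, t''_m) := \lim_{t''_1 \to 0^+} f(\t', t''_1, \dots, t''_m)$ on $\R_{>0}^{|\t'|}\times \R_{>0}^{m-1}$. The hypothesized continuous extension $\hat f$ of $f$ to $\R_{>0}^{|\t'|}\times \R_{\geq 0}^m$ restricts to a continuous function on the hyperplane $\{t''_1 = 0\}$, and this restriction provides a continuous extension of $\tilde g$ to $\R_{>0}^{|\t'|}\times \R_{\geq 0}^{m-1}$. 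The inductive hypothesis applied to $\tilde g$ then yields the desired subtraction-free expression in $\t'$ representing $\lim_{(t''_2,\dots,t''_m)\to 0}\tilde g = \hat f(\t', 0) = \lim_{\t'' \to 0} f(\t', \t'')$, where the middle equality uses the continuity of $\hat f$.

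The argument is mostly bookkeeping; the substance is the $m = 1$ observation that the lowest-$t$-degree coefficients $R_a$ and $Q_b$ inherit nonnegative coefficients in the remaining variables, combined with the fact that the $\R_{\geq 0}$-valued continuity hypothesis forbids the pole case $a < b$. The only subtle point is justifying that the iterated limit along coordinate directions coincides with the joint limit $\lim_{\t''\to 0}$, which is automatic from the continuity of $\hat f$.
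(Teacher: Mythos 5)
Your proof is correct and follows essentially the same route as the paper's: reduce by induction to a single variable, factor out the minimal power of that variable in numerator and denominator, rule out the pole case using the continuity hypothesis, and read off the limit as the ratio of the lowest-order coefficients. The extra detail you supply on why iterated limits coincide with the joint limit via the continuous extension $\hat f$ is correct and makes the inductive reduction explicit, though the paper leaves it implicit.
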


\begin{proof}
By induction, it is enough to prove this when $|\t''|=1$, where $\t''=t''$ is a single variable. In this case, $f(\t',t'')=R(\t',t'')/Q(\t',t'')$ where $R$ and $Q$ have positive coefficients. Let us consider $R$ and $Q$ as polynomials in $t''$ only. After dividing $R$ and $Q$ by $(t'')^k$ for some $k$, we may assume that one of them is not divisible by $t''$. Then $Q$ cannot be divisible by $t''$, since otherwise $f$ would not give rise to a continuous function $\R_{>0}^{|\t'|}\times \R_{\geq0}^{|\t''|}\to\R_{\geq0}$. We can write $Q(\t',t'')=Q_1(\t',t'')t''+Q_2(\t')$ and $R(\t',t'')=R_1(\t',t'')t''+R_2(\t')$, where $R_1,R_2,Q_1,Q_2$ are polynomials with nonnegative coefficients and $Q_2(\t')\neq0$. Thus $\lim_{t''\to 0}f(\t',t'')$ can be represented by $R_2(\t')/Q_2(\t')$, which is a subtraction-free rational expression in $\t'$.
\end{proof}

\begin{lemma}\label{lemma:Delta_nonzero}
(Assume $\K=\C$.) Suppose that $a\leq b\leq c\in W$. Then $\Deltamp(\db^{-1} x)\neq 0$ for some $x\in \GR$ such that $xB\in \Rtp_a^c$.
\end{lemma}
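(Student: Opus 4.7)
The plan is to prove the stronger statement that $\db^{-1} x \in \Gomp$ for \emph{every} $x \in \GR$ with $xB \in \Rtp_a^b$, which suffices since $\Rtp_a^b$ is nonempty (being diffeomorphic to $\R_{>0}^{\ell(b) - \ell(a)}$ by \cref{thm:GMR_param}). Interestingly, the hypothesis $b\leq c$ does not appear to be needed; only $a\leq b$ is used.

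First I would observe that $\Rtp_a^b \subset \Rich_a^b \subset \Xcl^b$, so $x \in B\db B$. Then I would invoke the biregular isomorphism \eqref{eq:X^w_inv(w)} (equivalently, the identity \eqref{eq:BuB_uU_cap_Uu}), which writes every element of $B\db B$ uniquely in the form $u\db b_1$ with $u \in \db U_- \db^{-1}\cap U$ and $b_1 \in B$. Thus I can write $x = u\db b_1$ with $u$ of the form $u = \db\tilde u\db^{-1}$ for a unique $\tilde u \in U_-$.

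Putting these together, $\db^{-1} x = \db^{-1}(\db \tilde u \db^{-1})\db\, b_1 = \tilde u\, b_1 \in U_-\cdot B = \Gomp$. By \cref{lemma:G_0}, this gives $\Deltamp_i(\db^{-1} x) \ne 0$ for every $i \in I$, proving the lemma.

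There is no real obstacle here: the matrix identity $\db^{-1}\cdot(\db U_-\db^{-1}\cap U)\cdot\db = U_- \cap \db^{-1}U\db \subseteq U_-$ does all the work, and none of the subtraction-free machinery developed in this section, nor any reality beyond the standard fact that $\Rtp_a^b \ne \emptyset$, is used. The existence statement in the lemma is simply the weakest form of what \eqref{eq:X^w_inv(w)} yields.
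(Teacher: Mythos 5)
Your calculation is internally correct: for any $x$ with $xB \in \X^b = B\db B/B$, the affine chart isomorphism \eqref{eq:X^w_inv(w)} writes $x = \db\tilde u\, b_1$ with $\tilde u \in U_-$ and $b_1 \in B$, so $\db^{-1}x = \tilde u\, b_1 \in U_- B \subset \Gomp$. But this should have worried you: you have proved a statement that holds for \emph{every} point of the Schubert cell $\X^b$, uses no positivity, and (as you yourself flagged) makes the hypothesis $b \le c$ vacuous. Whenever a stated hypothesis drops out and an existential becomes a free universal, suspect a mismatch with what the lemma is really for.

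Indeed, look at where the lemma is invoked (in the proof of \cref{lemma:01sf}) and at the paper's own proof: the contradiction is derived from $\Deltamp_i(\db^{-1}x')=0$ for all $x'$ with $x'B\in\Rtp_a^c$, and the argument takes the Zariski closure of $\Rtp_a^c$. The intended statement is that some $x$ with $xB \in \Rtp_a^{\mathbf{c}}$ has $\Deltamp_i(\db^{-1}x)\neq 0$; the ``$\Rtp_a^b$'' in the printed statement is a typo. For the corrected claim your approach collapses: such $x$ lies in $B\dc B$, not $B\db B$, so $\db^{-1}x$ is in no convenient chart, and the nonvanishing is a genuine total-positivity fact (it is what \cref{lemma:01} ultimately establishes, namely $\Rtp_a^c \subset \db B_- B/B$). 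The paper's argument is of a different nature: if $\Deltamp_i(\db^{-1}\cdot)$ vanished on all of $\Rtp_a^c$, then by Zariski density it would vanish on $\Richcl_a^c$, which by \eqref{eq:GB_closure} and $a\le b\le c$ contains the point $\db B = \Rich_b^b$; but the value there is $\Deltamp_i(\db^{-1}\db)=1$, a contradiction. This is where $b\le c$ enters, and it is the crucial ingredient you bypassed.
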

\begin{proof}
Suppose that $\Deltamp(\db^{-1} x)=0$ for all $x\in\GR$ such that $xB\in\Rtp_a^c$. Consider the map $\minormap_i:G/B\to \Proj^{W\omega_i}$ from \cref{gen_minors_flag}. We get that the $b\omega_i$-th coordinate of $\minormap_i$ is identically zero on $\Rtp_a^c$. Therefore it must be zero on the Zariski closure of $\Rtp_a^c$ inside $G/B$, which is $\Richcl_a^c$. By~\eqref{eq:GB_closure}, $\Richcl_a^c$ contains $\db B=\Rich_b^b$, and thus $\Deltamp_i(\db^{-1} \db)$ must be zero. We get a contradiction since by definition $\Deltamp_i(\db^{-1} \db)=1$.
\end{proof}

\subsection{Applications to the flag variety}
We use the machinery developed in the previous sections to obtain some natural statements about $(G/B)_{\geq0}$. 

\begin{lemma}
\label{lemma:01sf} (Assume $\K=\Fcalb$.) Suppose that $a\leq c\in W$ and $b\in W$. Then for any $x\in\Rsf_a^c$ and $i\in I$,
\begin{equation}\label{eq:lemma_01_sf}
\Deltamp_i(\db^{-1}x)\in\Qsf.
\end{equation} 
 Moreover, if $a\leq b\leq c$ then 
\begin{equation}\label{eq:lemma_01_sfs}
\Deltamp_i(\db^{-1}x)\in\Qsfs,\quad \text{and}\quad x\in \db B_- B.
\end{equation}
\end{lemma}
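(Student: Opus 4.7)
The proof splits into two parts. Part~1 deduces \eqref{eq:lemma_01_sfs} from \eqref{eq:lemma_01_sf}; Part~2 establishes \eqref{eq:lemma_01_sf} by induction. Throughout, I use two elementary facts about $\Deltamp_i$: left-$U_-$-invariance $\Deltamp_i(u_-g)=\Deltamp_i(g)$ for $u_-\in U_-$, $g\in G$ (by a one-line calculation with the highest weight vector in the proof of \cref{gen_minors_flag}), and right-$B$-equivariance $\Deltamp_i(gb)=\Deltamp_i(g)\cdot b^{\omega_i}$ for $b\in B$; note that $b^{\omega_i}\in\Qsfs$ whenever $b\in\Bsf$, since $\Tsf$ is generated by $\alpha^\vee(s)$ with $s\in\Qsfs$.

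\emph{Part 1 (reduction).} Granting \eqref{eq:lemma_01_sf}, $\Deltamp_i(\db^{-1}x)\in\Qsf=\{0\}\sqcup\Qsfs$, and writing $x=\gMR\ba\bc(\t)\cdot b_0$ with $b_0\in\Bsf$ (via \cref{Rsf}) reduces the problem to ruling out $\Deltamp_i(\db^{-1}\gMR\ba\bc(\t))\equiv 0$ in $\Fcal$ when $a\le b\le c$. But \cref{lemma:Delta_nonzero} produces some $y\in\GR$ with $yB\in\Rtp_a^c$ and $\Deltamp_i(\db^{-1}y)\neq 0$; writing $y=\eval_{\t'}(\gMR\ba\bc)\cdot\beta$ for some $\t'\in\R_{>0}^{J_\bv^\circ}$ and $\beta\in B(\R)$ via \cref{thm:GMR_param}, and noting $\beta^{\omega_i}\neq 0$, this gives a positive specialization where the minor is nonzero. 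The commutative diagram \eqref{eq:eval_commute} then forces $\Deltamp_i(\db^{-1}\gMR\ba\bc(\t))\in\Qsfs$, hence $\Deltamp_i(\db^{-1}x)\in\Qsfs$. Applying this for every $i\in I$ and invoking \cref{lemma:G_0} yields $\db^{-1}x\in\Gomp$, i.e.\ $x\in\db B_-B$.

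\emph{Part 2 (subtraction-freeness).} By Part~1's reduction, it suffices to treat $x=\gMR\ba\bc(\t)=g_1\cdots g_n$, with $(\ba,\bc)\in\RedMR(a,c)$ chosen freely thanks to \cref{prop:GMR_sf}. Induct on $n=\ell(c)$: the base case $n=0$ follows from \cref{lemma:G_0} and Bruhat uniqueness \eqref{eq:bruhat}, giving $\Deltamp_i(\db^{-1})=1$ when $b=\id$ and $0$ otherwise. For $n\ge 1$, set $x':=g_2\cdots g_n$ and rewrite $\db^{-1}g_1\cdot x'$ using the collision identities \eqref{eq:collision_x}--\eqref{eq:xy_commute_ineqj} together with the torus relations \eqref{eq:torus_conj}, \cref{Tsf_commutes}, with the goal of expressing it as $u_-\cdot\tau\cdot\db_1^{-1}\cdot\tilde x'$ where $u_-\in\Uo_-$, $\tau\in\Tsf\cup\{1\}$, and $\tilde x'$ is again a subtraction-free MR parametrization for a strictly shorter pair. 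By left-$U_-$-invariance and right-$B$-equivariance, the minor collapses to $\tau^{\omega_i}\cdot\Deltamp_i(\db_1^{-1}\tilde x')$, which lies in $\Qsf$ by the inductive hypothesis.

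\emph{Main obstacle.} The routine case is $g_1=y_{i_1}(t_1)$ with $s_{i_1}b>b$, where $\db^{-1}g_1\db$ lands in $\Uo_-$ and is absorbed into the $U_-$-prefix. The delicate case is $g_1=y_{i_1}(t_1)$ with $s_{i_1}b<b$, where $\db^{-1}g_1\db$ instead lies in $\Uo$; here one writes $b=s_{i_1}b_1$ length-additively, applies \eqref{eq:collision_y} to get $\ds_{i_1}^{-1}y_{i_1}(t_1)=\alpha_{i_1}^\vee(t_1)\cdot y_{i_1}(-t_1)\cdot x_{i_1}(1/t_1)$, pushes $\alpha_{i_1}^\vee(t_1)$ past $\db_1^{-1}$ via \cref{Tsf_commutes}, conjugates $y_{i_1}(-t_1)$ past $\db_1^{-1}$ into $\Uo_-$ (valid because $s_{i_1}b_1>b_1$), and commutes $x_{i_1}(1/t_1)$ past $x'$ via \eqref{eq:collision_xy}--\eqref{eq:xy_commute_ineqj}. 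Verifying that after all these moves the residual element is again of the form $\db_1^{-1}\gMR{\ba'}{\bc'}(\t'')\cdot\Bsf$ with $\ell(c')<n$ and $\t''\in\QsfsN{J_{\ba'}^\circ}$--essentially a localized rerun of \cref{prop:GMR_sf} tracking how $b$, $a$, $c$ shorten--is the bulk of the combinatorial bookkeeping and is the principal technical hurdle.
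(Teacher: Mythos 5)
Your Part 1 (deducing \eqref{eq:lemma_01_sfs} from \eqref{eq:lemma_01_sf}) is essentially the paper's argument and is correct. Your Part 2, however, proposes a genuinely different route — direct induction on $\ell(c)$ by peeling off $g_1$ with collision moves — whereas the paper avoids any such case analysis by first \emph{augmenting} to the top cell: choosing auxiliary variables $\t_1,\t_2$ and setting $g:=\bx_\bi(\t_1)\by_\bj(\t_2)\gMR\ba\bc(\t_3)$, invoking \cref{lemma:param_sf} to place $g\in\Rsf_\id^{w_0}=\Usf^-(w_0)\Bsf$, factoring $\db^{-1}g\in\Bsf_-\Bsf$ in one clean step via \eqref{eq:collision1} and \eqref{eq:Usf_commute}, and then killing the auxiliary variables by the limit argument \cref{lemma:SF_limit}. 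Beyond being cleaner, this detour is not cosmetic: your Part 2 as stated has a genuine gap.

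The gap is the omitted case $g_1=\ds_{i_1}$. When $1\in J_\ba^+$ (which happens unavoidably, e.g.\ whenever $a=c$ so that \emph{every} $g_k$ is a $\ds_{i_k}$), your rewriting must handle $\db^{-1}\ds_{i_1}$. If $s_{i_1}b<b$ this collapses to $\db_1^{-1}$ and the induction goes through; but if $s_{i_1}b>b$ (possible under the hypothesis $a\le c$ of \eqref{eq:lemma_01_sf}, without assuming $a\leq b\leq c$), then $\db^{-1}\ds_{i_1}=\db_1^{-1}\ds_{i_1}^{2}=\db_1^{-1}\alpha_{i_1}^\vee(-1)$ with $b_1:=s_{i_1}b$, and $\alpha_{i_1}^\vee(-1)\notin\Tsf$. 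Evaluating $\omega_i$ on this factor gives $-1$ when $i=i_1$, and conjugating it through $g_2\cdots g_n$ flips signs of various parameters depending on the Cartan entries $a_{i_1,i_k}$; your induction gives no mechanism by which these signs cancel. (Under the stronger hypothesis $a\leq b\leq c$ of \eqref{eq:lemma_01_sfs}, one can check via \cref{cor:positive_sub_v_not_leq_w} and the lifting property that $g_1=\ds_{i_1}$ forces $s_{i_1}b<b$, so the problematic sub-case disappears — but \eqref{eq:lemma_01_sf} is stated for arbitrary $b$.) A secondary issue: even in your ``delicate case,'' after producing $\db_1^{-1}x_{i_1}(1/t_1)x'$, the factor $x_{i_1}(1/t_1)$ does \emph{not} fall within the scope of \cref{lemma:MR_move_right} (at $k=0$ it would require membership in $\Uo\cap U_-=\{1\}$), so the claimed collapse to a shorter subtraction-free MR parametrization is not an application of a lemma already in the paper but a substantial new claim that would itself need to be proved. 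In short: the paper's augment-then-limit strategy is doing real work here, and your induction would need significant further argument (sign cancellation in the $\ds_{i_1}$ case, plus a standalone proof of the reduction step) to close.
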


\begin{proof}
  Let $\t=(\t_1,\t_2,\t_3)$ with $|\t_1|=\ell(a)$, $|\t_2|=\ell(w_0)-\ell(c)$, $|\t_3|=\ell(c)-\ell(a)$. Choose reduced words $\bi$ for $a^{-1}$ and $\bj$ for $w_0c^{-1}$, and let $(\ba,\bc)\in \RedMR(a,c)$. Suppose that $x\in\gMR\ba\bc(\t_3)\Bsf$ and let 
\[g:=\bx_\bi(\t_1)\cdot \by_\bj(\t_2)\cdot \gMR\ba\bc(\t_3)\in \Usf(a^{-1})\cdot \Usf^-(w_0c^{-1})\cdot \Rsf_a^c.\]
By \cref{lemma:param_sf}, $g\in \Usf^-(w_0)\cdot\Bsf=\Usf^-(b)\cdot\Usf^-(b^{-1}w_0)\cdot\Bsf$. By~\eqref{eq:collision1}, we have $\db^{-1} \cdot\Usf^-(b)\subset\Bsf_-\cdot\Usf(b^{-1})$. Therefore
\[\db^{-1}g\in \Bsf_-\cdot\Usf(b^{-1})\cdot\Usf^-(b^{-1}w_0)\cdot\Bsf.\]
By~\eqref{eq:Usf_commute}, we get $\db^{-1}g\in\Bsf_-\cdot\Usf^-(b^{-1}w_0)\cdot\Usf(b^{-1})\cdot\Bsf=\Bsf_-\cdot\Bsf$, and by definition, $\Deltamp_i(y)\in\Qsfs$ for any $y\in \Bsf_-\cdot\Bsf$. Since $\Deltamp_i$ is a regular function on $G$ by \cref{gen_minors_regular}, the function $f(\t_1,\t_2,\t_3):=\Deltamp_i(\db^{-1}g)\in\Qsfs$ extends to a continuous function on $\R_{\geq0}^{|\t_1|}\times \R_{\geq0}^{|\t_2|}\times \R_{>0}^{|\t_3|}$. Therefore by \cref{lemma:SF_limit}, $\lim_{\t_1,\t_2\to0}f(\t_1,\t_2,\t_3)$ is a subtraction-free rational expression in $\t_3$. Since $\lim_{\t_1,\t_2\to0}g=\gMR\ba\bc(\t_3)$, we get that $\Deltamp_i(\db^{-1}\gMR\ba\bc(\t_3))\in\Qsf$. Since $x\in \gMR\ba\bc(\t_3)\Bsf$, \eqref{eq:lemma_01_sf} follows.

Suppose now that $a\leq b\leq c$. We would like to show~\eqref{eq:lemma_01_sfs}, so assume that for some  $i\in I$ and $x\in\Rsf_a^c$, we have $\Deltamp_i(\db^{-1}x)=0$. Let $\t'\in (\Qsfs)^{|\t|}$ and $(\ba,\bc)\in\RedMR(a,c)$ be such that $x\in \gMR\ba\bc(\t') \Bsf$, and let $y(\t):=\gMR\ba\bc(\t)$. Then we have $\Deltamp_i(\db^{-1} y(\t))\in \Qsf$ by~\eqref{eq:lemma_01_sf}. If $\Deltamp_i(\db^{-1} y(\t))$ were a nonzero rational function in $\t$ then clearly substituting $\t\mapsto \t'$ for $\t'\in(\Qsfs)^{|\t|}$ would also produce a nonzero rational function. Since substituting $\t\mapsto \t'$ yields $\Deltamp_i(\db^{-1}x)=0$, we must have $\Deltamp_i(\db^{-1} y(\t))=0$. Therefore $\Deltamp_i(\db^{-1} x')=0$ for all $x'\in \Rsf_a^c$. 

Now let $\t'\in \R_{>0}^{|\t|}$. Recall from \cref{eval} that the image of $\Rsf_a^c$ in $\GBR$ under the map $\eval_{\t'}$ equals $\Rtp_a^c$. Thus by~\eqref{eq:eval_commute}, $\Deltamp_i(\db^{-1}x')=0$ for all $x'\in \GR$ such that $x'B\in\Rtp_a^c$, which contradicts \cref{lemma:Delta_nonzero}. Hence $\Deltamp_i(\db^{-1}x)\in\Qsfs$, and therefore $x\in \db B_- B$ follows from \cref{lemma:G_0}, finishing the proof of~\eqref{eq:lemma_01_sfs}.
\end{proof}

\begin{corollary}
\label{lemma:01} (Assume $\K=\C$.) Suppose that $a\leq c\in W$ and $b\in W$. Then for any $(\ba,\bc)\in\RedMR(a,c)$ and $\t'\in \R_{>0}^{J_\ba^\circ}$, we have
\begin{equation}\label{eq:lemma_01_>=0}
\Deltamp_i(\db^{-1} \gMR\ba\bc(\t'))\geq 0.
\end{equation}
Moreover, if $a\leq b\leq c$ then 
\begin{equation}\label{eq:lemma_01_>0}
\Deltamp_i(\db^{-1}\gMR\ba\bc(\t'))> 0,\quad \text{and}\quad \Rtp_a^c\subset \db B_-B/B.
\end{equation}
\end{corollary}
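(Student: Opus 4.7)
The plan is to reduce everything to \cref{lemma:01sf} by means of the evaluation homomorphism of \cref{eval}. The key point is that \cref{lemma:01sf} was proved over the algebraic closure $\K=\Fcalb$ of the field of rational functions $\Fcal=\R(\t)$, and subtraction-free rational expressions in $\t$ automatically evaluate to nonnegative reals at any $\t'\in\R_{>0}^{|\t|}$.

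First I would specialize \cref{lemma:01sf} to the element $x=\gMR\ba\bc(\t)\in \GMRsf\ba\bc\subset \Rsf_a^c$ (with $\t$ treated as a collection of formal variables indexing $J_\ba^\circ$). That lemma immediately gives
\[
\Deltamp_i\bigl(\db^{-1}\gMR\ba\bc(\t)\bigr)\in\Qsf,
\]
and, under the stronger hypothesis $a\leq b\leq c$, the refined conclusion $\Deltamp_i(\db^{-1}\gMR\ba\bc(\t))\in\Qsfs$. Now I would invoke the commutative square~\eqref{eq:eval_commute}: since $\Deltamp_i$ is a regular function on $G$ (\cref{gen_minors_regular}), its value on $\db^{-1}\gMR\ba\bc(\t)$ lies in $\Fcal'$, and applying $\eval_{\t'}$ commutes with $\Deltamp_i$, so
\[
\Deltamp_i\bigl(\db^{-1}\gMR\ba\bc(\t')\bigr)=\eval_{\t'}\!\bigl(\Deltamp_i(\db^{-1}\gMR\ba\bc(\t))\bigr).
\]
Any element of $\Qsf$ evaluates to a nonnegative real, yielding~\eqref{eq:lemma_01_>=0}. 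If in addition $a\leq b\leq c$, then the element lies in $\Qsfs$, which evaluates to a strictly positive real, yielding the first part of~\eqref{eq:lemma_01_>0}.

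For the inclusion $\Rtp_a^c\subset \db B_-B/B$ in~\eqref{eq:lemma_01_>0}, I would combine the strict positivity just established with \cref{lemma:G_0}. Concretely, by \cref{thm:GMR_param} every $xB\in \Rtp_a^c$ arises as $x=\gMR\ba\bc(\t')$ for some $(\ba,\bc)\in\RedMR(a,c)$ and $\t'\in\R_{>0}^{J_\ba^\circ}$. Since $\Deltamp_i(\db^{-1}x)>0$ for every $i\in I$, \cref{lemma:G_0} gives $\db^{-1}x\in \Gomp = B_-B$, hence $x\in \db B_-B$ and $xB\in \db B_-B/B$, as desired.

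The only subtle point, and what I would expect to be the main obstacle, is ensuring that the formal evaluation machinery is watertight: namely, that $\Deltamp_i(\db^{-1}\gMR\ba\bc(\t))$ is really an element of $\Fcal'$ (so that $\eval_{\t'}$ is defined on it) and that the composition genuinely coincides with $\Deltamp_i(\db^{-1}\gMR\ba\bc(\t'))$. Both assertions are packaged into the commutativity of~\eqref{eq:eval_commute} and the observation in \cref{eval} that $\Deltamp_i(\Go)\subset\Fcal'$; once these are invoked, the rest of the argument is essentially a bookkeeping exercise translating subtraction-free statements over $\Fcalb$ into pointwise inequalities over $\R$.
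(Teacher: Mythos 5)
Your proof is correct and matches the paper's argument: both reduce to \cref{lemma:01sf} by evaluating the subtraction-free expression $\Deltamp_i(\db^{-1}\gMR\ba\bc(\t))\in\Qsf$ (resp.\ $\Qsfs$) at $\t=\t'\in\R_{>0}^{J_\ba^\circ}$ via the commutative square~\eqref{eq:eval_commute}. Your last step---using \cref{thm:GMR_param} to parametrize $\Rtp_a^c$ and then \cref{lemma:G_0} to turn strict positivity of the minors into $\db^{-1}x\in B_-B$---is exactly what the paper's terse ``similarly'' elides, so there is no substantive difference.
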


\begin{proof}
By~\eqref{eq:lemma_01_sf}, we know that $\Deltamp_i(\db^{-1} \gMR\ba\bc(\t))\in\Qsf$ for all $i\in I$. Evaluating at $\t=\t'$ (cf. \cref{eval}), we find that $\Deltamp_i(\db^{-1} \gMR\ba\bc(\t'))\geq0$ for all $i\in I$, showing~\eqref{eq:lemma_01_>=0}. Similarly,~\eqref{eq:lemma_01_>0} follows from~\eqref{eq:lemma_01_sfs}.
\end{proof}

\begin{proposition} (Assume $\K=\Fcalb$.)
\label{gen_minors_sf} For all $v,w,v',w'\in W$ and $x\in \Usf(v')\cdot \Tsf\cdot \Usf^-(w')$, we have $\Deltapm_i(\dv x \dw^{-1})\in\Qsf$.
\end{proposition}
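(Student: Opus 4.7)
The plan is to reduce the claim, via a sequence of commutations that extract torus factors, to a base case that follows from the definition of $\Deltapm_i$, and then to proceed by induction on $\ell(v)+\ell(w)$ using the collision moves of this section.

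\textbf{Step 1 (Extracting the torus).} Write $x=b_+\cdot a\cdot b_-$ with $b_+\in\Usf(v')$, $a\in\Tsf$, $b_-\in\Usf^-(w')$. By~\eqref{eq:torus_conj}, $a^{-1}b_+a\in\Usf(v')$ (since each $x_j(t)$-generator is rescaled by $a^{\alpha_j}\in\Qsfs$), and symmetrically on the right. So $\dv\cdot b_+\cdot a = \dv\cdot a\cdot(a^{-1}b_+a)$, and conjugating further, $\dv\cdot a = (\dv a\dv^{-1})\cdot\dv$. Here $\dv a\dv^{-1}\in\Tsf$ because $\dv\alphacheck_j(t)\dv^{-1}=(v\alphacheck_j)(t)=\prod_k\alphacheck_k(t^{c_k})$ with $c_k\in\Z$, and $\Qsfs$ is closed under inversion.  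Applying the analogous manipulation to pass the resulting torus factor past $\dw^{-1}$, we arrive at $\dv\cdot x\cdot\dw^{-1}=a_L\cdot \dv\cdot b_+^*\cdot b_-^*\cdot\dw^{-1}\cdot a_R$ for some $a_L,a_R\in\Tsf$, $b_+^*\in\Usf(v')$, $b_-^*\in\Usf^-(w')$.  Since $\Deltapm_i(a_L\cdot g\cdot a_R)=a_L^{w_0\omega_i}a_R^{w_0\omega_i}\Deltapm_i(g)$ and both characters lie in $\Qsfs$, we reduce to the case $a=1$.

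\textbf{Step 2 (Base case $v=w=\id$).} We must show $\Deltapm_i(b_+\cdot b_-)\in\Qsf$ for $b_+\in\Usf(v')$, $b_-\in\Usf^-(w')$.  By~\eqref{eq:Usf_commute}, we have $b_+\cdot b_-\in\Usf(v')\cdot\Tsf\cdot\Usf^-(w')$, i.e.\ a factorization $b_+\cdot b_-=\tilde b_+\cdot\tilde a\cdot\tilde b_-$ with $\tilde a\in\Tsf$.  This is precisely the $U\cdot T\cdot U_-$ (Gaussian) decomposition in $\Gopm$, so $\Deltapm_i(b_+\cdot b_-)=\tilde a^{w_0\omega_i}\in\Qsfs$.

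\textbf{Step 3 (Induction on $\ell(v)+\ell(w)$).}  Suppose the statement holds for all pairs of smaller total length, and that $\ell(w)\geq 1$; the case $\ell(v)\geq 1$ is symmetric via the anti-involution $\iota$ of~\eqref{eq:iota}, which fixes $\Usf(\cdot)$ and $\Usf^-(\cdot)$ elementwise, inverts $\Tsf$, and swaps the roles of $(v,w)$ with $(w^{-1},v^{-1})$.  Write $w=w_1 s_j$ with $\ell(w_1)=\ell(w)-1$, so $\dw^{-1}=\ds_j^{-1}\dw_1^{-1}$.  Consider $b_+\cdot b_-\cdot\ds_j^{-1}$.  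Using~\eqref{eq:collision_y} to slide $\ds_j^{-1}$ leftward through the last $y_j$-factor of $b_-$, together with the commutations~\eqref{eq:xy_commute_ineqj} and~\eqref{eq:collision_xy} for other factors, one rewrites $b_+\cdot b_-\cdot\ds_j^{-1}$ as $b_+^{(1)}\cdot b_-^{(1)}\cdot a^{(1)}\cdot y$, where $b_+^{(1)}\in\Usf(v'')$, $b_-^{(1)}\in\Usf^-(w'')$, $a^{(1)}\in\Tsf$, and $y$ is a ``residue'' of the form $y_j(t_-)$ with $t_-\in\Fcalo$. Move $a^{(1)}$ and $y$ through $\dw_1^{-1}$, then apply the inductive hypothesis to the pair $(v,w_1)$ and the element $b_+^{(1)}\cdot\tilde a\cdot b_-^{(1)}$ in $\Usf(v'')\Tsf\Usf^-(w'')$.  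The contribution of the residue $y_j(t_-)$ is accounted for by repeating Step~1 (now within~$\dv\cdot(\cdot)\cdot\dw_1^{-1}$), which shows that conjugation by $\dw_1^{-1}$ allows one to absorb $y_j(t_-)$ into the $\Usf^-(\cdot)$-factor after an additional subtraction-free torus extraction.

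The main obstacle is the residue $t_-\in\Fcalo$ produced by~\eqref{eq:collision_y}, which is not a priori subtraction-free.  The resolution is that this residue appears as a factor $y_j(t_-)$ which, after the next pass of Step~1, becomes absorbed into a subtraction-free adjustment of $b_-^{(1)}$ modulo torus factors in $\Tsf$ that are themselves subtraction-free, so that on applying $\Deltapm_i$ the dependence on $t_-$ is ultimately polynomial with subtraction-free coefficients coming from the inductive hypothesis.
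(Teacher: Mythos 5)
The base case (Steps~1 and~2) is fine, though overcomplicated: for $b_+\in\Usf(v')$ and $b_-\in\Usf^-(w')$, the factorization $b_+\cdot b_-\in U\cdot T\cdot U_-$ is simply $b_+\cdot 1\cdot b_-$, so $\Deltapm_i(b_+b_-)=1$ directly, with no need for~\eqref{eq:Usf_commute}. The problem is Step~3, and the gap there is genuine.

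The first difficulty, which you do not address, is that sliding $\ds_j^{-1}$ leftward through $b_-\in\Usf^-(w')$ via~\eqref{eq:collision_y} is only possible if the rightmost factor of $b_-$ is $y_j$, i.e.\ if $w'$ ends with $s_j$; there is no reason this should hold for the $j$ coming from a reduced word of $w$. And even then, conjugating the remaining $y_{i}$-factors by $\ds_j$ moves them off the set of simple-root one-parameter subgroups (in general $s_j\alpha_i\notin\{\alpha_{i'}\}_{i'\in I}$), so the result of the slide does not land in $\Usf(v'')\cdot\Tsf\cdot\Usf^-(w'')$ in the sense required.

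The second, more fundamental difficulty is the one you flag yourself: the collision move~\eqref{eq:collision_y} produces a residue $y_j(t_-)$ with $t_-\in\Fcalo$, not $\Qsfs$. Such an element is not in $\Usf^-(\cdot)$, so the inductive hypothesis simply does not apply to $b_+^{(1)}\cdot\tilde a\cdot b_-^{(1)}$ once the residue is dangling. Your suggestion that ``the dependence on $t_-$ is ultimately polynomial with subtraction-free coefficients'' is a hope, not an argument: no cancellation mechanism is exhibited, and indeed there is no reason for one. For example, if $t_-=1-t$ for some variable $t$, the factor $y_j(1-t)$ cannot be absorbed into a subtraction-free element of $\Usf^-(\cdot)$ modulo $\Tsf$.

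This is precisely the obstruction the paper's proof is designed to avoid. Rather than inducting on $\ell(v)+\ell(w)$ with fixed $v',w'$ and fighting the residues, the paper first embeds the given $x$ into the \emph{maximal} cell $\Usf(w_0)\cdot\Tsf\cdot\Usf^-(w_0)$ by adjoining extra parameters, factors $\Usf(w_0)=\Usf(v^{-1})\cdot\Usf(vw_0)$ and $\Usf^-(w_0)=\Usf^-(w_0w^{-1})\cdot\Usf^-(w)$ so that the block collision moves~\eqref{eq:collision1+}, \eqref{eq:Usf_commute}, \eqref{eq:collision1} can swallow $\dv$ and $\dw^{-1}$ whole (landing cleanly in $\Bsf\cdot\Bsf_-$ with no residues), and then sends the extra parameters $\t_1',\t_2'\to 0$ via \cref{lemma:SF_limit} to recover the desired claim. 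The ``work generically, then specialize'' step is the essential idea that your induction is missing, and it is hard to see how your approach can be repaired without introducing something equivalent.
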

\begin{proof}
Let $\t=(\t_1,\t_2,\t_1',\t_2')$ with $|\t_1|=\ell(v')$, $|\t_2|=\ell(w')$, $|\t_1'|=\ell(w_0)-\ell(v')$, and $|\t_2'|=\ell(w_0)-\ell(w')$. Let $\t_v:=(\t_1',\t_1)$ and $\t_w:=(\t_2,\t_2')$. Choose reduced words $\bi,\bj$ for $w_0$ such that $\bi$ ends with a reduced word for $v'$ and $\bj$ starts with a reduced word for $w'$. Set $g=g(\t_1,\t_2,\t_v,\t_w):=\bx_\bi(\t_v)\cdot a\cdot \by_\bj(\t_w)$ for some arbitrary element $a\in\Tsf$. We get
\[\dv g\dw^{-1}\in \dv \cdot \Usf(w_0)\cdot \Tsf\cdot \Usf^-(w_0)\cdot \dw^{-1}\subset\dv \cdot \Usf(v^{-1})\cdot \Usf(vw_0)\cdot \Tsf\cdot \Usf^-(w_0w^{-1})\cdot \Usf^-(w)\cdot \dw^{-1}.\]
By~\eqref{eq:collision1+}, \eqref{eq:Usf_commute}, and~\eqref{eq:collision1}, we get $\dv g\dw^{-1}\in \Bsf\cdot \Usf^-(v)\cdot \Usf(w^{-1})\cdot \Bsf_-$. By~\eqref{eq:Usf_commute}, we can permute $\Usf^-(v)$ and $\Usf(w^{-1})$, showing $\dv g\dw^{-1}\in \Bsf\cdot \Bsf_-$. Thus $\Deltapm_i(\dv g\dw^{-1})\in\Qsfs$. It gives rise to a continuous function on $\R_{>0}^{|\t_1|}\times\R_{>0}^{|\t_2|}\times\R_{\geq0}^{|\t_1'|}\times\R_{\geq0}^{|\t_2'|}$, so sending $\t_1',\t_2'\to0$ via \cref{lemma:SF_limit} and varying $\t_1$, $\t_2$, and $a$, we get  $\Deltapm_i(\dv x \dw^{-1})\in\Qsf$ for all $x\in \Usf(v')\cdot \Tsf\cdot \Usf^-(w')$.
\end{proof}

\section{Bruhat projections and total positivity}\label{sec:KWY_TP}

In this section, we prove a technical result (\cref{thm:zeta}) which later will be used to finish the proof of \cref{thm:main_intro2}. Assume $\K$ is algebraically closed and fix $u\in W^J$.

\subsection{The map \texorpdfstring{$\zetamap$}{zeta}}\label{sec:zeta}
Retain the notation from \cref{dfn:gj_and_stuff}. Given $v\in W$ and $u\in W^J$, let us introduce a subset 
\begin{equation}\label{eq:Guvbig}
\Guvbig:=\{x\in \du\Goj\mid \hjmp_x x\in  \dv\Goj\}\subset G.
\end{equation}
Note that if $x\in \Guvbig$ then $xP\subset \Guvbig$; see \cref{hjmap_properties} below.

\begin{definition}
\label{lots_of_maps} 
Define a map $\Ftmap:\Guvbig\to L_J$ sending $x\in\Guvbig$ to $\Ftmap(x):=[\dv^{-1}\hjmp_x x]_J$. Also define a map $\pidup:\du\Goj\to \du P_-$ sending $x\in\du\Goj$ to the unique element $\pidup(x)\in \du P_-\cap x\Uj$. Explicitly (cf. \cref{lemma:bracketsJ}), we put 
\begin{equation}\label{eq:pidup_explicit}
\pidup(x):=\du [\du^{-1}x]_-\pj [\du^{-1}x]_J=x\cdot  ([\du^{-1}x]_+\pj)^{-1}.
\end{equation}
Finally, define $\zetamap:\Guvbig\to G$ by $\zetamap(x):=\pidup(x) \cdot \Ftmap(x)^{-1}$.
\end{definition}

\begin{lemma}\leavevmode
\begin{theoremlist}
\item\label{hjmap_properties_reg}\label{pidup_regular}The maps $\hjmap$ and $\pidup$ are regular on $\du\Goj$. 
\item\label{Ftmap_regular}\label{zetamap_regular} The maps $\Ftmap$ and $\zetamap$ are regular on $\Guvbig\subset \du\Goj$.
\item\label{hjmap_properties} If $x\in \du\Goj$ and $x'\in xP$ then $\hjmp_{x'}=\hjmp_x$.
\item \label{zeta_mod_P} If $x\in \Guvbig$ and $x'\in xP$ then $\zetamap(x)=\zetamap(x')$.
\end{theoremlist}
\end{lemma}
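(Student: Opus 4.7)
The plan is to prove the four statements essentially in the order \itemref{hjmap_properties}, \itemref{hjmap_properties_reg}/\itemref{Ftmap_regular}, \itemref{zeta_mod_P}, as parts \itemref{hjmap_properties_reg}--\itemref{Ftmap_regular} are straightforward composition-of-regular-maps observations, part \itemref{hjmap_properties} is almost by construction, and part \itemref{zeta_mod_P} is the one requiring a short calculation.

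For \itemref{hjmap_properties}, I will simply unpack \cref{dfn:gj_and_stuff}: the element $\gj$ is characterized as the unique element of $\du\Uj_-\du^{-1}$ with $\gj\du\in xP\cap\du\Uj_-$, so it depends only on the coset $xP$. Consequently the tuple $(\gj_1,\gj_2,\hj_1,\hj_2)$ produced by \cref{lemma:KWY}, and in particular $\hjmp_x=\hj_2$, depends only on $xP$, giving $\hjmp_{x'}=\hjmp_x$ for $x'\in xP$. For \itemref{hjmap_properties_reg}, both $x\mapsto \gj$ and $\gj\mapsto \hj_2$ were noted to be regular in \cref{dfn:gj_and_stuff}, so $\hjmap$ is regular. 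For $\pidup$, the map $\du^{-1}x\mapsto [\du^{-1}x]_+\pj$ is regular by \cref{lemma:bracketsJ}, so the formula $\pidup(x)=x\cdot([\du^{-1}x]_+\pj)^{-1}$ shows $\pidup$ is regular. For \itemref{Ftmap_regular}, note that $x\in\Guvbig$ precisely says $\dv^{-1}\hjmp_x x\in \Goj$, so $\Ftmap(x)=[\dv^{-1}\hjmp_x x]_J$ is the composition of the regular maps $x\mapsto\hjmp_x$, $y\mapsto \dv^{-1}y$, $[\cdot]_J:\Goj\to L_J$ (regular by \cref{lemma:bracketsJ}). Then $\zetamap=\pidup\cdot\Ftmap^{-1}$ is regular since $L_J$ is an algebraic group.

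For \itemref{zeta_mod_P}, write $x'=xp$ with $p\in P$, and factor $p=p_0p_+$ with $p_0\in L_J$ and $p_+\in \Uj$. By \itemref{hjmap_properties}, $\hjmp_{x'}=\hjmp_x$, so using \cref{lemma:bracketsJ} (applied to $\dv^{-1}\hjmp_x x\in \Goj$ and $p\in P$)
\[
\Ftmap(x')=[\dv^{-1}\hjmp_x xp]_J=[\dv^{-1}\hjmp_x x]_J\cdot[p]_J=\Ftmap(x)\cdot p_0.
\]
For $\pidup$, write $\du^{-1}x=a_-a_0a_+\in \Uj_-\cdot L_J\cdot \Uj$; since $\Uj$ is normalized by $L_J\subset P$ (\cref{unip_radical}), $a_+p_0=p_0\,\tilde{a}_+$ with $\tilde{a}_+:=p_0^{-1}a_+p_0\in \Uj$. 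Therefore $\du^{-1}x'=a_-(a_0p_0)(\tilde{a}_+p_+)$ is the factorization of $\du^{-1}x'$ asserted by \cref{lemma:bracketsJ}, giving $[\du^{-1}x']_-\pj=a_-$ and $[\du^{-1}x']_J=a_0p_0$. Plugging into~\eqref{eq:pidup_explicit} yields $\pidup(x')=\du a_- a_0 p_0=\pidup(x)\cdot p_0$. Combining,
\[
\zetamap(x')=\pidup(x')\cdot\Ftmap(x')^{-1}=\pidup(x)\,p_0\,p_0^{-1}\,\Ftmap(x)^{-1}=\zetamap(x).
\]

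The only place requiring real care is \itemref{zeta_mod_P}: one must verify that the correction factors introduced by right-multiplication by $p$ are exactly the same for $\pidup$ and for $\Ftmap$ (both equal to $p_0=[p]_J$) and so cancel in $\zetamap$. The key input making this work is the normality of $\Uj$ in $P$, which lets one commute $a_+$ past $p_0$ without affecting the $L_J$-component of the factorization.
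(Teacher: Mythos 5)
Your proposal is correct and follows essentially the same approach as the paper: parts \itemref{hjmap_properties_reg}, \itemref{Ftmap_regular}, and \itemref{hjmap_properties} are observed from the definitions, and part \itemref{zeta_mod_P} is proved by showing the right-multiplication by $p\in P$ contributes the same factor $[p]_J$ to both $\pidup$ and $\Ftmap$, which cancels in $\zetamap$. The only cosmetic difference is that you carry out the $\pidup(x')=\pidup(x)[p]_J$ step by hand via the commutation $a_+p_0=p_0\tilde a_+$ (using normality of $\Uj$), whereas the paper simply cites \cref{lemma:bracketsJ} for this equality; both amount to the same computation.
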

\begin{proof}
  Parts~\itemref{hjmap_properties_reg} and~\itemref{Ftmap_regular} are clear since each map is a composition of regular maps. Part~\itemref{hjmap_properties} follows from \cref{dfn:gj_and_stuff}, since by construction the map $\hjmap$ starts by applying the isomorphism in~\eqref{eq:Cuj_to_Uj}, which gives a regular map $\Cuj\to \du \Uj_-\du^{-1}$. To prove~\itemref{zeta_mod_P}, suppose that  $x\in \Guvbig$ and $x'\in xP$ is given by $x'=xp$ for $p\in P$. Then $\pidup(x')=\pidup(x)[p]_J$ by \cref{lemma:bracketsJ}. By~\itemref{hjmap_properties}, $\hjmp_{x'}=\hjmp_x$, and $\Ftmap(x')=[\dv^{-1}\hjmp_{x'} x']_J=[\dv^{-1}\hjmp_x x]_J[p]_J=\Ftmap(x)[p]_J$. Thus
\begin{align*}
\zetamap(x')&=\pidup(x')\cdot \Ftmap(x')^{-1}=\pidup(x)[p]_J\cdot [p]_J^{-1}\Ftmap(x)^{-1}=\zetamap(x).\qedhere
\end{align*}
\end{proof}

\begin{lemma}Let $x\in\du P_-$.
\begin{theoremlist}
\item\label{pidup_x=x} We have $\pidup(x)=x$. 
\item \label{zeta_inside_uP_-} If $x\in  \Guvbig$ then $\zetamap(x)=x \Ftmap(x)^{-1}$.
\end{theoremlist}
\end{lemma}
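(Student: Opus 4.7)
The plan is to unwind the definitions and verify part~\itemref{pidup_x=x} by direct computation, from which part~\itemref{zeta_inside_uP_-} will be an immediate consequence.

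For part~\itemref{pidup_x=x}, suppose $x \in \du P_-$, and write $x = \du p$ with $p \in P_-$, so that $\du^{-1} x = p$. I would use the explicit formula $\pidup(x) = x \cdot ([\du^{-1}x]_+\pj)^{-1}$ from~\eqref{eq:pidup_explicit}, which reduces everything to showing that $[p]_+\pj = 1$. For this, I would invoke the standard Levi decomposition $P_- = \Uj_- \cdot L_J$, which follows from \cref{unip_radical} together with the fact that $L_J$ and $\Uj_-$ generate $P_-$ (analogously to the decomposition $P = L_J \cdot \Uj$ used in the proof of \cref{G_0^J}). Thus $p$ factorizes uniquely as $p = p_- \cdot p_0$ with $p_- \in \Uj_-$ and $p_0 \in L_J$. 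Comparing with the (unique) factorization $p = [p]_-\pj \cdot [p]_J \cdot [p]_+\pj$ provided by \cref{lemma:bracketsJ}, we read off $[p]_-\pj = p_-$, $[p]_J = p_0$, and $[p]_+\pj = 1$. Substituting back into~\eqref{eq:pidup_explicit} gives $\pidup(x) = x$.

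For part~\itemref{zeta_inside_uP_-}, since $x \in \Guvbig$ the map $\zetamap$ is defined at $x$, and by \cref{lots_of_maps} we have $\zetamap(x) = \pidup(x) \cdot \Ftmap(x)^{-1}$. Combining with part~\itemref{pidup_x=x} yields $\zetamap(x) = x \cdot \Ftmap(x)^{-1}$.

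This argument has no real obstacle; the only subtlety is ensuring that the Levi decomposition of $P_-$ is compatible with the three-factor decomposition of \cref{lemma:bracketsJ}, which amounts to checking that the $\Uj$-component vanishes for elements of $P_-$ — true because $P_- \cap \Uj = \{1\}$ (as $P_- = L_J \ltimes \Uj_-$ meets $\Uj$ trivially).
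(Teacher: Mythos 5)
Your proof is correct, but part~\itemref{pidup_x=x} can be seen more directly from the defining characterization of $\pidup$ rather than the explicit formula~\eqref{eq:pidup_explicit}. By \cref{lots_of_maps}, $\pidup(x)$ is \emph{defined} to be the unique element of $\du P_- \cap x\Uj$; when $x \in \du P_-$, the element $x = x \cdot 1$ itself lies in that intersection, so $\pidup(x) = x$ by uniqueness — no decomposition argument needed. Your detour through the Levi decomposition $P_- = \Uj_- L_J$ and the three-factor uniqueness of \cref{lemma:bracketsJ} is also valid (and the key observation $P_- \cap \Uj = \{1\}$ is correct), but it proves the same vanishing $[\du^{-1}x]_+\pj = 1$ by a longer route. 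Part~\itemref{zeta_inside_uP_-} is then immediate from the definition $\zetamap(x) = \pidup(x) \Ftmap(x)^{-1}$, exactly as you argue.
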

\begin{proof}
Both parts follow from \cref{lots_of_maps}.
\end{proof}

The ultimate goal of this section is to prove the following result.
\begin{theorem}\label{thm:zeta}
(Assume $\K=\C$.) Let $(u,u)\leqJ (v,w)\leqJ (v',w')\in Q_J$ and $x\in G$ be such that $xB\in \Rtp_{v'}^{w'}$. Then $x\in \Guvbig$ and $\zetamap(x)\in BB_-\dw$.
\end{theorem}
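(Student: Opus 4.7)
The proof splits into three parts: (1) $x\in\du\Goj$, (2) $\hjmp_x x\in\dv\Goj$ (together giving $x\in\Guvbig$), and (3) $\zetamap(x)\in BB_-\dw$. Applying \cref{Q_J_charact} to the chain $(u,u)\leqJ(v,w)\leqJ(v',w')$ yields $r,r'\in W_J$ with $vr'$ length-additive and $v'\leq vr'\leq ur\leq wr'\leq w'$ in $W$; this unpacked combinatorial hypothesis drives each subsequent step.

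For (1), since $v'\leq ur\leq w'$ and $xB\in\Rtp_{v'}^{w'}$, \cref{lemma:01} gives $x\in\du\dr B_-B$ (using length-additivity of $ur$, so that its lift equals $\du\dr$). As $r\in W_J$ implies $\dr\in L_J\subset P_-$, this lies in $\du P_-B = \du\Goj$ by \cref{G_0^J}.

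For (2) and (3), I would invoke the characterization of $\Goj$ via non-vanishing of $\{\Deltamp_i\}_{i\notin J}$ (the parabolic analog of \cref{lemma:G_0}, obtained from \cref{gen_minors_flag} restricted to $i\notin J$) and \cref{lemma:G_0} itself, reducing the two claims to showing
\[\Deltamp_i(\dv^{-1}\hjmp_x x) \neq 0 \text{ for } i\notin J, \qquad \Deltapm_i(\zetamap(x)\dw^{-1}) \neq 0 \text{ for all } i\in I.\]
I would then transfer to the subtraction-free setting $\K=\Fcalb$, parametrizing $x\in\Rsf_{v'}^{w'}$ by MR coordinates $\t$ and viewing $\hjmp_x$, $\pidup(x)$, $\Ftmap(x)$ as rational functions of $\t$. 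Although $\hjmp_x$ is not individually subtraction-free, the composite minors above are: following the strategy of \cref{lemma:01sf}, I would multiply $x$ on the left by auxiliary factors from $\Usf(v^{-1})$ and $\Usf^-(w_0w^{-1})$ (paralleling \cref{lemma:param_sf}) to land in a Bruhat--Birkhoff region where \cref{gen_minors_sf} exhibits the minors as elements of $\Qsf$, then shrink the auxiliary parameters to $0$ via \cref{lemma:SF_limit}. The length-additive chain $v'\leq vr'\leq ur\leq wr'\leq w'$ is precisely what ensures that the limits remain in the correct Bruhat strata. Non-vanishing of the resulting $\Qsf$ expressions is then established by specializing $\t$ to a test point in $\Rtp_{v'}^{w'}$ whose image under $\pi_J$ probes a suitable Richardson cell, and invoking \cref{lemma:Delta_nonzero}; finally, \cref{eval} transfers the positivity back to $\K=\C$.

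The main obstacle is constructing the subtraction-free envelope for the composite minors, since $\hjmp_x$ on its own introduces sign cancellations (visible already in small rank examples). The resolution is to bundle $\hjmp_x$ together with the remaining factors of each minor, applying the collision moves \eqref{eq:collision_x}--\eqref{eq:collision_xy} and the parabolic factorization of \cref{lemma:bracketsJ} in concert, with the Bruhat order $v\leq ur\leq w$ provided by \cref{Q_J_charact} giving the geometric control needed for the limiting step.
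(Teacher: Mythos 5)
Your high-level outline matches the paper's proof: pass to $\K=\Fcalb$, inflate $x\in\Rsf_{v'}^{w'}$ by auxiliary $\Usf(v'^{-1})\cdot\Usf^-(w_0w'^{-1})$ factors to land in the top cell $\Rsf_\id^{w_0}$ (via \cref{lemma:param_sf}), use subtraction-freeness there, take the limit as the auxiliary parameters go to $0$ using \cref{lemma:SF_limit}, and then rule out identical vanishing by Zariski density. However, there are two genuine gaps.

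First, the crux of the argument is to show that $\Deltapm_i(\zetamap(\cdot)\dw^{-1})\in\Qsf$ on the top cell $\Rsf_\id^{w_0}$. This is exactly \cref{thm:sf_generic_zetamap}, a separate theorem whose proof is the bulk of the technical work in \cref{sec:KWY_TP}: it decomposes $h\in\Usf^-(w_0)$ into $\Usf^-(u)\cdot\Usf^-(\woj)\cdot\Usf^-(q)$, uses \cref{sf_generic_h2j_h_is_sf} to control $\hjmp_h h$, and then relies on the double induction of \cref{lemma:case_h_3=1} to show that $[b_2b_1b_0^{-1}]_J\in\Usf(r)$ for some $r\in W_J$. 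You correctly flag the fact that $\hjmp_x$ alone is not subtraction-free as "the main obstacle," but resolve it only by waving at collision moves and \cref{lemma:bracketsJ} "in concert." That does not supply the inductive mechanism; without \cref{lemma:case_h_3=1} (or an equivalent), the subtraction-freeness claim on the top cell is unsupported.

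Second, the non-vanishing step is misattributed. You invoke \cref{lemma:Delta_nonzero}, which concerns the \emph{flag} minors $\Deltamp_i(\db^{-1}x)$, not $\Deltapm_i(\zetamap(x)\dw^{-1})$: that lemma cannot detect vanishing of the composite. The paper's argument is different: assuming $f(0,0,\t_3)=0$, the regular function $\ff:\Guvbig/P\to\C$ vanishes on the Zariski-dense $\PRtp_{v'}^{w'}\subset\PRcl_{v'}^{w'}\supset\PRtp_v^w$, and then a \emph{direct Bruhat computation} shows that for $xB\in\Rtp_v^w$ one has $\pidup(x)\in B\dw B$ and $\Ft(x)\in U_J^-TU_J$, whence $\zetamap(x)\dw^{-1}\in B\dw B\cdot U_J^-\cdot\dw^{-1}\subset B B_-$, so all $\Deltapm_i\neq0$ there — contradiction. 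Your phrase "probes a suitable Richardson cell" gestures at $\PRtp_v^w$, but the key Bruhat-order computation on $\Rtp_v^w$ is absent, and no specialization of $\t$ to a test point in $\Rtp_{v'}^{w'}$ can establish non-vanishing of a rational function that one is worried might vanish identically there. (You also do not elaborate the regularity check required before applying \cref{lemma:SF_limit} — that the evaluated points $g(\t_1',\t_2',\t_3')$ for $\t_1',\t_2'\in\R_{\geq0}$ lie in $\Guvbig$, which the paper proves via \cref{lemma:Demazure} and \cref{lemma:hj_2_x_cell_r_w} — though this is the most routine of the omissions.)
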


\subsection{Properties of \texorpdfstring{$\hjmap$}{kappa}}
We further investigate the element $\hjx x$. Denote $\ut:=u\woj\in W^J_\maxx$. 
\begin{lemma}
\label{lemma:U1_U2} 
The groups $\Uj$, $\Uj_1$, and $\Uj_2$ from \cref{dfn:gj_and_stuff} satisfy
\begin{align}\label{eq:U1_U2:Uj}
\du\Uj_- \du^{-1}&=\dut \Uj_- \dut^{-1},\\\label{eq:U1_U2:Uj1}
\Uj_1&=\du \Uj_- \du^{-1}\cap U=\du U_-\du^{-1}\cap U,\\ \label{eq:U1_U2:Uj2} 
\Uj_2&=\du\Uj_- \du^{-1}\cap U_-=\dut U_-\dut^{-1}\cap U_-.
\end{align}
\end{lemma}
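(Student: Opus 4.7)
The three assertions all rest on the structural fact that $U_- = \Uj_- \cdot U_J^-$ directly spans $U_-$ (by \cref{U(R)_generated}, since $\Phi^- = \Phij_- \sqcup \Phi_J^-$ is a disjoint union of bracket closed subsets), combined with the two easy observations that $\du U_J^- \du^{-1}\subset U_-$ (from \cref{Inv_W^J}, since $u\in W^J$) and that $\dwoj U_J^-\dwoj^{-1}=U_J$ (from~\eqref{eq:dwoj_U_J}), so that $\dut U_J^- \dut^{-1}=\du U_J\du^{-1}\subset U$. I will prove the three equalities in turn.

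For~\eqref{eq:U1_U2:Uj}, since $\dwoj\in L_J\subset P_-$, \cref{unip_radical} gives $\dwoj\Uj_-\dwoj^{-1}=\Uj_-$. Conjugating by $\du$ and using $\dut=\du\dwoj$ yields $\du\Uj_-\du^{-1}=\dut\Uj_-\dut^{-1}$.

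For~\eqref{eq:U1_U2:Uj1}, the first equality is the definition of $\Uj_1$. The inclusion $\du\Uj_-\du^{-1}\cap U\subset \du U_-\du^{-1}\cap U$ is trivial. Conversely, take $g\in \du U_-\du^{-1}\cap U$; write $g=\du h_1 h_2 \du^{-1}=(\du h_1\du^{-1})(\du h_2\du^{-1})$ with $h_1\in \Uj_-$, $h_2\in U_J^-$. Then $\du h_2\du^{-1}\in U_-$ by the observation above, while $\du h_1\du^{-1}\in \du\Uj_-\du^{-1}$ can be factored uniquely as $a_1a_2$ with $a_1\in \Uj_1\subset U$ and $a_2\in \Uj_2\subset U_-$ via \cref{lemma:KWY}. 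Thus $g=a_1\cdot(a_2\cdot \du h_2\du^{-1})$ with $a_1\in U$ and $a_2\cdot \du h_2\du^{-1}\in U_-$; since $g\in U$, the $U\cap U_-=\{1\}$ identity forces $a_2\cdot \du h_2\du^{-1}=1$, so $g=a_1\in \Uj_1$.

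For~\eqref{eq:U1_U2:Uj2}, again the first equality is the definition of $\Uj_2$ and one inclusion is trivial after substituting~\eqref{eq:U1_U2:Uj}. For the reverse inclusion, take $g\in \dut U_-\dut^{-1}\cap U_-$ and write $g=(\dut h_1\dut^{-1})(\dut h_2\dut^{-1})$ with $h_1\in \Uj_-$, $h_2\in U_J^-$. Now $\dut h_2\dut^{-1}\in U$, so this time I factor $\dut h_1\dut^{-1}\in \dut\Uj_-\dut^{-1}=\du\Uj_-\du^{-1}$ in the opposite order $a_2'a_1'$ with $a_2'\in \Uj_2\subset U_-$ and $a_1'\in \Uj_1\subset U$. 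Then $(a_2')^{-1}g = a_1'\cdot(\dut h_2\dut^{-1})\in U$, but also lies in $U_-$, hence equals $1$, giving $g=a_2'\in \Uj_2$. The main subtlety in the whole argument is choosing the correct order of the $\Uj_1,\Uj_2$ factorization in each case so that the resulting collision lands in $U\cap U_-=\{1\}$; apart from this, the proof is a direct application of the decompositions recalled above.
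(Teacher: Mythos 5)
Your proof is correct. For~\eqref{eq:U1_U2:Uj} it is identical to the paper's argument (apply \cref{unip_radical} and conjugate by $\du$). For~\eqref{eq:U1_U2:Uj1} and~\eqref{eq:U1_U2:Uj2} your route is genuinely different in its technical flavor: the paper stays entirely at the level of root sets — it invokes~\eqref{eq:U(Inv)} to identify $\du U_-\du^{-1}\cap U$ as $U(\Inv(u^{-1}))$ and uses $u\Phi_J^-\subset\Phi^-$ (respectively $\ut\Phi_J^+\subset\Phi^-$ via~\eqref{eq:Inv(ab)_sqcup}) to discard the $U_J^-$-factor — whereas you work with the group elements directly, factoring $\du\Uj_-\du^{-1}$ as $\Uj_1\cdot\Uj_2$ or $\Uj_2\cdot\Uj_1$ and collapsing the unwanted piece via $U\cap U_-=\{1\}$. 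Both are valid; the paper's is shorter, yours is more hands-on and makes the collision explicit, which is the part the paper leaves implicit in the phrase ``and hence.'' One very small citation point: the unique factorization $\du\Uj_-\du^{-1}=\Uj_1\cdot\Uj_2$ (in either order) that you invoke comes from \cref{U(R)_generated} (the maps $\mu_{12},\mu_{21}$ are isomorphisms); \cref{lemma:KWY} is the additional statement that the simultaneous map $(\nu_1,\nu_2)$ is biregular. Citing \cref{U(R)_generated} would be slightly more precise, but the content is unaffected.
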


\begin{proof}
By \cref{unip_radical}, we see that $\dwoj \Uj_-\dwoj^{-1}=\Uj_-$, which shows~\eqref{eq:U1_U2:Uj}. For~\eqref{eq:U1_U2:Uj1}, $\Uj_1=\du \Uj_- \du^{-1}\cap U$ by definition. By \cref{Inv_W^J}, we have $\du U_J^-\du^{-1}\subset U_-$, so~\eqref{eq:U1_U2:Uj1} follows from~\eqref{eq:U(Inv)}. For~\eqref{eq:U1_U2:Uj2}, observe that $\woj\Phi_J^+=\Phi_J^-$, so $\ut \Phi_J^+\subset \Phi^-$ by~\eqref{eq:Inv(ab)_sqcup}. We thus have $\dut U_-\dut^{-1}=(\dut U_J^-\dut^{-1})\cdot (\dut \Uj_-\dut^{-1})$ where $(\dut U_J^-\dut^{-1})\subset U$, and hence $\dut U_-\dut^{-1}\cap U_-=\dut \Uj_-\dut^{-1}\cap U_-=\Uj_2$ by the definition of $\Uj_2$.
\end{proof}

\begin{lemma}
\label{hj_2:alt:Uj1up} For $x\in\du\Goj$, there exists a unique element $h\in \Uj_2$  such that $hx\in \Uj_1 \du P$, and we have $h=\hjx$.
\end{lemma}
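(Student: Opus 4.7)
The statement has two parts: existence of such an $h$, and uniqueness. For existence, I would simply unpack \cref{dfn:gj_and_stuff}. By construction $\hjx = \hj_2 \in \Uj_2$ and $\hj_2\gj = \gj_1 \in \Uj_1$, where $\gj \in \du\Uj_-\du^{-1}$ is the unique element with $\gj\du \in xP$. Writing $x = \gj\du p^{-1}$ for some $p \in P$, we get $\hjx x = \hj_2 \gj \du p^{-1} = \gj_1 \du p^{-1} \in \Uj_1 \du P$, as required.

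The bulk of the work is uniqueness. Suppose $h, h' \in \Uj_2$ both satisfy the condition, and write $hx = y\du p$ and $h'x = y'\du p'$ with $y,y' \in \Uj_1$ and $p,p' \in P$. Set $k := hh'^{-1} \in \Uj_2$ and $q := pp'^{-1} \in P$, so that
\[
y^{-1} k y' \;=\; \du q \du^{-1}.
\]
The left-hand side lies in $\du\Uj_-\du^{-1}$ since $\Uj_1, \Uj_2 \subset \du\Uj_-\du^{-1}$, while the right-hand side lies in $\du P \du^{-1}$. Thus $q \in \Uj_- \cap P$, and by the uniqueness of the factorization in \cref{lemma:bracketsJ} we have $\Uj_- \cap P = \{e\}$, forcing $q = e$ and hence $y^{-1}ky' = e$. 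Therefore $k = yy'^{-1} \in \Uj_1$. Combined with $k \in \Uj_2$, and using $\Uj_1 \subset U$ and $\Uj_2 \subset U_-$ together with $U \cap U_- = \{e\}$, we conclude $k = e$, i.e.\ $h = h'$.

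I don't anticipate any real obstacle: the proof is essentially a direct manipulation of the two direct-product decompositions (from \cref{lemma:KWY}, which gave us $\Uj_1,\Uj_2$ in the first place, and from \cref{lemma:bracketsJ}, which gives $\Uj_-\cap P = \{e\}$). The only subtle point to double-check is that the ambient containments $\Uj_1, \Uj_2 \subset \du\Uj_-\du^{-1}$ (which are built into their definitions in \cref{dfn:gj_and_stuff}) are what allow us to intersect with $\du P \du^{-1}$ cleanly; otherwise the identification $\du q\du^{-1}\in\du\Uj_-\du^{-1}\cap\du P\du^{-1}$ doesn't go through.
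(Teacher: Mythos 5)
Your proof is correct, and the existence half is the same direct unwinding of \cref{dfn:gj_and_stuff} that the paper uses. For uniqueness you take a genuinely different route: you derive the uniqueness by hand from the two direct-product facts $\Uj_- \cap P = \{e\}$ (from \cref{lemma:bracketsJ}) and $\Uj_1 \cap \Uj_2 \subset U \cap U_- = \{e\}$, whereas the paper instead observes that the $\Uj_2$-action on $\Cuj$ is free via the isomorphism~\eqref{eq:Cuj_to_Uj}, so that the uniqueness already packaged in \cref{lemma:KWY} (the statement that $\nu_1(\gj)$ is the unique element of $\Uj_1 \cap \Uj_2\gj$) transfers directly. Your argument is a bit more elementary in that it does not re-invoke \cref{lemma:KWY}; the paper's is slicker and shorter because the heavy lifting was already done there. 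One small wording issue: after deducing $y^{-1}ky' = \du q\du^{-1}$, you write ``The right-hand side lies in $\du P\du^{-1}$. Thus $q\in\Uj_-\cap P$,'' which is a non sequitur as phrased. The intended (and correct) chain is: the left-hand side lies in $\du\Uj_-\du^{-1}$, hence so does $\du q\du^{-1}$, hence $q\in\Uj_-$; combined with $q = p(p')^{-1}\in P$, this gives $q\in\Uj_-\cap P$. Worth tightening that sentence, but the underlying logic is sound.
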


\begin{proof}
Let $\gj\in\Uj$ and $p\in P$ be such that $\gj \du= xp$. We first show that such an $h\in\Uj_2$ exists. By \cref{dfn:gj_and_stuff}, $\hjx$ is an element of $\Uj_2$ such that $\hjx\gj\in\Uj_1$. In particular, $\hjx x=\hjx\gj \du p^{-1}\in \Uj_1 \du P$, which shows existence. To show uniqueness, observe that the action of $\du \Uj_- \du^{-1}$ on $\du \Goj/P \subset G/P$ is free by~\eqref{eq:Cuj_to_Uj}, and in particular the action of $\Uj_2$ is also free.
\end{proof}

\begin{lemma}
\label{lemma:hj_2=1_for_ut} If $x\in \du \Goj\cap B \du \dr B$ for some $r\in W_J$, then $\hjx=1$.
\end{lemma}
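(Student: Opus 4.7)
The plan is to reduce the statement, via the uniqueness part of \cref{hj_2:alt:Uj1up}, to showing that $x \in \Uj_1 \du P$. Indeed, if we can verify this inclusion, then the identity element $1 \in \Uj_2$ satisfies $1 \cdot x \in \Uj_1 \du P$, and the uniqueness clause of \cref{hj_2:alt:Uj1up} forces $\hjx = 1$. So the task is to prove the inclusion $B\du\dr B \subset \Uj_1 \du P$ under the hypothesis that $r \in W_J$.

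First I would reduce $B\du\dr B$ to $U\du P$. Since $r \in W_J$ is a product of simple reflections $s_i$ with $i \in J$, and each $\ds_i = y_i(1)x_i(-1)y_i(1)$ with $y_i(t), x_i(t) \in P$ for $i \in J$, we get $\dr \in P$ and hence $\dr B \subset P$. Writing $B = UT$ and using $T\du = \du T \subset \du P$ (the torus is normalized by the Weyl group), we obtain
\[
B\du\dr B \;\subset\; UT\du\cdot\dr B \;\subset\; U\du P.
\]

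The key step is then to show that $U\du \subset \Uj_1 \du P$. For this, partition the positive roots as $\Phi^+ = \Inv(u^{-1}) \sqcup (\Phi^+ \setminus \Inv(u^{-1}))$, both of which are bracket closed. By \cref{U(R)_generated}, every $n \in U$ factors as $n = n_1 n_2$ with $n_1 \in U(\Inv(u^{-1}))$ and $n_2 \in U(\Phi^+ \setminus \Inv(u^{-1}))$. For $\alpha \in \Inv(u^{-1})$ we have $u^{-1}\alpha < 0$, so $\du^{-1} U_\alpha \du = U_{u^{-1}\alpha} \subset U_-$; for $\alpha \in \Phi^+ \setminus \Inv(u^{-1})$ we have $u^{-1}\alpha > 0$, so $\du^{-1} U_\alpha \du \subset U \subset P$. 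Consequently $\du^{-1} n_1 \du \in U_- \cap \du^{-1} U\du$ and $\du^{-1} n_2 \du \in P$, giving $\du^{-1} n \du \in (U_- \cap \du^{-1} U\du) \cdot P$. Conjugating back by $\du$ and invoking \eqref{eq:U1_U2:Uj1}, which identifies $\Uj_1 = \du U_- \du^{-1} \cap U$ and hence $\du(U_- \cap \du^{-1} U\du) = \Uj_1 \du$, we conclude $n\du \in \Uj_1 \du P$.

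Putting everything together, $x \in B\du\dr B \subset U\du P \subset \Uj_1 \du P$, so by \cref{hj_2:alt:Uj1up} we have $\hjx = 1$. There is no serious obstacle here: the argument is essentially a transparent consequence of the decomposition $U = U(\Inv(u^{-1}))\cdot U(\Phi^+\setminus \Inv(u^{-1}))$ combined with the fact that the second factor commutes with $\du$ modulo $P$, which is precisely the content of $u$ lying in $W^J$ via \cref{Inv_W^J}; the only minor care needed is in verifying that $\dr \in P$ for $r \in W_J$, which follows directly from the choice of pinning.
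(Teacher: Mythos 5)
Your argument is correct and takes essentially the same approach as the paper's: both reduce, via the uniqueness clause of \cref{hj_2:alt:Uj1up}, to proving $B\du\dr B\subset\Uj_1\du P$, dispose of $\dr$ using $\dr\in P$, and then isolate the $\Uj_1$ factor of $U\du$. The only difference is that the paper cites the packaged identity $B\du B=(\du U_-\cap U\du)\cdot B$ from \eqref{eq:BuB_uU_cap_Uu}, whereas you re-derive the needed inclusion $U\du\subset\Uj_1\du P$ directly from the root-group factorization $U=U(\Inv(u^{-1}))\cdot U(\Phi^+\setminus\Inv(u^{-1}))$ (and, as a minor aside, this factorization is valid for any $u\in W$, not specifically because $u\in W^J$; the citation of \cref{Inv_W^J} in your closing remark is not what is actually being used).
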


\begin{proof}
By \cref{hj_2:alt:Uj1up}, it suffices to show that $B\du\dr B\subset \Uj_1 uP$. Write 
\[B\du\dr B\subset B\du P\subset (B\du B)\cdot P.\]
By~\eqref{eq:BuB_uU_cap_Uu}, $B\du B\subset (\du U_-\cap U\du)\cdot B$, and therefore we find 
\[B\du\dr B\subset (\du U_-\cap U\du)\cdot P=(\du U_-\du^{-1}\cap U)\du P=\Uj_1\du P,\]
where the last equality follows from~\eqref{eq:U1_U2:Uj1}.
\end{proof}

\def\rhocheck{\rho^\vee}
\def\rhow{\rho_w^\vee}
\begin{lemma}\label{lemma:torus} Let $a\in T$.
\begin{theoremlist}
\item \label{torus:conjug} The subgroups $\du \Uj \du^{-1}$, $\Uj_1$, and $\Uj_2$ are preserved under conjugation by $a$.
\item \label{torus:kappa} If $x\in \du\Goj$, then $ax\in \du\Goj$ and $\hjmp_{ax} ax=a\hjx x$.
\item \label{torus:closure} (Assume $\K=\C$.) For each $w\in W$, there exists $\rhow\in Y(T)$ such that for all $x\in \dw B_-B$, $\lim_{t\to0}\rhow(t)\cdot xB=\dw B$ in $G/B$. If $w\in W^J$, then for all $x\in \dw \Goj$, $\lim_{t\to0}\rhow(t)\cdot xP=\dw P$ in $G/P$.
\end{theoremlist}
\end{lemma}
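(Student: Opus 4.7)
Here is my plan for proving \cref{lemma:torus}.

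\textbf{Part \itemref{torus:conjug}.} I would first observe that every root subgroup $U_\alpha$ is $T$-stable, since $a x_\alpha(t) a^{-1} = x_\alpha(a^\alpha t) \in U_\alpha$. Hence by \cref{U(R)_generated} the groups $\Uj = U(\Phij_+)$ and $\Uj_-$ are $T$-stable. Because $T$ is normalized by $\du$ (as $\du \in N_G(T)$), conjugation by $a \in T$ on $\du \Uj_- \du^{-1}$ satisfies $a(\du \Uj_- \du^{-1})a^{-1} = \du\bigl((\du^{-1}a\du)\Uj_-(\du^{-1}a\du)^{-1}\bigr)\du^{-1} = \du\Uj_-\du^{-1}$, since $\du^{-1}a\du \in T$ preserves $\Uj_-$. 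The groups $U$ and $U_-$ are obviously $T$-stable, so intersecting gives that $\Uj_1$ and $\Uj_2$ are preserved under conjugation by $a$ as well. The same argument handles $\du\Uj\du^{-1}$.

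\textbf{Part \itemref{torus:kappa}.} Since $T \subset L_J \subset P$, the element $\du^{-1}a\du \in T$ lies in $P$, and therefore $a\du = \du(\du^{-1}a\du) \in \du P \subset \du\Goj$, which gives $ax \in \du\Goj$. For the formula, I will apply the uniqueness in \cref{hj_2:alt:Uj1up}: set $h := a\hjx a^{-1}$, which lies in $\Uj_2$ by \itemref{torus:conjug}. Then
\[
h\cdot(ax) \;=\; a\hjx a^{-1}\cdot ax \;=\; a\hjx x \;\in\; a\,\Uj_1\,\du P \;=\; \Uj_1\cdot a\du\cdot P \;=\; \Uj_1\,\du P,
\]
using \itemref{torus:conjug} to move $a$ past $\Uj_1$ and using $\du^{-1}a\du \in T \subset P$ to absorb it into $\du P$. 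By the uniqueness statement of \cref{hj_2:alt:Uj1up}, $\hjmp_{ax} = h = a\hjx a^{-1}$, and multiplying by $ax$ on the right gives $\hjmp_{ax}\cdot(ax) = a\hjx x$.

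\textbf{Part \itemref{torus:closure}.} Let $\rhocheck := \sum_{i \in I}\omega_i^\vee \in Y(T)$ be the sum of the fundamental coweights, which is strictly dominant: $\<\rhocheck,\alpha\> > 0$ for all $\alpha \in \Phi^+$. I will \emph{define} $\rhow := -w\rhocheck \in Y(T)$, so that $\rhow(t) = \dw\,\rhocheck(t^{-1})\,\dw^{-1}$. For the first claim, given $x \in \dw B_-B$, write $xB = \dw u_- B$ with $u_- \in U_-$, which is possible by~\eqref{eq:BuB_uU_cap_Uu} or by direct decomposition of $B_-$. Then since $\rhocheck(t^{-1}) \in T \subset B$,
\[
\rhow(t)\cdot xB \;=\; \dw\,\rhocheck(t^{-1})\,u_-\,B \;=\; \dw\bigl(\rhocheck(t^{-1})\,u_-\,\rhocheck(t^{-1})^{-1}\bigr)B.
\]
Decomposing $u_-$ in coordinates via \cref{U(alpha)_iso} shows that conjugation scales each $u_\alpha(c)$ (with $\alpha<0$) to $u_\alpha(t^{-\<\rhocheck,\alpha\>}c)$, and $-\<\rhocheck,\alpha\> > 0$, so each factor tends to the identity as $t \to 0$. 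Thus $\rhow(t)\cdot xB \to \dw B$. For the second claim, let $w \in W^J$ and $x \in \dw\Goj$. By \cref{G_0^J} write $x = \dw p_- b$ with $p_- \in P_-$, $b \in B$, and further decompose $p_- = u_- l$ with $u_- \in \Uj_-$, $l \in L_J$ via \cref{unip_radical}. Since $L_J \subset P$, we have $xP = \dw u_- P$, and as above
\[
\rhow(t)\cdot xP \;=\; \dw\bigl(\rhocheck(t^{-1})\,u_-\,\rhocheck(t^{-1})^{-1}\bigr)P \;\longrightarrow\; \dw P
\]
as $t\to 0$, since $u_- \in \Uj_- \subset U_-$ is again contracted to the identity by the strictly dominant coweight. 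This is the main mechanism; the only care needed is to use \cref{U(alpha)_iso} to justify the coordinate-wise contraction and to invoke the correct Levi/unipotent decomposition, neither of which should cause difficulty.
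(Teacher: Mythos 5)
Your argument follows the paper's approach for all three parts: part~\itemref{torus:conjug} via writing $a\du=\du(\du^{-1}a\du)$ with $\du^{-1}a\du\in T$; part~\itemref{torus:kappa} via the uniqueness characterization in \cref{hj_2:alt:Uj1up}; and part~\itemref{torus:closure} via conjugating $U_-$ by a regular coweight (the paper uses a strictly antidominant $\rhocheck$, while you use a strictly dominant one and flip the sign — this is purely cosmetic). So the route is the same.

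There is, however, one genuine slip in part~\itemref{torus:closure}. Since $G$ is assumed simple and \emph{simply connected}, $Y(T)$ is the coroot lattice, and the fundamental coweights $\omega_i^\vee$ generally do \emph{not} belong to it; already for $G=\SL_2$ one has $\omega_1^\vee=\tfrac12\alphacheck_1\notin Y(T)=\Z\alphacheck_1$. So your specific element $\rhocheck=\sum_{i\in I}\omega_i^\vee$ need not be a cocharacter of $T$, and the argument as written fails there. The fix is easy: choose any $\rhocheck\in Y(T)$ with $\<\rhocheck,\alpha_i\>>0$ for all $i\in I$ (such an element exists in the coroot lattice since the Cartan matrix is invertible over $\Q$, so one can clear denominators). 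This is exactly the choice the paper makes (with the opposite sign). A much smaller point in part~\itemref{torus:kappa}: to get $ax\in\du\Goj$ you argue via $a\du\in\du P$, but $P\cdot\Goj$ is not contained in $\Goj$ (compare $BB_-$ vs.\ $B_-B$ in $\SL_2$); the correct observation is that $\du^{-1}a\du\in T\subset P_-$ and $P_-\cdot\Goj=\Goj$. With these two corrections your proof is complete and coincides in substance with the paper's.
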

\begin{proof}
Since $\du\in N_G(T)$, there exists $b\in T$ such that $a\du=\du b$. Thus $a\du \Uj \du^{-1}a^{-1}=\du b\Uj b^{-1}\du^{-1}=\du\Uj\du^{-1}$, which shows~\itemref{torus:conjug}, and~\itemref{torus:kappa} is a simple consequence of~\itemref{torus:conjug}. To show~\itemref{torus:closure}, assume $\K=\C$ and choose $\rhocheck\in Y(T)$ such that $\<\rhocheck,\alpha_i\><0$ for all $i\in I$. Then $\lim_{t\to 0}\rhocheck(t) y \rhocheck(t)^{-1}=1$ for all $y\in U_-$, and in particular for all $y\in \Uj_-$. Set $\rhow:=w^{-1}\rhocheck$, so that for $t\in \Cast$, $\rhow(t)=\dw \rhocheck(t)\dw^{-1}$ by~\eqref{eq:torus_conj}. Every $x\in \dw B_-B$ belongs to $\dw yB$ for some $y\in U_-$, so $\rhow(t)\cdot x\cdot B=\dw \rhocheck(t) y\rhocheck(t)^{-1}\cdot B\to\dw B$ as $t\to 0$. Similarly, if $w\in W^J$ then every $x\in \dw \Goj$ belongs to $\dw yP$ for some $y\in \Uj_-$ by~\eqref{eq:Cuj_to_Uj}, so $\rhow(t)\cdot xP\to \dw P$ as $t\to0$.
\end{proof}

\def\rhou{\rho_u^\vee}
\def\rhour{\rho_{ur}^\vee}
\begin{lemma}
\label{lemma:hj_2_x_cell_refined}
Suppose that $v''\leq ur\leq w''$ for some $v'',w''\in W$ and $r\in W_J$, and let $x\in G$. 
\begin{theoremlist}
\item \label{lemma:subset_uP_sf} (Assume $\K=\Fcalb$.) If $x\in \Rsf_{v''}^{w''}$, then $x\in \du\Goj$.
\item \label{lemma:subset_uP} \label{lemma:hj_2_x_cell_r_w} (Assume $\K=\C$.) If $xB\in \Rtp_{v''}^{w''}$, then $x\in \du\Goj$ and  $\hjx xB\in \Rtp_{v''}^{ur_w}$ for some $r_w\in W_J$ such that $r_w\geq r$.
\end{theoremlist}
\end{lemma}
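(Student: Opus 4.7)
For \cref{lemma:subset_uP_sf}, the strategy is to apply \cref{lemma:01sf} with $a = v''$, $b = ur$, $c = w''$, which is permitted by the hypothesis $v'' \leq ur \leq w''$. Since $u \in W^J$ and $r \in W_J$, we have $\ell(ur) = \ell(u) + \ell(r)$, so $\dot{ur} = \du \dr$; the lemma then gives $x \in \du \dr \cdot B_- B$. It remains to verify $\dr B_- B \subset P_- P = \Goj$. Decomposing $B_- = T \Uj_- U_J^-$ and $B = T U_J \Uj$, and using that $\dr \in L_J$ normalizes both $\Uj_-$ and $\Uj$ by \cref{unip_radical}, one computes
\[
\dr B_- B \subset L_J \cdot \Uj_- \cdot L_J \cdot \Uj = \Uj_- \cdot L_J \cdot \Uj = P_- P.
\]

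For \cref{lemma:subset_uP}, the first assertion $x \in \du \Goj$ follows from part~\cref{lemma:subset_uP_sf} by evaluation (see \cref{eval}), together with the observation that $\du \Goj$ is right $B$-invariant (since $B \subset P \subset \Goj$). For the second assertion, set $y := \hjx x$ and invoke \cref{hj_2:alt:Uj1up} to write $y = \gj_1 \du p$ with $\gj_1 \in \Uj_1 \subset U$ and $p \in P$. Factoring $p = l u_+$ via the decomposition $P = L_J \Uj$ (so $u_+ \in \Uj \subset B$), and applying the Bruhat decomposition in $L_J$ to obtain $l \in B_J \d{r_w} B_J$ for the unique $r_w \in W_J$ (with $B_J := B \cap L_J$), we can write $l = b_1 \d{r_w} b_2$ and get $yB = \gj_1 \, (\du b_1 \du^{-1}) \, \du \d{r_w} B$. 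The key observation is that $\du B_J \du^{-1} \subset B$: since $u \in W^J$, \cref{Inv_W^J} gives $u \Phi_J^+ \subset \Phi^+$, so conjugation of $B_J = T U_J$ by $\du$ lands in $T U = B$. Combined with length-additivity $\du \d{r_w} = \dot{u r_w}$, this yields $yB \in B \dot{u r_w} B = \X^{u r_w}$. Since $\hjx \in U_- \subset B_-$ and $xB \in \X_{v''}$, also $yB \in \X_{v''}$, whence $yB \in \Rich_{v''}^{u r_w}$.

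The remaining claims---that $yB$ lies in the totally positive cell $\Rtp_{v''}^{u r_w}$ (not merely the Richardson variety), and that $r_w \geq r$---are the main obstacle. My plan is to carry out the preceding decomposition in the subtraction-free setting by starting from a subtraction-free lift $\tilde{x} \in \Rsf_{v''}^{w''}$ of $x$ and showing $\hjmap_{\tilde{x}} \tilde{x} \in \Rsf_{v''}^{u r_w}$ with $r_w \geq r$; evaluating via \cref{eval} then transports both conclusions back to the complex setting. The hardest step will be pinning down $r_w \geq r$, which I expect to require a reduced word for $w''$ that exposes $ur$ as a positive subexpression (via \cref{lemma:positive_subexpression}), combined with another application of \cref{lemma:01sf} to suitable generalized minors of $l \in L_J$ in order to extract the desired $W_J$-Bruhat inequality from the positivity of $\Deltamp_i(\dr^{-1} \dw^{-1} l)$-type expressions.
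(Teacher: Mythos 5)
Your proof of part~\itemref{lemma:subset_uP_sf} is correct and essentially matches the paper's (the paper simply notes $\dr\in L_J\subset P_-$ to get $\du\dr B_-B\subset\du P_-B=\du\Goj$; your decomposition of $\dr B_- B$ into $\Uj_-L_J\Uj$ is a slightly longer route to the same inclusion). The first half of part~\itemref{lemma:subset_uP}, through the conclusion $\hjx xB\in\Rich_{v''}^{ur_w}$, is also correct and tracks the paper's argument closely; the paper writes $\hjx x = \gj_1\du p^{-1}$ and invokes~\eqref{eq:Bruhat_length_add} where you factor $p=lu_+$ and conjugate $B_J$ past $\du$, but these are cosmetic differences.

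The genuine gap is that you have not proved either of the two remaining claims — that $r_w\geq r$, and that $\hjx xB$ lies in $\GBtnn$ (hence in $\Rtp_{v''}^{ur_w}$, not merely $\RRich_{v''}^{ur_w}$) — and you explicitly acknowledge these as "the main obstacle" while offering only a plan. That plan diverges from what the paper does and is not clearly workable. The paper establishes $r_w\geq r$ by a torus degeneration argument: using~\eqref{eq:lemma_01_>0}, $x\in\du\dr B_-B$, so by \cref{torus:closure} the one-parameter subgroup $\rhour(t)$ contracts $xB$ to $\du\dr B$; since $\hjmap$ is $T$-equivariant (\cref{torus:kappa}) and $\hjmp_{\du\dr}=1$ (\cref{lemma:hj_2=1_for_ut}), the family $\rhour(t)\cdot\hjx xB$ stays in $\Rich_{v''}^{ur_w}$ and limits to $\du\dr B$, forcing $ur\leq ur_w$ and hence $r\leq r_w$ by \cref{ur_leq_ur'}. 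For the total-nonnegativity claim the paper uses a connectedness argument on the set $\bigsqcup_{w''\geq\ut}\Rtp_{v''}^{w''}$ (handled first for $r=\woj$), then extends by continuity.

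Your proposed subtraction-free route faces concrete obstructions that you have not addressed. The paper's subtraction-free computation of $\hjmp$ (\cref{sf_generic_h2j_h_is_sf}) is only available for elements $h\in\Usf^-(\ut q)$ with $\ell(\ut q)=\ell(\ut)+\ell(q)$, i.e.\ parametrizations of the top cell $\Rsf_\id^{w_0}$; there is no subtraction-free formula for $\hjmp_{\tilde x}\tilde x$ when $\tilde x\in\Rsf_{v''}^{w''}$ with $v''\neq\id$, and producing one would be substantial additional work. Moreover, "exposing $ur$ as a positive subexpression" is not a well-defined operation in the Marsh--Rietsch framework — positive subexpressions are attached to the boundary pair $(v'',w'')$, not to intermediate elements — and it is not clear how positivity of expressions like $\Deltamp_i(\dr^{-1}\dw^{-1}l)$ would extract the Bruhat inequality $r_w\geq r$ in $W_J$. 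You would need to either fill in these missing constructions or switch to the paper's degeneration-and-connectedness strategy.
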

\begin{proof}
When $\K=\Fcalb$, \eqref{eq:lemma_01_sfs} implies $\Rsf_{v''}^{w''}\subset \du\dr B_-B\subset \du P_-B$, and by \cref{G_0^J}, $P_-B=\Goj$, which shows~\itemref{lemma:subset_uP_sf}. Similarly  (for $\K=\C$),  by \cref{lemma:01},  we have $x\in \du\dr B_-B$ for any $x\in \Rtp_{v''}^{w''}$, so $\Rtp_{v''}^{w''}\subset \du\Goj$.

Assume now that $\K=\C$ and $xB\in \Rtp_{v''}^{w''}$. Let $p\in P$ and $\gj\in\du\Uj_-\du^{-1}$ be such that $xp=\gj \du$. Then $\hjx xp=\gj_1 \du$ for $\gj_1\in \Uj_1$. By~\eqref{eq:U1_U2:Uj1}, $\Uj_1\du\subset U\du\subset B\du B$. By \cref{G_0^J}, we have $p^{-1}\in B\dr_wB$ for some $r_w\in W_J$. We get $\hjx x=\gj_1\du \cdot p^{-1}\in B\du B\cdot B\dr_w B\subset B\du\dr_w B$ by~\eqref{eq:Bruhat_length_add}. On the other hand, $\hjx\in U_-$ and $x\in B_-v''B$, so $\hjx x\in B_-v''B$. Therefore $\hjx xB\in\Rich_{v''}^{ur_w}$.

We now show $r_w\geq r$. By~\eqref{eq:lemma_01_>0}, $x\in \du\dr B_-B$, so by \cref{torus:closure}, we have $\rhour(t)\cdot xB\to \du\dr B$ as $t\to0$ in $G/B$. Since $\du\dr \in \du\Goj$, $\hjmap$ is regular at $\du\dr B$, and by \cref{lemma:hj_2=1_for_ut}, we have $\hjmp_{\du\dr}=1$. Thus $\hjmp_{\rhour(t) x} \rhour(t) xB\to \du\dr B$ as $t\to0$. By \cref{torus:kappa}, $\hjmp_{\rhour(t) x} \rhour(t) xB=\rhour(t)\cdot \hjx xB$, which belongs to $\Rich_{v''}^{ur_w}$ for all $t\in\Cast$. We see that the closure of $\Rich_{v''}^{ur_w}$ contains $\du\dr B$, and so $v''\leq ur\leq ur_w$ by~\eqref{eq:GB_closure}. Thus $r\leq r_w$ by \cref{ur_leq_ur'}.

Finally, we show $\hjx x B\in \GBtnn$.  First, clearly the map $\hjmap$ is defined over $\R$, so $\hjx x B\in \GBR$. Consider the subset $\Rtpv:=\bigsqcup_{w''\geq\ut} \Rtp_{v''}^{w''}\subset \GBtnn$. It contains $\Rtp_{v''}^{w_0}$ as an open dense subset, and therefore $\Rtpv$ is connected. We have already shown that for any $x'\in \Rtpv$, $\hjmp_{x'}x' B\in \RRich_{v''}^{\ut}$ (because we have $r_w\geq r=\woj$). Thus the image of the set $\Rtpv$ under the map $x'\mapsto \hjmp_{x'}x'$ must lie inside a single connected component of $\RRich_{v''}^{\ut}$. However, if $x'\in \Rtp_{v''}^\ut\subset \Rtpv$ then $\hjmp_{x'}=1$ by \cref{lemma:hj_2=1_for_ut}, so in this case $\hjmp_{x'}x'\in \Rtp_{v''}^{\ut}$. We conclude that the image of $\Rtpv$ is contained inside $\Rtp_{v''}^{\ut}\subset \GBtnn$. It follows by continuity that for arbitrary $v''\leq ur\leq w''$ and $x\in \Rtp_{v''}^{w''}$, we have $\hjx x B\in \GBtnn$.
\end{proof}

\noindent We will use the following consequence of \cref{lemma:hj_2_x_cell_r_w} in \cref{sec:Gr_total-positivity}.
\begin{corollary}\label{cor:hj_2_x_cell_r_w_proj}
  (Assume $\K=\C$.) In the notation of \cref{lemma:hj_2_x_cell_r_w}, we have $\hjx xP\in \PRtp_{\bar v''}^{u}$ for $\bar v'':=v''\triangleleft r_w^{-1}$.
\end{corollary}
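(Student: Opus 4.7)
The plan is to simply combine Lemma \ref{lemma:hj_2_x_cell_r_w} with Corollary \ref{cor:MR_projection}. By Lemma \ref{lemma:hj_2_x_cell_r_w}, the hypothesis $xB \in \Rtp_{v''}^{w''}$ together with $v'' \le ur \le w''$ implies that $x \in \du\Goj$ and $\hjx xB \in \Rtp_{v''}^{ur_w}$ for some $r_w \in W_J$ with $r_w \ge r$. In particular, $v'' \le ur_w$ with $u \in W^J$ and $r_w \in W_J$, so Corollary \ref{cor:MR_projection} applies.

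Applying $\pi_J$ to both sides of $\hjx xB \in \Rtp_{v''}^{ur_w}$ and using Corollary \ref{cor:MR_projection}, we get
\begin{equation*}
\hjx x P = \pi_J(\hjx xB) \in \pi_J\!\left(\Rtp_{v''}^{ur_w}\right) = \PRtp_{v''\triangleleft r_w^{-1}}^{u} = \PRtp_{\bar v''}^{u},
\end{equation*}
which is the desired statement. There is no obstacle; this is an immediate consequence of the two cited results, and the corollary is stated precisely to repackage Lemma \ref{lemma:hj_2_x_cell_r_w} on the level of $G/P$ rather than $G/B$, for later use in Section \ref{sec:Gr_total-positivity}.
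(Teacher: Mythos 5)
Your proof is correct and follows exactly the same route as the paper: apply Lemma~\ref{lemma:hj_2_x_cell_r_w} to land in $\Rtp_{v''}^{ur_w}$, then project with Corollary~\ref{cor:MR_projection} (taking $r_w$ as the $r$ there and noting $v'' \le ur_w$ since that cell is nonempty). Nothing to add.
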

\begin{proof}
\cref{lemma:hj_2_x_cell_r_w} says that $\hjx xB\in \Rtp_{v''}^{ur_w}$, so applying \cref{cor:MR_projection}, we find that $\pi_J(\hjx xB)=\hjx xP\in \PRtp_{\bar v''}^{u}$.
\end{proof}

\subsection{Proof via subtraction-free parametrizations}
In this section, we fix some set $\t$ of variables and assume $\K=\Fcalb$. Also fix $u\in W^J$ and recall that $\ut=u \woj\in W^J_\maxx$.

By \cref{dfn:gj_and_stuff}, the map $\hjmap$ is defined on $\du\Goj$. By \cref{lemma:subset_uP_sf}, we have $\Rsf_{v''}^{w''}\subset \du \Goj$ whenever $v''\leq ur\leq w''$ for some $r\in W_J$. In particular, $\hjmap$ is defined on $\Usf^-(w'')\subset \Rsf_\id^{w''}$ for all $w''\geq\ut$.
\begin{proposition}\label{sf_generic_h2j_h_is_sf} Let $q\in W$ be such that $\ell(\ut q)=\ell(\ut)+\ell(q)$. Then for $h\in \Usf^-(\ut q)$, we have $\hjh h\in \Usf^-(\ut)$.
\end{proposition}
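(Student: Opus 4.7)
My plan is to prove the proposition by induction on $\ell(q)$.

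\emph{Base case} ($q=\id$): Since $h\in\Usf^-(\ut)$, \cref{lemma:subset_uP_sf} (applied with $v''=\id$, $w''=\ut$, $r=\woj$) gives $h\in\du\Goj$, and $h\in B\dut B=B\du\dwoj B$ by \eqref{eq:U_BwB_over_K}. Thus \cref{lemma:hj_2=1_for_ut} yields $\hjh=1$, so $\hjh h=h\in\Usf^-(\ut)$.

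\emph{Inductive step} ($\ell(q)>0$): First, suppose $q\notin W^J$, so that $q$ has a right descent $s_i$ with $i\in J$. Writing $q=q's_i$ with $\ell(q')=\ell(q)-1$, I choose a reduced word for $\ut q$ ending in $s_i$ and factor $h=h'\cdot y_i(t)$ with $h'\in\Usf^-(\ut q')$ and $t\in\Qsfs$. Since $i\in J$ gives $y_i(t)\in P$, we have $hP=h'P$, so $\hjh=\hjmap_{h'}$ (as $\hjmap$ descends to $\Cuj$ by \cref{dfn:gj_and_stuff}). By the inductive hypothesis, $\hjmap_{h'}h'\in\Usf^-(\ut)$. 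Since $s_i\le\woj$ we have $\ut s_i<\ut$, so there is a reduced word $\bj=\bj_0\cdot(i)$ for $\ut$ ending in $s_i$; by the reduced-word independence of $\Usf^-$ recorded in \cref{dfn:Uo}, I may write $\hjmap_{h'}h'=\by_{\bj_0}(\t'')\cdot y_i(t^*)$, and then $\hjh h=\by_{\bj_0}(\t'')\cdot y_i(t^*+t)\in\Usf^-(\ut)$.

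The main obstacle is the remaining case $q\in W^J$, where every reduced word of $q$ ends in some $s_i\notin J$ and the $P$-coset argument fails. My strategy is to use iterated subtraction-free braid and commutation moves (coming from \cref{prop:GMR_sf} together with the collision relations of \cref{sec:subtraction-free}) to rewrite the factorization $h=h_1\cdot h_q$ (with $h_1\in\Usf^-(\ut)$ and $h_q\in\Usf^-(q)$, obtained via the reduced word $\bi_\ut\cdot\bi_q$ for $\ut q$) in the form $h=z\cdot\tilde h_1$, where $z\in\Uj_2$ and $\tilde h_1\in\Usf^-(\ut)$. Once such a rewriting is in hand, the characterization of $\hjh$ from \cref{hj_2:alt:Uj1up}, combined with $\hjmap_{\tilde h_1}=1$ (from the base case), forces $\hjh=z^{-1}$, whence $\hjh h=\tilde h_1\in\Usf^-(\ut)$.

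The hardest step will be proving this rewriting, which is where exact cancellation occurs: although $z^{-1}=\hjh$ typically involves minus signs (so $\hjh$ is not itself subtraction-free), the product $\hjh h=\tilde h_1$ is subtraction-free because the minus signs in $z^{-1}$ precisely cancel the $z$-factor of $h$. A small example ($G=\SL_3$, $J=\{1\}$, $u=s_2$, $q=s_2$) illustrates this: starting from $h=y_2(a)y_1(b)y_2(t)\in\Usf^-(\ut q)$, the subtraction-free rank-two braid relation rewrites $h$ as $y_1(bt/(a+t))\cdot y_2(a+t)\cdot y_1(ab/(a+t))$, where the first factor lies in $\Uj_2=U_{-\alpha_1}$ and the remaining two factors form $\tilde h_1\in\Usf^-(s_2s_1)=\Usf^-(\ut)$; hence $\hjh=y_1(-bt/(a+t))$ and $\hjh h=y_2(a+t)y_1(ab/(a+t))\in\Usf^-(\ut)$. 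The root-theoretic picture in general is that pulling $h_q$-factors leftward past $h_1$ by braid moves produces factors corresponding to negative roots in $\ut\Phi^-\cap\Phi^-$, which is exactly the set of roots generating $\Uj_2$.
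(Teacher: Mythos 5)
Your base case and your reduction for $q\notin W^J$ are both correct, and the latter is a pleasant observation (not used in the paper): if $q=q's_i$ with $i\in J$, then $y_i(t)\in P$ gives $hP=h'P$, so $\hjh=\hjmp_{h'}$, and since $\ut s_i<\ut$ the factor $y_i(t)$ absorbs cleanly into $\Usf^-(\ut)$. Your framing of the remaining case is also sound: if you can show $h=z\tilde h_1$ with $z\in\Uj_2$ and $\tilde h_1\in\Usf^-(\ut)$, then $\hjmp_{\tilde h_1}=1$, the uniqueness in \cref{hj_2:alt:Uj1up} forces $\hjh=z^{-1}$, and you are done. The paper in fact establishes exactly this containment $\Usf^-(\ut q)\subset\Uj_2\cdot\Usf^-(\ut)$, so your target is the right one.

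However, the proposal as written has a genuine gap: the claimed rewriting $h=z\tilde h_1$ is never proved. You verify it in a rank-two $\SL_3$ example and appeal to a root-theoretic heuristic (``pulling $h_q$-factors leftward past $h_1$ by braid moves produces factors in $\ut\Phi^-\cap\Phi^-$''), but this heuristic is not a proof and is not obviously promoted to one; the braid/commutation moves of \cref{prop:GMR_sf} change reduced expressions for the \emph{same} Weyl group element, whereas here you need a fundamentally different kind of decomposition that separates off a factor in $\Uj_2$ (which need not itself be subtraction-free). The paper closes this gap by a short chain of collision-move inclusions: starting from $h\in\Usf^-(\ut)\Usf^-(q)$, apply \eqref{eq:collision1}, \eqref{eq:Usf_commute}, and \eqref{eq:collision1+} to get $h\in\dut U_-\dut^{-1}\cdot\Tsf\cdot\Usf^-(\ut)$; then use \cref{U(R)_generated} (direct-span of $\dut U_-\dut^{-1}$ as $\Uj_2\cdot(\dut U_-\dut^{-1}\cap U)$) together with the uniqueness of the Gaussian decomposition in \cref{G_0_uniquely} (comparing with the fact that both $h$ and the $\Uj_2$-factor lie in $U_-$) to kill the $U$ and $T$ pieces and land in $\Uj_2\cdot\Usf^-(\ut)$. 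This argument is uniform in $q$, so it makes your induction and the $q\notin W^J$ reduction unnecessary. As it stands, the hardest and only substantive step of your proof is an unproved assertion, and the sketch you give does not make clear why the iterated moves terminate with \emph{all} the excess on the left in $\Uj_2$ rather than scattered through the word.
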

\begin{proof}
Write $h\in \Usf^-(\ut q)=\Usf^-(\ut)\cdot \Usf^-(q)$. Using~\eqref{eq:collision1}, we find
\[h\in \dut\cdot \dut^{-1}\cdot \Usf^-(\ut)\cdot \Usf^-(q)\subset \dut \cdot \Bsf_- \cdot\Usf(\ut^{-1})\cdot \Usf^-(q).\]
By~\eqref{eq:Usf_commute}, $\Bsf_- \cdot \Usf(\ut^{-1})\cdot \Usf^-(q)=\Bsf_- \cdot  \Usf^-(q)\cdot\Usf(\ut^{-1})\subset \Bsf_-\cdot\Usf(\ut^{-1})$. Writing $\Bsf_-\subset U_-\cdot\Tsf$, we get
\[h\in \dut \cdot U_- \cdot\Tsf \cdot \Usf(\ut^{-1})=\Tsf \cdot\dut U_-\dut^{-1}\cdot \dut \cdot \Usf(\ut^{-1}).\]
Applying~\eqref{eq:collision1+}, we find
\[h\in \Tsf \cdot\dut U_-\dut^{-1}\cdot \Tsf \cdot(\Uo\cap \dut U_- \dut^{-1})\cdot \Usf^-(\ut)\subset \dut U_-\dut^{-1}\cdot\Tsf \cdot \Usf^-(\ut).\]
Let $g\in \dut U_-\dut^{-1}$ be such that $h\in g\cdot \Tsf \cdot \Usf^-(\ut)$. Recall from~\eqref{eq:U1_U2:Uj2} that $\Uj_2=\dut U_-\dut^{-1}\cap U_-$. By \cref{U(R)_generated}, there exists $h'\in \Uj_2$ such that $h'g\in \dut U_-\dut^{-1}\cap U$. Thus
\[h' h\in (\dut U_-\dut^{-1}\cap U)\cdot \Tsf\cdot \Usf^-(\ut)\subset U\cdot \Tsf\cdot \Usf^-(\ut).\]
But observe that both $h$ and $h'$ belong to $U_-$. Since the factorization of $h' h$ as an element of $U\cdot T\cdot U_-$ is unique by \cref{G_0_uniquely}, it follows that $h' h\in \Usf^-(\ut)$. By~\eqref{eq:U_BwB_over_K}, $\Usf^-(\ut)\subset B\dut B$. By \cref{lemma:hj_2=1_for_ut}, $\hjmp_{h'h}=1$, so $\hjmp_h=h'$, and thus $\hjmp_h h\in \Usf^-(\ut)$.
\end{proof}

\begin{corollary}
\label{sf_generic_Guv}For $q\in W$ such that $\ell(\ut q)=\ell(\ut)+\ell(q)$ and $v\leq \ut$, we have $\Rsf_\id^{\ut q}\subset  \Guvbig$.
\end{corollary}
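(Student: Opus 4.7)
The plan is to take an arbitrary $x\in \Rsf_\id^{\ut q}$ and check the two conditions defining $\Guvbig$ in~\eqref{eq:Guvbig} separately: $x\in \du\Goj$ and $\hjmp_x x\in \dv\Goj$. Unpacking the definitions (\cref{Rsf,dfn:Uo,lemma:positive_subexpression}), the positive subexpression for $\id$ inside any reduced expression is the trivial one, so $J_\bv^\circ=\{1,\dots,\ell(\ut q)\}$ and $\GMRsf{\bv}{\bw}=\Usf^-(\ut q)$; thus $\Rsf_\id^{\ut q}=\Usf^-(\ut q)\cdot\Bsf$. Write $x=h\cdot b$ with $h\in \Usf^-(\ut q)$ and $b\in \Bsf$. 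The length-additivity hypothesis gives $\ut\leq \ut q$, so with $v''=\id$, $w''=\ut q$ and $r=\woj$ (for which $ur=\ut$), \cref{lemma:subset_uP_sf} yields the first condition $x\in \du\Goj$ immediately.

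For the second condition, since $b\in\Bsf\subset B\subset P$, we have $x\in hP$, and thus $\hjmp_x=\hjmp_h$ by \cref{hjmap_properties}. The length-additivity $\ell(\ut q)=\ell(\ut)+\ell(q)$ is precisely the hypothesis of \cref{sf_generic_h2j_h_is_sf}, which I would invoke to conclude $\hjmp_h h\in \Usf^-(\ut)$. Consequently,
\[
\hjmp_x x = \hjmp_h h\cdot b\in \Usf^-(\ut)\cdot\Bsf.
\]
The assumption $v\leq \ut$ then allows me to apply the collision identity~\eqref{eq:collision2} with $w=\ut$, giving $\dv^{-1}\cdot\Usf^-(\ut)\subset \Bsf_-\cdot \Usf(v^{-1})$. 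Since $\Usf(v^{-1})\subset U\subset B$ and $\Bsf\subset B$, this yields
\[
\dv^{-1}\hjmp_x x\in \Bsf_-\cdot \Usf(v^{-1})\cdot \Bsf \subset B_-\cdot B \subset P_-B=\Goj,
\]
where the last equality is \cref{G_0^J}. Hence $\hjmp_x x\in \dv\Goj$, as required.

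I do not anticipate any genuine obstacle: the corollary is essentially a bookkeeping combination of three ingredients already established in this section, namely the factorization $\Rsf_\id^{\ut q}=\Usf^-(\ut q)\cdot\Bsf$, the subtraction-free Bruhat projection identity \cref{sf_generic_h2j_h_is_sf} (which is where the real work on $\hjmap$ was done), and the collision rule~\eqref{eq:collision2}. The only subtle point to keep in mind is that $\hjmap$ is constant on $P$-cosets (\cref{hjmap_properties}), which is what lets the trivial $\Bsf$-factor $b$ be pushed past $\hjmap$ without interfering.
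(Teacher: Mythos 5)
Your proof is correct, and it matches the paper's argument in structure: reduce to $\Rsf_\id^{\ut q}=\Usf^-(\ut q)\cdot\Bsf$, handle the first condition via \cref{lemma:subset_uP_sf}, reduce $\hjmp_x$ to $\hjmp_h$ via \cref{hjmap_properties}, and invoke \cref{sf_generic_h2j_h_is_sf} to land $\hjmp_h h$ in $\Usf^-(\ut)$. The one place you diverge is the final step showing $\hjmp_x x\in\dv\Goj$. The paper applies~\eqref{eq:lemma_01_sfs} to $\hjmp_x x\in\Rsf_\id^{\ut}$ with $\id\leq v\leq\ut$ to conclude $\hjmp_x x\in\dv B_-B$; you instead apply the collision identity~\eqref{eq:collision2} to get $\dv^{-1}\Usf^-(\ut)\subset\Bsf_-\cdot\Usf(v^{-1})$ and then absorb $\Bsf_-\Usf(v^{-1})\Bsf\subset B_-B=\Gomp\subset\Goj$. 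Both are valid; your route is a bit lighter since~\eqref{eq:collision2} is a purely formal subtraction-free identity, whereas~\eqref{eq:lemma_01_sfs} additionally invokes the non-vanishing of generalized minors (ultimately relying on \cref{lemma:Delta_nonzero}, which passes through an argument over $\C$). Either way, the real work lives in \cref{sf_generic_h2j_h_is_sf}, which both proofs use identically.
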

\begin{proof}
As we have already mentioned, \cref{lemma:subset_uP_sf} shows that $\Rsf_\id^{\ut q}\subset \du \Goj$. Let $x\in\Rsf_\id^{\ut q}=\Usf^-(\ut q)\cdot \Bsf$, and let $b\in \Bsf$ and $h\in \Usf^-(\ut q)$ be such that $x=hb$.  By \cref{hjmap_properties}, we have $\hjmp_x=\hjmp_h$. By \cref{sf_generic_h2j_h_is_sf}, $\hjmp_hh\in \Usf^-(\ut)$, and therefore $\hjmp_x x\in \Usf^-(\ut) \cdot\Bsf=\Rsf_\id^{\ut}$. By~\eqref{eq:lemma_01_sfs}, we get $\hjmp_x x\in \dv B_-B$.
\end{proof}

\noindent \cref{sf_generic_Guv} shows that the map $\zetamap$ is defined on the whole $\Rsf_\id^{\ut q}$.

\def\zu{d}
\begin{lemma}\label{lemma:case_h_3=1}
Suppose that $u_0\in W^J$ and $v_0\leq \ut_0:=u_0\woj$. Let $h\in \Usf^-(\ut_0)$, and let $b_u,b_v\in U$ be such that $\dut_0^{-1} h\in B_-\cdot b_u$ and $\dv_0^{-1} h\in B_-\cdot b_v$. Then $[b_ub_v^{-1}]_J\in \Usf(r)$ for some $r\in W_J$.
\end{lemma}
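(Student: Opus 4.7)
The plan is to first identify $[b_u]_J$ and $[b_v]_J$ individually as totally positive elements of $U_J$, and then to show that their quotient remains totally positive. By~\eqref{eq:collision1} we have $b_u \in \Usf(\ut_0^{-1}) = \Usf(\woj)\cdot\Usf(u_0^{-1})$ using the length-additive factorization $\ut_0^{-1} = \woj \cdot u_0^{-1}$. Since $u_0 \in W^J$, the inversion set $\Inv(u_0^{-1})$ is disjoint from $\Phi_J^+$, so $\Usf(u_0^{-1}) \subset \Uj$; writing $b_u = \bar b_u\, c_u$ with $\bar b_u \in \Usf(\woj)$ and $c_u \in \Uj$, the homomorphism property of \cref{bracketsJ_homo} yields $[b_u]_J = \bar b_u \in \Usf(\woj)$. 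Similarly, by~\eqref{eq:collision2}, $b_v \in \Usf(v_0^{-1})$ is a subtraction-free product of generators $x_i(\cdot)$, and applying the homomorphism $[\cdot]_J$ (which kills the factors with $i \notin J$) and using the commutation relations~\eqref{eq:collision_xy} within $L_J$, one rewrites $[b_v]_J$ in the form $\bx_{\bj}(\tau)$ for some reduced word $\bj$ of an element $r_v \in W_J$ and $\tau \in (\Qsfs)^{\ell(r_v)}$. Hence $[b_v]_J \in \Usf(r_v)$.

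Since $\Uj$ is normalized by $U_J$, the projection respects the quotient structure and gives $[b_u b_v^{-1}]_J = \bar b_u \cdot [b_v]_J^{-1}$, computed as a product in $U_J$. The crux of the argument is to show that this product lies in $\Usf(r)$ for some $r \in W_J$. Because $\bar b_u$ and $[b_v]_J$ arise from the \emph{same} element $h \in \Usf^-(\ut_0)$, one invokes a Marsh--Rietsch positive-subexpression structure inside the Levi $L_J$: choose a reduced word for $\ut_0$ whose suffix is a reduced word for $\woj$ compatible (in the sense of \cref{lemma:positive_subexpression}) with the positive subexpression of $v_0^{J,-}$, where $v_0 = v_0^J \cdot v_0^{J,-}$ is the parabolic factorization, and compute $b_u$ and $b_v$ by iterated collisions~\eqref{eq:collision_x}--\eqref{eq:collision_xy}. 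One then verifies that $\bar b_u$ admits a factorization $\bar b_u = \bar b_r \cdot [b_v]_J$ in $U_J$ for some $\bar b_r \in \Usf(r)$, whose parameters are subtraction-free rational expressions in the parameters of $h$.

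The hardest step is precisely this last one: tracking the iterated collisions to confirm that the ratio $\bar b_u \cdot [b_v]_J^{-1}$, although nominally involving subtraction, always simplifies to a subtraction-free expression. For instance, in a rank-two example one encounters an identity of the shape $\frac{1}{t_3} = \frac{1}{t_1+t_3} + \frac{t_1}{t_3(t_1+t_3)}$; the general such identities follow from the positive-subexpression calculus of \cref{lemma:positive_subexpression} applied within $L_J$, combined with the subtraction-free collision framework developed earlier in \cref{sec:subtraction-free} (cf.~\cref{lemma:MR_move_right,prop:GMR_sf}).
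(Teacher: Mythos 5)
There is a concrete error at the very start of your argument. You claim that since $u_0\in W^J$, the inversion set $\Inv(u_0^{-1})$ is disjoint from $\Phi_J^+$, hence $\Usf(u_0^{-1})\subset\Uj$ and $[c_u]_J=1$. But $u_0\in W^J$ only gives $\Inv(u_0)\cap\Phi_J^+=\emptyset$ (\cref{Inv_W^J}); the disjointness of $\Inv(u_0^{-1})$ from $\Phi_J^+$ is equivalent to $u_0^{-1}\in W^J$, which does not follow. A small counterexample: take $\SL_3$ with $J=\{1\}$ and $u_0=s_1s_2\in W^J$. Then $u_0^{-1}\alpha_1=s_2s_1\alpha_1=-\alpha_1-\alpha_2<0$, so $\alpha_1\in\Inv(u_0^{-1})\cap\Phi_J^+$. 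Explicitly, $c_u=x_2(t_1)x_1(t_2)$ has a nonzero $(1,2)$-entry, so $c_u\notin\Uj$ and $[c_u]_J=x_1(t_2)\neq 1$. Thus your identification $[b_u]_J=\bar b_u$ is wrong in general; the correct statement is $[b_u]_J=\bar b_u\,[c_u]_J$, with a nontrivial second factor that your argument ignores.

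Beyond this error, the remaining plan — ``tracking iterated collisions to confirm the ratio simplifies to a subtraction-free expression'' via an unspecified positive-subexpression calculus inside $L_J$ — is exactly the content that needs a proof and is left as an assertion. The paper's actual proof avoids computing $[b_u]_J$ and $[b_v]_J$ separately; it first establishes (by a careful manipulation with $\eqref{eq:collision_length_add}$, $\eqref{eq:dwoj_U_J}$, and the normality of $\Uj$ in $P$) the key characterization that $[b_u]_J$ is determined by $\dwoj^{-1}h_2\in B_-\cdot[b_u]_J$ where $h=h_1h_2\in\Usf^-(u_0)\cdot\Usf^-(\woj)$, and then runs an induction on $\ell(u_0)$, peeling off one $y_i(t)$ at a time and splitting into the cases $s_iv_0>v_0$ and $s_iv_0<v_0$; the base case $\ell(u_0)=0$ is handled directly by $\eqref{eq:collision_multi}$. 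That inductive structure is what actually shows $[b_ub_v^{-1}]_J\in\Usf(r)$, and it is missing from your write-up.
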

\begin{proof}
First, recall from \cref{G_0_uniquely} and~\eqref{eq:collision2} that $b_u$ and $b_v$ are uniquely defined and satisfy $b_u\in \Usf(\ut_0^{-1})$, $b_v\in \Usf(v_0^{-1})$. Let $h=h_1h_2$ for $h_1\in \Usf^-(u_0)$ and $h_2\in \Usf^-(\woj)$. Our first goal is to show that $[b_u]_J\in U_J$ satisfies (and is uniquely defined by) $\dwoj^{-1} h_2\in B_-\cdot  [b_u]_J$. Letting $b_u'\in U_J$ be uniquely defined by $\dwoj^{-1}h_2\in B_-\cdot  b_u'$, we thus need to show that $[b_u]_J=b_u'$.

By~\eqref{eq:collision_length_add}, there exists $\zu\in \Usf(u_0^{-1})$ such that 
\[\dwoj^{-1} \du_0^{-1} h_1\in \Bsf_-\cdot \dwoj^{-1} \cdot \zu.\]
Since $\zu\in U$, we can use \cref{lemma:bracketsJ} to factorize it as $\zu=[\zu]_J[\zu]_+\pj$. Since $h_2\in U_J^-\subset L_J$, \cref{unip_radical} shows that there exists $\zu'\in \Uj$ such that $[\zu]_+\pj h_2=h_2 \zu'$. Since $[\zu]_J\in U_J$ by \cref{bracketsJ_homo}, \eqref{eq:dwoj_U_J} shows that $\dwoj^{-1}[\zu]_J\in U_-\dwoj^{-1}$. Combining the pieces together, we get
\[\dut_0^{-1}h=\dwoj^{-1}\du_0^{-1} h_1h_2\in \Bsf_- \cdot \dwoj^{-1} \cdot [\zu]_J[\zu]_+\pj\cdot h_2\subset B_- \cdot \dwoj^{-1} h_2 \zu'=B_-\cdot b_u'\zu'.\]
On the other hand, $\dut_0^{-1}h\in B_-\cdot b_u$, so $b_u=b_u'\zu'$, where $b_u'\in U_J$ and $\zu'\in \Uj$. It follows that $[b_u]_J=b_u'$, and thus we have shown that $\dwoj^{-1} h_2\in B_-\cdot  [b_u]_J$.

We now prove the result by induction on $\ell(u_0)$. When $\ell(u_0)=0$, we have $\ut_0=\woj$ and $v_0\in W_J$. Thus there exists $v_1\in W_J$ such that $\woj=v_0\cdot v_1$ with $\ell(\woj)=\ell(v_0)+\ell(v_1)$. We have $b_u,b_v\in U_J$, so $[b_ub_v^{-1}]_J=b_ub_v^{-1}$ by \cref{bracketsJ_homo}. By~\eqref{eq:collision_multi}, there exist $b_0\in\Usf(v_0^{-1})$ and $b_1\in\Usf(v_1^{-1})$ such that 
\[\dv_0^{-1} h\in \Bsf_- \cdot b_0,\quad \dwoj^{-1} h\in \Bsf_- \cdot b_1b_0.\]
In particular, we have $b_v=b_0$ and $b_u=b_1b_0$. Thus $[b_ub_v^{-1}]_J=b_1\in\Usf(v_1^{-1})$, and we are done with the base case.

Assume $\ell(u_0)>0$, and let $i\in I$ be such that $u_1:=s_iu_0<u_0$. By \cref{s_iW^J_still_W^J}, $u_1\in W^J$, so define $\ut_1:=u_1\woj\in W^J_\maxx$. Let $h\in \Usf^-(\ut_0)$ be factorized as $h=h_ih_1'h_2$ for $h_i=y_i(t)\in \Usf^-(s_i)$, $h_1'\in \Usf^-(u_1)$, and $h_2\in \Usf^-(\woj)$. 

Suppose that $s_iv_0>v_0$, in which case we have $v_0\leq \ut_1$. Let $h':=h_1'h_2$ and $b_{u}'\in U$ be defined by $\dut_1^{-1} h'\in B_-\cdot b_{u}'$. Since $s_iv_0>v_0$, we see that $\dv_0^{-1} h_i\in B_-\cdot \dv_0^{-1}$, so $\dv_0^{-1} h'\in B_-\cdot \dv_0^{-1} h= B_-\cdot  b_v$. By the induction hypothesis applied to $v_0\leq \ut_1$ and $h'\in \Usf^-(\ut_1)$, we have $[b_u' b_v^{-1}]_J\in \Usf(r)$ for some $r\in W_J$. On the other hand, we have shown above that $[b_u]_J$ satisfies $\dwoj^{-1} h_2\in B_-\cdot [b_u]_J$. But since $h'=h_1'h_2$ for $h_2\in \Usf^-(\woj)$, we get that $[b_u']_J$ satisfies $\dwoj^{-1} h_2\in B_-\cdot [b_u']_J$, and thus $[b_u]_J=[b_u']_J$. Therefore using \cref{bracketsJ_homo}, we get 
\[ [b_ub_v^{-1}]_J=[b_u]_J[b_v^{-1}]_J=[b_u']_J[b_v^{-1}]_J=[b_u'b_v^{-1}]_J\in\Usf(r),\]
finishing the induction step in the case $s_iv_0>v_0$.

Suppose now that $v_1:=s_iv_0<v_0$. Let $h=h_ih_1'h_2\in \Usf^-(\ut_0)$ be as above.  By~\eqref{eq:collision1}, $\ds_i^{-1} h_i\in \Bsf_-\cdot  \Usf(s_i)$, so  let $d_i\in \Usf(s_i)$ be such that $\ds_i^{-1} h_i\in \Bsf_-\cdot d_i$. By~\eqref{eq:Usf_commute}, $\Usf(s_i)\cdot \Usf^-(\ut_1)=\Usf^-(\ut_1)\cdot \Usf(s_i)$, so let $b_i\in \Usf(s_i)$ and $h'\in \Usf^-(\ut_1)$ be such that $d_i h_1'h_2=h'b_i$. We check using~\eqref{eq:collision_length_add} that 
\begin{equation}\label{eq:b_u_b_v_b_i}
\dut_0^{-1} h\in \Bsf_-\cdot \dut_1^{-1} h'\cdot b_i,\quad \dv_0^{-1}h\in \Bsf_-\cdot \dv_1^{-1}h'\cdot b_i.
\end{equation}
Let $b_u',b_v'\in U$ be defined by $\dut_1^{-1}h'\in B_-\cdot b_u'$ and $\dv_1^{-1}h'\in B_-\cdot b_v'$. Then by the induction hypothesis applied to $v_1\leq \ut_1$ and $h'\in\Usf^-(\ut_1)$, we find $[b_u'b_v'^{-1}]_J\in \Usf(r)$ for some $r\in W_J$. But it is clear from~\eqref{eq:b_u_b_v_b_i} that $b_u=b_u'b_i$ and $b_v=b_v'b_i$. Therefore $[b_ub_v^{-1}]_J\in \Usf(r)$.
\end{proof}

\begin{theorem}\label{thm:sf_generic_zetamap}
For all $v\leq \ut$, $w\in W^J$, $i\in I$, and $x\in \Rsf_\id^{w_0}$, we have
  \begin{equation}
\Deltapm_i(\zetamap(x) \dw^{-1})\in \Qsf.\label{eq:deltapm_zetamap_Qsf}
\end{equation}
\end{theorem}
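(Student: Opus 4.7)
By \cref{zeta_mod_P}, $\zetamap(hb)=\zetamap(h)$ for $b \in \Bsf$, so since $\Rsf_\id^{w_0} = \Usf^-(w_0)\cdot\Bsf$ (\cref{Rsf}) it suffices to prove the statement for $x = h \in \Usf^-(w_0)$. Using the length-additive factorization $w_0 = \ut\cdot(\ut^{-1}w_0)$, write $h=h_1h_2$ with $h_1\in\Usf^-(\ut)$ and $h_2\in\Usf^-(\ut^{-1}w_0)$. By \cref{sf_generic_h2j_h_is_sf}, the element $h' := \hjmp_h h$ lies in $\Usf^-(\ut)$, keeping us within the subtraction-free world.

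Next I would make $\zetamap(h)$ explicit. By \cref{hj_2:alt:Uj1up} there is a unique decomposition $h' = c\,\du\,p$ with $c\in \Uj_1$ and $p\in P$. Writing $h = \hjmp_h^{-1}h' = \hjmp_h^{-1}c\,\du\,p$ and noting that $\hjmp_h^{-1}c\in \du\Uj_-\du^{-1}$, the $\Uj_-\!\cdot\! L_J\!\cdot\!\Uj$ decomposition of $\du^{-1}h$ reads $(\du^{-1}\hjmp_h^{-1}c\,\du)\cdot[p]_J\cdot[p]_+^J$. In particular $[\du^{-1}h]_J=[\du^{-1}h']_J=[p]_J$, and unwinding the definitions in \cref{lots_of_maps} yields
\[
\zetamap(h) \;=\; \hjmp_h^{-1}\,c\,\du\cdot M, \qquad M \;:=\; [\du^{-1}h']_J\,[\dv^{-1}h']_J^{-1} \;\in\; L_J.
\]
The first three factors are in $\du\Uj_-$, so the remaining difficulty is to control $M\cdot\dw^{-1}$.

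The key step is to show that $M$ admits a subtraction-free representative. Applying \eqref{eq:collision1_ref} and \eqref{eq:collision2} to $h'\in\Usf^-(\ut)$ gives $\du^{-1}h' \in \Bsf_-\cdot b_u^\star$ and $\dv^{-1}h' \in \Bsf_-\cdot b_v^\star$ with $b_u^\star\in\Usf(u^{-1})$ and $b_v^\star\in\Usf(v^{-1})$. Taking the $L_J$-component via \cref{lemma:bracketsJ} and using that $[\,\cdot\,]_J\colon U\to U_J$ is a homomorphism (\cref{bracketsJ_homo}), the quotient $M$ involves $[b_u^\star]_J [b_v^\star]_J^{-1} = [b_u^\star(b_v^\star)^{-1}]_J$. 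Translating from $\du^{-1}$ to $\dut^{-1} = \dwoj^{-1}\du^{-1}$ via the length-additive factorization $\dut=\du\dwoj$ and the collision moves of \cref{sec:subtraction-free}, this reduces to the $L_J$-component of $b_{\ut}b_v^{-1}$ studied in \cref{lemma:case_h_3=1}. That lemma (applied with $u_0=u$, $v_0=v$, $\ut_0=\ut$, and input $h'$) provides exactly the positivity we need: $[b_{\ut}b_v^{-1}]_J\in\Usf(r)$ for some $r\in W_J$. Assembling the pieces, we obtain $\zetamap(h) \in (\du\Uj_-)\cdot\Bsf_-\cdot\Usf(r)$ up to subtraction-free $T$-factors.

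Finally, to conclude, compute $\Deltapm_i(\zetamap(h)\dw^{-1})$ using the invariances of $\Deltapm_i$: left multiplication by $U$ preserves $\Deltapm_i$, right multiplication by $U_-$ preserves it, and $T$-factors act by a character. After conjugating the $\du\Uj_-\du^{-1}$-prefix across $\du$ via \cref{lemma:KWY} to land its $\Uj_1$-part in $U$ (where it disappears under $\Deltapm_i$) and its $\Uj_2$-part in $U_-$, we reduce to an expression of the form $\Deltapm_i(\dv'\cdot y \cdot \dw^{-1})$ with $y$ in a subtraction-free product $\Usf(v')\cdot\Tsf\cdot\Usf^-(w')$. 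Then \cref{gen_minors_sf} immediately yields $\Deltapm_i(\zetamap(h)\dw^{-1})\in \Qsf$.

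\textbf{Main obstacle.} The hardest part will be the third paragraph: the $L_J$-correction $M$ is an element of a non-abelian group involving a genuine inverse $[\dv^{-1}h']_J^{-1}$, so subtraction-freeness is not automatic. It is precisely the content of \cref{lemma:case_h_3=1} --- a delicate induction on $\ell(u_0)$ tracking the $U_J$-part of $b_u b_v^{-1}$ --- that makes $M$ subtraction-free. Matching the conventions of that lemma (which uses $\dut^{-1}$) to our setting (which uses $\du^{-1}$) via $\dwoj$, while propagating subtraction-freeness through the commutation moves \eqref{eq:collision1}--\eqref{eq:Usf_commute}, is the calculation requiring the most care.
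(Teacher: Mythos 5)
Your proposal identifies the correct strategy and the correct chain of key lemmas (reduce to $\Usf^-(w_0)$ via \cref{zeta_mod_P}; use \cref{sf_generic_h2j_h_is_sf} to get $h'\in\Usf^-(\ut)$; apply \cref{lemma:case_h_3=1}; finish with \cref{gen_minors_sf}), which matches the paper's approach. However, the third and fourth paragraphs have a genuine gap. You use the two-fold factorization $w_0 = \ut\cdot(\ut^{-1}w_0)$, but the paper's argument hinges on the three-fold, length-additive factorization $w_0 = u\cdot\woj\cdot q$ and \eqref{eq:collision_multi}, which simultaneously produces $b_1\in\Usf(u^{-1})$, $b_2\in\Usf(\woj)$, $b_3\in\Usf(q^{-1})$ with $\du^{-1}h\in\Bsf_-\cdot b_1$, $\dut^{-1}h\in\Bsf_-\cdot b_2b_1$, $\dw_0^{-1}h\in\Bsf_-\cdot b_3b_2b_1$. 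The paper then passes to $x'=hb_1^{-1}\in\du\Bsf_-$, which has two effects you are missing: (i) the unwanted $b_1$-factor in $\Ft(x')=[\zu b_0 b_1^{-1}]_J$ recombines with $b_2$ into the quantity $[b_2 b_1 b_0^{-1}]_J$, which is what \cref{lemma:case_h_3=1} controls (your $M$ as defined yields $[d']_J[b_1 b_0^{-1}]_J[\zu]_J^{-1}$, with $b_1$ the $U$-factor of $\du^{-1}h'$, which is \emph{not} the quantity $[b_{\ut}b_v^{-1}]_J$ from the lemma --- that lemma needs the $U$-factor of $\dut^{-1}h'$); and (ii) the factorization $\dw_0^{-1}x'\in\Bsf_- b_3 b_2$ lets one rewrite $\zetamap(x)\in\Bsf\cdot\dw_0 b_3[b_2b_1b_0^{-1}]_J[\zu]_J^{-1}$, placing a benign $\Bsf$-factor on the far left. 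Your final step does not have access to this $\Bsf\dw_0$ prefix: the claimed form "$\zetamap(h)\in(\du\Uj_-)\cdot\Bsf_-\cdot\Usf(r)$" has $\Bsf_-$ wedged between $\du$ and $\Usf(r)$, and your "$\Uj_2$-part in $U_-$" lands on the \emph{left}, a direction in which $\Deltapm_i$ is not invariant (it is invariant under left $U$-multiplication and right $U_-$-multiplication, not the reverse). You flag the $\du^{-1}\leftrightarrow\dut^{-1}$ translation as the hard part, but it is precisely the structural move (three-fold factorization plus passing to $x'=hb_1^{-1}$) that resolves it, and without it the argument as written does not close.
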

\begin{proof}
Let $q\in W$ be such that $w_0=\ut q$, so $\ell(\ut q)=\ell(\ut)+\ell(q)$. Let $x\in \Rsf_\id^{w_0}=\Usf^-(w_0)\cdot\Bsf$ be written as $x=h \cdot  b$, where $h=h_1 h_2 h_3\in\Usf^-(w_0)$ for $h_1\in \Usf^-(u)$, $h_2\in \Usf^-(\woj)$, $h_3\in\Usf^-(q)$, and $b\in \Bsf$.  By~\eqref{eq:collision_multi}, there exist $b_1\in \Usf(u^{-1})$, $b_2\in\Usf(\woj)$, and $b_3\in\Usf(q^{-1})$ such that 
\begin{equation}\label{eq:b1b2b3}
\du^{-1}h\in \Bsf_-\cdot b_1,\quad \dut^{-1}h\in \Bsf_-\cdot b_2b_1,\quad \dw_0^{-1}h\in \Bsf_-\cdot b_3b_2b_1.
\end{equation}
Let $x':=h b_1^{-1}$.  We have $x'=x b^{-1}b_1^{-1}\in xB\subset xP$, and therefore $x'\in\Guvbig$ and $\zetamap(x')=\zetamap(x)$ by \cref{zeta_mod_P}. On the other hand, by~\eqref{eq:b1b2b3}, $x'\in \du\Bsf_-\subset \du P_-$, so \cref{zeta_inside_uP_-} implies $\zetamap(x')=x'\Ft(x')^{-1}$. 

\def\zu{d_0}
Let us now compute $\Ft(x')=[\dv^{-1} \hjmp_{x'}x']_J$. By \cref{hjmap_properties}, $\hjx=\hjmp_{x'}=\hjh$, and by \cref{sf_generic_h2j_h_is_sf}, $\hjh h\in \Usf^-(\ut)$. Thus by~\eqref{eq:collision2}, $\dv^{-1}\hjh h\in\Bsf_-\cdot \Usf(v^{-1})$, so let $\zu\in\Bsf_-$ and $b_0\in \Usf(v^{-1})$ be such that $\dv^{-1}\hjh h=\zu b_0$. By definition, $\hjh\in \Uj_2$, so by~\eqref{eq:U1_U2:Uj2}, $\dut^{-1}\hjh\dut\in U_-$, and therefore using~\eqref{eq:b1b2b3} we find
\[\dut^{-1} \hjh h=\dut^{-1}\hjh \dut\cdot \dut^{-1}h\in U_-\cdot \dut^{-1}h\subset B_-\cdot b_2b_1.\]

We can now apply \cref{lemma:case_h_3=1}: we have $v\leq \ut$, $\hjh h\in \Usf^-(\ut)$, $\dut^{-1} \hjh h\in B_- \cdot b_2b_1$, and $\dv^{-1}\hjh h\in B_- \cdot b_0$. Let $b_u:=b_2b_1\in U$ and $b_v:=b_0\in U$. By \cref{lemma:case_h_3=1}, $[b_ub_v^{-1}]_J=[b_2b_1b_0^{-1}]_J\in \Usf(r)$ for some $r\in W_J$.

Recall that $\dv^{-1} \hjh h=\zu b_0$ for $\zu\in \Bsf_-$ and $b_0\in \Usf(v^{-1})$. Thus
\[ \Ft(x')=[\dv^{-1}\hjmp_{x'}x']_J=[\dv^{-1}\hjh x']_J=[\dv^{-1}\hjh h b_1^{-1}]_J=[\zu b_0b_1^{-1}]_J.\]
By \cref{lemma:bracketsJ}, we get $[\zu b_0b_1^{-1}]_J=[\zu]_J[b_0b_1^{-1}]_J$. Thus
\[\zetamap(x)=\zetamap(x')=x'\Ft(x')^{-1}=x'[b_0b_1^{-1}]_J^{-1}[\zu]_J^{-1}.\] 
By~\eqref{eq:b1b2b3}, we have $\dw_0^{-1} x'\in \Bsf_-\cdot  b_3b_2$, so $x'\in \Bsf\dw_0 b_3b_2$. Using \cref{bracketsJ_homo}, we thus get
\[\zetamap(x)=x'[b_0b_1^{-1}]_J^{-1}[\zu]_J^{-1}\in \Bsf \cdot \dw_0 b_3[b_2b_1b_0^{-1}]_J[\zu]_J^{-1}.\]
We are interested in the element $\zetamap(x)\dw^{-1}$. We know that $\zu\in \Bsf_-$, so $[\zu]_J\in\Tsf U_J^-$, and by \cref{Inv_W^J}, $\dw [\zu]_J\dw^{-1}\in \Tsf \cdot U_-$. Hence
\[\zetamap(x)\dw^{-1}\in \Bsf \cdot \dw_0 b_3[b_2b_1b_0^{-1}]_J[\zu]_J^{-1} \dw^{-1}\subset \Bsf \cdot \dw_0 b_3[b_2b_1b_0^{-1}]_J\dw^{-1} \cdot \Tsf\cdot U_-.\]
In particular, $\Deltapm_i(\zetamap(x)\dw^{-1})\in\Qsf$ if and only if $\Deltapm_i(\dw_0 b_3[b_2b_1b_0^{-1}]_J\dw^{-1})\in\Qsf$. Recall that $b_3\in \Usf(q^{-1})$ and $[b_2b_1b_0^{-1}]_J\in \Usf(r)$ for some $r\in W_J$. Thus $b_3[b_2b_1b_0^{-1}]_J\in \Usf(q^{-1}r)$, so we are done by \cref{gen_minors_sf}.
\end{proof}

\begin{proof}[Proof of \cref{thm:zeta}.]
Our strategy will be very similar to the one we used in the proof of \cref{lemma:01}. 

Fix $(u,u)\leqJ(v,w)\leqJ(v',w')\in Q_J$. Let $\t=(\t_1,\t_2,\t_3)$ with $|\t_1|=\ell(v')$, $|\t_2|=\ell(w_0)-\ell(w')$, and $|\t_3|:=\ell(w')-\ell(v')$, and assume $\K=\Fcalb$. Choose reduced words $\bi$ for $v'^{-1}$ and $\bj$ for $w_0w'^{-1}$, and let $(\bv',\bw')\in \RedMR(v',w')$. Suppose that $x\in\gMR{\bv'}{\bw'}(\t_3)\cdot \Bsf$. Then 
\[g(\t_1,\t_2,\t_3):=\bx_\bi(\t_1)\cdot \by_\bj(\t_2)\cdot \gMR{\bv'}{\bw'}(\t_3)\in \Usf(v'^{-1})\cdot \Usf^-(w_0w'^{-1})\cdot \Rsf_{v'}^{w'}.\]
By \cref{lemma:param_sf}, we have $g(\t_1,\t_2,\t_3)\in \Rsf_\id^{w_0}$. Thus by \cref{thm:sf_generic_zetamap}, for all $i\in I$ we have  $\Deltapm_i(\zetamap(g(\t_1,\t_2,\t_3))\dw^{-1})\in\Qsf$. Denote by $f(\t_1,\t_2,\t_3):=\Deltapm_i(\zetamap(g(\t_1,\t_2,\t_3))\dw^{-1})$ the corresponding subtraction-free rational expression, which yields a continuous function $\R_{>0}^{|\t_1|}\times \R_{>0}^{|\t_2|}\times \R_{>0}^{|\t_3|}\to \R_{\geq0}$. We claim that $f$ extends to a continuous function $\R_{\geq0}^{|\t_1|}\times \R_{\geq0}^{|\t_2|}\times \R_{>0}^{|\t_3|}\to \R_{\geq0}$. Indeed, fix some $(\t_1',\t_2',\t_3')\in \R_{\geq0}^{|\t_1|}\times \R_{\geq0}^{|\t_2|}\times \R_{>0}^{|\t_3|}$ and let $\K=\C$. The element $x':=g(\t_1',\t_2',\t_3')$ (obtained by evaluating at $(\t_1',\t_2',\t_3')$; see \cref{eval}) belongs to $\Gtnn\cdot \Rtp_{v'}^{w'}$, and by \cref{lemma:Demazure} there exist $v'',w''\in W$ such that $v''\leq v'\leq w'\leq w''$ and $x'\in \Rtp_{v''}^{w''}$. Recall from \cref{Q_J_charact} that we have 
\[v''\leq v'\leq vr'\leq ur\leq wr'\leq w'\leq w''\]
for some $r',r\in W_J$ such that $\ell(vr')=\ell(v)+\ell(r')$. In particular, by \cref{lemma:hj_2_x_cell_r_w}, $x'\in \du\Goj$ and $\hjmp_{x'}x'\in \Rtp_{v''}^{ur_w}$ for some $r_w\in W_J$ such that $r_w\geq r$. By \cref{lemma:01}, $\hjmp_{x'}x'\in \dv\dr'B_-B\subset \dv\Goj$, which shows that $x'\in \Guvbig$. The map $\zetamap$ is therefore regular at $x'$ by \cref{zetamap_regular}. The map $\Deltapm_i$ is regular on $G$ by \cref{gen_minors_regular}, so in particular it is regular at $\zetamap(x')\dw^{-1}$. We have shown that the map $x''\mapsto \Deltapm_i(\zetamap(x'')\dw^{-1})$ is regular at $x'=g(\t'_1,\t'_2,\t'_3)$ for all $(\t_1',\t_2',\t_3')\in \R_{\geq0}^{|\t_1|}\times \R_{\geq0}^{|\t_2|}\times \R_{>0}^{|\t_3|}$. Thus the map $f(\t_1,\t_2,\t_3)$ extends to a continuous function $\R_{\geq0}^{|\t_1|}\times \R_{\geq0}^{|\t_2|}\times \R_{>0}^{|\t_3|}\to \R_{\geq0}$. By \cref{lemma:SF_limit}, we find that $f(0,0,\t_3):=\lim_{\t_1,\t_2\to 0} f(\t_1,\t_2,\t_3)$ belongs to $\Qsf$, i.e., it can be represented by a subtraction-free rational expression in the variables $\t_3$. On the other hand, it is clear that $f(0,0,\t_3)=\Deltapm_i(\zetamap(\gMR{\bv'}{\bw'}(\t_3))\dw^{-1})$.

\def\ff{\bar f}
Our next goal is to show that $f(0,0,\t_3)\in\Qsfs$. Indeed, suppose otherwise that $f(0,0,\t_3)=0$ (as an element of $\Fcal$). By \cref{zeta_mod_P}, $\zetamap$ descends to a regular map $\Guvbig/P\to G$ (still assuming $\K=\C$). Therefore the map $\ff:\Guvbig/P\to \C$ sending $x'P$ to $\Deltapm_i(\zetamap(x')\dw^{-1})$ is also regular. If $f(0,0,\t_3)=0$ then $\ff$ vanishes on $\pi_J(\Rtp_{v'}^{w'})=\PRtp_{v'}^{w'}$, and therefore it vanishes on its Zariski closure, which is $\PRcl_{v'}^{w'}$. We have $\pi_J(\Rtp_{v}^{w})=\PRtp_{v}^{w}\subset \PRcl_{v'}^{w'}$, so $\ff(x)=0$ for any $x\in \Guvbig$ such that $xB\in \Rtp_v^w$. Let us show that this leads to a contradiction. 

Let $x\in G$ be such that $xB\in \Rtp_v^w$. By~\eqref{eq:KLS_pi_J_length_add}, there exists $x'\in xP$ such that $x'B\in\Rtp_{vr'}^{wr'}$. By \cref{lemma:subset_uP}, we have $x'\in \du\Goj$, and thus $x\in\du\Goj$. Having $xB\in \Rtp_v^w$ implies $x\in B_-\dv B\cap B\dw B$.  Since $\hjx\in \Uj_2\subset U_-$, we have $\hjx x\in B_-\dv B$. By~\eqref{eq:BuB_uU_cap_Uu}, $B_-\dv B= (\dv U_-\cap U_-\dv)B\subset \dv B_- B$, so $\hjx x\in \dv B_-B$, and therefore $x\in \Guvbig$. Moreover, $\dv^{-1} \hjx x\in B_-B$, and thus $\Ft(x)=[\dv^{-1} \hjx x]_J\in U_J^- T U_J$. On the other hand, $\pidup(x)\in x\Uj\subset xB\subset B\dw B$; see \cref{lots_of_maps}. Thus
\[\zetamap(x)=\pidup(x)\Ft(x)^{-1}\in B\dw B\cdot U_J T U_J^- = B\dw B\cdot U_J^-.\]
Recall that because $w\in W^J$, we have $U_J^-\dw^{-1}\subset \dw^{-1} U_-$ by \cref{Inv_W^J}. Hence
\[\zetamap(x) \dw^{-1}\in B\dw B \cdot U_J^-\cdot \dw^{-1}\subset B\dw B\dw^{-1} B_-.\]
By~\eqref{eq:BuB_uU_cap_Uu} (after taking inverses of both sides), $B\dw B=B\cdot (U_-\dw\cap \dw U)$, so 
\[\zetamap(x)\dw^{-1}\in B\cdot (U_-\cap \dw U\dw^{-1})\cdot B_-\subset B\cdot B_-.\] 
In particular, $\Deltapm_i(\zetamap(x)\dw^{-1})\neq 0$ for all $i\in I$. This gives a contradiction, showing $f(0,0,\t_3)\in\Qsfs$. But then evaluating $f$ at any $\t_3'\in\R_{>0}^{\ell(w')-\ell(v')}$ yields a positive real number. We have shown that $\Deltapm_i(\zetamap(x)\dw^{-1})\neq 0$ for all $x\in G$ such that $xB\in \Rtp_{v'}^{w'}$. We are done by \cref{lemma:G_0}.
\end{proof}


\def\evm{\ev_\infty}
\def\evp{\ev_0}
\def\ev{\bar{\operatorname{ev}}}
\def\Am{\Acal_-}
\def\Ap{\Acal_+}

\def\hG{{\hat \G}}
\def\hB{{\hat \B}}
\section{Affine Bruhat atlas for the projected Richardson stratification}\label{sec:kac-moody-groups}
In this section, we embed the stratification~\eqref{eq:GP_closure} of $G/P$ inside the affine Richardson stratification of the affine flag variety. Throughout, we work over $\K=\C$.

\subsection{Loop groups and affine flag varieties} \label{sec:loop-groups-affine}
Recall that $G$ is a simple and simply connected algebraic group. 
Let $\Acal:= \C[z,z^{-1}]$ and $\Ap,\Am\subset \Acal$ denote the subrings given by $\Ap:=\C[z]$, $\Am:=\C[z^{-1}]$. Then we have ring homomorphisms $\evp:\Ap\to \C$ (respectively, $\evm:\Am\to \C$), sending a polynomial in $z$ (respectively, in $z^{-1}$) to its constant term.  Let $\G := G(\Acal)$ denote the polynomial loop group of $G$. 

\begin{remark}
The group $\G$ is closely related to the \emph{(minimal) affine Kac--Moody group} $\Gmin$ associated to $G$, introduced by Kac and Peterson~\cite{KacP,PKac}. Below we state many standard results about $\G$ without proof. We refer the reader unfamiliar with Kac--Moody groups to \cref{sec:KM}, where we give some background and explain how to derive these statements from Kumar's book~\cite{Kum}.
\end{remark}

We introduce opposite Iwahori subgroups
$$
\B := \{g(z) \in G(\Ap) \mid \evp(g) \in B\}, \qquad \B_- := \{g(z^{-1}) \in G(\Am) \mid \evm(g) \in B_-\}
$$
of $\G$, and denote by 
$$
\U := \{g(z) \in G(\Ap) \mid \evp(g) \in U\}, \qquad \U_- := \{g(z^{-1}) \in G(\Am) \mid \evm(g) \in U_-\}
$$
their unipotent radicals. There exists a tautological embedding $G\hookrightarrow \G$, and we treat $G$ as a subset of $\G$. 

We let $\T:= \Cast \times T\subset \Cast\ltimes G$ be the affine torus, where $\Cast$ acts on $\G$ via loop rotation; see~\cref{sec:KM_torus-action}. The \emph{affine root system} $\Phiaff$ of $\G$ is the subset of $X(\T):=\Hom(\T,\Cast)\cong X(T) \oplus \Z\delta$ given by
\[\Phiaff=\Phire\sqcup \Phiim,\quad\text{where}\quad \Phire:=\{\beta+j\delta\mid \beta\in \Phi,\ j\in\Z\},\quad \Phiim:=\{j\delta\mid j\in \Z\setminus\{0\}\}\]
are the real and imaginary roots, and the set of \emph{positive roots} $\Phiaff^+\subset \Phiaff$ has the form
\begin{equation}\label{eq:Phiaff_+}
\Phiaff^+=\{j\delta\mid j>0\}\sqcup \{\beta+j\delta\mid \beta\in \Phi,\ j>0\}\sqcup\{\beta\mid \beta\in\Phi^+\}.
\end{equation}
We let $\Phire^+:=\Phiaff^+\cap \Phire$ and $\Phire^-:=\Phiaff^-\cap \Phire$. For each $\alpha \in \Phire^+$ (respectively, $\alpha \in \Phire^-$), we have a one-parameter subgroup $\U_\alpha \subset \U$ (respectively, $\U_\alpha \subset \U_-$). The group $\U$ (respectively, $\U_-$) is generated by $\{\U_\alpha\}_{\alpha\in \Phire^+}$ (respectively, $\{\U_\alpha\}_{\alpha\in \Phire^-}$), and for each $\alpha\in\Phire$, we fix a group isomorphism $x_\alpha:\C\xrasim \U_\alpha$.

Let $\CRL:=\bigoplus_{i\in I}\Z\alphacheck_i$ denote the \emph{coroot lattice} of $\Phi$. 
The \emph{affine Weyl group} $\Waff=W\ltimes \CRL$ is a semidirect product of $W$ and $\CRL$, i.e., as a set we have $\Waff=W\times \CRL$, and the product rule is given by $(w_1,\lch_1)\cdot (w_2,\lch_2):=(w_1w_2,\lch_1+w_1\lch_2).$
For $\lch\in \CRL$, we denote the element $(\id, \lch)\in \Waff$ by  $\tl$. The group $\Waff$ is isomorphic to $N_{\Cast\ltimes \G}(\T)/\T$, and for $f \in \Waff$, we choose a representative $\df \in \G$  of $f$ in $N_{\Cast\ltimes \G}(\T)$, with the assumption that for $w \in W$, the representative $\dw \in G  \subset \G$ is given by \eqref{eq:x_i(t)}. Thus $\Waff$ is a Coxeter group with generators $s_0\sqcup \{s_i\}_{i\in I}$, length function $\ell:\Waff\to \Z_{\geq0}$, and \emph{affine Bruhat order} $\leq$.
The group $\Waff$ acts on $\Delta$, and for $\alpha\in \Phi$, $\beta \in \Phire$, $\lch\in\CRL$, and $w\in W$, we have 
\begin{equation}\label{eq:tau_on_Phi}
w\tl w^{-1}=\tau_{w\lch},\quad  \tl\alpha=\alpha+\<\lch,\alpha\>\delta, \quad \tl \delta=\delta, \quad \dtl\U_\beta \dtl^{-1}=\U_{\tl\beta}.
\end{equation}

Let $\G/\B$ denote the \emph{affine flag variety} of $G$.  This is an ind-variety that is isomorphic to the flag variety of the corresponding affine Kac--Moody group $\Gmin$; see \cref{sec:AKM}. For each $h,f\in \Waff$ we have Schubert cells $\Xaff^f:=\B \df\B/\B$ and opposite Schubert cells $\Xaff_h:=\B_-\dh\B/\B$.
If $h\not\leq f\in\Waff$ then $\Xaff_h\cap \Xaff^f=\emptyset$. For $h\leq f$, we denote $\Richaff_h^f:=\Xaff_h\cap \Xaff^f$. For all $g\in \Waff$, we have
\begin{align}
\label{eq:affine_Xaff_sqcup_Raff} 
  \Xaff^g=\bigsqcup_{h\leq g} \Richaff_h^g,\quad \Xaff_g=\bigsqcup_{g\leq f} \Richaff_g^f,\quad
\Xaffcl^g:=\bigsqcup_{h\leq g} \Xaff^h,\quad  \Xaffcl_g:=\bigsqcup_{g\leq f} \Xaff_f.
\end{align}
For $g\in \Waff$, let 
\begin{equation}\label{eq:Caff_U_1(g)_U_2(g)}
  \Caff_g:=\dg \B_-\B/\B,\quad \U_1(g):=\dg \U_- \dg^{-1}\cap \U,\quad\text{and}\quad \U_2(g):=\dg\U_-\dg^{-1}\cap \U_-.
\end{equation}
As we explain in \cref{sec:KMapp_Gauss}, the map $x\mapsto x\dg \B$ gives biregular isomorphisms
\begin{equation}\label{eq:KM_affine_charts}
  \dg\U_-\dg^{-1}\xrasim \Caff_g,\quad \U_1(g)\xrasim \Xaff^g,\quad \U_2(g)\xrasim \Xaff_g.
\end{equation}

\def\Uvp{\U^{(I)}}
\def\Uvn{\U^{(I)}_-}

Let $\Uvp\subset \U$ be the subgroup generated by $\{\U_\alpha\}_{\alpha\in\Phire^+\setminus \Phi^+}$. Similarly, let $\Uvn\subset \U_-$ be the subgroup generated by $\{\U_\alpha\}_{\alpha\in\Phire^-\setminus \Phi^-}$. For $x\in G\subset \G$, we have
\begin{equation}\label{eq:Uvp_Uvn_normal}
x\cdot \Uvp\cdot x^{-1}=\Uvp,\quad x\cdot \Uvn\cdot x^{-1}=\Uvn.
\end{equation}

\subsection{Combinatorial Bruhat atlas for \texorpdfstring{$G/P$}{G/P}} \label{sec:BA_combinatorial-map}
We fix an element $\lch\in \CRL$ such that $\<\lch,\alpha_i\>=0$ for $i\in J$ and $\<\lch,\alpha_i\>\in\Z_{<0}$ for $i\in I\setminus J$.  Thus $\lch$ is anti-dominant and the stabilizer of $\lch$ in $W$ is equal to $W_J$.  Following~\cite{HL}, define a map 
\begin{equation}\label{eq:BA_comb}
\BAcomb:Q_J\to \Waff,\quad (v,w)\mapsto \vtw.
\end{equation}
By~\cite[Theorem~2.2]{HL}, the map $\BAcomb$ gives an order-reversing bijection between $Q_J$ and a subposet of $\Waff$. More precisely, let $\tmin:=\tl \wji$, and recall from~\eqref{eq:tau_on_Phi} that $u\tl u^{-1}=\tul$. By~\cite[\S2.3]{HL}, for all $(v,w)\in Q_J$ we have
\begin{equation}\label{eq:tau_length_add}
\vtw=v\cdot \tmin\cdot \wj w^{-1},\quad  \ell(\vtw)=\ell(v)+\ell(\tmin)+\ell(\wj w^{-1});
\end{equation}
see Figure~\ref{fig:Le} for an example. By~\cite[Theorem~2.2]{HL}, for all $u\in W^J$ we have
\begin{align}\label{eq:BA_comb_image}
\BAcomb(\QJfilter uu)&=\{g\in \Waff\mid \tmin \leq g\leq \tul\},\\   
\BAcomb(Q_J)&=\{g\in \Waff\mid \tmin \leq g\leq \twl\text{ for some $w\in W^J$}\}.
\end{align}

\begin{remark}\label{rmk:minuscule}
The construction of \cite{HL} can be applied in the more general setting where $\lambda$ is an anti-dominant coweight, and thus $\BAcomb$ sends $Q_J$ to the extended affine Weyl group.  This is especially natural when $\la$ is a minuscule coweight, and thus $G/P$ is a {\it cominuscule Grassmannian}.  In this case, the image of $\BAcomb$ is a lower order ideal in affine Bruhat order.  The map $\BAgeom$ below then sends $\Cuj$ to the Schubert cell $\Xaff^{\tul}$ as opposed to the more complicated intersection $\Xaffcl_{\tmin}\cap \Xaff^{\tul}$.
\end{remark}

\subsection{Bruhat atlas for the projected Richardson stratification of \texorpdfstring{$G/P$}{G/P}}\label{sec:bruh-atlas-proj}
Let $u\in W^J$.   Recall that $\lch \in \CRL$ has been fixed.  We further assume that the representatives $\dtl$ and $\dtul$ satisfy the identity $\du \dtl \du^{-1} = \dtul$.

Our goal is to construct a geometric lifting of the map $\BAcomb$. Recall the maps $x\mapsto \gj_1$ and $x\mapsto \gj_2$ from \cref{dfn:gj_and_stuff}. We define maps
\begin{align}\label{eq:BA_geo}
\BAgeo:\Cuj&\to \G, &&xP\mapsto \gj_1\du\cdot \dtl\cdot  (\gj_2\du)^{-1}=\gj_1\cdot \dtul\cdot (\gj_2)^{-1},\quad\text{and}\\
\label{eq:BA_geom}
\BAgeom:\Cuj&\to \G/\B, &&xP\mapsto \BAgeo(xP)\cdot \B.
\end{align}

The main result of this section is the following theorem.
\def\myarabic#1{\textup{(\arabic{#1})}}
\begin{theorem}\label{thm:BA}\leavevmode
\begin{theoremlist}
\item
\label{BAgeom1}
The map $\BAgeom$ is a biregular isomorphism
\[\BAgeom: \Cuj\xrasim \Xaffcl_{\tmin}\cap \Xaff^{\tul}=\bigsqcup_{(v,w)\in \QJfilter uu} \Richaff_{\vtw}^{\tul},\]
and for all $(v,w)\geqJ (u,u)\in Q_J$, $\BAgeom$ restricts to a biregular isomorphism 
$$\BAgeom: \Cuj\cap \PR_v^w\xrasim \Richaff_{\vtw}^{\tul}.$$
\item 
\label{BAgeom3}
Suppose that $(u,u)\leqJ (v,w)\leqJ(v',w')\in Q_J$. Then 
\begin{equation*}
\BAgeom\left(\PRtp_{v'}^{w'}\right)\subset \Caff_{\vtw}.
\end{equation*}
\end{theoremlist}
\end{theorem}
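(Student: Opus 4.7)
For Part (1), my plan is to first verify that $\BAgeo(xP) = \gj_1\dtul\gj_2^{-1}$ lies in $\U_1(\tul)\cdot\dtul$, so that $\BAgeom$ factors through the affine chart $\U_1(\tul)\xrasim\Xaff^{\tul}$ of \eqref{eq:KM_affine_charts}. Writing $\BAgeo(xP) = \bigl(\gj_1\cdot\dtul\gj_2^{-1}\dtul^{-1}\bigr)\cdot\dtul$, the essential computation is $\dtl\,\Uj_-\,\dtl^{-1}\subset\Uvp$: for $\alpha\in\Phij_-$, the affine root $\tl\alpha = \alpha+\<\lch,\alpha\>\delta$ has $\<\lch,\alpha\>>0$ because $\lch$ is anti-dominant with $W_J$-stabilizer. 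Combined with $\du\,\Uvp\,\du^{-1}=\Uvp$ from \eqref{eq:Uvp_Uvn_normal} and $\dtul=\du\dtl\du^{-1}$, this gives $\dtul\gj_2^{-1}\dtul^{-1}\in\Uvp\subset\U$, and the dual inclusion $\dtl^{-1}\Uj_-\dtl\subset\Uvn$ yields $\Uj_1\subset\U_1(\tul)$.

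Injectivity will follow from the disjoint affine-root supports of $\gj_1$ (in $\Phi^+$) and $\dtul\gj_2^{-1}\dtul^{-1}\in\Uvp$ (in $\Phire^+\setminus\Phi^+$), via the unique root-subgroup decomposition of $\U$. For the cell-by-cell biregular isomorphism $\Cuj\cap\PR_v^w\xrasim\Richaff_{\vtw}^{\tul}$, I will rewrite $\BAgeo(xP)=\hj_2\bigl(\gj\dtul\gj^{-1}\bigr)\hj_1^{-1}$ with $\hj_2\in\B_-$, $\hj_1^{-1}\in\B$, analyze $\gj\dtul\gj^{-1}=\du p\dtl p^{-1}\du^{-1}$ for $p=\du^{-1}\gj\du\in\Uj_-$, and use the finite Bruhat data $x\in B_-\dv B\cap B\dw B$ from $xB\in\Rich_v^w$ to identify the affine cell index as $\vtw=v\tl w^{-1}$. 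Matching dimensions and invoking the order-reversing bijection \eqref{eq:BA_comb_image} will then assemble the cell isomorphisms into the global biregular isomorphism onto $\Xaffcl_{\tmin}\cap\Xaff^{\tul}$.

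For Part (2), my engine will be the identity
\[
\BAgeom(xP) = \hjx\cdot\zetamap(x)\cdot\dtl\cdot\zetamap(x)^{-1}\cdot\B.
\]
To derive it, I start from $\BAgeo(xP)=\hj_2\gj\dtul\gj^{-1}\hj_1^{-1}$, use $\gj\dtul\gj^{-1}=\du p\dtl p^{-1}\du^{-1}$, and compute $\zetamap(x)=\du p m^{-1}$ with $m=[\dv^{-1}\gj_1\du]_J\in L_J$ (valid since $x\in\Guvbig$). The crucial fact is that $\lch$ is fixed by $W_J$, so $\dtl$ centralizes $T$, $U_J$, and $U_J^-$, hence all of $L_J$; consequently $\zetamap(x)\dtl\zetamap(x)^{-1}=\du p\dtl p^{-1}\du^{-1}=\gj\dtul\gj^{-1}$, and absorbing $\hj_1^{-1}\in\U\subset\B$ completes the identity.

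Applying \cref{lemma:hj_2_x_cell_refined} and \cref{thm:zeta} gives $x\in\Guvbig$ and $\zetamap(x)=b\,b_-\dw$ with $b\in B$, $b_-\in B_-$; substitution and absorbing $b^{-1}\in\B$ yield $\BAgeom(xP)=\hjx\,b\,b_-\,\dtau_{w\lch}\,b_-^{-1}\,\B$. To conclude $\BAgeom(xP)\in\Caff_{\vtw}=\dv\dtl\dw^{-1}\B_-\B/\B$, I will multiply by $(\dv\dtl\dw^{-1})^{-1}=\dw\dtl^{-1}\dv^{-1}$ on the left and expand $\dtau_{w\lch}=\dw\dtl\dw^{-1}$, obtaining $\dw\cdot\dtl^{-1}\bigl(\dv^{-1}\hjx bb_-\dw\bigr)\dtl\cdot\dw^{-1}b_-^{-1}$. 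The inclusions $\dtl^{-1}B\dtl\subset\B$ and $\dtl^{-1}B_-\dtl\subset\B_-$, which follow from anti-dominance of $\lch$ exactly as in Part (1), let the $\dtl^{\pm1}$-conjugation absorb into the affine Borel subgroups, reducing the required containment in $\B_-\B$ to a finite-$G$ assertion that follows from $\hjx\in U_-$ and $\zetamap(x)\dw^{-1}\in BB_-$. The principal obstacle will be the cell-by-cell matching in Part (1), where the finite Bruhat decomposition of $x$ must be tracked carefully through the affine translation $\dtl$; Part (2) then becomes a largely formal consequence of \cref{thm:zeta} once the identity relating $\BAgeom$ and $\zetamap$ is in hand.
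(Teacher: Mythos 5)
Your overall strategy for both parts tracks the paper's proof closely, and your identity
\[
\BAgeom(xP) = \hjx\cdot\zetamap(x)\cdot\dtl\cdot\zetamap(x)^{-1}\cdot\B
\]
is correct and is a nice repackaging of the paper's formula $\BAgeom(xP)=\hjx x\dtl x^{-1}\B$, obtained simply by inserting $\Ft(x)^{-1}\dtl\Ft(x)=\dtl$. For Part (1), your plan for the containment $\BAgeo(xP)\in\U_1(\tul)\dtul$ is exactly the paper's computation (\eqref{eq:tau_conj_Uj}, \eqref{eq:tul_conj_Uj}), and the injectivity via disjoint root supports is the paper's use of the decomposition $\Theta_1\sqcup\tul\Theta_2\sqcup\Theta_3$ inside $\Inv(\tul^{-1})$. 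However, "matching dimensions and invoking the order-reversing bijection" undersells the key point: you need that the image of $\BAgeomx$ is a \emph{closed} subvariety of $\U_1(\tul)$ of dimension $\ell(\wj)$, and that $\Xaffcl_{\tmin}\cap \Xaff^{\tul}$ is \emph{irreducible} of the same dimension (\cref{Xaff_cl_irreducible}); the paper gets closedness from the explicit isomorphism \eqref{eq:KM_T1_T2_T3} onto the factor $\U(\Theta_1)\times\U(\tul\Theta_2)\times\{1\}$. Without closedness you only get a dense subset.

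The genuine gap is in the last step of Part (2). After expanding $\zetamap(x)=bb_-\dw$ and multiplying on the left by $\dw\dtl^{-1}\dv^{-1}$, you arrive at $\dw\cdot\dtl^{-1}\bigl(\dv^{-1}\hjx bb_-\dw\bigr)\dtl\cdot\dw^{-1}b_-^{-1}\,\B$, and you claim the required membership in $\B_-\B$ "follows from $\hjx\in U_-$ and $\zetamap(x)\dw^{-1}\in BB_-$" via the inclusions $\dtl^{-1}B\dtl\subset\B$ and $\dtl^{-1}B_-\dtl\subset\B_-$. This does not go through as stated: the inner element $\dv^{-1}\hjx bb_-\dw$ is not expressed as a product amenable to these Borel-absorption inclusions, since $\dv^{-1}$ and $\dw$ are Weyl representatives sitting in the middle and $\hjx b b_-$ is a mixed $U_-\cdot B\cdot B_-$ product. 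What makes the argument work --- and what the paper uses --- is the Levi decomposition $\dv^{-1}\hjx x\in\Goj=\Uj_-\cdot L_J\cdot\Uj$ (available precisely because $x\in\Guvbig$), which gives $\dv^{-1}\hjx\zetamap(x)=\dv^{-1}\hjx x\,\Ft(x)^{-1}=[y]_-\pj\bigl([y]_J[y]_+\pj[y]_J^{-1}\bigr)\in\Uj_-\cdot\Uj$; then $\dtl^{-1}$-conjugation via \eqref{eq:tau_inv_conj_Uj} sends $\Uj_-\to\Uvn$ and $\Uj\to\Uvp$, and \eqref{eq:Uvp_Uvn_normal} lets $\dw^{\pm 1}$ and $b_-^{-1}$ pass through these subgroups, landing in $\B_-\B$. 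Your two stated inputs alone do not produce this decomposition; you need the $\Goj$-structure explicitly (or equivalently the fact that $\Ft(x)\in L_J$ conjugates $\Uj$ to itself), which the paper isolates as the factorization $y=[y]_-\pj[y]_J[y]_+\pj$. Once you add this, your argument completes and is essentially the paper's, just conjugated through your $\zetamap$-identity rather than through the raw $\hjx x\dtl x^{-1}$.
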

\noindent The remainder of this section will be devoted to the proof of \cref{thm:BA}.  

\def\myarabic#1{\normalfont(\roman{#1})}

\subsection{An alternative definition of \texorpdfstring{$\BAgeom$}{phi}}
Recall the notation from \cref{dfn:gj_and_stuff}, and that we have fixed $u \in W^J$ and $\lch\in \CRL$ satisfying $\<\lch,\alpha_i\>=0$ for $i\in J$ and $\<\lch,\alpha_i\>\in\Z_{<0}$ for $i\in I\setminus J$.  We list the rules for conjugating elements of $G\subset \G$ by $\dtl$.
\begin{lemma}
We have
\begin{align}
\label{eq:tau_conj_L_J}
&\dtl \cdot p=p\cdot \dtl\quad \text{for all $p\in L_J$,}\\
\label{eq:tau_conj_Uj}
  & \dtl \cdot \Uj\cdot \dtl^{-1}\subset \Uvn,\quad \dtl \cdot \Uj_-\cdot \dtl^{-1}\subset \Uvp,\\
\label{eq:tau_inv_conj_Uj}
 &\dtl^{-1} \cdot \Uj\cdot \dtl\subset \Uvp,\quad \dtl^{-1} \cdot \Uj_-\cdot \dtl\subset \Uvn,\\
\label{eq:tul_conj_Uj}
 &\dtul \cdot \Uj_2\cdot \dtul^{-1}\subset \Uvp,\quad \dtul^{-1} \cdot \Uj_1\cdot \dtul\subset \Uvn.
\end{align}
\end{lemma}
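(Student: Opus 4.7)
The plan is to deduce all four identities from a single observation: conjugation by $\dtl$ sends each real root subgroup $\U_\alpha$ to $\U_{\tl\alpha}$, where $\tl\alpha = \alpha + \<\lch,\alpha\>\delta$ by~\eqref{eq:tau_on_Phi}. Since $\<\lch,\alpha_i\>=0$ for $i\in J$ and $\<\lch,\alpha_i\><0$ for $i\in I\setminus J$, the pairing $\<\lch,\alpha\>$ has a definite sign depending on which of the sets $\Phi_J^{\pm}$, $\Phij_\pm$ contains $\alpha$. Each statement will follow by tracking in which of $\Phi^+$, $\Phire^+\setminus\Phi^+$, $\Phi^-$, $\Phire^-\setminus\Phi^-$ the shifted root $\alpha+j\delta$ lies.

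For~\eqref{eq:tau_conj_L_J}, recall that $L_J$ is generated by $T$ together with the one-parameter subgroups $\U_{\pm\alpha_i}$ for $i\in J$. The element $\dtl$ normalizes $T$, and its action on $X(T)$ (viewed as the projection modulo $\Z\delta$) is trivial, so $\dtl$ commutes with $T$. For $i\in J$, we have $\tl(\pm\alpha_i)=\pm\alpha_i+\<\lch,\pm\alpha_i\>\delta=\pm\alpha_i$, so $\dtl$ commutes with $\U_{\pm\alpha_i}$ as well. Since these generate $L_J$, the identity follows.

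For~\eqref{eq:tau_conj_Uj}, let $\alpha\in\Phij_+$, write $\alpha=\sum_{i\in I}c_i\alpha_i$ with $c_i\geq0$ and some $c_{i_0}>0$ for $i_0\in I\setminus J$; then $\<\lch,\alpha\>=\sum_{i\in I\setminus J}c_i\<\lch,\alpha_i\>=-j$ for some $j>0$. Thus $\tl\alpha=\alpha-j\delta=-((-\alpha)+j\delta)$ with $-\alpha\in\Phi$ and $j>0$, so $\tl\alpha\in\Phire^-\setminus\Phi^-$ by~\eqref{eq:Phiaff_+}, and $\dtl\U_\alpha\dtl^{-1}\subset\Uvn$. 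Since $\Uj=U(\Phij_+)$ is generated by these root subgroups, we conclude $\dtl\Uj\dtl^{-1}\subset\Uvn$. The second inclusion is proved identically, replacing $\alpha\in\Phij_+$ by $\alpha\in\Phij_-$ (so $-\alpha\in\Phij_+$ and $\<\lch,\alpha\>=j>0$, giving $\tl\alpha=\alpha+j\delta\in\Phire^+\setminus\Phi^+$). The statements in~\eqref{eq:tau_inv_conj_Uj} follow by the same argument with $\lch$ replaced by $-\lch$.

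The main step — and the only one requiring slightly more than root-by-root bookkeeping — is~\eqref{eq:tul_conj_Uj}, since $\Uj_1,\Uj_2$ are defined by intersection with $\U$ and $\U_-$ rather than as root subgroup products. I will handle this by factoring through the preceding identities. Using our choice $\dtul=\du\dtl\du^{-1}$, we compute
\[
\dtul\,\Uj_2\,\dtul^{-1}\;=\;\du\,\dtl\,(\du^{-1}\Uj_2\du)\,\dtl^{-1}\,\du^{-1}.
\]
By~\eqref{eq:U1_U2:Uj2} we have $\Uj_2\subset\du\Uj_-\du^{-1}$, hence $\du^{-1}\Uj_2\du\subset\Uj_-$. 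Applying~\eqref{eq:tau_conj_Uj} gives $\dtl(\du^{-1}\Uj_2\du)\dtl^{-1}\subset\Uvp$, and then~\eqref{eq:Uvp_Uvn_normal} (which applies since $\du\in G\subset\G$) yields $\du\Uvp\du^{-1}=\Uvp$. The inclusion $\dtul\Uj_2\dtul^{-1}\subset\Uvp$ follows. The second inclusion in~\eqref{eq:tul_conj_Uj} is identical: $\du^{-1}\Uj_1\du\subset\Uj_-$ by~\eqref{eq:U1_U2:Uj1}, then~\eqref{eq:tau_inv_conj_Uj} gives $\dtl^{-1}(\du^{-1}\Uj_1\du)\dtl\subset\Uvn$, and conjugation by $\du$ preserves $\Uvn$ by~\eqref{eq:Uvp_Uvn_normal}. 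No step is genuinely delicate; the only thing to be careful about is to invoke~\eqref{eq:Uvp_Uvn_normal} at the end so that the outer conjugation by $\du$ does not leave $\Uvp$ or $\Uvn$.
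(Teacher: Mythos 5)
Your proof is correct and follows essentially the same route as the paper: parts \eqref{eq:tau_conj_L_J}--\eqref{eq:tau_inv_conj_Uj} by tracking $\tl\alpha = \alpha + \<\lch,\alpha\>\delta$ root by root, and part \eqref{eq:tul_conj_Uj} by conjugating with $\du$, applying $\Uj_1,\Uj_2 \subset \du\Uj_-\du^{-1}$, then \eqref{eq:tau_conj_Uj}/\eqref{eq:tau_inv_conj_Uj}, and finally \eqref{eq:Uvp_Uvn_normal}. You spell out a few details more explicitly than the paper (the commutation of $\dtl$ with $T$, the sign analysis of $\<\lch,\alpha\>$), but the underlying argument is identical.
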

\begin{proof}
Recall that $L_J$ is generated by $T$, $U_J$, and $U_J^-$, and since $\tl\alpha=\alpha$ for all $\alpha\in\Phi_J$, we see that~\eqref{eq:tau_conj_L_J} follows from~\eqref{eq:tau_on_Phi}. By~\eqref{eq:tau_on_Phi}, we find $\tl \alpha\in \Phire^+\setminus \Phi^+$ for $\alpha\in \Phij_-$ and $\tl \alpha\in \Phire^-\setminus \Phi^-$ for $\alpha\in \Phij_+$, which shows~\eqref{eq:tau_conj_Uj}. Similarly, $\tl^{-1} \alpha\in \Phire^+\setminus \Phi^+$ for $\alpha\in \Phij_+$ and $\tl^{-1} \alpha\in \Phire^-\setminus \Phi^-$ for $\alpha\in \Phij_-$, which shows~\eqref{eq:tau_inv_conj_Uj}.

To show~\eqref{eq:tul_conj_Uj}, we use~\eqref{eq:Uvp_Uvn_normal}, \eqref{eq:tau_conj_Uj},~\eqref{eq:tau_inv_conj_Uj}, and $\Uj_1,\Uj_2\subset \du\Uj_-\du^{-1}$ to get
\begin{align*}
&\dtul \cdot \Uj_2\cdot \dtul^{-1}= \du \dtl \du^{-1}\cdot \Uj_2\cdot \du\dtl^{-1}\du^{-1}\subset \du \dtl\cdot \Uj_-\cdot \dtl^{-1}\du^{-1}\subset \du \Uvp \du^{-1}= \Uvp,\\
&\dtul^{-1} \cdot \Uj_1\cdot \dtul=\du \dtl^{-1} \du^{-1}\cdot \Uj_1\cdot \du\dtl\du^{-1}\subset \du \dtl^{-1}\cdot \Uj_-\cdot \dtl\du^{-1}\subset \du \Uvn \du^{-1}= \Uvn.\qedhere
\end{align*}
\end{proof}

The map $\BAgeom$ can alternatively be characterized as follows. Recall from \cref{dfn:gj_and_stuff} that we have a regular map $\hjmap:\du\Goj\to \Uj_2$ that descends to a regular map $\hjmap:\Cuj\to \Uj_2$ by \cref{hjmap_properties}. Recall also from \cref{G_0^J} that $\du\Goj=\du P_-\cdot B$.
\begin{lemma}
Let $x\in \du P_-$. Then
\begin{equation}\label{eq:BA_geom2}
\BAgeom(xP)=\hjx x\cdot  \dtl\cdot x^{-1}\cdot \B.
\end{equation}
\end{lemma}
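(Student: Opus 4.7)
The plan is to compute both sides and match them, using the Gauss decomposition for $\Goj$ together with the fact that $\dtl$ centralizes $L_J$.

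First, I would use \cref{lemma:bracketsJ} to obtain the factorization $P_- = \Uj_- \cdot L_J$, so any $x \in \du P_-$ can be written as $x = \du u_-\pj \ell$ for some $u_-\pj \in \Uj_-$ and $\ell \in L_J$. Now I identify the element $\gj \in \du \Uj_- \du^{-1}$ appearing in \cref{dfn:gj_and_stuff}. Since $\ell \in L_J \subset P$, we have $xP = \du u_-\pj P$, and the element $\gj := \du u_-\pj \du^{-1}$ clearly belongs to $\du \Uj_- \du^{-1}$ and satisfies $\gj \du = \du u_-\pj \in xP \cap \du \Uj_-$, which is precisely the characterization of $\gj$. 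In particular, $x = \gj \du \ell$.

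Next I compute directly:
\[
\hjx x \cdot \dtl \cdot x^{-1} = \hjx \gj \du \ell \cdot \dtl \cdot \ell^{-1} \du^{-1} \gj^{-1}.
\]
Since $\ell \in L_J$, the relation~\eqref{eq:tau_conj_L_J} gives $\ell \dtl \ell^{-1} = \dtl$, so the right-hand side simplifies to $\hjx \gj \cdot \du \dtl \du^{-1} \cdot \gj^{-1} = \hjx \gj \cdot \dtul \cdot \gj^{-1}$, using our convention $\du \dtl \du^{-1} = \dtul$. By the definition of $\gj_1$ in \cref{dfn:gj_and_stuff} (with $\hj_2 = \hjx$), we have $\hjx \gj = \gj_1$, hence
\[
\hjx x \cdot \dtl \cdot x^{-1} = \gj_1 \cdot \dtul \cdot \gj^{-1}.
\]

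Finally, to match the defining formula $\BAgeom(xP) = \gj_1 \dtul \gj_2^{-1} \B$ from \eqref{eq:BA_geo}--\eqref{eq:BA_geom}, I use $\gj_2 = \hj_1 \gj$, which gives $\gj^{-1} = \gj_2^{-1} \hj_1$. By~\eqref{eq:U1_U2:Uj1}, $\hj_1 \in \Uj_1 \subset U \subset B \subset \B$, so $\hj_1 \B = \B$, and therefore
\[
\hjx x \cdot \dtl \cdot x^{-1} \cdot \B = \gj_1 \dtul \gj_2^{-1} \hj_1 \B = \gj_1 \dtul \gj_2^{-1} \B = \BAgeom(xP),
\]
as desired. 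There is no real obstacle here: the only nontrivial input is the commutation $\ell \dtl = \dtl \ell$ for $\ell \in L_J$, which is precisely why the choice of representative $x \in \du P_-$ in the coset $xP$ does not affect the formula.
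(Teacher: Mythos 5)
Your proof is correct and follows essentially the same route as the paper's: both rely on the commutation $\ell\dtl = \dtl\ell$ for $\ell\in L_J$ from~\eqref{eq:tau_conj_L_J}, the identities $\hjx\gj = \gj_1$ and $\hj_1\gj = \gj_2$ from \cref{dfn:gj_and_stuff}, and the inclusion $\hj_1\in\Uj_1\subset U\subset\B$ to absorb $\hj_1$ into $\B$. The only cosmetic difference is that you work from the right-hand side toward $\BAgeom(xP)$ with the factorization $x = \gj\du\ell$ made explicit, whereas the paper starts from the defining formula and substitutes $xp = \gj\du$ (with $p = \ell^{-1}$), but the underlying computations are identical.
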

\begin{proof}
We continue using the notation of \cref{dfn:gj_and_stuff}. Let $p\in L_J$ and $\gj\in \du\Uj_-\du^{-1}$ be such that $xp=\gj\du$. Note that $\gj_2\du=\hj_1\gj\du=\hj_1xp$, and since $\hj_1\in \Uj_1\subset U\subset \B$, we see that $(\gj_2\du)^{-1}\cdot \B=(xp)^{-1}\cdot \B$. On the other hand, $\hjx xp=\hj_2\gj\du=\gj_1\du$. Since $p$ commutes with $\dtl$ by~\eqref{eq:tau_conj_L_J}, we find
\[\BAgeom(xP)=\gj_1\du\cdot \dtl\cdot  (\gj_2\du)^{-1}\cdot \B=\hjx xp\cdot  \dtl\cdot(xp)^{-1}\cdot \B= \hjx x\cdot  \dtl\cdot x^{-1}\cdot \B.\qedhere\]
\end{proof}

\subsection{The affine Richardson cell of \texorpdfstring{$\BAgeom$}{phi}} 
\begin{lemma}
We have
\begin{equation}\label{eq:Cuj_Rich_disjoint}
  \Cuj=\bigsqcup_{(v,w)\in \QJfilter uu} (\Cuj\cap \PR_v^w).
\end{equation}
\end{lemma}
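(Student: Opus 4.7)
The inclusion $\supseteq$ and the disjointness of the right-hand side are both immediate from the global partition $G/P = \bigsqcup_{(v,w) \in Q_J} \PR_v^w$ in~\eqref{eq:GP_closure}, so the content of the lemma is the reverse inclusion: every stratum $\PR_v^w$ meeting $\Cuj$ must satisfy $(u,u) \leqJ (v,w)$.

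The plan is to degenerate an arbitrary $xP \in \Cuj \cap \PR_v^w$ to the base point $\du P$ via a one-parameter torus flow, and then to read off the order relation from the closure formula for $\PR_v^w$. By the definition $\Cuj = \du\Goj/P$, we have $x \in \du\Goj$; since $u \in W^J$, \cref{torus:closure} supplies a cocharacter $\rhou \in Y(T)$ with $\lim_{t \to 0}\rhou(t)\cdot xP = \du P$ in $G/P$. Next I would verify that $\PR_v^w$ is $T$-stable: both $B \dw B$ and $B_- \dv B$ are normalized by $T \subset B \cap B_-$, so $\Rich_v^w \subset G/B$ is $T$-stable, and since $T \subset P$ the projection $\pi_J$ is $T$-equivariant, making $\PR_v^w = \pi_J(\Rich_v^w)$ also $T$-stable. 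Consequently the entire orbit $\{\rhou(t) \cdot xP \mid t \in \Cast\}$ lies in $\PR_v^w$, and letting $t \to 0$ yields $\du P \in \overline{\PR_v^w}$.

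Finally, the closure relation in~\eqref{eq:GP_closure} reads $\overline{\PR_v^w} = \bigsqcup_{(v',w') \leqJ (v,w)} \PR_{v'}^{w'}$, and $\du P$ belongs to the single-point stratum $\PR_u^u$; this forces $(u,u) \leqJ (v,w)$ and completes the proof. There is no substantive obstacle here: the argument collapses to a single torus flow once the cocharacter of \cref{torus:closure} is identified as the right tool, and the two auxiliary ingredients---the $T$-equivariance of the projected Richardson stratification and the explicit closure formula---are already in place from the preceding sections.
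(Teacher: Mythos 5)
Your proof is correct and follows essentially the same route as the paper's: both degenerate $xP$ to $\du P$ via the cocharacter from \cref{torus:closure}, use $T$-invariance of the projected Richardson strata, and then read off $(u,u) \leqJ (v,w)$ from the closure formula~\eqref{eq:GP_closure}. The paper phrases the degeneration via the closure of the $T$-orbit $TxP$, but that is just a mild repackaging of the same one-parameter flow you invoke.
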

\begin{proof}
The torus $T$ acts on $G/P$ by left multiplication and preserves the sets $\Cuj$ and $\PR_v^w$ for all $(v,w)\in Q_J$. By~\eqref{eq:GP_closure}, $\PRcl_v^w$ contains $\du P$ if and only if $(u,u)\leqJ (v,w)$. Suppose that $xP\in \Cuj\cap \PR_v^w$ for some $(v,w)\in Q_J$. Then $TxP/P\subset  \Cuj$, and by \cref{torus:closure}, the closure of this set contains $\du P$. On the other hand, the closure of this set is contained inside $\PRcl_v^w$, and thus  $(u,u)\leqJ (v,w)$.
\end{proof}

\begin{lemma}\label{BAgeom_BAcomb_Richaff}
Let $(v,w)\in \QJfilter uu$. Then 
\begin{equation}\label{eq:BAgeom_BAcomb_Richaff}
\BAgeom(\Cuj\cap \PR_v^w)\subset \Richaff_{\vtw}^{\tul}.
\end{equation}
\end{lemma}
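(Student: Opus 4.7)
My plan is to verify the two containments $\BAgeom(\Cuj) \subset \Xaff^{\tul}$ and $\BAgeom(\Cuj \cap \PR_v^w) \subset \Xaff_{\vtw}$ separately; their intersection is $\Richaff_{\vtw}^{\tul}$.

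For the Schubert-cell containment I work directly from \eqref{eq:BA_geo}. Writing $\BAgeom(xP) = \gj_1 \cdot \dtul \cdot \gj_2^{-1} \cdot \B$ with $\gj_1 \in \Uj_1 \subset \U \subset \B$ and $\gj_2 \in \Uj_2$, I apply \eqref{eq:tul_conj_Uj} to get $\dtul \gj_2^{-1} \dtul^{-1} \in \Uvp \subset \U \subset \B$. Hence
\[
\BAgeom(xP) = \gj_1 \cdot (\dtul \gj_2^{-1} \dtul^{-1}) \cdot \dtul \cdot \B \in \U \cdot \dtul \cdot \B/\B \subset \Xaff^{\tul}.
\]

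For the opposite-Schubert-cell containment I lift $xP \in \Cuj \cap \PR_v^w$ to $y \in G$ with $yB \in \Rich_v^w$, which is possible since $\pi_J(\Rich_v^w) = \PR_v^w$. Then $yP = xP \in \Cuj$ forces $y \in \du\Goj$, and $\hjmp_y = \hjmp_x$ by \cref{hjmap_properties}. Choose $q \in P$ with $yq \in \du P_-$; such $q$ exists by the Gauss decomposition $\Goj = P_- P$. Applying \eqref{eq:BA_geom2} to $yq$ gives $\BAgeom(xP) = \hjmp_y \cdot y \cdot (q\dtl q^{-1}) \cdot y^{-1} \cdot \B$. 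Factor $q = q_J q_+$ with $q_J \in L_J$ and $q_+ \in \Uj$. The identities \eqref{eq:tau_conj_L_J} (namely $q_J$ commutes with $\dtl$) and \eqref{eq:tau_inv_conj_Uj} (namely $\dtl^{-1} q_+ \dtl \in \Uvp$), combined with the facts that $L_J$ normalizes both $\Uvp$ (conjugation by a constant loop preserves the condition $g(0)=e$ defining $\Uvp$) and $\Uj$ (by \cref{unip_radical}), yield $q\dtl q^{-1} = \dtl \cdot u$ for some $u \in \Uvp \cdot \Uj \subset \U \subset \B$. Writing $y = b_- \dv b$ with $b_- \in B_-$ and $b \in B$, and using $b \dtl = \dtl \cdot b'$ with $b' \in \B$ (via \eqref{eq:tau_inv_conj_Uj} and \eqref{eq:tau_conj_L_J} applied to the factors of $b$), I absorb the $B$-factor past $\dtl$ to obtain
\[
\BAgeom(xP) \in \B_- \cdot \dv \dtl \cdot \B \cdot \dw^{-1} \cdot \B /\B.
\]

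The final step uses affine Bruhat multiplication in $\G$. Applying \eqref{eq:tau_length_add} to $(v,w) = (\id, \id) \in Q_J$ gives the length-additive identity $\tl = \tmin \cdot \wj$ in $\Waff$, while applying it to the general pair $(v,w)$ gives the length-additive factorization $\vtw = v \cdot \tmin \cdot (\wj w^{-1})$. Using the standard rule $(\B \df \B)(\B \dg \B) = \B \eta \B$ for any lift $\eta$ of $fg \in \Waff$ whenever $\ell(fg) = \ell(f) + \ell(g)$, the triple product $(\B \dv \B)(\B \dtl \B)(\B \dw^{-1} \B)$ collapses, after rewriting $\dtl = \dtmin \dwj$ up to a torus factor, onto the single cell $\B \cdot \eta_{\vtw} \cdot \B$ for $\eta_{\vtw} \in N_{\Cast \ltimes \G}(\T)$ a lift of $\vtw$. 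Hence $\BAgeom(xP) \in \B_- \cdot \eta_{\vtw} \cdot \B/\B = \Xaff_{\vtw}$, and together with the first part this proves the lemma. The main obstacle is this final Bruhat-collapse step: because $v \cdot \tl \cdot w^{-1}$ is not directly length-additive as a triple product in $\Waff$, one must refine via $\tl = \tmin \cdot \wj$ and carefully realign the resulting four-factor product $v \cdot \tmin \cdot \wj \cdot w^{-1}$ with the length-additive three-factor decomposition $\vtw = (v\tmin) \cdot (\wj w^{-1})$ from \eqref{eq:tau_length_add}, tracking the torus discrepancies between the various lifts of $\vtw$ obtained along the way.
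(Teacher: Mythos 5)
Your opening observation (reduce to two separate containments) and your proof of $\BAgeom(\Cuj)\subset\Xaff^{\tul}$ match the paper exactly. Your reduction in the second half is also essentially the paper's, just lifted through $y$ and $q$ rather than choosing the representative $x\in\du P_-$ directly; the rewriting $q\dtl q^{-1}\in\dtl\U$ via \eqref{eq:tau_conj_L_J}, \eqref{eq:tau_inv_conj_Uj}, and \eqref{eq:Uvp_Uvn_normal} is correct, and you land at $\BAgeom(xP)\in\B_-\dv\dtl\B\dw^{-1}\B/\B$.

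The final ``Bruhat collapse'' step is where the argument breaks, and the framing you chose cannot be made to work as stated. You invoke the rule $\B\df\B\cdot\B\dg\B=\B\widehat{fg}\B$ for \emph{length-additive} products, but the product you have is not of that form, and more importantly, it is not length-additive: one computes $\ell(\wj w^{-1})=\ell(\wj)-\ell(w)$ for $w\in W^J$, so the four-factor word $v\cdot\tmin\cdot\wj\cdot w^{-1}$ has a length-\emph{decreasing} last step, and no amount of ``realigning'' or ``torus bookkeeping'' makes the Schubert-cell multiplication rule apply. The correct tool here is the \emph{opposite} Schubert cell rule, $\B_-\dg\B\cdot\B\ds\B=\B_-\dg\ds\B$ whenever $gs<g$ in $\Waff$ (proved by noting $\B\ds\B=\U_{\alpha_s}\ds\B$ and that $\U_{g\alpha_s}\subset\B_-$ when $g\alpha_s<0$). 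To iterate this over a reduced word for $w^{-1}$ you need $\ell(v\tl)=\ell(\vtw)+\ell(w)$; this follows from the standard fact that $\ell(v\tl)=\ell(v)+\ell(\tl)$ when $\lch$ is anti-dominant, combined with \eqref{eq:tau_length_add} and $\ell(\wj w^{-1})=\ell(\wj)-\ell(w)$. None of this is spelled out, and it is not in the paper, so as written the step is a gap. With those facts supplied the route does close, giving a genuinely different proof from the paper's: the paper never forms a Bruhat double coset but instead writes $B_-\dv B\cdot\dtl\cdot B\dw^{-1}B\subset \B_-\dv\Uj U_J\dtl\Uj U_J\dw^{-1}\B$ and walks each of $U_J,\Uj$ past $\dtl$ explicitly via \eqref{eq:tau_conj_L_J}--\eqref{eq:tau_inv_conj_Uj}, using $\Uvp,\Uvn$ as buffers that commute freely through $G$ by \eqref{eq:Uvp_Uvn_normal}, and finally using $U_J\dw^{-1}\subset\dw^{-1}U$ from \cref{Inv_W^J}. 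That approach is more elementary (no length computations in $\Waff$), self-contained within the tools the paper has already assembled, and avoids the subtleties you flagged but did not resolve.
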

\begin{proof}
Let $x\in \du\Goj$ be such that $xP\in \PR_v^w$. Let us first show that $\BAgeom(xP)\in \Xaff^{\tul}$. By~\eqref{eq:BA_geom}, we have
\begin{equation}\label{eq:BA_geom3}
\BAgeom(xP)=\gj_1\cdot \dtul\cdot (\gj_2)^{-1}\cdot\dtul^{-1}\cdot \dtul\cdot \B.
\end{equation}
Observe that $\gj_1\in \Uj_1\subset U$, and by~\eqref{eq:tul_conj_Uj}, $\dtul\cdot (\gj_2)^{-1}\cdot\dtul^{-1}\in \Uvp$. We get
\begin{equation}\label{eq:BA_geom_BtulB}
  \BAgeo(xP)\cdot\dtul^{-1}\in \U,\quad\text{so}\quad   \BAgeo(xP)\in \B \cdot \dtul\cdot \B.
\end{equation}
This proves that $\BAgeom(xP)\in \Xaff^{\tul}$.

We now show $\BAgeom(xP)\in \Xaff_{\vtw}$. Recall that $\PR_v^w=\pi_J(\Rich_v^w)$, so assume that $x\in B_-\dv B\cap B\dw B$.  Since $\du\Goj=\du P_-B$ by \cref{G_0^J}, we may assume that $x\in \du P_-$, in which case $\BAgeom(xP)$ is given by~\eqref{eq:BA_geom2}.  We have $\hjx x\in B_-\dv B$ and $x^{-1}\in B\dw^{-1}B$, so it suffices to show
\begin{equation}
B_-\dv B\cdot \dtl \cdot B\dw^{-1} B\subset \B_- \cdot \dv\dtl \dw^{-1}\cdot  \B.\label{eq:B_-vtwB}
\end{equation}

Clearly we have 
\[B_-\dv B\cdot \dtl \cdot B\dw^{-1} B\subset \B_- \cdot \dv\cdot \Uj \cdot U_J\cdot \dtl \cdot \Uj\cdot U_J\cdot   \dw^{-1}\cdot  \B.\] 
By~\eqref{eq:tau_conj_L_J} and \cref{unip_radical}, $U_J$ can be moved to the right past $\dtl$ and $\Uj$. We can then move $\Uj$ to the left past $\dtl$ using~\eqref{eq:tau_conj_Uj}, which gives
\[B_-\dv B\cdot \dtl \cdot B\dw^{-1} B\subset \B_- \cdot \dv\cdot \Uj\cdot \Uvn\cdot  \dtl \cdot U_J\cdot   \dw^{-1}\cdot  \B.\]
By~\eqref{eq:Uvp_Uvn_normal}, $\Uvn$ can be moved to the left past $\dv\cdot \Uj$, and then $\Uj$ can be moved to the right past $\dtl$ using~\eqref{eq:tau_inv_conj_Uj}, yielding
\[B_-\dv B\cdot \dtl \cdot B\dw^{-1} B\subset \B_- \cdot \dv\cdot  \dtl \cdot \Uvp \cdot U_J\cdot   \dw^{-1}\cdot  \B.\]
By~\eqref{eq:Uvp_Uvn_normal}, $\Uvp$ can be moved to the right past $U_J\cdot \dw^{-1}$. Since $w\in W^J$, \cref{Inv_W^J} implies that $U_J\cdot \dw^{-1}\subset \dw^{-1} U$, so~\eqref{eq:B_-vtwB} follows.
\end{proof}

\def\BAgeomx{\BAgeom^\dagger}
\def\BAgeomxx{\BAgeom^\ddagger}

\subsection{Proof of \texorpdfstring{\cref{BAgeom1}}{Theorem \ref{BAgeom1}}}
Observe that $\Xaffcl_{\tmin}\cap \Xaff^{\tul}=\bigsqcup_{(v,w)\in \QJfilter uu} \Richaff_{\vtw}^{\tul}$ by~\eqref{eq:affine_Xaff_sqcup_Raff} and~\eqref{eq:BA_comb_image}. By~\eqref{eq:BAgeom_BAcomb_Richaff},  $\BAgeom(\Cuj)\subset \Xaffcl_{\tmin}\cap \Xaff^{\tul}$. Let us identify $\Xaff^{\tul}$ with the affine variety $\U_1(\tul)$ via~\eqref{eq:KM_affine_charts}, and denote by $\BAgeomx:\Cuj\to \U_1(\tul)$ the composition of~\eqref{eq:KM_affine_charts} and $\BAgeom$.

 We claim that $\BAgeomx$ gives a biregular isomorphism between $\Cuj$ and a closed subvariety of $\U_1(\tul)$. Let $x\in \du\Goj$ and let $\gj$, $\gj_1$, $\gj_2$ be as in \cref{dfn:gj_and_stuff}. Let $y:=\BAgeo(xP)\cdot \dtul^{-1}$, so $\BAgeom(xP)=y\cdot \dtul\cdot \B$. Thus $\BAgeomx(xP)=y$ if and only if $y\in \U_1(\tul)$. By~\eqref{eq:BA_geom_BtulB}, we have $y\in \U$. Hence in order to prove  $y\in \U_1(\tul)$, we need to show $y\in \dtul \U_-\dtul^{-1}$. Conjugating both sides by $\dtul$, we get
\[\dtul^{-1}\cdot y\cdot \dtul= \dtul^{-1}\gj_1\dtul \cdot (\gj_2)^{-1},\]
which belongs to $\U_-$ since $(\gj_2)^{-1}\in \Uj_2\subset U_-$ by definition and $\dtul^{-1}\gj_1\dtul\in \Uvn$ by~\eqref{eq:tul_conj_Uj}. Thus  $y\in \U_1(\tul)$ and $\BAgeomx(xP)=y$. By \cref{lemma:KWY}, we may identify $\Cuj$ with $\Uj_1\times \Uj_2$, so let $\BAgeomxx:\Uj_1\times \Uj_2\to \U_1(\tul)$ be the map sending $(\gj_1,\gj_2)$ to $y:=\gj_1\cdot \dtul (\gj_2)^{-1}\dtul^{-1}$. 

Let $\Theta_1:=u\Phij_- \cap\Phi^+$ and $\Theta_2:=u\Phij_-\cap \Phi^-$, so $\Uj_1=U(\Theta_1)$, $\Uj_2=U_-(\Theta_2)$, and $\Theta_1\sqcup \Theta_2=u\Phij_-$. By the proof of~\eqref{eq:tul_conj_Uj}, $\tul \Theta_2\subset \Phire^+\setminus \Phi^+$ and $\tul^{-1}\Theta_1\subset \Phire^-$, and thus $\Theta_1\sqcup \tul\Theta_2\subset \Inv(\tul^{-1})$. Let $\Theta_3\subset \Phire^+$ be defined by $\Theta_3:=\Inv(\tul^{-1})\setminus(\Theta_1\sqcup \tul\Theta_2)$. By \cref{KM_U(R)_generated}, the multiplication map gives a biregular isomorphism 
\begin{equation}\label{eq:KM_T1_T2_T3}
\U(\Theta_1)\times \U(\tul\Theta_2)\times \prod_{\alpha \in \Theta_3} \U_\alpha\xrasim \U(\Inv(\tul^{-1}))=\U_1(\tul),
\end{equation}
where $\U(\Theta)$ denotes the subgroup generated by $\{\U_\alpha\}_{\alpha\in\Theta}$. In particular, $\U(\Theta_1)\cdot \U(\tul\Theta_2)$ is a closed subvariety of $\U_1(\tul)$ isomorphic to $\C^{|\Theta_1|+|\Theta_2|}=\C^{\ell(\wj)}$. Observe that $\U(\tul\Theta_2)=\dtul\Uj_2\dtul^{-1}$, and hence $\BAgeomxx$  essentially coincides with the restriction of the map~\eqref{eq:KM_T1_T2_T3} to $\U(\Theta_1)\times \U(\tul\Theta_2)\times \{1\}$. We have thus shown that $\BAgeomxx$ gives a biregular isomorphism between $\Uj_1\times \Uj_2$ and a closed $\ell(\wj)$-dimensional subvariety of $\U_1(\tul)$. Therefore $\BAgeom$ gives a biregular isomorphism between $\Cuj$ and a closed $\ell(\wj)$-dimensional subvariety $\BAgeom(\Cuj)$ of $\Xaff^{\tul}$. By \cref{Xaff_cl_irreducible}, $\Xaffcl_{\tmin}\cap \Xaff^{\tul}$ is a closed irreducible subvariety of $\Xaff^{\tul}$, and by~\eqref{eq:tau_length_add} and \cref{Xaff_cl_irreducible}, it has dimension $\ell(\wj)$. Since $\BAgeom(\Cuj)\subset \Xaffcl_{\tmin}\cap \Xaff^{\tul}$, it follows that $\BAgeom(\Cuj)= \Xaffcl_{\tmin}\cap \Xaff^{\tul}$. We are done with the proof of \cref{BAgeom1}.

\begin{remark}
Alternatively, the proof of \cref{BAgeom1} could be deduced from \emph{Deodhar-type parametrizations}~\cite{Had2,Had1,Billig} of $\Richaff_{\vtw}^{\tul}$, by observing that any reduced word for $\tul$ that is compatible with the length-additive factorization $\tul=u\cdot \tmin\cdot \wj u^{-1}$ in~\eqref{eq:tau_length_add} contains a unique reduced subword for $\tmin$.
\end{remark}

\subsection{Proof of \texorpdfstring{\cref{BAgeom3}}{Theorem \ref{BAgeom3}}}\label{sec:BAgeom3_proof}
We use the notation and results from \cref{sec:KWY_TP}.  Let $x\in G$ be such that $xP\in \PRtp_{v'}^{w'}$. Since $\PRtp_{v'}^{w'}=\pi_J(\Rtp_{v'}^{w'})$, we may assume that $xB\in \Rtp_{v'}^{w'}$. Then $x\in\du\Goj$ by \cref{lemma:subset_uP}, so $\BAgeom(xP)$ is defined. In addition, by \cref{G_0^J} we may assume that $x\in \du P_-$. By definition, $\BAgeom(xP)\in \Caff_{\vtw}$ if and only if $\dw\dtl^{-1}\dv^{-1} \BAgeom(xP)\in \B_-\B/\B$. By~\eqref{eq:BA_geom2}, this is equivalent to 
\begin{equation}\label{eq:KM_vtw_B_-_Bmin}
\dw\dtl^{-1}\dv^{-1}\cdot \hjx x\cdot  \dtl\cdot x^{-1} \in \B_-\B.
\end{equation}
By \cref{thm:zeta}, $x\in \Guvbig$, so $\dv^{-1}\hjx x\in \Goj$. Let us factorize $y:=\dv^{-1}\hjx x$ as $y=[y]_-\pj [y]_J [y]_+\pj$ using \cref{lemma:bracketsJ}. By~\eqref{eq:tau_conj_L_J} and~\eqref{eq:tau_inv_conj_Uj}, we get
\[\dw\dtl^{-1}\dv^{-1}\cdot \hjx x\cdot  \dtl\cdot x^{-1}= \dw\cdot \dtl^{-1}[y]_-\pj \dtl \cdot \dtl^{-1}[y]_J \dtl\cdot \dtl^{-1}[y]_+\pj \dtl\cdot x^{-1}\in  \dw\cdot \Uvn \cdot [y]_J \cdot \Uvp\cdot x^{-1}.\]
Using~\eqref{eq:Uvp_Uvn_normal}, we can move $\Uvn$ to the left and $\Uvp$ to the right, so we see that~\eqref{eq:KM_vtw_B_-_Bmin} is equivalent to $\dw[y]_Jx^{-1}\in \B_-\B$. By \cref{lots_of_maps}, we have $[y]_J=\Ft(x)$, and by \cref{zeta_inside_uP_-}, we have $\zetamap(x)=x \Ft(x)^{-1}=x[y]_J^{-1}$. By \cref{thm:zeta}, $\zetamap(x)\in BB_-\dw$, and after taking inverses, we obtain $\dw[y]_Jx^{-1}\in B_-B\subset \B_-\B$, finishing the proof. \qed

\tikzset{
    labl/.style={anchor=south, rotate=90, inner sep=.5mm}
}

\def\isogaff{\tilde{\nu}_g}
\def\isogaffa{\tilde{\nu}_{g,1}}
\def\isogaffb{\tilde{\nu}_{g,2}}
\section{From Bruhat atlas to Fomin--Shapiro atlas}\label{sec:BA_implies_FS}
We use \cref{thm:BA} to prove \cref{thm:main_intro2}.
 
\subsection{Affine Bruhat projections}
We first define the affine flag variety version of the map $\isog$ from~\eqref{eq:FSiso_intro}. We will need some results on the Gaussian decomposition inside $\G$; see \cref{sec:KMapp_Gauss} for a proof.

\begin{lemma}\leavevmode\label{lemma:KM_gaussian} Let $\G_0:=\B_-\cdot \B$.
\begin{theoremlist}
\item \label{KM_gaussian} The multiplication map gives a biregular isomorphism of ind-varieties
\begin{equation}\label{eq:KM_gaussian}
  \U_-\times \T\times \U \xrasim \G_0.
\end{equation}
For $x\in \G_0$, we denote by $[x]_-\in\U_-$, $[x]_0\in\T$, and $[x]_+\in \U$ the unique elements such that $x=[x]_-[x]_0[x]_+$.
\item \label{KM_U1_U2} For $g\in \Waff$, the multiplication map gives biregular isomorphisms of ind-varieties
\begin{equation}\label{eq:KM_U1_U2}
  \mu_{12}:\U_1(g)\times \U_2(g) \xrasim \dg \U_-\dg^{-1},\quad   \mu_{21}:\U_2(g)\times \U_1(g) \xrasim \dg \U_-\dg^{-1}.
\end{equation}
\end{theoremlist}
\end{lemma}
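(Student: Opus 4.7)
The plan is to deduce both parts from standard triangular decomposition results for the minimal Kac--Moody group $\Gmin$, with which $\G$ is identified in \cref{sec:KM}. First I would observe that in both parts, set-theoretic surjectivity of the multiplication map is immediate: for part~\itemref{KM_gaussian}, since $\T$ normalizes $\U$ and $\U_-$, we have $\G_0=\B_-\cdot \B=\U_-\T\U$; for part~\itemref{KM_U1_U2}, this is the content of the fact that $\dg\U_-\dg^{-1}$ is generated by $\U_1(g)$ and $\U_2(g)$ together. Thus the real content in both cases is the injectivity and biregularity as ind-varieties.

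For part~\itemref{KM_gaussian}, the plan is to invoke the Gaussian (or triangular) decomposition of the big cell in $\Gmin$ due to Kumar~\cite{Kum}: the multiplication map $\Umin_-\times \T\times \Umin\to \Gmin$ is a biregular isomorphism onto an open ind-subvariety (the big cell), and under the identification of \cref{sec:KM}, this big cell is precisely $\G_0$, with $\Umin,\Umin_-$ corresponding to $\U,\U_-$. The regularity of the projections $[\cdot]_-,[\cdot]_0,[\cdot]_+$ then transfers directly from Kumar's framework.

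For part~\itemref{KM_U1_U2}, the strategy is to partition the set of roots of $\dg\U_-\dg^{-1}$ into two bracket-closed pieces and apply a Kac--Moody analog of \cref{U(R)_generated}. The group $\dg\U_-\dg^{-1}$ is generated by $\{\U_\beta\mid \beta\in g\Phire^-\}$, and I would set $S_1:=g\Phire^-\cap \Phire^+$ and $S_2:=g\Phire^-\cap \Phire^-$, giving $g\Phire^-=S_1\sqcup S_2$. A direct check shows that both $S_i$ are bracket closed, since each is an intersection of two bracket-closed sets, and by~\eqref{eq:Caff_U_1(g)_U_2(g)} we have $\U_1(g)=\U(S_1)$ and $\U_2(g)=\U(S_2)$. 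Note that $|S_1|=\ell(g)<\infty$ while $S_2$ is generally infinite. The Kac--Moody analog of \cref{U(R)_generated} for bracket-closed subsets of $\Phire$ (see~\cite[Proposition~6.1.1 and Lemma~6.1.3]{Kum}) then yields biregular isomorphisms $\U(S_1)\times \U(S_2)\xrasim \dg\U_-\dg^{-1}$ and $\U(S_2)\times \U(S_1)\xrasim \dg\U_-\dg^{-1}$, which are $\mu_{12}$ and $\mu_{21}$ respectively.

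The main technical obstacle is ensuring that all decomposition maps respect the ind-variety structures, since $\U$, $\U_-$, and $\U_2(g)$ are infinite-dimensional. To handle this, for any bracket-closed subset $\Theta\subset\Phire^+$ one endows $\U(\Theta)$ with an ind-variety structure via an exhaustion by subgroups generated by finite bracket-closed subsets; biregularity of the multiplication maps is then verified on each finite-dimensional piece using the Chevalley commutation relations between root subgroups, and the structure assembles to a biregular isomorphism in the direct limit. This entire framework is worked out in~\cite{Kum}, so the role of our proof is essentially to transport those statements via the identification $\G\cong\Gmin$ recalled in \cref{sec:KM}.
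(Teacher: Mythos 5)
For part~\itemref{KM_gaussian}, your approach is the same as the paper's (cite Kumar's Gaussian decomposition; the paper points to~\cite[Proposition~7.4.11]{Kum}).

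For part~\itemref{KM_U1_U2}, your route is genuinely different from the paper's and has a gap at the step that actually matters. You want to deduce biregularity from a direct-spanning statement for the partition $g\Phire^-=S_1\sqcup S_2$, citing~\cite[Proposition~6.1.1, Lemma~6.1.3]{Kum}. But the paper's \cref{KM_U(R)_generated} (and Kumar's Lemma~6.1.4 behind it) gives biregularity only when the pieces are \emph{finite} bracket-closed subsets of $\Phire^+$; here $S_2\subset\Phire^-$ is infinite, so $\U_2(g)$ is an infinite-dimensional ind-group. Kumar's Lemma~6.1.3 gives bijectivity of the product map in this setting, but bijective morphisms of ind-varieties are not automatically isomorphisms, and the sentence ``the structure assembles to a biregular isomorphism in the direct limit'' skips exactly the hard part, namely regularity of the inverse. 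You would need to exhibit the inverse as a morphism, and nothing in what you wrote does that.

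The paper (following~\cite[Proposition~2.5]{HW}) sidesteps this cleanly: it uses Lemma~6.1.3 only for bijectivity, then observes that bijectivity implies $\dg\U_-\dg^{-1}\subset\G_0$ with $[x]_0=1$ for all $x\in\dg\U_-\dg^{-1}$, which makes the inverses \emph{explicit} in terms of the Gaussian factorization already established in part~\itemref{KM_gaussian}: $\mu_{21}^{-1}(x)=([x]_-,[x]_+)$ and $\mu_{12}^{-1}(x)=([x^{-1}]_+^{-1},[x^{-1}]_-^{-1})$. Regularity of these formulas is immediate from~\itemref{KM_gaussian}, so no independent direct-spanning machinery for infinite bracket-closed sets is needed. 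I'd encourage you to adopt this bootstrap: it turns the only delicate point (regularity of the inverse) into a direct consequence of the part of the lemma you already have.
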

\noindent The group $\dg \U_-\dg^{-1}$, as well as its subgroups $\U_1(g)$ and $\U_2(g)$, act on $\Caff_g$.  The following result, which we state for the polynomial loop group $\G$, holds in Kac--Moody generality.

\begin{proposition}\label{prop:FS_iso}
Let $g\in \Waff$.
\begin{theoremlist}
\item\label{FS_iso_g1_g2} For $x\in \G$ such that $x\B\in\Caff_g$, there exist unique elements $y_1\in \U_1(g)$ and $y_2\in \U_2(g)$ such that $y_1x\B\in\Xaff_g$ and $y_2x\B\in\Xaff^g$.
\item\label{FS_iso_biregular} The map $\isogaff:\Caff_g\xrasim  \Xaff_g\times \Xaff^g$ sending $x\B$ to $(y_1x\B,y_2x\B)$ is a  biregular isomorphism of ind-varieties.
\item\label{FS_iso_Rich} For all $h,f\in \Waff$ satisfying $h\leq g\leq f$, the map $\isogaff$ restricts to a biregular isomorphism $\Caff_g\cap \Richaff_h^f\xrasim  \Richaff_g^f\times \Richaff_h^g$ of finite-dimensional varieties.
\end{theoremlist}
\end{proposition}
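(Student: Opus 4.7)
The plan is to read off $y_1$ and $y_2$ directly from the two factorizations of $z$ provided by \cref{lemma:KM_gaussian}\itemref{KM_U1_U2}. Using the biregular isomorphism $\dg \U_- \dg^{-1} \xrasim \Caff_g$ from \eqref{eq:KM_affine_charts}, I first write $x\B = z \dg \B$ for the unique $z \in \dg \U_- \dg^{-1}$ (uniqueness follows from $\dg \U_- \dg^{-1} \cap \dg \B \dg^{-1} = \{1\}$, since $\U_- \cap \B = \{1\}$). Applying $\mu_{12}^{-1}$ and $\mu_{21}^{-1}$, I obtain unique factorizations $z = u_1 u_2 = u_2' u_1'$ with $u_1, u_1' \in \U_1(g)$ and $u_2, u_2' \in \U_2(g)$, and set $y_1 := u_1^{-1}$ and $y_2 := (u_2')^{-1}$. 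Then $y_1 x \B = u_2 \dg \B \in \U_2(g) \dg \B = \Xaff_g$ and $y_2 x \B = u_1' \dg \B \in \U_1(g) \dg \B = \Xaff^g$ via \eqref{eq:KM_affine_charts}. For uniqueness, if $\tilde y_1 \in \U_1(g)$ also satisfies $\tilde y_1 x \B \in \Xaff_g$, then by \eqref{eq:KM_affine_charts} there is a unique $\tilde u_2 \in \U_2(g)$ with $\tilde y_1 z \dg \B = \tilde u_2 \dg \B$; freeness of the $\dg \U_- \dg^{-1}$-action on $\Caff_g$ gives $\tilde y_1 z = \tilde u_2$, and the uniqueness in $\mu_{12}$ then forces $\tilde y_1 = u_1^{-1}$. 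The argument for $y_2$ is symmetric, completing part \itemref{FS_iso_g1_g2}.

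For part \itemref{FS_iso_biregular}, each of the steps $x \B \mapsto z \mapsto (u_1, u_2)$ via $\mu_{12}^{-1}$, $z \mapsto (u_2', u_1')$ via $\mu_{21}^{-1}$, and $(u_2, u_1') \mapsto (u_2 \dg \B, u_1' \dg \B)$ is a biregular morphism of ind-varieties, so $\isogaff$ is regular. To exhibit an explicit inverse, given $(p_1, p_2) \in \Xaff_g \times \Xaff^g$ I use \eqref{eq:KM_affine_charts} to write $p_1 = u_2^{(1)} \dg \B$ and $p_2 = u_1^{(2)} \dg \B$ uniquely with $u_2^{(1)} \in \U_2(g)$ and $u_1^{(2)} \in \U_1(g)$; the product $u_1^{(2)} (u_2^{(1)})^{-1}$ lies in $\U_1(g) \cdot \U_2(g) = \dg \U_- \dg^{-1}$, so by $\mu_{21}^{-1}$ it factors uniquely as $\tilde u_2 \tilde u_1$ with $\tilde u_i \in \U_i(g)$, and I set $\isogaff^{-1}(p_1, p_2) := \tilde u_1 u_2^{(1)} \dg \B$. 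A direct verification using the uniqueness of both Gauss factorizations confirms that $\isogaff \circ \isogaff^{-1}$ and $\isogaff^{-1} \circ \isogaff$ are identities, and since $\isogaff^{-1}$ is again a composition of biregular maps, $\isogaff$ is a biregular isomorphism.

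For part \itemref{FS_iso_Rich}, the containment $\isogaff(\Caff_g \cap \Richaff_h^f) \subset \Richaff_g^f \times \Richaff_h^g$ is immediate: since $y_1 \in \U_1(g) \subset \U \subset \B$ and $x \in \B \df \B$, we have $y_1 x \in \B \df \B$, so $y_1 x \B \in \Xaff^f \cap \Xaff_g = \Richaff_g^f$; symmetrically, $y_2 \in \U_2(g) \subset \U_- \subset \B_-$ and $x \in \B_- \dh \B$ give $y_2 x \B \in \Richaff_h^g$. The reverse containment follows from the explicit formula for $\isogaff^{-1}$: if $p_1 \in \Xaff^f$ and $p_2 \in \Xaff_h$, then the preimage $x \B = \tilde u_1 u_2^{(1)} \dg \B$ also equals $\tilde u_2^{-1} u_1^{(2)} \dg \B$ (from $z = \tilde u_2^{-1} u_1^{(2)}$), so $x \in \U \cdot \B \df \B = \B \df \B$ and simultaneously $x \in \U_- \cdot \B_- \dh \B = \B_- \dh \B$, yielding $x \B \in \Richaff_h^f$. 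Finite-dimensionality on both sides is standard (the ambient ind-variety structure restricts to affine subvarieties of dimensions $\ell(f)-\ell(g)$, $\ell(g)-\ell(h)$, and $\ell(f)-\ell(h)$). The main point requiring care is simply the bookkeeping of $\mu_{12}$ versus $\mu_{21}$ when writing the inverse, since the $\U_2$-component of $\isogaff$ comes from $\mu_{12}$ while the $\U_1$-component comes from $\mu_{21}$; once \cref{lemma:KM_gaussian}\itemref{KM_U1_U2} is in hand, there is no substantive analytic obstacle.
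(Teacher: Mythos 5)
Your proof is correct and follows essentially the same route as the paper's: you use the affine chart isomorphism \eqref{eq:KM_affine_charts} to identify $\Caff_g$ with $\dg\U_-\dg^{-1}$, extract $y_1,y_2$ from the two Gaussian factorizations $\mu_{12}^{-1}$ and $\mu_{21}^{-1}$ of \cref{lemma:KM_gaussian}\itemref{KM_U1_U2}, and build the inverse from the $\mu_{21}$-decomposition of $u_1^{(2)}(u_2^{(1)})^{-1}$, which is exactly the element $[x_1x_2^{-1}]_\pm$ the paper uses. The only (minor) divergence is in part \itemref{FS_iso_Rich}: you prove surjectivity of the restriction directly from your explicit inverse formula, whereas the paper deduces it indirectly from the disjoint stratifications in \eqref{eq:affine_Xaff_sqcup_Raff}; both arguments are valid.
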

\begin{proof}
Let us first prove an affine analog of \cref{lemma:KWY}. Let $\nu_1:\dg\U_-\dg^{-1}\to \U_2(g)$, $\nu_2:\dg\U_-\dg^{-1}\to \U_1(g)$ denote the second component of $\mu_{12}^{-1}$ and $\mu_{21}^{-1}$ (cf.~\eqref{eq:KM_U1_U2}), respectively, and let $\nu:=(\nu_1,\nu_2):\dg\U_-\dg^{-1}\to \U_2(g)\times \U_1(g)$. We claim that $\nu$ is a biregular isomorphism. By \cref{KM_U1_U2}, $\nu$ is a regular morphism. Let us now compute the inverse of $\nu$. Given $x_1\in \U_1(g)$ and $x_2\in\U_2(g)$, we claim that there exist unique $y_1\in \U_1(g)$ and $y_2\in \U_2(g)$ such that $y_1x_2=y_2x_1$. Indeed, this equation is equivalent to $y_2^{-1}y_1=x_1x_2^{-1}$, so we must have $y_2=[x_1x_2^{-1}]_-^{-1}$ and $y_1=[x_1x_2^{-1}]_+$. Clearly, $\nu^{-1}(x_2,x_1)=y_1x_2=y_2x_1$, and by \cref{KM_gaussian}, the map $\nu^{-1}$ is regular. Applying~\eqref{eq:KM_affine_charts} finishes the proof of~\itemref{FS_iso_g1_g2} and~\itemref{FS_iso_biregular}.

We now prove~\itemref{FS_iso_Rich}. Observe that if $x\B\in \Caff_g\cap \Richaff_h^f$ for some $h\leq f\in\Waff$ then $x\in \B_- \dh \B\cap \B \df\B$. Let $y_1,y_2$ be as in~\itemref{FS_iso_biregular}. Then $y_1\in \U_1(g)\subset \U$, so $y_1x\in \B\df\B$. Similarly, $y_2\in \U_2(g)\subset \U_-$, so $y_2x\in \B_-\dh \B$. It follows that if $x\B\in \Caff_g\cap \Richaff_h^f$ then $\isogaff(x\B)\in \Richaff_h^g\times \Richaff_g^f$. In particular, we must have $h\leq g\leq f$, and we are done by ~\eqref{eq:affine_Xaff_sqcup_Raff}.
\end{proof}

\subsection{Torus action}\label{sec:KM_torus-action}
Recall that $\T= \Cast \times T$ is the affine torus. The group $\Cast$ acts on $\G$ via loop rotation as follows. For $\tht \in \Cast$, we have $\tht \cdot g(z) = g(tz)$.  We form the semidirect product $\Cast \ltimes \G$ with multiplication given by $(\tht_1,x_1(z))\cdot (\tht_2,x_2(z)):=(\tht_1\tht_2,x_1(z)x_2(\tht_1 z))$ for $(\tht_1,x_1(z)),(\tht_2,x_2(z))\in\Cast\times \G$. Let $Y(\T):=\Hom(\Cast,\T)\cong \Z d \oplus Y(T)$. For $\lch\in Y(\T)$, $t\in \Cast$, $t'\in \C$, and $\alpha\in \Phire$, we have 
\begin{equation}\label{eq:KM_torus_rules}
\lch(t) x_\alpha(t') \lch(t)^{-1}=x_\alpha(t^{\<\lch,\alpha\>}t'),
\end{equation}
where $x_\alpha:\C\xrasim \U_\alpha$ is as in \cref{sec:loop-groups-affine}, and $\<\cdot,\cdot\>:Y(\T)\times X(\T)\to \Z$ extends the pairing from \cref{sec:pinnings} in such a way that $\<d,\delta\> = 1$ and $\<d,\alpha_i\>=\<\alphacheck_i,\delta\>= 0$ for $i \in I$.

\def\NNg{N}
Let $g\in\Waff$ and define $\NNg:=\ell(g)$. If $\Inv(g)=\{\alpha^\parr1,\dots,\alpha^\parr{\NNg}\}$, then by \cref{KM_U(R)_generated}, the map $\bx_g:\C^{\NNg}\to \U_1(g)$ given by
\begin{equation}\label{eq:KM_bx_g}
\bx_g(t_1,\dots,t_{\NNg}):=x_{\alpha^\parr1}(t_1)\cdots x_{\alpha^\parr{\NNg}}(t_{\NNg})
\end{equation} 
is a biregular isomorphism. For $\t=(t_1,\dots,t_\NNg)\in \C^\NNg$, define $\normg{\t}:=\left(|t_1|^2+\dots+|t_\NNg|^2\right)^{\frac12}\in \R_{\geq0}$, and 
let $\normg\cdot:\U_1(g)\to \R_{\geq0}$ be defined by $\normg y:=\normg{\bx_g^{-1}(y)}$. Identifying $\U_1(g)$ with $\Xaff^g$ via~\eqref{eq:KM_affine_charts}, we get a function $\normg\cdot:\Xaff^g\to \R_{\geq0}$.

\def\rch{\tilde\rho}
\def\dilgaff{\FSdil_g}
We say that $\rch\in Y(\T)$ is a \emph{regular dominant integral coweight} if $\<\rch,\delta\>\in\Z_{>0}$ and $\<\rch,\alpha_i\>\in\Z_{>0}$ for all $i\in I$. In this case, we have $\<\rch,\alpha\>\in\Z_{>0}$ for any $\alpha\in\Phire^+$. Let us choose such a coweight $\rch$, and define $\dilgaff:\R_{>0}\times \G/\B\to \G/\B$ by $\dilgaff(t,x\B):=\rch(t)x\B$. 

It follows from~\eqref{eq:KM_torus_rules} that if $g\in \Waff$ and $y\in \U_1(g)$ is such that $\bx_g^{-1}(y)=(t_1,\dots,t_{\NNg})$ then there exist  $k_1,\dots, k_{\NNg}\in\Z_{>0}$ satisfying 
\begin{equation}\label{eq:FSstr_FSdil}
  \normg{\dilgaff(t,y\dg \B)}=\left(t^{k_1}|t_1|^2+\dots+t^{k_{\NNg}}|t_{\NNg}|^2\right)^\frac12 \quad\text{for all $t\in \R_{>0}$.}
\end{equation}

\def\Ogc{\Og^{\C}}
\def\scgc{\sc_g^{\C}}
\def\scoghc{\accentset{\circ}{\sc}_{g,h}^{\C}}

\subsection{Proof of \texorpdfstring{\cref{thm:main_intro2}}{Theorem \ref{thm:main_intro2}}}\label{sec:FS_proof}
By \cref{cor:GP_TNN_space}, $(\GPR,\GPtnn,Q_J)$ is a shellable TNN space in the sense of \cref{dfn:TNNspace}. Thus it suffices to construct a Fomin--Shapiro atlas.

Let $(u,u)\leqJ (v,w)\in Q_J$, and define $f:=(u,u)$, $g:=(v,w)$. Thus we have $\BAcomb(f)=\tul$ and $\BAcomb(g)=\vtw$. Moreover, for the maximal element $\hat1=(\id,\wj)\in Q_J$, we have $\BAcomb(\hat1)=\tmin$. By \cref{BAgeom1}, the map $\BAgeom$ gives an isomorphism $\Cuj\xrasim \Xaffcl_{\BAcomb(\hat1)}\cap \Xaff^{\BAcomb(f)}$. Let $\Ogc\subset \Cuj$ be the preimage of $\Caff_{\BAcomb(g)}\cap \Xaffcl_{\BAcomb(\hat1)}\cap \Xaff^{\BAcomb(f)}$ under $\BAgeom$, and denote by $\Og:=\Ogc\cap \GPR$. Since $\Caff_{\BAcomb(g)}$ is open in $\G/\B$, we see that $\Ogc$ is open in $\Cuj$ which is open in $G/P$, so $\Og$ is an open subset of $\GPR$. By \cref{BAgeom3}, $\Og$ contains $\Startnn_g$, which shows~\axref{FS:Startnn}. Moreover, we claim that $\Og\subset\Star_g$. Indeed, if $h\geqJ f$ but $h\not\geqJ g$ then $\BAcomb(h)\not\leq \BAcomb(g)$. The map $\BAgeom$ sends $\Povar_h\cap \Cuj$ to $\Richaff_{\BAcomb(h)}^{\BAcomb(f)}$, which does not intersect $\Caff_{\BAcomb(g)}$ by~\eqref{eq:KM_Rich_stratif}.

We now define the smooth cone $(\scg,\dilg)$. Throughout, we identify $\Xaff^{\BAcomb(g)}$ with $\C^\Ng$ for $\Ng:=\ell(\BAcomb(g))$ via~\eqref{eq:KM_bx_g}. We set $\scgc:=\Xaffcl_{\BAcomb(\hat1)}\cap \Xaff^{\BAcomb(g)}$ and $\scoghc:=\Richaff_{\BAcomb(h)}^{\BAcomb(g)}$ for $g\leqJ h\in Q_J$. We let $\scg:=\scgc\cap \R^\Ng$ and $\scogh:=\scoghc\cap \R^\Ng$ denote the corresponding sets of real points. Thus~\axref{FS:mnf_strat} follows. The action $\dilgaff$ restricts to $\R^\Ng$, and by~\eqref{eq:FSstr_FSdil}, it satisfies~\axref{SC:str_derivative}. As we discussed in \cref{sec:KM_torus-action}, the action of $\dilgaff$ also preserves both $\scg$ (showing~\axref{SC:R_+_action}) and $\scogh$ (showing~\axref{FS:mnf_dil}).

Finally, we define a map $\isog:\Ogc\to (\Povar_g\cap \Ogc)\times \C^\Ng$ as follows. Let $\isogaff=(\isogaffa,\isogaffb):\Caff_g\xrasim  \Xaff_g\times \Xaff^g$ be the map from \cref{prop:FS_iso}. We let $\isogb:=\isogaffb\circ\BAgeom$, so it sends $\Ogc\to \Caff_{\BAcomb(g)}\to \Xaff^{\BAcomb(g)}\cong \C^\Ng$. By \cref{FS_iso_Rich}, the image of $\isogb$ is precisely $\scgc$. We also let $\isoga:=\BAgeom^{-1}\circ \isogaffa\circ \BAgeom$, so it sends 
\[\Ogc\xrasim \Caff_{\BAcomb(g)}\cap \Xaffcl_{\BAcomb(\hat1)}\cap \Xaff^{\BAcomb(f)}\to \Richaff_{\BAcomb(g)}^{\BAcomb(f)}\xrasim \Povar_g\cap \Ogc.\]
It follows from \cref{BAgeom1} and \cref{prop:FS_iso} that $\isog:=(\isoga,\isogb)$ gives a biregular isomorphism $\Ogc\xrasim (\Povar_g\cap \Ogc)\times \scgc$. All maps in the definition of $\scgc$ are defined over $\R$, so $\isog$ gives a smooth embedding $\Og\to (\RPovar_g\cap \Og)\times \R^\Ng$ with image $(\RPovar_g\cap \Og)\times \scg$. By \cref{lemma:smooth_manifolds_embedded}, we find that $\scg$ is an embedded submanifold of $\R^\Ng$, so we get a diffeomorphism
\[\isog:\Og\xrasim (\RPovar_g\cap \Og)\times \scg.\]
By \cref{BAgeom1} and \cref{FS_iso_Rich}, we find that for $h\geqJ g$, $\isog$ sends $\Povar_h\cap \Og$ to $(\Povar_g\cap \Og)\times \scogh$, showing~\axref{FS:image_of_Yh}. When $xP\in \Povar_g\cap \Og$, we have $\BAgeom(xP)\in \Richaff_{\BAcomb(g)}^{\BAcomb(f)}$, so $\isogaffa(\BAgeom(xP))=\BAgeom(xP)$ and $\isogaffb(\BAgeom(xP))\in\Richaff_{\BAcomb(g)}^{\BAcomb(g)}$. Thus $\isoga(xP)=x$ and $\isogb(xP)=\baseg$, showing~\axref{FS:Yg_id}. We have checked all the requirements of Definitions~\ref{dfn:TNNspace}, \ref{dfn:sc}, and~\ref{dfn:FS_atlas}.\qed 

\def\center{C}
\def\centeraff{\tilde{C}}
\def\pone{^1}

\def\BAgeos{\BAgeo'}
\def\BAgeoms{\BAgeom'}

\def\trnc#1#2{{\operatorname{\tr}_{#1}^{#2}}}
\def\trunc#1#2{#1^{\trnc u{#2}}}
\def\trunci#1#2#3{#1^{\trnc u{#2}}_{#3}}
\def\truncij#1#2#3#4{#1^{\trnc u{#2}}_{#3,#4}}
\def\trmin#1#2#3{\Delta^{\trnc u{#2}}_{#3}(#1)}
\def\pivot#1#2{\theta^u_{#1,#2}}

\def\parz{}
\def\BAP{B(\Ap)}
\def\BAM{B_-(\Am)}
\def\BAPP{B\pone(\Ap)}
\def\BAMM{B\pone_-(\Am)}
\def\UAP{U(\Ap)}
\def\UAM{U_-(\Am)}

\def\inveps#1#2{\epsilon_{#1,#2}}
\def\dft{\tilde{f}}
\def\xt{\tilde{x}}
\def\yt{\tilde{y}}
\def\Glnap{\GL_n(\Ap)}
\def\Glnam{\GL_n(\Am)}
\def\Gminp{\Gmin_+}
\def\Gminm{\Gmin_-}

\def\Gminx#1{\Gmin_{(#1)}}
\def\dett#1{\det\,#1}
\def\vall#1{\val\,#1}
\def\Ctr{C}
\def\Slna{\SL_n(\Acal)}
\def\Glona{\GL_n^\circ(\Acal)}
\def\Glnax#1{\GL_n^\parr{#1}(\Acal)}
\def\Glna{\GL_n(\Acal)}
\def\val{\operatorname{val}}
\def\rkk#1#2#3{\rank(#1;#2,#3)}

\def\av{\operatorname{av}}
\def\Wext{\widehat{W}}
\def\Saffn{\tilde{S}_n}
\def\Saffxn#1{\tilde{S}_{#1,n}}
\def\Saffkn{\Saffxn k}
\def\uS#1#2{#1\cdot #2}
\def\uS#1#2{#1 #2}
\def\uk#1{\uS #1{[k]}}

\def\uphi#1#2{#1 \cdot #2}

\def\nchk{{[n]\choose k}}
\def\Cast{\C^\ast}

\def\GR#1{\left[#1\right|}

\newcommand\scalemath[2]{\scalebox{#1}{\mbox{\ensuremath{\displaystyle #2}}}}

\def\sbl#1#2{\scalemath{0.6}{\left[
\begin{array}{c|c}
#1 \\
\hline
#2
\end{array}
\right]}}

\def\sbld#1#2{\left[\scalebox{0.8}{$
\begin{array}{c|c}
#1 \\
\hline
#2
\end{array}$}
\right]}

\def\svec#1#2{\scalemath{0.6}{\left[
\begin{array}{c}
#1 \\
\hline
#2
\end{array}
\right]}}

\section{The case \texorpdfstring{$G=\SL_n$}{G = SL\textunderscore n}}\label{sec:type_A}
 In this section, we illustrate our construction in type $A$. We mostly focus on the case when $G/P$ is the \emph{Grassmannian} $\Gr(k,n)$ so that $\GPtnn$ is the \emph{totally nonnegative Grassmannian} $\Grtnn(k,n)$ of Postnikov~\cite{Pos}. Throughout, we assume $\K=\C$.

\subsection{Preliminaries}\label{sec:SL_n_preliminaries}
Fix an integer $n\geq 1$ and denote $[n]:=\{1,2,\dots,n\}$. For $0\leq k\leq n$, let $\nchk$ denote the set of all $k$-element subsets of $[n]$.  

Let $G=\SL_n$ be the group of $n\times n$ matrices over $\C$ of determinant $1$. We have subgroups $B, B_-, T, U, U_-\subset G$ consisting of upper triangular, lower triangular, diagonal, upper unitriangular, and lower unitriangular matrices of determinant $1$, respectively. The Weyl group $W$ is the group $S_n$ of permutations of $[n]$, and for $i\in I=[n-1]$, $s_i\in W$ is the simple transposition of elements $i$ and $i+1$. If $w\in W$ is written as a product $w=s_{i_1}\dots s_{i_l}$, then the action of $w$ on $[n]$ is given by $w(j)=s_{i_1}(\cdots(s_{i_l}(j))\cdots )$ for $j\in[n]$. For $S\subset [n]$, we set $\uS wS:=\{w(j)\mid j\in S\}$. For example, if $n=3$ and $w=s_2s_1$ then $w(1)=3$, $w(2)=1$,  $w(3)=2$, and $w\{1,3\}=\{2,3\}$.

For $i\in [n-1]$, the homomorphism $\phi_i:\SL_2\to G$ just sends a matrix $A\in \SL_2$ to the $n\times n$ matrix $\phi_i(A)\in\SL_n$ which has a $2\times 2$ block equal to $A$ in rows and columns $i,i+1$. Thus if $n=3$ then $\ds_1=\smat{0 & -1 & 0\\ 1 & 0 &0\\ 0& 0 & 1}$, $\ds_2=\smat{1 & 0 & 0\\ 0 & 0 &-1\\ 0& 1 & 0}$, and if $w=s_2s_1$ then $\dw=\smat{0 & -1 & 0\\ 0 & 0 &-1\\ 1& 0 & 0}$. In general, given $w\in S_n$, $\dw$ contains a $\pm1$ in row $w(j)$ and column $j$  for each $j\in [n]$, and the sign of this entry is $-1$ if and only if the number of $\pm1$'s strictly below and to the left of it is odd.  In other words, the $(w(j),j)$-th entry of $\dw$ equals $(-1)^{\#\{i<j\mid w(i)>w(j)\}}$.

For $x\in \SL_n$, $x^T$ is just the matrix transpose of $x$, and $x^\iota$ defined in~\eqref{eq:iota} is the ``positive inverse'' given by $(x^\iota)_{i,j} = (-1)^{i+j}(x^{-1})_{i,j}$ for all $i,j$.

For $i\in [n-1]$, the function $\Deltamp_i:\SL_n\to \C$ is the top-left $i\times i$ principal minor, while $\Deltapm_i:\SL_n\to \C$ is the bottom-right $i\times i$ principal minor. The subset $\Gomp=B_-B$ consists precisely of matrices $x\in \SL_n$ all of whose top-left principal minors are nonzero, in agreement with \cref{lemma:G_0}. We define $\Deltamp_n(x)=\Deltapm_n(x):=\dett{x}=1$.

\def\fm_#1{\Delta^\flag_{#1}}
\def\lex{\leq_{\operatorname{lex}}}
\subsection{Flag variety}
The group $B$ acts on $G=\SL_n$ by right multiplication, and $G/B$ is the \emph{complete flag variety in $\C^n$}. It consists of \emph{flags} $\{0\}=V_0\subset V_1\subset \dots \subset V_n=\C^n$ in $\C^n$ such that $\dim V_i=i$ for $i\in [n]$. For a matrix $x\in \SL_n$, the element $xB\in G/B$ gives rise to a flag $V_0\subset V_1\subset \dots \subset V_n$ such that $V_i$ is the span of columns $1,\dots, i$ of $x$. For $k\in[n]$, $S\in \nchk$, and $x\in \SL_n$, we denote by $\fm_S$ the determinant of the $k\times k$  submatrix of $x$ with row set $S$ and column set $[k]$. Thus for each $k\in [n]$, we have a map $\minormap_k:G/B\to \CP^{{n\choose k}-1}$ sending $xB$ to $(\fm_S(x))_{S\in\nchk}$. Here $\nchk$ is identified with the set $W\omega_k$ from \cref{gen_minors_flag}. 

\subsection{Partial flag variety}
For $J\subset [n]$, we have a parabolic subgroup $P\subset G$, and the partial flag variety $G/P$ consists of \emph{partial flags} $\{0\}=V_0\subset V_{j_1}\subset\dots\subset V_{j_l}\subset V_n=\C^n$, where $\{j_1<\dots<j_l\}:=[n-1]\setminus J$ and $\dim V_{j_i}=j_i$ for $i\in [l]$. The projection $\pi_J:G/B\to G/P$ sends a flag $(V_0, V_1,\dots,V_n)$ to $(V_0,V_{j_1},\dots,V_{j_l},V_n)$. When $J=\emptyset$, we have $P=B$ and $G/P=G/B$. We will focus on the ``complementary'' special case:
\[\text{Unless otherwise stated, we assume that $J=[n-1]\setminus \{k\}$ for some fixed $k\in [n-1]$.} \]
 In this case, $G/P$ is the \emph{(complex) Grassmannian $\Gr(k,n)$}, which we will identify with the space of $n\times k$ full rank matrices modulo column operations. Let us write matrices in $\SL_n$ in block form $\sbl{A&B}{C&D}$, where $A$ is of size $k\times k$ and $D$ is of size $(n-k)\times (n-k)$. For a matrix $x=\sbl{A&B}{C&D}\in\SL_n$, we denote by $\GR x:=\svec{A}{C}$ the $n\times k$ submatrix consisting of the first $k$ columns of $x$. Thus every $x\in\SL_n$ gives rise to an element $xP$ of $G/P$ which is a $k$-dimensional subspace $V_k\subset \C^n$ equal to the column span of $\GR x$. The map $\minormap_k$ in this case is the classical \emph{Pl\"ucker embedding} $\minormap_k:\Gr(k,n)\hookrightarrow \CP^{{n\choose k}-1}$.

The set $W^J$ from \cref{sec:parab-subgr-w_j} consists of \emph{Grassmannian permutations}: we have $w\in W^J$ if and only if $w=\id$ or every reduced word for $w$ ends with $s_k$. Equivalently, $w\in W^J$ if and only if $w(1)<\dots <w(k)$ and $w(k+1)<\dots <w(n)$, so the map $w\mapsto \uk w$ gives a bijection $W^J\to \nchk$. The parabolic subgroup $W_J$ (generated by $\{s_j\}_{j\in J}$) consists of permutations $w\in S_n$ such that $\uk w=[k]$, and the longest element $\woj\in W_J$ is given by $(\woj(1),\dots,\woj(n))=(k,\dots,1,n,\dots,k+1)$. The maximal element $\wj$ of $W^J$ is given by $(\wj(1),\dots,\wj(n))=(n-k+1,\dots,n, 1,\dots,n-k)$. We have 
\[U_J=\left\{\sbld{U_k&0}{0&U_{n-k}}\right\},\quad \Uj_-=\left\{\sbld{I_k&0}{C&I_{n-k}}\right\},\quad L_J=\left\{\sbld{A&0}{0&D}\right\},\quad P=\left\{\sbld{A&B}{0&D}\right\},\]
where $U_r$ is an $r\times r$ upper unitriangular matrix, $I_r$ is the $r\times r$ identity matrix, $A\in\SL_k$, $D\in \SL_{n-k}$, and $B$, $C$ are arbitrary $k\times (n-k)$ and $(n-k)\times k$ matrices, respectively.

\subsection{Affine charts}\label{sec:Gr_affine-charts}
We have $\Goj:=\{x\in G\mid \fm_{[k]}(x)\neq0\}$, and for $x=\sbl{A&B}{C&D}\in\Goj$ (such that $\dett{A}=\fm_{[k]}(x)\neq 0$), the factorization $x=[x]_-\pj [x]_0\pj [x]_+\pj$ from \cref{lemma:bracketsJ} is given by 
\begin{equation}\label{eq:Gr_bracketsJ}
\sbld{A&B}{C&D}=\sbld{I_k&0}{CA^{-1}&I_{n-k}}\cdot \sbld{A&0}{0&D-CA^{-1}B}\cdot \sbld{I_k& A^{-1}B}{0&I_{n-k}}.
\end{equation}
The matrix $D-CA^{-1}B$ is called the \emph{Schur complement of $A$ in $x$}.

For $u\in W^J$, the set $\Cuj\subset G/P$ from \cref{sec:isomorphisms} consists of elements $xP$ such that $\fm_{\uk u}(x)\neq 0$.  The (inverse of the) isomorphism~\eqref{eq:Cuj_to_Uj} essentially amounts to computing the reduced column echelon form of an $n\times k$ matrix: if $x\in G$ is such that $xP\in \Cuj$ is sent to $\gj\in \du\Uj_-\du^{-1}$ via~\eqref{eq:Cuj_to_Uj}, then the $n\times k$ matrices $\GR x$ and $\GR{\gj \du}$ have the same column span, and the submatrix of $\GR{\gj \du}$ with row set $\uk u$ is the $k\times k$ identity matrix. Let us say that an $n\times k$ matrix $M$ is in \emph{$\uk u$-echelon form} if its submatrix with row set $\uk u$ is the $k\times k$ identity matrix. 

The matrices $\gj_1\du$ and $\gj_2\du$ from \cref{dfn:gj_and_stuff} are obtained from $\gj \du$ simply by replacing some entries with $0$. Explicitly, let $(M_{i,j}):=\GR{\gj\du}$, $(M'_{i,j}):=\GR{\gj_1\du}$, and $(M''_{i,j}):=\GR{\gj_2\du}$  be the corresponding $n\times k$ matrices. Thus $M_{i,j}=\delta_{i,u(j)}$ for all $i\in \uk u$ and $j\in[k]$, and we have
\[M'_{i,j}=
  \begin{cases}
    M_{i,j}, &\text{if $i\leq u(j)$,}\\
    0, &\text{otherwise,}\\
  \end{cases}\qquad M''_{i,j}=
  \begin{cases}
    M_{i,j}, &\text{if $i\geq u(j)$,}\\
    0, &\text{otherwise,}\\
  \end{cases}\quad \text{for all $i\in[n]$ and $j\in[k]$.}\]
The operation $M\mapsto M'$, which we call \emph{$u$-truncation}, will play an important role.

\begin{example}\label{ex:ssa_1}
Let $G/P=\Gr(\ssak,\ssan)$ and  $u=\ssau\in W^J$, so $\uk u=\ssauk$. We have
\[x=\gj \du=\ssax,\quad \GR{x}=\ssaGRx,\quad \GR{\gj_1\du}=\ssaGRgjAu,\quad \GR{\gj_2\du}=\ssaGRgjBu,\]
where blank entries correspond to zeros.
\end{example}

\subsection{Positroid varieties}\label{sec:positroid-varieties}
We review the background on \emph{positroid varieties} inside $\Gr(k,n)$, which were introduced in~\cite{KLS2}, building on the work of Postnikov~\cite{Pos}. Let $\Saffn$ be the group of \emph{affine permutations}, i.e., bijections $f:\Z\to\Z$ such that $f(i+n)=f(i)+n$ for all $i\in\Z$. We have a function $\av:\Saffn\to \Z$ sending $f$ to $\av(f):=\frac1n\sum_{i=1}^n (f(i)-i)$, which is an integer for all $f\in \Saffn$. For $j\in\Z$, denote $\Saffxn j:=\{f\in\Saffn\mid \av(f)=j\}$. Every $f\in\Saffn$ is determined by the sequence $f(1),\dots,f(n)$, and we write $f$ in \emph{window notation} as $f=[f(1),\dots,f(n)]$.  For $\lch\in\Z^n$, define $\tl\in \Saffn$ by $\tl:=[d_1,\dots,d_n]$, where $d_i=i+n\lch_i$ for all $i\in [n]$. Let $\Boundkn\subset \Saffkn$ be the set of \emph{bounded affine permutations}, which consists of all $f\in\Saffn$ satisfying $\av(f)=k$ and $i\leq f(i)\leq i+n$ for all $i\in \Z$. The subset $\Saffxn0$ is a Coxeter group with generators $s_1,\dots,s_{n-1}, s_n=s_0$, where for $i\in[n]$, $s_i:\Z\to\Z$ sends $i$ to $i+1$, $i+1$ to $i$, and $j$ to $j$ for all $j\not\equiv i,i+1\pmod n$. We let $\leq$ denote the Bruhat order on $\Saffxn0$, and $\ell:\Saffxn0\to\Z_{\geq0}$ denote the length function. We have a bijection $\Saffxn0\to\Saffkn$ sending $(i\mapsto f(i))$ to $(i\mapsto f(i)+k)$, which induces a poset structure and a length function on $\Saffkn$. When $f\leq g$, we write $g\leqop f$, and we will be interested in the poset $(\Boundkn,\leqop)$, which has a unique maximal element $\tauk:=[1+k,2+k,\dots,n+k]$. It is known that $\Boundkn$ is a lower order ideal of $(\Saffkn,\leqop)$. We fix $\lch=1^k0^{n-k}:=(1,\dots,1,0,\dots,0)\in\Z^n$ (with $k$ $1$'s). Then $\tl=[1+n,\dots,k+n,k+1,\dots,n]$ is one of the $\nchk$ minimal elements of $(\Boundkn,\leqop)$.  The group $S_n$ is naturally a subset of $\Saffxn0$, and we have $\tauk=\tl \wji=\tmin$, where $\tmin$ was introduced in \cref{sec:BA_combinatorial-map}. Note that $\la$ is cominuscule; see \cref{rmk:minuscule}.

Given an $n\times k$ matrix $M$ and $i\in[n]$, we let $M_i$ denote the $i$th row of $M$. We extend this to all $i\in\Z$ in such a way that $M_{i+n}=(-1)^{k-1}M_i$ for all $i\in\Z$.  Thus we view $M$ as a periodic $\Z\times k$ matrix.  (The sign $(-1)^{k-1}$ is chosen so that if $M \in \Grtnn(k,n)$, then the matrix with rows $M_i,\dots,M_{i+n-1}$ belongs to $\Grtnn(k,n)$ for all $i\in\Z$; see \cref{sec:Gr_total-positivity}.) 
Every $n\times k$ matrix $M$ of rank $k$ gives rise to a map $f_M:\Z\to\Z$ sending $i\in \Z$ to the minimal $j\geq i$ such that $M_i$ belongs to the linear span of $M_{i+1},\dots,M_j$. It is easy to see that $f_M\in\Boundkn$ and $f_M$ depends only on the column span of $M$.  For $h\in\Boundkn$, the \emph{(open) positroid variety} $\Povar_h\subset \Gr(k,n)$ is the subset $\Povar_h:=\{M\in\Gr(k,n)\mid f_M=h\}$. Its Zariski closure inside $\Gr(k,n)$ is $\Povarcl_h=\bigsqcup_{g\leqop h} \Povar_g$; see~\cite[Theorem~5.10]{KLS2}. 

For $h\in\Boundkn$, define the \emph{Grassmann necklace} $\Ical_h=(I_a)_{a\in\Z}$ of $h$ by
\begin{equation}\label{eq:Gr_Grneck}
  I_a:=\{h(i)\mid i<a,\ h(i)\geq a\}\quad\text{for $a\in\Z$.}
\end{equation}
Then $I_a$ is a $k$-element subset of $[a,a+n)$, where for $a\leq b\in\Z$ we set $[a,b):=\{a,a+1,\dots,b-1\}$. For $a\leq b\in \Z$ and $M\in\Gr(k,n)$, define $\rkk Mab$ to be the rank of the submatrix of $M$ with row set $[a,b)$. For $a, b\in \Z$ and $h\in\Saffn$, define $r_{a,b}(h):=\#\{i<a\mid h(i)\geq b\}$. We describe two well-known characterizations of open positroid varieties; see~\cite[\S5.2]{KLS2}.

\begin{proposition} Let $h\in\Boundkn$ and let  $\Ical_h=(I_a)_{a\in\Z}$ be its Grassmann necklace.
\begin{theoremlist}
\item\label{Gr_Grneck_charact} The set $\Povar_h$ consists of all $M\in\Gr(k,n)$ such that for each $a\in\Z$, $I_a$ is the lexicographically minimal $k$-element subset $S$ of $[a,a+n)$ such that the rows $(M_i)_{i\in S}$ are linearly independent.
\item\label{Gr_rank_conditions} For $M\in\Gr(k,n)$, we have $M\in \Povar_h$ if and only if
\begin{equation}\label{eq:Gr_rank_conditions}
k-\rkk Mab=r_{a,b}(h)\quad \text{for all $a\leq b\in\Z$}.
\end{equation}
\end{theoremlist}
\end{proposition}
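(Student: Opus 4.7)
\medskip

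\noindent\textbf{Proof proposal.} The plan is to prove parts \itemref{Gr_Grneck_charact} and \itemref{Gr_rank_conditions} in tandem, first establishing a direct linear-algebraic description of $I_a$ from the pivots of $f_M$, then using this to convert between the Grassmann necklace data, the rank data, and the bounded affine permutation $f_M$. Throughout I fix $M\in\Gr(k,n)$ and write $h:=f_M$; it is standard that $h\in\Boundkn$ and that $|I_a|=k$ for every $a\in\Z$ (these follow directly from the definitions, using that the rows $M_a,M_{a+1},\dots,M_{a+n-1}$ span a $k$-dimensional space).

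First I would prove the following key lemma, which is the heart of~\itemref{Gr_Grneck_charact}: for every $a\in\Z$ and every $j\in[a,a+n)$,
\[
j\in I_a\iff M_j\notin\Span(M_a,\dots,M_{j-1}).
\]
For the forward direction, suppose $j=h(i)$ for some $i<a$ with $h(i)\geq a$. By definition of $h(i)$, we have $M_i\in\Span(M_{i+1},\dots,M_j)$ but $M_i\notin\Span(M_{i+1},\dots,M_{j-1})$. If $M_j$ were in $R:=\Span(M_a,\dots,M_{j-1})$, then since $i<a$ we have $R\subseteq\Span(M_{i+1},\dots,M_{j-1})$, so we could replace $M_j$ and conclude $M_i\in\Span(M_{i+1},\dots,M_{j-1})$, a contradiction. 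Thus $M_j\notin R$, proving the forward implication. The reverse implication then follows by a counting argument: the set $I_a\subseteq[a,a+n)$ has cardinality $k$, and the set of $j\in[a,a+n)$ satisfying the right-hand side is exactly the lex-min independent subset $B_a$, which also has cardinality $k$ (since rows indexed by $[a,a+n)$ span a $k$-dimensional space). Hence $I_a=B_a$, establishing~\itemref{Gr_Grneck_charact} for $M\in\Povar_h$. Conversely, if $M\in\Gr(k,n)$ satisfies the lex-min condition with respect to the Grassmann necklace of some $h\in\Boundkn$, I will verify (by reversing the above argument) that $f_M=h$.

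Next, for~\itemref{Gr_rank_conditions}, assume $M\in\Povar_h$ and fix $a\leq b$. If $b\geq a+n$ then both $\rkk Mab=k$ and $r_{a,b}(h)=0$ (using $h(i)\leq i+n$), so the identity holds. If $b\leq a+n$, then by~\itemref{Gr_Grneck_charact} and the greedy structure of $I_a$, we have $\rkk Mab=|I_a\cap[a,b)|$. Since $I_a\subset[a,a+n)$,
\[
k-\rkk Mab=|I_a|-|I_a\cap[a,b)|=|I_a\cap[b,a+n)|=\#\{i<a\mid h(i)\geq b\}=r_{a,b}(h),
\]
where the penultimate equality uses that $i<a$ forces $h(i)<a+n$. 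Conversely, suppose $M$ satisfies the rank identity with some $h\in\Boundkn$. The function $(a,b)\mapsto r_{a,b}(h)$ determines $h$: a direct computation gives
\[
r_{a+1,b}(h)-r_{a,b}(h)=[h(a)\geq b],
\]
so from the differences of the rank values one reads off $h(a)$ for every $a\in\Z$. Since $M\in\Povar_{f_M}$ trivially, the already-proved forward direction shows $r_{a,b}(f_M)=k-\rkk Mab=r_{a,b}(h)$ for all $a\leq b$, hence $f_M=h$ and $M\in\Povar_h$.

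The main obstacle is the first lemma identifying $I_a$ with the lex-min independent subset; once that is in hand, the rank characterization is essentially a bookkeeping translation between the pivot statistics $r_{a,b}(h)$ and complementary cardinalities of Grassmann necklaces, together with the elementary observation that ranks of consecutive row-intervals determine the affine permutation. These arguments reproduce the content of~\cite[\S5.2]{KLS2} to which we refer for further details.
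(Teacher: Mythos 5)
The paper gives no proof of this proposition; it simply cites \cite[\S 5.2]{KLS2}. Your write-up is a correct, self-contained reconstruction of that standard argument: the key lemma $j\in I_a\iff M_j\notin\Span(M_a,\dots,M_{j-1})$ is proved correctly (the forward direction via the minimality in the definition of $f_M$, the reverse by the cardinality comparison $|I_a|=|B_a|=k$), the identification of $B_a$ with the lex-min/greedy basis is the usual matroid fact, and the rank-count translation $k-\rkk Mab=|I_a\cap[b,a+n)|=r_{a,b}(h)$ together with the observation that $r_{a+1,b}-r_{a,b}=[h(a)\geq b]$ recovers $h$ are all sound. The only places you wave your hands — that $|I_a|=k$, and the converse of~\itemref{Gr_Grneck_charact} "by reversing the argument" — are genuinely routine and follow from what you already set up (the rank data is determined by the Grassmann necklace via the greedy structure, and the rank data determines $f_M$), so there is no gap, just compression.
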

\noindent We use \emph{window notation} for Grassmann necklaces as well, i.e., we write $\Ical_h=[I_1,\dots,I_n]$.

Recall that we have fixed $\lch=1^k0^{n-k}\in\Z^n$. For $(v,w)\in Q_J$, define $f_{v,w}\in\Saffn$ by 
\begin{equation}\label{eq:KLS_f_v_w}
f_{v,w}:=v \tl w^{-1}.
\end{equation}

\begin{theorem}[{\cite[Propositions 3.15 and 5.4]{KLS2}}]\label{thm:Povar_PR}
  The map $(v,w)\mapsto f_{v,w}$ gives a poset isomorphism 
\[(Q_J,\leqJ)\xrasim (\Boundkn,\leqop).\] 
For $(v,w)\in Q_J$, we have $\PR_v^w=\Povar_{f_{v,w}}$ and $\PRcl_v^w=\Povarcl_{f_{v,w}}$ as subsets of $G/P=\Gr(k,n)$.
\end{theorem}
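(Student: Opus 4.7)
The plan is to derive both assertions by specializing the general machinery developed earlier in the paper. Part 1 (the poset isomorphism) is the type $A$ avatar of the map $\BAcomb$ from \eqref{eq:BA_comb} combined with an identification of its image. Part 2 (the identification of strata) reduces to matching rank conditions on two sides.

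For the first assertion, I would begin by checking directly that $f_{v,w} := v\tl w^{-1}$ lies in $\Boundkn$. Writing any $i\in[n]$ and tracing through $w^{-1}$, $\tl$, and $v$ (extended to $\Z$ by $v(j+n)=v(j)+n$), one gets $f_{v,w}(i) = v(w^{-1}(i))+n\cdot\mathbbm{1}[w^{-1}(i)\le k]$. The bounds $i\le f_{v,w}(i)\le i+n$ then follow from the combination of $v\le w$ in Bruhat order and $w\in W^J$ (so that the restrictions of $w$ to $\{1,\dots,k\}$ and to $\{k+1,\dots,n\}$ are increasing). By \cref{rmk:Q_J_omit_ell_vr} the map $\BAcomb:(v,w)\mapsto v\tl w^{-1}$ is an order-reversing injection from $Q_J$ into $\Waff$, and in type $A$ its image lands in $\Saffxn k \cong \Saffxn 0$. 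Order-reversal of $(v,w)\mapsto f_{v,w}$ follows from the length-additive factorization in \eqref{eq:tau_length_add}, together with the standard fact that covers in $(\Boundkn,\leqop)$ correspond to covers in the Bruhat order of $\Waff$. Finally, to see that the image is all of $\Boundkn$, I would exhibit an explicit inverse: given $f\in\Boundkn$ with Grassmann necklace $\Ical_f$, reconstruct $w\in W^J$ from $\uk w=I_1$ and recover $v\le w$ from the positions at which $f$ strictly exceeds the identity within each Bruhat window. Alternatively, one may compare cardinalities, since both $|Q_J|$ and $|\Boundkn|$ equal the number of cells in the projected Richardson / positroid stratification of $\Gr(k,n)$.

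For the second assertion, my approach is to characterize both sides via rank functions. By \Cref{Gr_rank_conditions}, $M\in\Povar_{f_{v,w}}$ if and only if $k-\rkk M a b = r_{a,b}(f_{v,w})$ for every $a\le b\in\Z$. On the other hand, any $xP\in\PR_v^w$ lifts to $xB\in\Rich_v^w$, and the rank of the $[a,b)$-submatrix of the associated $n\times k$ matrix $\GR{x}$ is determined by the pair of Bruhat positions $(v,w)$: it may be computed from the sizes of the sets $\{i\in[a,b)\mid v(i)\le k\}$ etc., modified by the contribution of $w$ via the action on the standard flag. Carrying out this computation and massaging it into the form $k-r_{a,b}(f_{v,w})$ yields $\PR_v^w\subset\Povar_{f_{v,w}}$; the reverse inclusion then follows either by an explicit parametrization or because both decompositions~\eqref{eq:GP_closure} and $\Povarcl_h=\bigsqcup_{g\leqop h}\Povar_g$ are stratifications of the same finite-dimensional variety indexed by the same set (via part 1). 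The closure statement $\PRcl_v^w = \Povarcl_{f_{v,w}}$ then follows automatically from the poset isomorphism.

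The main obstacle is the explicit combinatorial identity relating $\rkk M a b$ for $M$ representing a point of $\Rich_v^w \subset G/B$ to the affine-permutation statistic $r_{a,b}(f_{v,w})$. This identity is the technical heart of the argument, and is carried out in detail in~\cite[Prop.~5.4]{KLS2}; it can be proved by induction on $\ell(w)-\ell(v)$ or by reducing via~\eqref{eq:KLS_pi_J_length_add} to the case $r=\id$ and then using the MR-parametrization of \cref{thm:GMR_param} to verify the rank formula on an open dense subset of $\PR_v^w$. Everything else assembles formally from results already stated in this paper, so once this rank matching is in hand both parts of the theorem follow.
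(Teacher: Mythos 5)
The paper states this result purely as a citation to \cite[Propositions 3.15 and 5.4]{KLS2} and gives no proof of its own, so there is no in-paper argument to compare against; what you have written is essentially a reconstruction of the proof in that reference, filtered through the notation the paper sets up in \cref{sec:BA_combinatorial-map,sec:positroid-varieties}. As a sketch it is accurate: the formula $f_{v,w}(i)=v(w^{-1}(i))+n\cdot\mathbbm{1}[w^{-1}(i)\le k]$ is correct, order-reversal via~\eqref{eq:tau_length_add} is the route taken by \cite{HL} (which the paper itself invokes in \cref{sec:BA_combinatorial-map}), and you correctly flag the rank-matching identity as the technical heart that is carried out in \cite[Prop.~5.4]{KLS2}. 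Two small imprecisions. First, your boundedness argument is terser than it should be: the needed inequalities are $v(j)\le w(j)$ for $j\le k$ and $v(j)\ge w(j)$ for $j>k$, and these follow from $v\le w$ and $w\in W^J$ only after applying the tableau criterion to both initial segments (using that $w$ is increasing on $[1,k]$, so the $j$-th smallest of $\{w(1),\dots,w(j)\}$ is $w(j)$) and final segments (using that $w$ is increasing on $[k+1,n]$). Second, your aside about ``reducing via~\eqref{eq:KLS_pi_J_length_add} to the case $r=\id$'' is slightly garbled since for $(v,w)\in Q_J$ one already has $w\in W^J$; what you presumably mean is to choose a convenient lift $\Rich_{vr}^{wr}\to\PR_v^w$ or a convenient MR-chart representative on which to verify the rank formula, which is a legitimate move. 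Neither of these rises to a genuine gap, and since the paper itself defers entirely to \cite{KLS2}, your proposal is as complete as the paper's own treatment.
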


\begin{example}\label{ex:Gr_KLS_f_v_w}
There are $n$ positroid varieties of codimension $1$, each given by the condition $\fm_{\{i-k+1,\dots,i\}}=0$ for some $i\in[n]$. Indeed, the top element $(\id,\wj)\in Q_J$ covers $n$ elements, namely $(s_i,\wj)$ for $i\in[n-1]$, together with $(\id, s_{n-k}\wj)$. In the former case we have $f_{s_i,\wj}=s_i\tmin$, which corresponds to the variety $\fm_{\{i-k+1,\dots,i\}}=0$. In the latter case we have $f_{\id,s_{n-k}\wj}=\tmin s_{n-k}$, which corresponds to the variety $\fm_{\{n-k+1,\dots,n\}}=0$.
\end{example}

\begin{example}\label{ex:Gr_neck_v[k]}
One can check directly from~\eqref{eq:KLS_f_v_w} and~\eqref{eq:Gr_Grneck} that the first element of the Grassmann necklace of $f_{v,w}$ is $I_1=\uk v$. Similarly, $\uk w=\{i\in[n]\mid f_{v,w}(i)>n\}$.
\end{example}

\begin{example}\label{ex:Le}
Elements of $\Boundkn$ and $Q_J$ are in bijection with \emph{\Le -diagrams} of~\cite{Pos}. The bijection between $Q_J$ and the set of \Le -diagrams is described in~\cite[\S19]{Pos}: the pair $(v,w)\in Q_J$ gives rise to a \Le -diagram whose \emph{shape} is a Young diagram inside a $k\times (n-k)$ rectangle, corresponding to the set $w[k]$. The squares of the \Le -diagram correspond to the terms in a reduced expression for $w$, as shown in Figure~\ref{fig:Le} (top left): the box with coordinates $(i,j)$ in matrix notation is labeled by $s_{k+j-i}$, and we form the expression by reading boxes from right to left, bottom to top. The terms in the positive subexpression for $v$ inside $w$ correspond to the squares of the \Le -diagram that are \emph{not} filled with dots; see Figure~\ref{fig:Le} (bottom left). Thus the bijection of \cref{thm:Povar_PR} can be pictorially represented as in Figure~\ref{fig:Le} (right). We refer to~\cite[\S19]{Pos} or~\cite[Appendix~A]{Wil} for the precise description. For the example in Figure~\ref{fig:Le}, we have $v=\ssbvp$, $w=\ssbwp$, and $f_{v,w}=\ssbh$ in window notation, which is obtained by following the strands in Figure~\ref{fig:Le} (right) from top to bottom.
\end{example}

\begin{figure}
\figureLe
\caption{\label{fig:Le} A \Le -diagram (bottom left), the labeling of its squares by simple transpositions (top left), and the result of applying the bijection of \cref{thm:Povar_PR} (right). See \cref{ex:Le} for details.}
\end{figure}

\subsection{Polynomial loop group}
We explain how the construction in \cref{sec:kac-moody-groups} applies to the case $G/P=\Gr(k,n)$. Recall that $\Acal:= \C[z,z^{-1}]$. Let $\Glna$ denote the \emph{polynomial loop group of $\GL_n$}, consisting of $n\times n$ matrices with entries in $\Acal$ whose determinant is a nonzero Laurent monomial in $z$, i.e., an invertible element of $\Acal$. (We use $\Glna$ instead of $\Slna$ as the constructions are combinatorially more elegant.)
We have a group homomorphism $\val:\Glna\to \Z$ sending $x\parz \in\Glna$ to $j\in \Z$ such that $\dett{x\parz }=cz^{-j}$ for some $c\in \Cast$, and we let $\Glnax j:=\{x\parz \in\Glna\mid \vall{x\parz }=j\}$.  The subgroups $\Glnap$ and $\Glnam$ are contained inside the group $\Glnax0$ of matrices whose determinant belongs to $\Cast$.  We have subgroups $\UAP:=\evp^{-1}(U)$, $\UAM:=\evm^{-1}(U_-)$, $\BAP:=\evp^{-1}(B)$ and $\BAM:=\evm^{-1}(B_-)$ of $\Glnax0$. Thus in the notation of \cref{sec:kac-moody-groups} for $G=\SL_n$, we have $\G=\Slna\subsetneq \Glnax0$, $\B=\Slna\cap \BAP\subsetneq \BAP$, $\U=\UAP$, and $\U_-=\UAM$.

To each matrix $x\parz \in \Glna$, we associate a $\Z\times \Z$ matrix $\xt=(\xt_{i,j})_{i,j\in\Z}$ that is uniquely defined by the conditions
\begin{enumerate}
\item $\xt_{i,j}=\xt_{i+n,j+n}$ for all $i,j\in\Z$, and
\item the entry $x_{i,j}(z)$ equals the finite sum $\sum_{d\in \Z}\xt_{i,j+dn}z^d$ for all $i,j\in[n]$.
\end{enumerate}
One can check that if $x = x_1 x_2$, then $\xt = \xt_1 \xt_2$. With this identification, the subgroups $\U$, $\U_-$, $\BAP$, and $\BAM$ have a very natural meaning. For example, $x\parz \in\Glna$ belongs to $\U$ if and only if $\xt_{i,j}=0$ for $i>j$ and $\xt_{i,i}=1$ for all $i\in\Z$. Similarly, $\BAP$ consists of all elements $x\parz \in\Glna$ such that $\xt_{i,j}=0$ for $i>j$ and $\xt_{i,i}\neq0$ for all $i\in\Z$.

To each affine permutation $f\in \Saffkn$, we associate an element $\df\parz \in\Glna$ so that the corresponding $\Z\times \Z$ matrix $\dft$ satisfies $\dft_{i,j}=1$ if $i=f(j)$ and $\dft_{i,j}=0$ otherwise, for all $i,j\in\Z$. 
 In other words, if for $i,j\in[n]$ there exists $d\in\Z$ such that $f(j)=i+dn$ then $\df_{i,j}(z):= z^{-d}$, and otherwise $\df_{i,j}(z):=0$. Observe that $\vall{\df\parz }=k$ for all $f\in\Saffkn$, and thus $\df\in \Glnax k$. Recall that we have fixed $\l=1^k0^{n-k}\in\Z^n$. We obtain $\dtl\parz =\diag\left(\frac1z,\dots,\frac1z,1,\dots,1\right)$ with $k$ entries equal to $\frac1z$, and for $u\in W^J$, we therefore get $\dtul\parz =\diag(c_1,\dots,c_n)$, where $c_i=\frac1z$ for $i\in \uk u$ and $c_i=1$ for $i\notin \uk u$.

\def\Flaffk{\Glnax k/\BAP}
\def\Flaffx#1{\Glnax{#1}/\BAP}
\subsection{Affine flag variety}
The quotient $\Flaffk$ is isomorphic to the affine flag variety $\G/\B$ of \cref{sec:kac-moody-groups} for the case $G = \SL_n$.  Indeed, $\Glnax0$ acts simply transitively on $\Glnax k$ and we clearly have $\Glnax0/\BAP\cong \G/\B$. For $f\leqop h\in \Saffkn$ and $g\in \Saffkn$, we have subsets $\Xaff^f,\Xaff_h,\Richaff_h^f,\Caff_g\subset \Flaffk$ defined by 
\begin{align*}
&\Xaff^f:=\BAP\cdot \df\parz \cdot \BAP/\BAP, &\Xaff_h:=\BAM\cdot \dh\parz \cdot \BAP/\BAP,\\
&\Richaff_h^f:=\Xaff_h\cap \Xaff^f, &\Caff_g:=\dg\parz \cdot \BAM\cdot \BAP/\BAP.
\end{align*}

Let us now calculate the map $\BAgeo$ from~\eqref{eq:BA_geo}. Recall that it sends $xP\in \Cuj$ to $\gj_1\cdot \dtul\parz \cdot (\gj_2)^{-1}$. Assuming as before that $x=\gj \du\in \du\Uj_-$, consider the corresponding $n\times k$ matrix $(M_{i,j}):=\GR{x}$ in $\uk u$-echelon form.

\begin{proposition}
The matrix $y\parz :=\BAgeo(xP)\in\Glnax k$ is given for all $i,j\in[n]$ by
\begin{equation}\label{eq:Snider}
y_{i,j}(z)=
\begin{cases}
 \delta_{i,j}, &\text{if $j\notin \uk u$,}\\
 -M_{i,s}, &\text{if $i>j$ and $j=u(s)$ for some $s\in[k]$,}\\
\frac{M_{i,s}}{z} , &\text{if $i\leq j$ and $j=u(s)$ for some $s\in[k]$.}\\
\end{cases}
\end{equation}
\end{proposition}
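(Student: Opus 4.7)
The plan is to rewrite the defining identity $y(z) = \gj_1\du \cdot \dtl \cdot (\gj_2\du)^{-1}$ in the equivalent form
\begin{equation*}
y(z)\cdot(\gj_2\du) \;=\; (\gj_1\du)\cdot\dtl,
\end{equation*}
and verify it column by column. For this I would use the explicit descriptions of $\gj_1\du$ and $\gj_2\du$ recalled in \cref{sec:Gr_affine-charts}: the $n\times k$ matrices $\GR{\gj_1\du}$ and $\GR{\gj_2\du}$ are obtained from $M=\GR{\gj\du}$ by retaining only the entries $M_{i,j}$ with $i\leq u(j)$ and $i\geq u(j)$ respectively. In addition, since $\gj_1,\gj_2\in \du\Uj_-\du^{-1}$, writing $\gj_\ell\du=\du g_\ell$ with $g_\ell\in\Uj_-$ shows that for $j\in[n]\setminus[k]$ the $j$-th columns of $\gj_1\du$ and $\gj_2\du$ both equal $\du\cdot e_j$. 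Finally, note that $\dtl$ scales column $j$ by $1/z$ for $j\in[k]$ and fixes column $j$ for $j\in[n]\setminus[k]$, so the identity splits into two cases.

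For $j\in[n]\setminus[k]$, the $j$-th columns of $\gj_2\du$ and $\gj_1\du$ coincide with $\du\, e_j=\pm e_{u(j)}$, and by \eqref{eq:Snider} the column $u(j)$ of $y(z)$ is $e_{u(j)}$ (since $u(j)\in[n]\setminus\uk u$), so both sides agree. For $j\in[k]$ the sign of $\du e_j$ is $+1$ (as $u$ is Grassmannian), so column $j$ of $\gj_2\du$ equals
\begin{equation*}
e_{u(j)} \;+\; \sum_{\substack{r\in [n]\setminus \uk u\\ r>u(j)}} M_{r,j}\,e_r.
\end{equation*}
Applying $y(z)$ using \eqref{eq:Snider}, one has $y(z)e_r=e_r$ for $r\in[n]\setminus\uk u$, and
\begin{equation*}
y(z)\,e_{u(j)} \;=\; \tfrac{1}{z}\sum_{r\leq u(j)} M_{r,j}\,e_r \;-\; \sum_{r>u(j)} M_{r,j}\,e_r.
\end{equation*}
The contributions with $r\in[n]\setminus\uk u$ and $r>u(j)$ cancel between these two expressions, while the surviving terms with $r\in\uk u$ and $r>u(j)$ vanish by the echelon condition $M_{u(s),j}=\delta_{s,j}$. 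What remains is $\tfrac{1}{z}\sum_{r\leq u(j)}M_{r,j}\,e_r$, which is exactly $\tfrac{1}{z}$ times column $j$ of $\gj_1\du$, as required.

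The entire argument is essentially bookkeeping; the substantive input is the description of $\gj_1\du$ and $\gj_2\du$ and the observation that $y(z)$ acts as the identity on $e_r$ for $r\notin\uk u$. The main technical point to watch is sign tracking from the representative $\du$, which is controlled by the fact that for a Grassmannian permutation the signs of $\du e_j$ for $j\in[k]$ are all $+1$, and for $j\in[n]\setminus[k]$ the same sign appears in the $j$-th columns of both $\gj_1\du$ and $\gj_2\du$, so it cancels from the identity.
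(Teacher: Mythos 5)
Your proof is correct and takes essentially the same approach as the paper, which simply asserts that the formula follows "by directly computing the product $\gj_1\cdot \dtul\parz \cdot (\gj_2)^{-1}$." Your reformulation as $y(z)\cdot(\gj_2\du)=(\gj_1\du)\cdot\dtl$ avoids inverting $\gj_2$ and makes the column-by-column bookkeeping cleaner, but it is the same direct verification.
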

\begin{proof}
This follows by directly computing the product $\gj_1\cdot \dtul\parz \cdot (\gj_2)^{-1}$. 
\end{proof}

\begin{example}\label{ex:ssa_2}
In the notation of \cref{ex:ssa_1}, we have
\begin{equation}\label{eq:ssa_2}
y\parz =\gj_1\cdot \dtul\parz \cdot (\gj_2)^{-1}=\ssagjA\cdot \ssadtul\cdot \ssagjBi=\ssabageom.
\end{equation}
\end{example}

\begin{remark}\label{rmk:Snider}
The map $\BAgeom: xP\mapsto \gj_1\cdot \dtul\parz \cdot (\gj_2)^{-1}\cdot \BAP$ is a slight variation of a similar embedding of~\cite{Sni} which we denote $\BAgeoms$.  We have $\BAgeoms(xP)=\gj_1\cdot \dtul\parz \cdot \gj_2\cdot \BAP$, and the corresponding matrix $y'=\BAgeos(xP) :=\gj_1\cdot \dtul\parz \cdot \gj_2$ is given by~\eqref{eq:Snider} except that $-M_{i,s}$ should be replaced by $M_{i,s}$.   Thus $y'$ is obtained from $y$ by substituting $z\mapsto -z$ and then changing the signs of all columns in $\uk u$. In particular, $y'$ and $y$ are related by an element of the affine torus from \cref{sec:KM_torus-action}. 

\Cref{snider} below is due to Snider~\cite{Sni}. \Cref{BAgeom1} generalizes Snider's result to arbitrary $G/P$. The advantage of introducing the sign change in our map $\BAgeom$ is that it is better suited for applications to total positivity: for instance, the analog of \cref{BAgeom3} does not hold for the map $\BAgeoms$.
\end{remark}

We give a standard convenient characterization of $\Xaff_h$ using \emph{lattices}. For each $x\parz \in \Glna$ and column $a\in \Z$, we introduce a Laurent polynomial $x_a(t)\in \C[t,t^{-1}]$ defined by $x_a(t):=\sum_{i\in\Z}\xt_{i,a} t^{i}$, and an infinite-dimensional linear subspace $L_a(x)\subset\C[t,t^{-1}]$ given by $L_a(x):=\Span\{x_{j}(t)\mid j< a\}$, where $\Span$ denotes the space of all finite linear combinations.  For $b\in\Z$, define another linear subspace $E_b\subset \C[t,t^{-1}]$ by $E_b:=\Span\{t^{i}\mid i\geq b\}$. Finally, for $a,b\in \Z$, define $r_{a,b}(x)\in\Z$ to be the dimension of $L_a(x)\cap E_b$. In other words, $r_{a,b}(x)$ is the dimension of the space of $\Z\times 1$ vectors that have zeros in rows $b-1,b-2,\dots$ and can be obtained as finite linear combinations of columns $a-1,a-2,\dots$ of $\xt$. Recall from \cref{sec:positroid-varieties} that for $a,b\in\Z$ and $h\in\Saffn$, we define $r_{a,b}(h):=\#\{i<a\mid h(i)\geq b\}$.

 \begin{lemma}
Let $x\in \Glnax d$ and $h\in\Saffxn d$ for some $d\in\Z$. Then
\begin{equation}\label{eq:Gr_Xaff_rank_cond}
x\cdot \BAP\in \Xaff_h\quad \text{if and only if}\quad  r_{a,b}(x)=r_{a,b}(h)\quad\text{for all $a,b\in\Z$.}
\end{equation}
 \end{lemma}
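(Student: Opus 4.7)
The strategy is to show that the function $r_{a,b}(\cdot)$ descends to the double coset space $\BAM\backslash\Glnax d/\BAP$, to compute its value on the standard representative $\dh\parz$, and to conclude via the affine Bruhat decomposition together with the observation that the collection $\{r_{a,b}(h)\}_{a,b\in\Z}$ determines $h$.

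For the invariance statements, I will argue in the $\Z\times\Z$ matrix model. For right multiplication by $b_+\in\BAP$: the matrix $\tilde{b}_+$ is upper triangular with invertible diagonal, so each column of $\xt\cdot \tilde{b}_+$ indexed by $l<a$ is a linear combination of columns of $\xt$ indexed by $j\leq l<a$; invertibility gives $L_a(xb_+)=L_a(x)$, and hence $r_{a,b}(xb_+)=r_{a,b}(x)$. For left multiplication by $b_-\in\BAM$: the matrix $\tilde{b}_-$ is lower triangular with invertible diagonal, so left multiplication realizes an invertible linear operator $\phi_{b_-}$ on $\C[t,t^{-1}]$ that sends each $t^l$ to a linear combination of $t^i$ with $i\geq l$. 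Therefore $\phi_{b_-}(E_b)=E_b$ and $L_a(b_-x)=\phi_{b_-}(L_a(x))$, from which
\[ L_a(b_-x)\cap E_b \;=\; \phi_{b_-}\bigl(L_a(x)\cap E_b\bigr), \]
and $r_{a,b}(b_-x)=r_{a,b}(x)$ follows.

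A direct calculation will give $r_{a,b}(\dh)=r_{a,b}(h)$: the $j$th column of the $\Z\times\Z$ matrix $\tilde{h}$ of $\dh\parz$ is the monomial $t^{h(j)}$, so $L_a(\dh)=\Span\{t^{h(j)}\mid j<a\}$, and since $h$ is a bijection the intersection $L_a(\dh)\cap E_b$ is spanned by the distinct monomials $t^{h(j)}$ with $j<a$ and $h(j)\geq b$, yielding $\#\{j<a\mid h(j)\geq b\}=r_{a,b}(h)$. Combined with the invariance, this proves the forward direction of~\eqref{eq:Gr_Xaff_rank_cond}. For the converse, I will invoke the affine Bruhat decomposition $\Glnax d = \bigsqcup_{h'\in\Saffxn d} \BAM\cdot \dh'\parz\cdot \BAP$, which follows from the usual Bruhat decomposition of $\G/\B$ (see \cref{sec:KM}) after translation by a fixed element of $\Glnax d$. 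Writing $x\in \BAM\cdot \dh'\parz\cdot \BAP$, the invariance gives $r_{a,b}(x)=r_{a,b}(h')$, so the hypothesis forces $r_{a,b}(h')=r_{a,b}(h)$ for all $a,b$. But these numbers determine $h$: since $r_{a+1,b}(h)-r_{a,b}(h)$ equals $1$ when $h(a)\geq b$ and $0$ otherwise, $h(a)$ is recovered as the largest $b$ for which this difference is nonzero. Hence $h'=h$.

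The main technical point is to make the invariance arguments rigorous in the $\Z\times\Z$ model, and in particular to verify that $\phi_{b_-}$ really is a well-defined invertible linear operator on the Laurent polynomial ring $\C[t,t^{-1}]$ rather than merely on some formal completion; this relies on the fact that for $x\in\Glna$ each column of $\xt$ has finite support, so the subspaces $L_a(x)$ genuinely sit inside $\C[t,t^{-1}]$. Beyond this, every step is either a direct bookkeeping calculation or an appeal to the affine Bruhat decomposition.
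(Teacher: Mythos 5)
Your proposal is correct and follows essentially the same strategy as the paper's (very terse) proof: verify $r_{a,b}(\dh)=r_{a,b}(h)$, show $r_{a,b}$ is invariant under left $\BAM$-multiplication and right $\BAP$-multiplication, and conclude via the Birkhoff decomposition $\Flaffx d=\bigsqcup_{h'\in\Saffxn d}\Xaff_{h'}$. The paper leaves the invariance as ``one can check'' and the injectivity of $h\mapsto(r_{a,b}(h))_{a,b}$ implicit; you have filled in both, and your arguments (column operations for $\BAP$, the operator $\phi_{b_-}$ preserving each $E_b$ for $\BAM$, and recovering $h(a)$ from the differences $r_{a+1,b}(h)-r_{a,b}(h)$) are sound.
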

\begin{proof}
It is clear that $r_{a,b}(x)=r_{a,b}(h)$ when $x=\dh$. One can check that $r_{a,b}(y_-xy_+)=r_{a,b}(x)$ for all $x\in\Glnax d$, $y_-\in\BAM$, $y_+\in\BAP$, and $a,b\in\Z$. This proves~\eqref{eq:Gr_Xaff_rank_cond} since $\Flaffx d=\bigsqcup_{h\in\Saffxn d}\Xaff_h$ by~\eqref{eq:KM_Bruhat_Birkhoff}.
\end{proof}

\begin{remark}
  A \emph{lattice} $\Lcal$ is usually defined  (see e.g.~\cite[\S13.2.13]{Kum}) to be a free $\C[[z]]$-submodule of $\C((t)) \cong \C((z))^n$ (where $z = t^n$) satisfying $\Lcal \otimes_{\C[[z]]} \C((z)) \cong \C((z))^n$.  The $\C[[z]]$-submodule generated by our $L_a(x)$ gives a lattice $\Lcal_a(x)$ in the usual sense.
\end{remark}

\begin{definition}\label{dfn:trunc_matrix}
Suppose we are given an $n\times k$ matrix $M$ in $\uk u$-echelon form. Recall that we have defined the row $M_a$ for all $a\in \Z$ in such a way that $M_{a+n}=(-1)^{k-1}M_a$. For $a\in\Z$ and $j\in[k]$, denote by $\pivot aj\in[a,a+n)$ the unique integer that is equal to $u(j)$ modulo $n$. Define the \emph{$u$-truncation} $\trunc Ma$ of $M$ to be the $[a,a+n)\times k$ matrix $\trunc Ma=(\truncij Maij)$ such that for $i\in [a,a+n)$ and $j\in[k]$, the entry $\truncij Maij$ is equal to $M_{i,j}$ if $i\leq \pivot aj$ and to $0$ otherwise; see \cref{ex:ssa_4}. Thus $\trunc Ma$ is obtained from the matrix with rows $M_a,\dots,M_{a+n-1}$ by setting an entry to $0$ if it is below the corresponding $\pm1$ in the same column, and we label its rows by $a, \dots, a+n-1$ rather than by $1, \dots, n$. For example, if $x=\gj \du$ and $M=\GR x$ then $\trunc M1=\GR{\gj_1\du}$; cf.~\cref{ex:ssa_1}. 
\end{definition}

\begin{lemma}
Let $x=\gj\du\in\du\Uj_-$, $M:=\GR{x}$, and $y:=\BAgeo(xP)$. Then for all $a\in\Z$, the space $L_a(y)$ has a basis
\begin{equation}\label{eq:Gr_L_a_basis}
\{t^i\mid i<a\}\sqcup \{P_1(t),\dots,P_k(t)\},\quad \text{where}\quad P_s(t):=\sum_{i=a}^{a+n-1} \truncij Mais t^i\quad\text{for $s\in[k]$}.
\end{equation}
\end{lemma}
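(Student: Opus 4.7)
\bigskip

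\noindent\textbf{Proof proposal.}
The plan is to read off the columns of $\tilde y$ explicitly from \eqref{eq:Snider} and compare them with the claimed spanning set. Using the factorisation $\tilde y = \tilde{\gj_1} \cdot \tilde{\dtul(z)} \cdot \tilde{(\gj_2)^{-1}}$, the two outer factors are combinatorially transparent: $\tilde{\gj_1 \dtul(z)}$ shifts a pivot column by one period while leaving non-pivot columns alone, and $\tilde{(\gj_2)^{-1}}$ is block-diagonal with $n\times n$ lower-unitriangular blocks (since $\gj_2\in\SL_n$ does not depend on~$z$). So the first step is to compute, for $j\in[n]$, the finite Laurent polynomial $y_j(t)=\sum_i\tilde y_{i,j}t^i$: for $j=j_0\in[n]\setminus\uk u$ one gets $y_j(t)=t^j$, while for $j=u(s)\in\uk u$, taking constant terms and $[z^{-1}]$-coefficients gives $y_{u(s)}(t)=-\sum_{i=u(s)+1}^{n}M_{i,s}t^i+\sum_{i=1}^{u(s)}M_{i,s}t^{i+n}$; all other $y_j$ are obtained from these via $y_{j+n}(t)=t^n y_j(t)$.

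\smallskip

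\noindent I would then prove the case $a=1$ directly. The point is that $y_{u(s)-n}(t)=t^{-n}y_{u(s)}(t)$ splits cleanly into a positive-exponent piece equal to $P_s^{(1)}(t)=\sum_{i=1}^{u(s)}M_{i,s}t^i$ and a non-positive-exponent piece $-\sum_{i=u(s)+1}^{n}M_{i,s}t^{i-n}\in\Span\{t^i:i\le 0\}$. Thus modulo $\Span\{t^i:i\le 0\}$, the generators of $L_1(y)$ living in the fundamental domain $[1-n,0]$ are $\{[P_s^{(1)}]:s\in[k]\}$ (the non-pivot $y_{j_0-n}$ giving zero classes). A clean consequence of $M$ being in $\uk u$-echelon form, i.e.\ $M_{u(s'),s}=\delta_{s's}$, is that all ``cross'' pivot contributions vanish: the translates $y_{u(s)-dn}(t)=t^{-(d-1)n}P_s^{(1)}(t)$ for $d\ge 2$ have leading term $t^{u(s)-(d-1)n}$ with \emph{no} other pivot-exponent terms, so a straightforward top-to-bottom triangular argument recovers every $t^i$ with $i\le 0$ inside $L_1(y)$. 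Independence of $\{P_s^{(1)}\}$ follows from the distinct leading monomials $t^{u(s)}$ with coefficient $M_{u(s),s}=1$.

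\smallskip

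\noindent For arbitrary $a\in\Z$, I would argue by induction on $a$ using the one-step recursion $L_{a+1}(y)=L_a(y)+\C\cdot y_a(t)$, matching it with the change of the claimed basis. Write $a\equiv r\pmod n$ with $r\in[n]$. If $r\notin\uk u$ then $y_a(t)=t^a$, the pivots $\pivot{a}{s}$ do not move, and $P_s^{(a+1)}-P_s^{(a)}=-M_{a,s}t^a$ (the row $a+n$ contributes $0$ because it sits strictly above $\pivot{a+1}{s}$); both sides of the claimed equality change by $\C t^a$, and the bookkeeping is immediate. If $r=u(s_0)$ then $\pivot{a+1}{s_0}=\pivot{a}{s_0}+n$ whereas $\pivot{a+1}{s}=\pivot{a}{s}$ for $s\ne s_0$, and the new column $y_a(t)=t^{a-u(s_0)}y_{u(s_0)}(t)$ has support in $(a,a+n]$; using the $M$-periodicity $M_{i+n,s}=(-1)^{k-1}M_{i,s}$ together with the $\uk u$-echelon structure, one checks that $y_a(t)$ coincides with $P_{s_0}^{(a+1)}(t)$ modulo the lower-exponent span $\Span\{t^i:i\le a\}+\Span\{P_s^{(a)}:s\ne s_0\}$, while $P_{s_0}^{(a)}(t)=M_{a,s_0}t^a$ gets ``absorbed'' into $\Span\{t^i:i<a+1\}$. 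Linear independence of the new $P_s^{(a+1)}$'s modulo $\Span\{t^i:i<a+1\}$ is again immediate from their distinct leading monomials $t^{\pivot{a+1}{s}}$ with leading coefficient $M_{\pivot{a+1}{s},s}=\pm 1\ne 0$.

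\smallskip

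\noindent The step I expect to be most delicate is the pivot case of the inductive step: one has to verify a coefficient-by-coefficient algebraic identity between $y_a(t)$ (whose coefficients alternate in sign across the boundary $i=n$ in \eqref{eq:Snider}) and $P_{s_0}^{(a+1)}(t)$ (whose coefficients are the periodised rows of $M$). This amounts to checking that the $-M_{i,s}$ vs.\ $+M_{i,s}$ dichotomy in \eqref{eq:Snider}, combined with the $(-1)^{k-1}$ in the periodicity of $M$ and the truncation rule defining $\trunc{M}{a+1}$, produces matching Laurent polynomials modulo the lower-exponent span. Once this identity is verified, the induction closes and the lemma follows for every $a\in\Z$.
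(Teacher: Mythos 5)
You take an approach that is genuinely different from the paper's. The paper proves the lemma \emph{directly} for every $a$: it first shows $\Span\{t^i\mid i<a\}\subseteq L_a(y)$ using the key observation that for $j\in \uk u+n\Z$ the generator $y_{j-n}(t)$ equals $t^j+\sum_{j-n<i<j}c_it^i$ with $c_i=0$ whenever $i\in\uk u+n\Z$ (so an upward triangular elimination, using the non-pivot columns $y_i(t)=t^i$, produces every $t^i$ with $i<a$); it then notes that the only remaining generators are $y_j$ with $j\in[a-n,a)\cap(\uk u+n\Z)$, and each of these equals $\pm P_s(t)$ modulo $\Span\{t^i\mid i<a\}$; independence is automatic from the distinct top degrees. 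Your proposal instead sets up the base case $a=1$ and then a one-step induction on $a$. Both routes are reasonable, but note that the paper's ``triangular cleaning'' observation is exactly what you are re-deriving in your base case and in your pivot step, and the direct version avoids the bookkeeping of matching $L_{a+1}(y)=L_a(y)+\C\,y_a(t)$ against a shifting basis.

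Two concrete issues in the write-up.

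First, the equality $y_{u(s)-dn}(t)=t^{-(d-1)n}P_s^{(1)}(t)$ for $d\ge 2$ is false: from \eqref{eq:Snider} one has
$y_{u(s)}(t)=-\sum_{i=u(s)+1}^{n}M_{i,s}t^i+\sum_{i=1}^{u(s)}M_{i,s}t^{i+n}$, so
$y_{u(s)-dn}(t)=t^{-dn}y_{u(s)}(t)$ still carries the term $-\sum_{i=u(s)+1}^{n}M_{i,s}t^{i-dn}$, which does not vanish in general. What \emph{is} true, and is all you actually use, is that $y_{u(s)-dn}(t)$ has top term $t^{u(s)-(d-1)n}$ with coefficient $1$, and that its lower coefficients vanish at every pivot exponent $u(s')+n\Z$, $s'\ne s$ (this uses $M_{u(s'),s}=\delta_{s's}$). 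You should state the weaker, correct claim; the false equality would confuse a reader and is unnecessary.

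Second, and more seriously, the sign check you defer in the pivot case of the induction does not close as cleanly as you assume, and this points at a real subtlety. Splitting $y_j(t)$ (for $j=u(s)+mn\in[a-n,a)$) into the two branches of \eqref{eq:Snider}, the coefficient of $t^{i}$ for $i\ge a$ equals $-(-1)^{(k-1)m}M_{i,s}$ on the $-M$ branch and $(-1)^{(k-1)(m+1)}M_{i,s}$ on the $M/z$ branch (using $M_{i+n,s}=(-1)^{k-1}M_{i,s}$). These two signs agree exactly when $(-1)^{k-1}=-1$, i.e.\ when $k$ is even. When $k$ is odd and the exponent range $[a,j+n]$ straddles both branches (which happens whenever $a\le (m+1)n$), no single choice of $\pm$ makes $y_j(t)\pm P_s(t)\in\Span\{t^i\mid i<a\}$. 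A minimal concrete failure: take $n=2$, $k=1$, $u=\id$, $M=\bigl(\begin{smallmatrix}1\\ m\end{smallmatrix}\bigr)$ with $m\ne 0$, and $a=2$. Then $y_1(t)=-mt^2+t^3$ while $P_1(t)=mt^2+t^3$, and neither $y_1\pm P_1$ lies in $\Span\{t^i\mid i<2\}$; in fact $t^2\notin L_2(y)$, so the asserted basis is not even a spanning set of $L_2(y)$ here. So ``once this identity is verified, the induction closes'' is not attainable as stated for $k$ odd; the $(-1)^{k-1}$ periodicity of $M$ (chosen so that the cyclic shift preserves $\Grtnn(k,n)$) is off by a sign $(-1)^k$ from what the loop-group periodicity $\tilde y_{i,j}=\tilde y_{i+n,j+n}$ forces. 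To salvage the step you would need to introduce per-entry sign twists in $P_s(t)$ (equivalently, use $M_{i+n}=-M_i$ inside the definition of $\trunc{M}{a}$ and absorb a compensating $(-1)^{(k-1)m_s}$ into the column). Those twists are irrelevant for the downstream uses (the Snider rank and the nonvanishing of $\trmin{M}{a}{I_a}$, which are invariant under row/column rescalings), but they are needed for the literal basis claim you are proving. You should either make those sign twists explicit, or restate the conclusion in the sign-agnostic form that the later propositions actually need.
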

\begin{proof}
  For a subset $S\subset \Z$, define $S+n\Z:=\{j+in\mid j\in S,\  i\in\Z\}$. The space $L_a(y)$ is the span of $y_j(t)$ for all $j< a$. If $j\notin \uk u+n\Z$ then $y_j(t)=t^j$ by definition. If $j\in \uk u+n\Z$ then $y_{j-n}(t)=t^j+\sum_{j-n<i<j} c_it^i$, where $c_i$ is zero for $i\in \uk u+n\Z$. It follows that $L_a(y)$ contains $t^i$ for all $i<a$. Moreover, the only indices $j< a$ such that $y_{j}(t)\notin \Span\{t^i\mid i<a\}$ are those that belong to $[a-n,a)\cap (\uk u+n\Z)$. Let $j\in[a-n,a)\cap (\uk u+n\Z)$ be such an index, and let  $s\in [k]$ be the unique index such that $u(s)\in j+n\Z$. Then clearly $y_{j}(t)\pm P_s(t)\in \Span\{t^i\mid i<a\}$, where the sign depends on the parity of $\frac{j-u(s)}{n}\in\Z$. Thus $P_s(t)\in L_a(y)$ for all $s\in[k]$, and $L_a(y)$ is the span of $\{t^i\mid i<a\}\sqcup \{P_1(t),\dots,P_k(t)\}$. Since the Laurent polynomials $P_s(t)$ have different degrees, they must be linearly independent.
\end{proof}

We give an alternative proof of \cref{BAgeom1} for the case $G/P=\Gr(k,n)$. 
\begin{proposition}\label{snider}
For $h\in\Boundkn$ such that $\tul\leqop h$, the map $\BAgeom$ gives isomorphisms 
\[\BAgeom:\Cuj\xrasim \Xaff^{\tul},\quad \BAgeom: \Cuj\cap\Povar_h\xrasim \Richaff_h^{\tul}.\]
\end{proposition}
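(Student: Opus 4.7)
My approach is to work directly at the level of $n \times k$ matrices, exploiting the explicit formula~\eqref{eq:Snider} for $y\parz = \BAgeo(xP)$ together with the lattice description~\eqref{eq:Gr_L_a_basis} of $L_a(y)$. The starting point is to compute $A := y \cdot \dtul^{-1}$ directly from~\eqref{eq:Snider}: its entry $(i,j)$ is $\delta_{i,j}$ when $j \notin \uk u$; equal to $M_{i,s}$ when $j = u(s) \in \uk u$ and $i \leq j$; and equal to $-M_{i,s}\,z$ when $j = u(s) \in \uk u$ and $i > j$. Using that $M$ is in $\uk u$-echelon form, one reads off that $A \in \UAP$ is upper unitriangular, so that $y\cdot\BAP \in \BAP \cdot \dtul \cdot \BAP = \Xaff^{\tul}$. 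A parallel check (comparing $\dtul^{-1} A \dtul$ with the defining conditions of $\UAM$) shows moreover that $A \in \U_1(\tul) = \UAP \cap \dtul\, \UAM\, \dtul^{-1}$ under the identification~\eqref{eq:KM_affine_charts}.

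For the first isomorphism, the plan is a direct parameter count. The free parameters on $\Cuj$ are the $k(n-k)$ entries $M_{i,s}$ with $i \in [n] \setminus \uk u$ and $s \in [k]$, and the formula above shows that these correspond bijectively to the free coordinates of $A \in \U_1(\tul)$, the remaining entries of $A$ being fixed as $0$, $1$, or pivot entries of $M$. Since $\ell(\tul) = k(n-k) = \dim \Cuj$ (via~\eqref{eq:tau_length_add}, using $\ell(\tmin) = 0$ and $\ell(\wj) = k(n-k)$) and $\U_1(\tul) \cong \Xaff^{\tul}$ is irreducible of that dimension, the morphism $\BAgeom$ is a closed embedding of irreducible affine varieties of equal dimension, hence an isomorphism.

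For the second isomorphism, I will combine two rank characterizations:~\eqref{eq:Gr_Xaff_rank_cond} gives $y\cdot\BAP \in \Xaff_h$ iff $r_{a,b}(y) = r_{a,b}(h)$ for all $a,b$, while \cref{Gr_rank_conditions} identifies $r_{a,b}(f_M) = k - \rkk Mab$ for all $a \leq b$, where $f_M \in \Boundkn$ is the bounded affine permutation of $M$. It therefore suffices to show $r_{a,b}(y) = k - \rkk Mab$ for every $a \leq b$. Using the basis~\eqref{eq:Gr_L_a_basis}, a direct computation (solving the linear system that enforces vanishing of degrees in $[a,b)$) gives $r_{a,b}(y) = k - \rank((\trunc Ma)|_{[a,b) \times [k]})$ for $a \leq b \leq a + n$; for $b \geq a + n$ both $r_{a,b}(y)$ and $k - \rkk Mab$ vanish, since the $P_s$ have degree strictly less than $a + n$ while the periodic extension of $M$ has full rank $k$ over any window of size $n$. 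The task thus reduces to proving $\rank((\trunc Ma)|_{[a,b) \times [k]}) = \rkk Mab$ for $a \leq b \leq a + n$.

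The main obstacle is this rank identity. The key observation is that $u$-truncation zeros out precisely the entries $M_{i,s}$ with $i > \pivot as$, and these can be eliminated by row operations without affecting any other entry: since $M$ is in $\uk u$-echelon form, the periodically extended row $\pivot as$ has its unique nonzero entry in column $s$, so subtracting a scalar multiple of row $\pivot as$ from any later row $i$ changes only the $(i,s)$-entry. When $\pivot as \in [a,b)$, this elementary operation lives inside the submatrix $M|_{[a,b) \times [k]}$ and preserves its rank; when $\pivot as \geq b$, no truncation occurs within $[a,b)$ in column $s$, so there is nothing to verify. In either case the ranks agree, completing the proof.
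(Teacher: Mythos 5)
Your proof follows essentially the same route as the paper's: you read off from~\eqref{eq:Snider} that $y\dtul^{-1}$ lands in $\U_1(\tul)$, identify $\Cuj$ with $\Xaff^{\tul}$ by a dimension count (the paper instead observes the map $(\gj_1,\gj_2)\mapsto \gj_1\dtul(\gj_2)^{-1}\dtul^{-1}$ is biregular onto $\U_1(\tul)$, but these come to the same thing), then reduce the stratification statement to the identity $r_{a,b}(y) + \rkk Mab = k$ via~\eqref{eq:Gr_Xaff_rank_cond}, \cref{Gr_rank_conditions}, and the lattice basis~\eqref{eq:Gr_L_a_basis}. Your row-operation argument for the rank identity is a welcome expansion of the paper's terse phrase ``obtained by downward row operations.''

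One step is asserted without justification. You write that it ``suffices to show $r_{a,b}(y) = k - \rkk Mab$ for every $a \leq b$,'' but~\eqref{eq:Gr_Xaff_rank_cond} requires $r_{a,b}(y) = r_{a,b}(h)$ for \emph{all} $a,b \in \Z$, including $a > b$, while~\eqref{eq:Gr_rank_conditions} only constrains $a \leq b$. The paper closes this by noting that for $a > b$, both sides satisfy the same recursion $r_{a,b} = r_{a,b+1} + 1$: for $y$ because $t^b \in L_a(y)$ whenever $b < a$ (visible from~\eqref{eq:Gr_L_a_basis}), and for $h$ because $h\in\Boundkn$ forces $h^{-1}(b) \leq b < a$, so exactly one $i < a$ has $h(i) = b$. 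This reduces the biconditional to the range $a\leq b$, and your argument then goes through.
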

\begin{proof}
  It is clear from~\eqref{eq:Snider} that we have a biregular isomorphism $\Uj_1\times \Uj_2\xrasim \U_1(\tul)$ sending $(\gj_1,\gj_2)$ to $\gj_1\cdot \dtul (\gj_2)^{-1}\dtul^{-1}$. Thus the map $(\gj_1,\gj_2)\mapsto \gj_1\cdot \dtul\cdot (\gj_2)^{-1}\cdot \BAP$ gives a parametrization of $\Xaff^{\tul}$; see~\eqref{eq:KM_affine_charts}. Since $\Cuj=\bigsqcup_{h\in\Boundkn}( \Cuj\cap\Povar_h)$, let us fix $h\in\Boundkn$ and $x=\gj\du\in \du\Uj_-$. Define $M:=\GR{x}$ and $y:=\BAgeo(xP)$. By~\eqref{eq:Gr_rank_conditions}, we have $M\in\Povar_h$ if and only if $k-\rkk Mab=r_{a,b}(h)$ for all $a\leq b\in\Z$. By~\eqref{eq:Gr_Xaff_rank_cond}, we have $y\cdot \BAP\in \Xaff_h$ if and only if $r_{a,b}(y)=r_{a,b}(h)$ for all $a, b\in\Z$. If $a>b$ then $r_{a,b}(y)=r_{a,b+1}(y)+1$ by~\eqref{eq:Gr_L_a_basis} and $r_{a,b}(h)=r_{a,b+1}(h)+1$ since $h\in\Boundkn$ satisfies $h^{-1}(b)\leq b$, so $h^{-1}(b)<a$. We have shown that $y\cdot \BAP\in \Xaff_h$ if and only if $r_{a,b}(y)=r_{a,b}(h)$ for all $a\leq b\in\Z$. Thus it suffices to show 
\begin{equation}\label{eq:rab_y_M_need}
r_{a,b}(y)+\rkk Mab=k\quad \text{for all $a\leq b\in\Z$.}
\end{equation}
By~\eqref{eq:Gr_L_a_basis}, $r_{a,b}(y)$ is the dimension of $\Span\{P_1(t),\dots,P_k(t)\}\cap E_b$. By the rank-nullity theorem, $k-r_{a,b}(y)$ is the rank of the submatrix of $\trunc Ma$ with row set $[a,b)$, which is obtained by downward row operations from the submatrix of $M$ with row set $[a,b)$. This shows~\eqref{eq:rab_y_M_need}.
\end{proof}

\begin{remark}
By \cref{BAgeom1}, the image of $\BAgeom$ is $\Xaffcl_{\tmin}\cap \Xaff^{\tul}$, where $\tmin=\tl \wji$. But recall from \cref{sec:positroid-varieties} that $\tl\wji=\tauk$, and since $\Xaff_{\tauk}$ is dense in $\Flaffk$, we find that $\Xaffcl_{\tmin}\cap \Xaff^{\tul}=\Xaff^{\tul}$.
\end{remark}

\begin{example}\label{ex:ssa_3}
Suppose that $x=\gj\du$ is given as in \cref{ex:ssa_1}, so that $y\parz=\BAgeo(xP)$ is the matrix from \cref{ex:ssa_2}. It is clear that $y\parz \in \BAP\cdot \dtul$ regardless of the values of $x_1,x_2,x_3,x_4$, and therefore $y\cdot \BAP$ belongs to $\Xaff^{\tul}$. We can try to factorize $y$ as an element of $\BAM\cdot \dtauk\cdot \BAP$:
\[y\parz =\ssaBm\cdot \ssatauk\cdot \ssaBp.\]
This factorization makes sense only when all denominators on the right-hand side are nonzero, which shows that $y\parz\cdot  \BAP\in \Richaff_{\tauk}^{\tul}$ whenever the minors $\fm_{12}(x)=x_2$, $\fm_{23}=x_1x_4-x_2x_3$, and $\fm_{34}=x_3$ are nonzero.  Observe also that $\fm_{14}(x)=1$.  Thus $y\parz\cdot  \BAP\in \Richaff_{\tauk}^{\tul}$ precisely when $xP\in \Povar_{\tauk}$, where $\tauk=[3,4,5,6]$ in window notation. If  $x_2=0$ then $xP\in \Povar_{h}$ for $h=[2,4,5,7]$. In this case, we have
\[\dh\parz =\ssahx,\quad  y\parz |_{x_2=0}=\ssaBmx\cdot \ssahx\cdot \ssaBpx.\]
Therefore $y\parz|_{x_2=0}$ belongs to $\Richaff_{h}^{\tul}$ whenever $x_1,x_3,x_4\neq0$. Observe that the Grassmann necklace of $h$ is given by $\Ical_h=[\{1,3\},\{2,3\},\{3,4\},\{4,5\}]$ in window notation, and the corresponding flag minors of $x|_{x_2=0}$ are given by $\fm_{13}=x_4$, $\fm_{23}=x_1x_4$, $\fm_{34}=x_3$, and $\fm_{14}=1$, in agreement with \cref{snider}.
\end{example}

\subsection{Preimage of \texorpdfstring{$\Caff_g$}{C\textunderscore g}}\label{sec:Gr_preimage-caff_g}
For this section, we fix $\tul\leqop g\in\Boundkn$. We would like to understand the preimage of $\left(\Xaff^{\tul}\cap \Caff_g \right)\subset\Flaffk$ under the map $\BAgeom$. For a set $S\subset [a,a+n)$ of size $k$, define $\trmin MaS$ to be the determinant of the $k\times k$ submatrix of $\trunc Ma$ with row set $S$. Let $\Ical_g=(I_a)_{a\in\Z}$ be the Grassmann necklace of $g$. 

\begin{proposition}\label{thm:Snider_preimage_Cg}
Suppose that $xP\in\Cuj$ and let $M:=\GR{\gj\du}$. Then $\BAgeom(xP)\in\Caff_g$ if and only if $\trmin Ma{I_a}\neq0$ for all $a\in[n]$.
\end{proposition}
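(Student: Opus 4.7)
The condition $\BAgeom(xP) \in \Caff_g$ rewrites as $\dg^{-1}y \in \BAM \cdot \BAP$, where $y := \BAgeo(xP)$. My plan is to check this by projecting $\Xaff^{\tul}$ to a family of finite-dimensional Grassmannians, one per cyclic window.

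First I would introduce, for each $a \in \Z$, a projection $\pi_a \colon \Xaff^{\tul} \to \Gr(k,n)$ sending $y\BAP$ to the image of the lattice $L_a(y)$ in the quotient $E_a/E_{a+n} \cong \C^n$. By \eqref{eq:Gr_L_a_basis}, when $y = \BAgeo(xP)$ this image is precisely the column span of the truncated matrix $\trunc Ma$, so $\Delta_{I_a}(\pi_a(y\BAP)) = \trmin Ma{I_a}$. Next I would compute $\pi_a(\dg\BAP)$: since $\widetilde{\dg}_{i,j} = \pm\delta_{i,g(j)}$, one has $L_a(\dg) = \Span_\C\{\pm t^{g(j)} : j < a\}$, and modulo $\{t^i : i < a\}$ the surviving exponents are exactly $\{g(j) : j < a,\ g(j) \geq a\} = I_a$ by the definition of the Grassmann necklace \eqref{eq:Gr_Grneck}. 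Hence $\pi_a(\dg\BAP)$ is the coordinate subspace $\C^{I_a}$.

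For the forward direction, I would exploit that every $h_- \in \BAM$ has $\Z\times\Z$-matrix $\widetilde{h_-}$ that is lower triangular with invertible diagonal, while right multiplication by $\BAP$ preserves $L_a$ for every $a$. Writing $\dg^{-1}y = h_-h_+$, so that $L_a(y) = L_a(\dg h_-)$, the right action of $\widetilde{h_-}$ on $\widetilde{\dg}$ is a lower-triangular column operation that, after passing to the quotient $E_a/E_{a+n}$, restricts to an invertible operation on the coordinate basis $\{t^i : i \in I_a\}$ up to shifts along the complementary directions. Consequently $\pi_a(y\BAP)$ lies in the open affine chart $\{V \in \Gr(k,n) : \Delta_{I_a}(V) \ne 0\}$, so $\trmin Ma{I_a} \neq 0$. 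By the $n$-periodicity of $\widetilde{y}$, it suffices to verify the condition for $a \in [n]$.

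The main obstacle will be the converse: showing that the nonvanishing of the $n$ window minors $\{\trmin Ma{I_a}\}_{a \in [n]}$ is enough to detect membership in $\BAM \cdot \BAP$ inside the infinite-dimensional group $\Glnax 0$. My plan here is to produce the Gaussian factorization $\dg^{-1}y = h_-h_+$ by periodic Gaussian elimination, using each $\trmin Ma{I_a}$ as a pivot to clear upper-triangular entries of $\dg^{-1}y$ one $n$-window at a time, with the boundedness of $g \in \Boundkn$ ensuring that the elimination stabilizes cyclically. An equivalent and perhaps cleaner strategy is to identify the complement $\Xaff^{\tul} \setminus \Caff_g$ as a union of $n$ irreducible Schubert-like divisors (one per wall of the Birkhoff cell $\Caff_g$), and match each with the vanishing locus of the pullback of $\Delta_{I_a}$ under $\pi_a$. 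This is the affine analogue of the finite-dimensional characterization $\Gomp = \bigcap_{i \in I}\{\Deltamp_i \ne 0\}$ from \cref{lemma:G_0}, and is the essential technical heart of the argument.
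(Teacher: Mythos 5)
Your projection idea captures the right invariants, and your forward-direction argument is morally sound, but the proposal leaves the converse unproved — and you correctly identify that this is where the real work lies. The paper's proof avoids this asymmetry entirely by invoking the Birkhoff decomposition and the rank-function characterization of opposite Schubert cells, which you do not use. Concretely, the paper lets $h \in \Saffxn0$ be the \emph{unique} element (by the Birkhoff decomposition, see~\eqref{eq:KM_Bruhat_Birkhoff}) with $\dg^{-1}y\B \in \Xaff_h$, observes that $\BAgeom(xP)\in\Caff_g$ is equivalent to $h = \id$, and then reduces this to the single numerical condition $r_{a,a}(\dg^{-1}y) = 0$ for all $a$ via~\eqref{eq:Gr_Xaff_rank_cond}. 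Since $L_a(\dg^{-1}y) = g^{-1}L_a(y)$ and $g \in \Boundkn$ implies $gE_a = \Span(H_a \setminus \{t^j\}_{j \in I_a})$, the dimension count $\dim(L_a(y)\cap gE_a)$ is read off directly from $\trunc Ma$ via~\eqref{eq:Gr_L_a_basis}, and the entire equivalence falls out in one stroke. There is nothing left to prove in the converse direction because the rank invariants are a \emph{complete} invariant of the Birkhoff stratum; you don't need to construct the factorization $h_-h_+$ by hand.

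By contrast, your plan for the converse — ``periodic Gaussian elimination'' or a Schubert-divisor decomposition of the complement — is not carried out, and carrying it out would essentially amount to re-proving the Birkhoff decomposition for $\Glnax0$ or the fact that $\Xaff_\id = \Caff_\id$ is cut out by the $r_{a,a}$ conditions. I also flag a small gap in your forward direction: the claim that right multiplication by $\widetilde{h_-}$ ``restricts to an invertible operation on $\{t^i : i \in I_a\}$ up to shifts along complementary directions'' glosses over the fact that for $j < a$ the column $(\dg h_-)_j$ involves rows $(\dg)_i$ with $i \geq a$ as well; the cleaner statement is that $L_a(h_-) \cap E_a = 0$ because $\widetilde{h_-}$ is lower unitriangular (up to diagonal), so $r_{a,a}(\dg^{-1}y) = r_{a,a}(h_-h_+) = r_{a,a}(h_-) = 0$, which is exactly what the paper computes. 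I suggest you reorganize the proof around~\eqref{eq:Gr_Xaff_rank_cond}; your observation about the image of $L_a(y)$ in $E_a/E_{a+n}$ being the column span of $\trunc Ma$ is then the only computational ingredient you need, and it gives both directions simultaneously.
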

\begin{proof}
 Let $h\in\Saffn$ be the unique element such that $\dg^{-1}\BAgeom(xP)$ belongs to $\Xaff_h$, so that $\BAgeom(xP)\in\Caff_g$ if and only if $h=\id$. Since $\vall{\BAgeo(xP)}=k$ and $\vall{\dg^{-1}}=-k$, we get $h\in\Saffxn0$. Hence $h=\id$ if and only if $r_{a,a}(h)=0$ for all $a\in\Z$. Let $y:=\BAgeo(xP)$ and $y':=\dg^{-1}y$. Then for $a\in\Z$, we get $L_a(y')=g^{-1}L_a(y)$, where $g^{-1}$ acts on $\C[t,t^{-1}]$ as a linear map sending $t^j$ to $t^{g^{-1}(j)}$. In particular, $L_a(y')\cap E_a=(g^{-1}L_a(y))\cap E_a$ has the same dimension as $L_a(y)\cap gE_a$. Let us define $H_a:=\{t^i\mid i\geq a\}$, so $E_a=\Span (H_a)$ and $gE_a=\Span (gH_a)$. Since $g(i)\geq i$ for all $i\in\Z$, it follows from~\eqref{eq:Gr_Grneck} that $gH_a=H_a\setminus \{t^j\}_{j\in I_a}$. Therefore by~\eqref{eq:Gr_L_a_basis}, $L_a(y)\cap gE_a=\{0\}$ if and only if $\Span\{P_j(t)\}_{j\in[k]}\cap \Span \left(H_a\setminus \{t^j\}_{j\in I_a}\right)=\{0\}$, which happens precisely when the submatrix of $\trunc Ma$ with row set $I_a$ is nonsingular, i.e., $\trmin Ma{I_a}\neq0$.
\end{proof}

\begin{example}\label{ex:ssa_4}
Suppose that $x$ is the matrix from \cref{ex:ssa_1}, so that  $y\parz :=\BAgeo(xP)$ is given in \cref{ex:ssa_2}. We have
\[M=\ssaGRx,\quad \trunc M1=\ssatrMA,\quad \trunc M2=\ssatrMB,\quad \trunc M3=\ssatrMC,\quad \trunc M4=\ssatrMD.\]
Suppose that $g=[2,4,5,7]$ as in  \cref{ex:ssa_3}, so that its Grassmann necklace is $\Ical_g=[\{1,3\},\{2,3\},\{3,4\},\{4,5\}]$ in window notation. This gives
\begin{equation}\label{eq:trmin_ex}
\trmin M1{13}=\ssatrminA,\quad \trmin M2{23}=\ssatrminB,\quad \trmin M3{34}=\ssatrminC,\quad \trmin M4{45}=\ssatrminD.
\end{equation}
On the other hand, recall from \cref{ex:ssa_3} that $\dg\parz =\ssahx$. Since $y\parz \in\Caff_g$ if and only if $\dg^{-1}\parz y\parz \in \BAM\cdot \BAP$, we can factorize it as
\begin{equation}\label{eq:ssa_4_gi_yz}
\dg^{-1}\parz  y\parz =\ssagy=\ssaBmg\cdot \ssaBpg.
\end{equation}
Again, this is valid only when the denominators in the right-hand side are nonzero. Thus we see that $\dg^{-1}\parz  y\parz $ belongs to $\BAM\cdot \BAP$ precisely when all minors in~\eqref{eq:trmin_ex} are nonzero, in agreement with \cref{thm:Snider_preimage_Cg}.
\end{example}

\subsection{Fomin--Shapiro atlas}\label{sec:Gr_fomin-shap-proj}
The computation in~\eqref{eq:ssa_4_gi_yz} can now be used to find the maps $\isog$ and $\dilg$. As in \cref{sec:FS_proof}, denote by $\Og\subset \Cuj$ the preimage of $\Caff_g\cap \Xaff^{\tul}$ under $\BAgeom$. Thus for our running example, $\Og$ is the subset of $\Cuj$ where all minors in~\eqref{eq:trmin_ex} are nonzero. We are interested in the map $\isog=(\isoga,\isogb):\Og\to (\Povar_g\cap\Og)\times \scg$ from~\eqref{eq:FSiso_intro}, defined in \cref{sec:FS_proof}. The first component is $\isoga=\BAgeom^{-1}\circ \isogaffa\circ \BAgeom$, where $\isogaff:\Caff_g\cap \Xaff^{\tul}\xrasim  \Richaff_g^{\tul}\times \Xaff^g$ is the map from \cref{FS_iso_biregular}. In order to compute it, we consider the factorization $\dg^{-1}\parz y\parz =y_-\cdot y_+\in \U_-\cdot \BAP$ from~\eqref{eq:ssa_4_gi_yz}. The group $\U_1(g)$ is $1$-dimensional since $\ell(g)=1$, and the corresponding element $y_1\parz \in\U_1(g)$ from \cref{FS_iso_biregular} can be computed by factorizing $\dg\parz  y_-\dg\parz ^{-1}$ as an element of $\U_1(g)\cdot \U_2(g)$:
\[\dg\parz  y_-\dg\parz ^{-1}=\ssagUigi=\ssayAi\cdot \ssaxB,\quad  y_1=\ssayA.\]
Therefore the map $\isogaffa$ sends $y\parz\cdot  \BAP$ from~\eqref{eq:ssa_2} to
\begin{equation*}
y_1y\parz \cdot \BAP=\ssayAbageom\cdot \BAP=\ssayAbageomB\cdot \BAP.
\end{equation*}
Applying $\BAgeom^{-1}$ to the right-hand side, we see that the map $\isoga$ is given by
\[\isoga:\Og\to \Povar_g\cap \Og,\quad \ssaGRx\mapsto \ssapigx.\]
Similarly, factorizing $\dg\parz  y_-\dg\parz ^{-1}$ as an element of $\U_2(g)\cdot \U_1(g)$, we find that 
\[\isogaffb(y\cdot \BAP)=y_2y\cdot \BAP=\ssayBbageomA\cdot \dg\cdot  \BAP.\]
We have $\Ng=\ell(g)=1$, and the map $\isogb:\Og\to \scg=\R$ sends  $\ssaGRx$ to $\frac{x_2}{x_4}$.

\subsubsection{Torus action}\label{sec:Gr_torus-action}
We compute the maps from \cref{sec:KM_torus-action}. Let $\rch\in Y(\T)$ denote the group homomorphism $\rch:\Cast\to \Cast\times T$  sending $\tht$ to $\rch(\tht):=(\tht^n,\diag(\tht^{n-1},\dots,\tht,1))$. If $x\in \Glna$ is represented by a $\Z\times \Z$ matrix $(\xt_{i,j})$ then the element $y:=\rch(\tht) x\rch(\tht)^{-1}\in\Glna$ satisfies $\yt_{i,j}=\tht^{j-i}\xt_{i,j}$ for all $i,j\in\Z$. 

\begin{example}
  Continuing the example above, we find that 
  \[\rch(t) \cdot y_2y\cdot \rch(t)^{-1}\cdot \BAP=\ssayBbageomAt\cdot \dg\cdot  \BAP,\quad\text{and}\quad \normg{y_2y\cdot \BAP}=\frac{|x_2|}{|x_4|}.\]
Thus the action of $\dilg$ on $\scg$ is given by $\dilg\left(t,\frac{x_2}{x_4}\right)=\frac{tx_2}{x_4}$. The pullback of this action to $\Og\subset \Cuj$  via $\isog^{-1}$ preserves $x_3$, $x_4$, and $x_1x_4-x_2x_3$ (since it preserves $\isoga(x)$), but multiplies $\frac{x_2}{x_4}$ by $t$. Therefore it is given by
\[\isog^{-1}\circ \left(\id\times \dilg(t,\cdot)\right)\circ \isog: \Og\to\Og,\quad \ssaGRx\mapsto  \ssaGRxt.\] 
\end{example}

\subsection{\texorpdfstring{The maps \texorpdfstring{$\hjmap$}{kappa} and \texorpdfstring{$\zetamap$}{zeta}}{The maps kappa and zeta}}
The subset $\du\Goj$ consists of matrices $x\in G$ such that $\fm_{\uk u}(x)\neq0$. Suppose that $x=\gj\du\in \du\Uj_-$. Then the elements $\gj_1\du$ and $\gj_2\du$ are obtained from $x$ by setting some entries to zero; see \cref{sec:Gr_affine-charts}. The map $x\mapsto \hjx x$ from \cref{dfn:gj_and_stuff} sends $x=\gj\du$ to $\gj_1\du$, e.g., if $\GR{x}=\ssaGRx$ then $\GR{\hjx x}=\ssaGRgjAu$ as in \cref{ex:ssa_1}. Comparing this to \cref{sec:Gr_preimage-caff_g}, we see that if $M=\GR{x}$ is in $\uk u$-echelon form then $\GR{\hjx x}$ is the $u$-truncation $\trunc M1$. 

Now let $(v,w)\in \QJfilter uu$, so $\tul\leqop g:=f_{v,w}$, and define $\Ical_g:=(I_a)_{a\in\Z}$. The set $\Guvbig$ from~\eqref{eq:Guvbig} consists of $x\in G$ such that $\fm_{\uk u}(x)\neq0$ and $\fm_{\uk v}(\hjx x)\neq0$. But recall from \cref{ex:Gr_neck_v[k]} that $\uk v=I_1$. Thus 
\begin{equation}\label{eq:Gr_Guv}
\Guvbig=\left\{x\in G\mid \fm_{\uk u}(x)\neq0\text{ and } \trmin M1{I_1}\neq0\right\},\quad \text{where $M:=\GR{\gj\du}$.}
\end{equation}

\begin{example}\label{ex:ssa_5}
We compute the maps $\hjmap$ and $\zetamap$ for our running example. Suppose that $x=\gj\du$ is given as in \cref{ex:ssa_1}, and let $g=[2,4,5,7]$ as in \cref{ex:ssa_4}. Then $g=s_2\tauk$, so under the correspondence~\eqref{eq:KLS_f_v_w}, we have $g=f_{v,w}$ for $v=s_2$ and $w=\wj=s_2s_1s_3s_2$; cf.\ \cref{ex:Gr_KLS_f_v_w}. Since $\uk v=I_1=\{1,3\}$, we see that $x\in \Guvbig$ whenever $x_4\neq0$. We have just computed that $\GR{\hjx x}=\ssaGRgjAu$, so $\dv^{-1}\hjx x=\ssavikappax$.  Factorizing the latter as an element of $\Uj_-\cdot L_J\cdot \Uj$ via~\eqref{eq:Gr_bracketsJ}, we get
\[\dv^{-1}\hjx x=\ssavikappax=\ssaetaL\cdot \ssaeta\cdot \ssaetaR,\;\; [\dv^{-1}\hjx x]_J=\ssaeta.\]
Thus we have computed $\Ft(x)=[\dv^{-1}\hjx x]_J$ from \cref{lots_of_maps}. Since $x\in \du\Uj_-$, we use \cref{zeta_inside_uP_-} to find 
\[\zetamap(x)=x\Ft(x)^{-1}=\ssazeta,\quad \text{so}\quad\zetamap(x)\dw^{-1}=\ssazetaw.\] 
Therefore the bottom-right principal minors of $\zetamap(x)\dw^{-1}$ are
\begin{equation}\label{eq:ssa_5_zeta_principal}
\Deltapm_1(\zetamap(x)\dw^{-1})=\ssazetawA,\quad \Deltapm_2(\zetamap(x)\dw^{-1})=\ssazetawB,\quad \Deltapm_3(\zetamap(x)\dw^{-1})=\ssazetawC.
\end{equation}
\end{example}
\noindent By \cref{thm:Snider_preimage_Cg}, the preimage of $\Caff_g$ under $\BAgeom$ is described by  $\trmin Ma{I_a}\neq0$ for all $a\in[n]$. Alternatively, as we showed in \cref{sec:BAgeom3_proof}, the preimage of $\Caff_g$ under $\BAgeom$ is described by  $\Deltapm_i(\zetamap(x)\dw^{-1})\neq0$ for all $i\in[n-1]$. The following result has been computationally checked for all $n\leq 5$, $k\in[n]$, and $(u,u)\leqJ (v,w)\in Q_J$:
\begin{conjecture}\label{conj::Gr_conj_zeta_trunc}
Let $(u,u)\leqJ(v,w)\in Q_J$. Define $g:=f_{v,w}$, and let $\Ical_g=(I_a)_{a\in\Z}$ be its Grassmann necklace. Suppose that $x=\gj\du\in \Guvbig$ and let $M:=\GR{x}$. Then
\begin{equation}\label{eq:Gr_conj_zeta_trunc}
\Deltapm_{n+1-i}(\zetamap(x)\dw^{-1})=\frac{\trmin M{i}{I_{i}}}{\trmin M1{I_1}} \quad \text{for all $i\in[n]$.}
\end{equation}
\end{conjecture}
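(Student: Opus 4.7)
The plan is to establish \cref{conj::Gr_conj_zeta_trunc} by direct matrix computation. First I would reduce to $x = \gj\du \in \du\Uj_-$; then $[\du^{-1}x]_+\pj = 1$, so $\pidup(x) = x$ by \eqref{eq:pidup_explicit} and \cref{pidup_x=x}, giving $\zetamap(x) = x\Ft(x)^{-1}$. Writing $M := \GR{x}$, we have $\GR{\hjx x} = \trunc M 1$ by the discussion preceding \cref{ex:ssa_1}. Applying the block Gaussian decomposition \eqref{eq:Gr_bracketsJ} to $\dv^{-1}\hjx x$ expresses $\Ft(x) = [\dv^{-1}\hjx x]_J$ as a block-diagonal element of $L_J$, the top $k\times k$ block being the submatrix $A$ of $\trunc M 1$ on rows $\uk v = I_1$, which is invertible precisely because $x\in\Guvbig$ by \eqref{eq:Gr_Guv}. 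In particular $\det A = \trmin M 1 {I_1}$.

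Second, I would unpack $\Deltapm_{n+1-i}(\zetamap(x)\dw^{-1}) = \det(x\Ft(x)^{-1}\dw^{-1})_{[i,n],[i,n]}$. Right-multiplication by $\dw^{-1}$ permutes columns by $w$ up to a sign, reducing the calculation to $\pm \det(x\Ft(x)^{-1})_{[i,n],w[i,n]}$. Since $\Ft(x)^{-1}$ is block-diagonal in $L_J$, the Cauchy--Binet formula expands this determinant as an alternating sum over $k$-subsets specifying which of the first $k$ columns of $x$ get contracted with the top block of $\Ft(x)^{-1}$. The block structure together with $\det \Ft(x) = 1$ should collapse the sum to a single surviving term proportional to a determinant of a submatrix of $M$ indexed by a particular set of rows and the columns $[k]$; this determinant is then to be identified with $\trmin M i {I_i}$, and after dividing by $\det A$ we obtain the conjectured ratio.

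The combinatorial heart of the argument is the identification of this surviving subset with the Grassmann necklace set $I_i$. Starting from $g = f_{v,w} = v\tl w^{-1}$ in \eqref{eq:KLS_f_v_w} together with \eqref{eq:Gr_Grneck} and $\lch = 1^k 0^{n-k}$, one can verify that $I_i$ records exactly those rows of $\trunc M i$ whose labels correspond to elements of $w[i,n] \cap \uk v$ once the $u$-echelon normalization has been undone. The main obstacle is the meticulous bookkeeping of three concurrent sources of signs: the $\pm 1$ entries of $\dw$ indexed by the inversions of $w$, the periodicity sign $(-1)^{k-1}$ in $M_{a+n} = (-1)^{k-1} M_a$, and the sign in the Cauchy--Binet/Laplace expansion. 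Since \cref{conj::Gr_conj_zeta_trunc} is a signed equality, these three contributions must cancel exactly. As a fallback I would use a uniqueness argument: both sides of \eqref{eq:Gr_conj_zeta_trunc} are regular functions on $\Guvbig/P$ vanishing on the same divisor (by \cref{thm:Snider_preimage_Cg} combined with the proof of \cref{BAgeom3}), and both are homogeneous under the left $T$-action on $G/P$ of the same weight (via \eqref{eq:torus_conj} and \cref{torus:kappa}); comparing values at a single explicit point of $\Povar_g\cap\Og$ would then pin down the constant and conclude the proof.
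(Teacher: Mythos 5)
This statement is labeled as a \emph{Conjecture} in the paper, and the authors explicitly state that it has only been verified computationally for $n\leq 5$; no proof appears in the paper. There is therefore no proof of record to compare your attempt against — you are attempting something the authors themselves left open.

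As for your sketch: the setup (reduction to $x\in\du\Uj_-$, the identity $\zetamap(x)=x\Ft(x)^{-1}$, and the identification $\det A = \trmin M1{I_1}$) is correct and consistent with \cref{ex:ssa_5}. But the Cauchy--Binet expansion and the claim that a single term survives are not carried out, and you yourself flag the sign bookkeeping and the combinatorial identification of the surviving subset with $I_i$ as incomplete. Those are precisely the parts that would constitute a proof, so this is a plan rather than a proof.

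The fallback uniqueness argument has a more substantive gap. \Cref{thm:Snider_preimage_Cg} and the proof of \cref{BAgeom3} show that the \emph{conjunction} over all $i$ of the conditions $\trmin Mi{I_i}\neq 0$ and the conjunction over all $i$ of the conditions $\Deltapm_{n+1-i}(\zetamap(x)\dw^{-1})\neq 0$ cut out the same open set $\Og$. This does not establish a divisor-by-divisor matching: a priori the $i$-th factor on the left could vanish on the divisor cut out by the $j$-th factor on the right for some nontrivial permutation $i\mapsto j$. You would need an additional argument (e.g.\ identifying which affine Schubert divisor covering $\BAcomb(g)$ each function cuts out, and showing the two labelings agree) before the torus-weight comparison and evaluation at a point could conclude. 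You would also need to know that each $\Deltapm_{n+1-i}(\zetamap(x)\dw^{-1})$ vanishes to order exactly one along its divisor, since otherwise the ratio of the two sides could be a nonunit regular function rather than a constant.

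In short: the approach is plausible and the reductions you make are correct, but the core combinatorial identification and the sign analysis (in the direct route), or the divisor-matching and multiplicity claims (in the fallback route), remain unproven. Given that the authors state the identity only as a conjecture, filling in either of these would be a genuine contribution beyond the paper.
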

\noindent For example, compare~\eqref{eq:ssa_5_zeta_principal} with~\eqref{eq:trmin_ex}. Also recall that when $i=1$, $\Deltapm_n(\zetamap(x)\dw^{-1}):=1$, so in this case~\eqref{eq:Gr_conj_zeta_trunc} holds trivially.

\def\shi{\chi}
\def\dshi{{\ddot{\chi}}}
\subsection{Total positivity}\label{sec:Gr_total-positivity}
We recall some background on the totally nonnegative Grassmannian $\Grtnn(k,n)$ of~\cite{Pos}. By a result of Whitney~\cite{Whitney}, $\Gtnn$ is the set of matrices in $\SL_n(\R)$ all of whose minors (of arbitrary sizes) are nonnegative. We have the following characterizations:
\begin{align}
\label{eq:Gr_GBtnn_minors}
\GBtnn&=\left\{xB\in \GBR\mid \fm_S(x)\geq0\text{ for all }S\subset[n]\right\},\\
\label{eq:Gr_Grtnn_minors}
\Grtnn(k,n)=\GPtnn&=\left\{xP\in \GPR\mid \fm_S(x)\geq0\text{ for all }S\in\textstyle\nchk\right\}.
\end{align}
The equality~\eqref{eq:Gr_Grtnn_minors} is due to Rietsch; see~\cite[Remark~3.8]{Lam16} for a proof. The equality~\eqref{eq:Gr_GBtnn_minors} can be proved using arguments from~\cite{Whitney} (cf.\ the proof of \cref{lemma:Demazure}). We caution the reader that the analogous statement can fail to hold for other choices of $J$. For instance, when $G=\SL_4$ and $J=\{2\}$, $\GPtnn$ does \emph{not} contain all $xP\in \GPR$ such that $\fm_S(x)\geq0$ for all $S\in {[n]\choose1}\cup{[n]\choose3}$; see~\cite[\S 10.1]{Che}.

For $f\in \Boundkn$, we let $\Povtp_f:=\Povar_f\cap \Grtnn(k,n)$ and $\Povtnn_f:=\Povarcl_f\cap \Grtnn(k,n)$. Thus for $(v,w)\in Q_J$, we have $\Povtp_{f_{v,w}}=\PRtp_v^w$ and $\Povtnn_{f_{v,w}}=\PRtnn_v^w$ by \cref{thm:Povar_PR}.

\begin{proposition}\label{prop:truncation}
  Let $\tul\leqop g\leqop h\in\Boundkn$, and let $\Ical_g=(I_a)_{a\in\Z}$ be the Grassmann necklace of $g$. Suppose that a matrix $M$ in $\uk u$-echelon form belongs to $\Povtp_h$. Then
\begin{equation}\label{eq:trmin_>0}
  \trunc Ma\in\Grtnn(k,n)\quad\text{and}\quad\trmin Ma{I_a}>0 \quad\text{for all $a\in\Z$}.
\end{equation}
\end{proposition}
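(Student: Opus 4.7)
The plan is to reduce to the case $a = 1$ using the cyclic symmetry of $\Grtnn(k,n)$, and then apply the affine Bruhat projection machinery of \cref{sec:KWY_TP}.

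The cyclic shift map $\shi : \Grtnn(k,n) \xrasim \Grtnn(k,n)$ sending the matrix with rows $(M_1,\ldots,M_n)$ to the one with rows $(M_2,\ldots,M_n,(-1)^{k-1}M_1)$ is a homeomorphism that permutes positroid cells.  First I would verify that $\shi^{a-1}$ intertwines the $u$-truncation at position $a$ with the $u_a$-truncation at position $1$, where $u_a\in W^J$ satisfies $\uk{u_a}=\{\pivot aj-(a-1):j\in[k]\}$ (after a uniform column-sign normalization that does not change the Grassmannian point).  Under $\shi$ the conditions $\tul\leqop g\leqop h$ transform appropriately to conditions with $u_a$ in place of $u$, and $I_a$ becomes the first Grassmann necklace entry of the shifted $g$.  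This reduces the proposition to the case $a=1$.

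For the case $a=1$: write $M=\GR{x}$ with $x=\gj\du\in\du\Uj_-$, so $\trunc M1=\GR{\gj_1\du}=\GR{\hjx x}$ by \cref{sec:Gr_affine-charts}.  Let $(v_h,w_h)\in Q_J$ correspond to $h$ under \cref{thm:Povar_PR}, so $MP\in\PRtp_{v_h}^{w_h}$.  Using \cref{KLS_pi_J_length_add}, I would lift $MP$ to some $xB\in\Rtp_{v''}^{w''}$ with $v''=v_hr$, $w''=w_hr$ for a suitable $r\in W_J$ such that $v_hr$ is length-additive and $v''\leq ur'$ for some $r'\in W_J$; the existence of such $r$ follows from $(u,u)\leqJ(v_h,w_h)$ via \cref{Q_J_charact}.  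Then \cref{lemma:hj_2_x_cell_r_w} gives $\hjx xB\in\Rtp_{v''}^{ur_w}$ for some $r_w\in W_J$ with $r_w\geq r'$, and \cref{cor:hj_2_x_cell_r_w_proj} yields $\hjx xP\in\PRtp_{\bar v''}^u\subset\GPtnn=\Grtnn(k,n)$ where $\bar v''=v''\triangleleft r_w^{-1}$.  Hence $\trunc M1\in\Grtnn(k,n)$.

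It remains to show strict positivity of $\trmin M1{I_1}=\fm_{I_1}(\hjx x)$, where $I_1=\uk{v_g}$ for $(v_g,w_g)\in Q_J$ corresponding to $g$ (by \cref{ex:Gr_neck_v[k]}).  Since $\hjx xP\in\Grtnn(k,n)$ the minor is nonnegative, so it suffices to show $\uk{v_g}$ is a basis of the positroid matroid of $\PRtp_{\bar v''}^u$.  I would prove this by combining the chain $(u,u)\leqJ(v_g,w_g)\leqJ(v_h,w_h)$ with properties of the Demazure product to deduce $\bar v''\leq v_g$ in Bruhat order, and combine this with $\uk{v_g}\leq\uk u$ componentwise (which follows from $(u,u)\leqJ(v_g,w_g)$) to conclude that $\uk{v_g}$ lies in the matroid of the positroid attached to $(\bar v'',u)$.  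The hard part will be this final combinatorial step, since it requires a clean characterization of positroid matroid bases for pairs $(\bar v'',u)\in Q_J$ not of the form arising directly from a Grassmann necklace.  An alternative route avoiding it would be to establish \cref{conj::Gr_conj_zeta_trunc}, which identifies $\trmin M1{I_1}$ with a principal minor of $\zetamap(x)\dw^{-1}$; the latter is nonvanishing by \cref{thm:zeta}, which combined with nonnegativity would immediately yield strict positivity.  Proving the conjecture is, however, itself substantial.
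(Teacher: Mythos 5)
Your overall architecture matches the paper's: reduce to $a=1$ by the cyclic shift, identify $\trunc M1=\GR{\hjx x}$, apply \cref{cor:hj_2_x_cell_r_w_proj} to get $\hjx xP\in\PRtp_{\bar v''}^u\subset\Grtnn(k,n)$, and then argue that the minor $\fm_{I_1}$ is a matroid basis of the resulting positroid so that nonnegativity upgrades to strict positivity. The detour through \cref{KLS_pi_J_length_add} to lift $MP$ to $xB$ is unnecessary (the paper simply picks $x$ with $xB\in\Rtp_{v'}^{w'}$ directly, which exists because $\PRtp_{v'}^{w'}=\pi_J(\Rtp_{v'}^{w'})$ by definition), but this does no harm.

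The genuine gap is exactly the step you flag: you never prove that $\uk{v_g}$ is a basis of the positroid matroid of $\PRtp_{\bar v''}^u$, and your proposed route (``deduce $\bar v''\leq v_g$ in Bruhat order'' plus a componentwise Gale comparison) is neither clearly correct nor clearly sufficient, while the fallback via \cref{conj::Gr_conj_zeta_trunc} relies on an open conjecture. The paper closes this by a short Demazure-product argument that you should be able to recover from the lemmas already in hand. With $v'\leq vr'\leq ur\leq wr'\leq w'$ from \cref{Q_J_charact} and $r_w\geq r$ from \cref{lemma:hj_2_x_cell_r_w}, one first notes $ur\triangleleft r_w^{-1}=u$ (since $ur\leq ur_w$ forces $ur\triangleleft r_w^{-1}\leq u$ by \cref{x*y:compare}, and the left side clearly lies in $uW_J$). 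Then applying $\triangleleft\,r_w^{-1}$ to the chain $v'\leq vr'\leq ur$ via \cref{x*y:compare} gives $\bar v'\leq (vr'\triangleleft r_w^{-1})\leq u$. Since $vr'\triangleleft r_w^{-1}$ lies in the coset $v_1W_J$ where $v=v_1v_2$ is the parabolic factorization of $v$, this exhibits exactly the chain required by \cref{Q_J_relation} (using \cref{Q_J_omit_length_add} to drop the length-additivity constraint) to conclude $(v_1,v_1)\leqJ(\bar v',u)$, which is precisely the statement that $\uk{v_1}=\uk v=I_1$ is a matroid basis, i.e.\ $\fm_{I_1}>0$ on $\PRtp_{\bar v'}^u$. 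So the ``hard combinatorial step'' you anticipated is in fact a three-line consequence of \cref{lemma:x*y} and \cref{lemma:from_KLS}, and no further characterization of positroid matroids or appeal to the conjecture is needed.
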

\begin{proof}
Applying \cref{thm:Povar_PR}, we have $(u,u)\leqJ(v,w)\leqJ(v',w')\in Q_J$, where $g=f_{v,w}$ and $h=f_{v',w'}$. By~\eqref{Q_J_charact_eqn}, we get $v'\leq vr'\leq ur\leq wr'\leq w'$ for some $r,r'\in W_J$. 

First suppose that $a=1$. Let $x\in G$ be such that $M=\GR{\gj\du}$ and $xP\in \Povtp_h$, and define $M':=\trunc M1$. We may assume that $xB\in \Rtp_{v'}^{w'}$. By \cref{cor:hj_2_x_cell_r_w_proj}, we find that $\hjx xP\in \PRtp_{\bar v'}^u$, where $\bar v':=v'\triangleleft r_w^{-1}$ for some $r_w\in W_J$ satisfying $r_w\geq r$; see \cref{lemma:hj_2_x_cell_r_w}. This shows that $M'\in \Grtnn(k,n)$. Since $ur\leq ur_w$, we find that $ur\triangleleft r_w^{-1}\leq u$ by \cref{x*y:compare}, and therefore $ur\triangleleft r_w^{-1}=u$. Applying ${}\triangleleft r_w^{-1}$ to $v'\leq vr'\leq ur$ via \cref{x*y:compare}, we see that $\bar v'\leq (vr'\triangleleft r_w^{-1})\leq u$. Let $v=v_1v_2$ for $v_1\in W^J$ and $v_2\in W_J$ be the parabolic factorization of $v$. Then $vr'\triangleleft r_w^{-1}\in v_1W_J$, and thus $(v_1,v_1)\leqJ (\bar v',u)\in Q_J$, which is equivalent to $\fm_{\uk{v_1}}(\hjx x)>0$.  From \cref{ex:Gr_neck_v[k]} we have $\uk v=I_1$, and  $\uk{v_1}=\uk{v}$ since $v\in v_1W_J$, so $\trmin M1{I_1} = \fm_{I_1}(\hjx x)>0$. We have shown~\eqref{eq:trmin_>0} for $a=1$.  Applying the cyclic shift $\shi: \Grtnn(k,n)\to \Grtnn(k,n)$ (which takes $M$ to the matrix with rows $(M_{a+1})_{a\in[n]}$), we obtain~\eqref{eq:trmin_>0} for all $a\in\Z$.
\end{proof}
\noindent Note that our proof of \cref{prop:truncation} involves a lifting from $G/P$ to $G/B$, so it does not stay completely inside $\Gr(k,n)$.
\begin{problem}
Give a self-contained proof of \cref{prop:truncation}.
\end{problem}
\begin{example}\label{ex:ssb}
We now consider an example for the case $G/P=\Gr(\ssbk,\ssbn)$. Let $u:=\ssbu\in W^J$, so $\uk u=\ssbuk$. Consider $(v',w')\in Q_J$ given by $v':=\ssbvp$ and $w':=\ssbwp$ as in Figure~\ref{fig:Le}, so that $h:=f_{v',w'}=\ssbh$. We use Marsh--Rietsch parametrizations\footnote{For the Grassmannian case, Marsh--Rietsch parametrizations are closely related to \emph{BCFW bridge parametrizations}; see~\cite{BCFW,ABCGPT,KarDeo}.} from \cref{sec:mr-param-gbtnn} to compute $x\in G$ such that $xB\in \Rtp_{v'}^{w'}$ and $xP\in \Povtp_h$:
\[x:=\ssbdeovpwp=\ssbx,\quad M:=\GR{\gj\du}=\ssbGRgju,\]
where $\t=(t_1,t_3,t_4,t_5)\in \R_{>0}^4$. Observe that $xB\in \GBtnn$ since all flag minors of $x$ are nonnegative. (For instance, the first column of $x$ consists of nonnegative entries.) In fact, flag minors of $x$ are subtraction-free rational expressions in $\t$; cf.~\eqref{eq:lemma_01_sf}. The $n\times k$ matrix $\GR{x}$ is \emph{not} in $\uk u$-echelon form, but the matrix $M:=\GR{\gj\du}$ is. Up to a common scalar, the $2\times 2$ flag minors of $M$ are the same as the corresponding flag minors of $x$; however, other (i.e., $1\times 1$) flag minors of $M$ are not necessarily nonnegative. The  Grassmann necklace of $h$ is $\Ical_h=\ssbhneck$. Using \cref{Gr_Grneck_charact}, we check that indeed  $xP\in\Povtp_h$.

Let us choose $(v,w)\in Q_J$ with $v:=\ssbv$, $w:=\ssbw$, so that $g:=f_{v,w}=\ssbg$. The corresponding \Le -diagram is obtained from the one in Figure~\ref{fig:Le} (bottom left) by removing the dot in the bottom row. We have $(u,u)\leqJ (v,w)\leqJ (v',w')$ and $\tul\leqop g\leqop h$.  We compute the elements $\hjx=\hj_2\in\Uj_2$, $\pidup(x)$, $\Ft(x)$, and $\zetamap(x)=\pidup(x)\cdot \Ft(x)^{-1}$ from \cref{lots_of_maps}:
\begin{align*}
\gj\du&=\ssbgju, &\hjx&=\ssbhjx, &\hjx x&=\ssbhjxx,\\
\pidup(x)&=\ssbpidup, &\Ft(x)&=\ssbeta, &\zetamap(x)&=\ssbzeta.
\end{align*}
We see that all flag minors of $\hjx x$ are nonnegative; cf. \cref{lemma:hj_2_x_cell_r_w}. Observe that $\hjmp_{\gj\du}=\hjx$ by \cref{hjmap_properties}, so by \cref{zeta_inside_uP_-}, we could alternatively compute $\zetamap(x)$ as the product $\gj\du\cdot \Ft(\gj\du)^{-1}$:
\[\Ft(\gj\du)=\ssbetaB,\quad \zetamap(x)=\gj\du\cdot \Ft(\gj\du)^{-1}=\ssbgju\cdot \ssbetaBi.\]
Finally, we compute the bottom-right $i\times i$ principal minors of $\zetamap(x)\dw^{-1}$ and observe that they are all nonzero subtraction-free expressions in $\t$, agreeing with \cref{thm:zeta,thm:sf_generic_zetamap}:
\def\parzetadwi{(\zetamap(x)\dw^{-1})}
\[\zetamap(x)\dw^{-1}=\ssbzetaw,\quad \text{\begin{tabular}{lll}
$\Deltapm_1\parzetadwi=\ssbzetawA$,&& $\Deltapm_2\parzetadwi=\ssbzetawB$,\\ 
$\Deltapm_3\parzetadwi=\ssbzetawC$,&& $\Deltapm_4\parzetadwi=\ssbzetawD$.
                                      \end{tabular}}\]
Let us check that this agrees with \cref{conj::Gr_conj_zeta_trunc}. The Grassmann necklace of $g$ is $\Ical_g=\ssbgneck$ in window notation. We see that the corresponding $u$-truncated minors of $M=\GR{\gj\du}$ are indeed given by
\[\trmin M1{\ssbgAneck}=\ssbtrminA,\quad \trmin M2{\ssbgBneck}=\ssbtrminB,\quad \trmin M3{\ssbgCneck}=\ssbtrminC,\quad \trmin M4{\ssbgDneck}=\ssbtrminD,\quad \trmin M5{\ssbgEneck}=\ssbtrminE.\]

\end{example}


\def\Ummin{\Umin_-}
\def\Bmmin{\Bmin_-}
\def\Tmin{\T^\minn}
\def\Pmin{{\Pcal^\minn}}
\def\Umtnn{\U^-_{\geq0}}

\section{Further directions}\label{sec:further}
In addition to \cref{thm:main_intro} and Hersh's result \cite{Her} (cf.~\cref{cor:Her}), we expect the regularity theorem to hold for many other spaces occurring in total positivity. The most natural immediate direction is total positivity for Kac--Moody flag varieties.

Let $\Gmin$ be a minimal Kac--Moody group, $\Umin,\Ummin,\Bmin,\Bmmin$ be unipotent and Borel subgroups, and $\Waff$ be the Weyl group as in \cref{sec:KM}.  Furthermore, let $\Pmin \supset \Bmin$ denote a standard parabolic subgroup of $\Gmin$ (a group of the form $\Gmin \cap \Pcal_Y$ in the notation of \cite{Kum}).  

\begin{definition}\label{defn:TNN}
Define the \emph{totally nonnegative part} $\Umtnn$ of $\Ummin$ to be the subsemigroup generated by $\{x_{\alpha_i}(t)\mid t\in\R_{>0},\ 1\leq i\leq r\}$. 
Define the \emph{totally nonnegative part} of the flag variety $\Gmin/\Pmin$ to be the closure $(\Gmin/\Pmin)_{\geq 0} :=\overline{\Umtnn \Pmin/\Pmin}$.
\end{definition}
\noindent We remark that our notion of $\Umtnn$ coincides with the one studied recently  by Lusztig~\cite{Lus2018,Lus2019}  in the simply laced case. 

When $\Gmin$ is an affine Kac--Moody group of type $A$, Definition \ref{defn:TNN} agrees with the definition of Lam and Pylyavskyy (cf.~\cite[Theorem 2.6]{LP}) for the polynomial loop group.

\begin{conjecture}[Regularity conjecture for Kac--Moody groups and flag varieties]\label{conj:KM} \leavevmode
\begin{enumerate}
\item\label{conj:KM_G}
The intersection of $\Umtnn$ with the Bruhat stratification $\{\Bmin \dot w \Bmin \mid w \in \Waff\}$ of $\Gmin$ endows $\Umtnn$ with an (infinite) cell decomposition with closure partial order equal to the Bruhat order of $\Waff$.   Furthermore, the link of the identity in any (closed) cell is a regular CW complex homeomorphic to a closed ball.
\item\label{conj:KM_B}
The intersection of $(\Gmin/\Bmin)_{\geq 0}$ with the open Richardson stratification $\Richaff_u^v$ of $\Gmin/\Bmin$ endows $(\Gmin/\Bmin)_{\geq 0}$ with the structure of a regular CW complex.  The closure partial order is the interval order of the Bruhat order of $\Waff$, and after adding a minimum, every interval of the closure partial order is thin and shellable.
\item\label{conj:KM_P}
The intersection of $(\Gmin/\Pmin)_{\geq 0}$ with the open projected Richardson stratification $\pcvar {v,w}$ of $\Gmin/\Pmin$ endows $(\Gmin/\Pmin)_{\geq 0}$ with the structure of a regular CW complex.  The closure partial order is the natural partial order on $\Pcal$-Bruhat intervals of $\Waff$, and after adding a minimum, every interval of the closure partial order is thin and shellable.
\end{enumerate}
\end{conjecture}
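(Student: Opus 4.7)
The plan is to reduce each part of the conjecture to the abstract framework of \cref{sec:overview,sec:topological-results}: construct a Fomin--Shapiro atlas for the appropriate Kac--Moody projected Richardson stratification and apply \cref{thm:FS_intro}. Since $\Gmin/\Pmin$ is an infinite-dimensional ind-variety, one works interval-by-interval. Given a $\Pcal$-Bruhat interval $[(u,u),(v',w')]$ in the Kac--Moody analog of $\Qkls$, let $\Ycal$ be a finite-dimensional open subset of $\Gmin/\Pmin$ containing $\overline{\pctnn{(v',w')}}$; one verifies that $(\Ycal,\overline{\pctnn{(v',w')}},[(u,u),(v',w')])$ is a TNN space in the sense of \cref{dfn:TNNspace} and admits a Fomin--Shapiro atlas, at which point \cref{thm:FS_intro} gives regularity on this closed cell. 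Gluing over all intervals then yields regularity of the entire infinite cell decomposition, since the cell decomposition is locally finite.

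Checking the TNN axioms amounts to extending known $G/P$ results to the Kac--Moody setting. Axiom~\axref{TNN:thin_shell} --- shellability, thinness, and gradedness of $\Pcal$-Bruhat intervals --- should follow from an adaptation of Williams~\cite{Wil}, whose argument is essentially Coxeter-theoretic and therefore applies to $\Waff$. Axioms~\axref{TNN:smooth} and~\axref{TNN:Ytp_cl} follow from the Kac--Moody generalization of the Knutson--Lam--Speyer stratification theory of projected Richardson varieties. Axioms~\axref{TNN:Ytp} and the existence of smooth parametrizations $\R^{d}_{>0}\to \pc{g}$ are consequences of the Marsh--Rietsch parametrization, which is purely Coxeter-combinatorial and extends verbatim. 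The compactness axiom~\axref{TNN:Ytnn_compact}, that each finite-dimensional closure $\overline{\pc{g}}$ is compact, should follow from a direct semialgebraic argument using the extended Marsh--Rietsch parametrization, along the lines of~\cite{RW08}.

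The hardest step, exactly as in the present paper, is constructing the Fomin--Shapiro atlas, which requires a Kac--Moody analog of the affine Bruhat atlas of \cref{sec:bruh-atlas-proj}. The natural target for the lifted map $\BAgeom$ is the flag variety of a \emph{double} Kac--Moody group $\hat{\Gcal}$ --- heuristically the loop group of $\Gmin$ --- into which $\Gmin/\Pmin$ embeds combinatorially via a generalized map $\BAcomb$ in the spirit of He--Lam~\cite{HL}, and geometrically via a formula analogous to~\eqref{eq:BA_geo}. Once this is in place, the Fomin--Shapiro atlas is extracted from the double-affine analog of \cref{prop:FS_iso} via the same conjugation trick used in \cref{sec:FS_proof}. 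The principal obstacle, and the technical heart of any proof, is establishing a Kac--Moody analog of \cref{thm:zeta}: that suitable generalized minors of $\zetamap(x)$ are nonvanishing for $x\in \Startnn_g$. The argument of \cref{sec:KWY_TP} rests on the subtraction-free calculus of \cref{sec:subtraction-free}, which in turn relies on the classical Gaussian decomposition $\G_0=\U_-\T\U$ and on the finite-dimensional representation theory used in the proof of \cref{gen_minors_sf} (highest-weight vectors, projection onto a Weyl orbit of weight spaces, etc.). In the double Kac--Moody setting both the Gaussian decomposition and the construction of generalized minors become substantially more subtle because of imaginary roots, the failure of local finiteness, and the absence of finite-dimensional highest-weight modules; reconstructing the subtraction-free calculus in this context is where most of the technical difficulty will lie.

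Finally, part~(1) of the conjecture, concerning the link of the identity in $\Umtnn$, should follow from part~(2) in exact parallel with \cref{rmk:U}. The inclusion $\Ummin \hookrightarrow \Gmin/\Bmin$ sends the Bruhat cell $\Umtnn \cap \Bmin \dw \Bmmin$ into the opposite Schubert cell indexed by $\id\in \Waff$, and the composition with the Fomin--Shapiro map $\isog$ at $g=(\id,\id)$ identifies the link of the identity in the closure of $(\Umtnn)_w$ with the link $\Lktnn_{(\id,\id),(\id,w)}$ inside $(\Gmin/\Bmin)_{\geq 0}$. Once part~(2) is established, \cref{thm:Lk_ball} then immediately delivers the regular CW ball structure.
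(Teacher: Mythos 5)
This statement is a \emph{conjecture} in the paper (``Regularity conjecture for Kac--Moody groups and flag varieties''), stated without proof in the final section on further directions; there is therefore no proof of it in the paper for your attempt to be compared against. Your proposal is a reasonable roadmap that mirrors the structure of the paper's argument for the finite-dimensional $G/P$ case, and you correctly identify the likely shape of a future proof. But it is not a proof: several essential steps are deferred as open problems rather than established, so the proposal cannot be judged correct.

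To be concrete about the gaps you flag but do not fill. First, the poset-theoretic hypotheses of \axref{TNN:thin_shell} --- thinness and shellability of intervals in the $\Pcal$-Bruhat order $\leqkls$ on $\Waff$ after adjoining a minimum --- are not known in Kac--Moody generality; the paper notes that the Bruhat order itself is thin and shellable by~\cite{BW}, but the interval order needed for part~(2) and the $\Pcal$-Bruhat order needed for part~(3) are genuinely different posets, and Williams's shellability argument in~\cite{Wil} makes essential use of finite $W$. Second, your target for the Kac--Moody Bruhat atlas, a ``double Kac--Moody'' flag variety, is a heuristic object: there is no established theory of Kac--Moody groups over Laurent series in the literature with the structure (Gaussian decomposition, opposite Iwahoris, Bruhat/Birkhoff decompositions, generalized minors as highest-weight matrix coefficients) that the paper's argument requires. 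The proof of \cref{gen_minors_sf} uses finite-dimensional highest-weight modules $V_{\omega_i}$, which is already problematic once for affine $\G$; for a double loop group there is no candidate replacement. Third, and as you correctly emphasize, the analog of \cref{thm:zeta} --- the non-vanishing result that makes \axref{FS:Startnn} work --- is the technical heart of the whole construction, and the subtraction-free calculus of \cref{sec:subtraction-free} on which it rests cannot be ``adapted'' without rebuilding its foundations in a setting where local finiteness, conjugation relations like \cref{commute_alpha_beta}, and the results of \cref{sec:gp:-basic} no longer hold in the same form. Your closing reduction of part~(1) to part~(2) via the analog of \cref{rmk:U} is the most solid piece of the proposal, but even that is conditional on part~(2). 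In short, the proposal describes the likely architecture of an eventual proof but does not constitute one, and it should be framed as a research program rather than a verification.
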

\noindent Note that every interval in the Bruhat order of $\Waff$ is known to be thin and shellable \cite{BW}.  The stratification $\pcvar{v,w}$ and the $\Pcal$-Bruhat order can be defined analogously to \cite{KLS}.

We include a list of some other spaces occurring in total positivity which we expect to have a natural regular CW complex structure. 
\begin{enumerate}
\item The {\it totally nonnegative part of double Bruhat cells} \cite{FZ}.  It has been expected that a link of a double Bruhat cell inside another double Bruhat cell is a regular CW complex homeomorphic to a closed ball.  Our \cref{thm:Lk_ball} confirms this in type $A$, since double Bruhat cells for $\GL_n$ embed in the Grassmannian $\Gr(n,2n)$; see \cite[Remark 3.11]{Pos}.
\item The {\it compactified space of planar electrical networks} \cite{Lam} and the {\it space of boundary correlations of planar Ising models} \cite[Conjecture~9.1]{Ising}.  These spaces are known to be homeomorphic to closed balls \cite{GKL, Ising}, and have cell decompositions \cite{Lam,Ising} whose face poset is graded, thin, and shellable \cite{HK}.
\item {\it Amplituhedra} \cite{AT} and, more generally, {\it Grassmann polytopes} \cite{Lam16}.  Grassmann polytopes generalize convex polytopes into the Grassmannian $\Gr(k,n)$. The former are well known to be regular CW complexes homeomorphic to closed balls.  Some amplituhedra and Grassmann polytopes have been shown to be homeomorphic to closed balls in \cite{KW19,GKL,BGPZ}, though we caution that not all Grassmann polytopes are balls. 
\item The {\it totally nonnegative part of the wonderful compactification} of a semisimple algebraic group \cite{He2}.  A cell decomposition of this space was constructed in \cite{He2}.
\end{enumerate}
\noindent We expect that most spaces in this list are (complexes of) shellable TNN spaces that admit a Fomin--Shapiro atlas.

Finally, let us mention the analogy between totally nonnegative spaces
and Teichm\"uller space \cite{FoGo,Guichard, GuWi,Labourie}.  Thurston's compactification of the
Teichm\"uller space of a compact surface of genus $g\ge 2$ is
homeomorphic to a closed ball of dimension $6g-6$  \cite{Thu}, a
result that could be compared to \cref{thm:main_intro}.

\appendix

\section{Kac--Moody flag varieties}\label{sec:KM}
We recall some background on Kac--Moody groups, and refer to~\cite{Kum} for all missing definitions. We start by introducing the minimal Kac--Moody group $\Gmin$ and its flag variety $\Gmin/\Bmin$, and then explain how they relate to the polynomial loop group $\G$ and its flag variety $\G/\B$ from \cref{sec:kac-moody-groups}.  
\subsection{Kac--Moody Lie algebras}\label{sec:KM_LA}
Suppose that $\GCMaff$ is a \emph{generalized Cartan matrix}~\cite[Definition~1.1.1]{Kum}. Thus $\GCMaff$ is an $r\times r$ integer matrix for some $r\geq1$. We assume $\GCMaff$ is \emph{symmetrizable}, that is, there exists a diagonal matrix $D\in\GL_{r}(\Q)$ such that $D\GCMaff$ is a symmetric matrix.  As in~\cite[\S1.1]{Kum}, denote by $\gfr$ the \emph{Kac--Moody Lie algebra} associated to $\GCMaff$, and let $\hfr\subset \gfr$ be its \emph{Cartan subalgebra}, whose dual is denoted by $\hfrast$. Thus $\hfr$ and $\hfrast$ are vector spaces over $\C$ of dimension $\rt:=2r-\rank(\GCMaff)$, and we let $\<\cdot,\cdot\>:\hfr\times \hfrast\to \C$ denote the natural pairing.

We let $\Phiaff\subset \hfrast$ denote the \emph{root system} of $\gfr$, as defined in~\cite[\S1.2]{Kum}. Let $\{\alpha_i\}_{i=1}^r\subset \hfrast$ be the \emph{simple roots} and $\{\alphacheck_i\}_{i=1}^r\subset\hfr$ be the \emph{simple coroots}.

Let $\Phire\subset \Phiaff$ denote the set of \emph{real roots} and $\Phiim\subset \Phiaff$ denote the set of \emph{imaginary roots}, so $\Phiaff=\Phire\sqcup \Phiim$. Also let $\Phiaff=\Phiaff^+\sqcup \Phiaff^-$ denote the decomposition of $\Phiaff$ into \emph{positive} and \emph{negative} roots, and denote $\Phire^+:=\Phiaff^+\cap \Phire$ and $\Phire^-:=\Phire\cap \Phiaff^-$. Denote by $\Waff$ the \emph{Weyl group} associated to $\GCMaff$ as in~\cite[\S1.3]{Kum}. Thus $\Waff$ acts on $\Phiaff$, and preserves the subset $\Phire$. Moreover, $\Waff$ is generated by \emph{simple reflections} $s_1,\dots,s_r\in \Waff$, and $(\Waff,\{s_i\}_{i=1}^r)$ is a Coxeter group by~\cite[Proposition~1.3.21]{Kum}. We let $(\Waff, \leq)$ denote the \emph{Bruhat order} on $\Waff$ and $\ell:\Waff\to \Z_{\geq0}$ denote the length function.

 \subsection{Kac--Moody groups}\label{sec:KMgroups}
 Let $\Gmin$ be the \emph{minimal Kac--Moody group} associated to $\GCMaff$ by Kac and Peterson~\cite{KacP,PKac}; see~\cite[\S7.4]{Kum}. For each real root $\alpha\in\Phire$, there is a one-parameter subgroup $\U_\alpha\subset \Gmin$ by~\cite[Definition~6.2.7]{Kum}.\footnote{The results in~\cite{Kum} are usually stated for the \emph{maximal Kac--Moody group} which he denotes by $\G$. However, these results apply to $\Gmin$ as well; see \cref{rmk:Gmin_vs_hG}.} For each $\alpha\in\Phire$, we fix an isomorphism $x_\alpha:\C\xrasim \U_\alpha$ of algebraic groups. Similarly to the subgroups $U,U_-,T,B,B_-$ of $G$, we have subgroups $\Umin,\Ummin,\Tmin,\Bmin,\Bmmin$ of $\Gmin$. The subgroup $\Umin$ is generated by $\{\U_\alpha\}_{\alpha\in \Phire^+}$, and $\Ummin$ is generated by $\{\U_\alpha\}_{\alpha\in \Phire^-}$. Next, $\Tmin$ is an $\rt$-dimensional algebraic torus defined in~\cite[\S6.1.6]{Kum}, $\Bmin=\Tmin\ltimes \Umin$ is the \emph{standard positive Borel subgroup} and $\Bmmin=\Tmin\ltimes \Ummin$ is the  \emph{standard negative Borel subgroup}. 

 We define a \emph{bracket closed subset} $\Theta\subset \Phire$ in the same way as in \cref{sec:subgroups-u}, and for a bracket closed subset $\Theta\subset \Phire^+$ (respectively, $\Theta\subset \Phire^-$), we have a subgroup $\U(\Theta)\subset \Umin$ (respectively, $\U_-(\Theta)\subset \Ummin$), generated by $\U_\alpha$ for $\alpha\in\Theta$; see~\cite[6.1.1(6) and \S6.2.7]{Kum}. For $w\in \Waff$,  $\Inv(w):=\Phiaff^+\cap w^{-1}\Phiaff^-\subset \Phire^+$ is a bracket closed subset of size $\ell(w)$; cf.~\cite[Example~6.1.5(b)]{Kum}. We state the Kac--Moody analog of \cref{U(R)_generated}.
 \begin{lemma}[{\cite[Lemma~6.1.4]{Kum}}]\label{KM_U(R)_generated}
Suppose that  $\Theta=\bigsqcup_{i=1}^n \Theta_i$ and $\Theta,\Theta_1,\dots,\Theta_n\subset \Phire^+$ are finite bracket closed subsets. Then $\U(\Theta),\U(\Theta_1), \dots, \U(\Theta_n)$ are  finite-dimensional unipotent algebraic groups, and the multiplication map gives a biregular isomorphism
\begin{equation}\label{eq:KM_U(R)_generated}
  \U(\Theta_1)\times\dots\times \U(\Theta_n)\xrasim\U(\Theta).
\end{equation}
\end{lemma}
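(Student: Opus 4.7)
The plan is to reduce the statement to standard facts about unipotent subgroups of Kac--Moody groups (essentially [Kum, Lemma~6.1.4]), organized as follows. The first step is to check that for any finite bracket closed $\Theta\subset \Phire^+$, the group $\U(\Theta)$ is a finite-dimensional unipotent algebraic group of dimension $|\Theta|$. One does this by working at the Lie algebra level: the subspace $\gfr(\Theta):=\bigoplus_{\alpha\in\Theta}\gfr_\alpha$ is a finite-dimensional nilpotent Lie subalgebra of $\gfr$, finite-dimensional because $\Theta$ is finite, a subalgebra because $\Theta$ is bracket closed (noting that any $\alpha+\beta$ obtained by bracketing root spaces indexed by real roots lies in $\Phiaff$, and hence by bracket closedness in $\Theta$), and nilpotent because $\Theta\subset \Phire^+$ and positive real roots cannot sum indefinitely inside the height-graded $\bigoplus_{\alpha\in\Phiaff^+}\gfr_\alpha$. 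The exponential map then realizes $\U(\Theta)$ as an affine variety of dimension $|\Theta|$, with the one-parameter subgroups $\U_\alpha$ giving natural coordinates.

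The second step is the case $n=2$. Here one fixes a convex ordering of $\Theta$ that places all of $\Theta_1$ before $\Theta_2$ (or vice versa); this is possible because any finite bracket closed subset of $\Phire^+$ admits a convex ordering and both parts are themselves bracket closed. Using such an ordering together with the commutator relations among the $\U_\alpha$'s (which, by bracket closedness of $\Theta$, stay inside $\U(\Theta)$), one shows that every element of $\U(\Theta)$ can be written uniquely as a product from $\U(\Theta_1)\cdot\U(\Theta_2)$. The uniqueness and biregularity follow from a PBW-type argument on $\gfr(\Theta)=\gfr(\Theta_1)\oplus\gfr(\Theta_2)$ transferred to the group via the Campbell--Baker--Hausdorff formula, which converges because $\gfr(\Theta)$ is nilpotent. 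Thus the multiplication map $\U(\Theta_1)\times \U(\Theta_2)\to \U(\Theta)$ is bijective and both it and its inverse are polynomial in the chosen coordinates, giving a biregular isomorphism.

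The third step is a straightforward induction on $n$: apply the $n=2$ case to $\Theta_1'=\Theta_1\sqcup\cdots\sqcup\Theta_{n-1}$ and $\Theta_n$ (noting $\Theta_1'$ is bracket closed as the complement in $\Theta$ of the bracket closed $\Theta_n$), then apply the inductive hypothesis to decompose $\U(\Theta_1')$.

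The main obstacle is the $n=2$ step and, within it, the biregularity rather than the set-theoretic bijection. Establishing biregularity requires either invoking the functorial construction of $\U(\Theta)$ as a pro-unipotent group scheme together with a careful analysis of its coordinate ring, or explicitly writing down polynomial formulas for the factorization $u=u_1u_2$ using a convex order. Both routes are carried out in detail in [Kum, \S 6.1]; in practice the cleanest option is to cite Kumar's Lemma~6.1.4 directly, after observing that our hypothesis of bracket closedness for subsets of $\Phire^+$ matches the hypothesis used there.
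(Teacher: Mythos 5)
The paper does not prove this lemma; it simply cites \cite[Lemma~6.1.4]{Kum}, which is precisely where your sketch lands. Your outline is a faithful summary of why Kumar's result holds, and it agrees in spirit with the underlying argument (Lie-algebra decomposition, PBW/CBH in the nilpotent case, then biregularity from a convex ordering), so there is no conflict with the paper's route.

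One point in your Step~1 deserves to be made explicit, because it is hiding the crucial hypothesis. When you assert that $\gfr(\Theta)$ is a subalgebra because $[\gfr_\alpha,\gfr_\beta]\neq 0$ forces $\alpha+\beta\in\Phiaff$ and ``hence by bracket closedness $\alpha+\beta\in\Theta$,'' you are implicitly using the definition of bracket closed in which a nonzero sum landing in $\Phiaff$ (imaginary roots included) must lie in $\Theta$. Since $\Theta\subset\Phire^+$, this is what \emph{forces} $\alpha+\beta$ to never be imaginary; otherwise the conclusion $\alpha+\beta\in\Theta$ would be vacuously impossible. If one instead read bracket closed as constraining only sums that land in $\Phire$ (a plausible but incorrect reading of ``in the same way as for $\Phi$''), the lemma is genuinely false: for affine $\SL_2$ the set $\Theta=\{\alpha_0,\alpha_1\}$ has $\alpha_0+\alpha_1=\delta\in\Phiim$, satisfies the weaker closure condition, yet $\U(\Theta)$ contains both simple root groups and hence is all of $\U$, which is infinite-dimensional. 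So the subtlety is not a flaw in your reasoning so much as a point that needs to be called out explicitly, and it is exactly the kind of thing one must check against Kumar's precise definitions rather than reconstruct from the finite-type analogue. Your nilpotency argument is also fine once this case is excluded, since $\Theta$ is finite and iterated brackets strictly increase height.
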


\subsection{Kac--Moody flag varieties}
The Weyl group $\Waff$ equals $N_{\Gmin}(\Tmin)/\Tmin$ , where $N_{\Gmin}(\Tmin)$ is the normalizer of $\Tmin$ in $\Gmin$; cf.~\cite[Lemma~7.4.2]{Kum}. For $f\in\Waff$, we denote by $\df\in\Gmin$ an arbitrary representative of $f$ in $N_{\Gmin}(\Tmin)$.

By~\cite[Lemma~7.4.2, Exercise~7.4.E(9), and Theorem~5.2.3(g)]{Kum}, we have Bruhat and Birkhoff decompositions of $\Gmin$:
\begin{equation}\label{eq:KM_Bruhat_Birkhoff}
  \Gmin=\bigsqcup_{f\in \Waff} \Bmin \df\Bmin,\quad   \Gmin=\bigsqcup_{h\in \Waff} \Bmmin \dh\Bmin.
\end{equation}
We let $\Gmin/\Bmin$ denote the \emph{Kac--Moody flag variety} of $\Gmin$. 
For each $h,f\in \Waff$, we have Schubert cells $\Xaff^f:=\Bmin \df\Bmin/\Bmin$ and opposite Schubert cells $\Xaff_h:=\Bmmin\dh\Bmin/\Bmin$.
If $h\not\leq f\in\Waff$ then by~\cite[Lemma~7.1.22(b)]{Kum}, $\Xaff_h\cap \Xaff^f=\emptyset$. For $h\leq f$, we define $\Richaff_h^f:=\Xaff_h\cap \Xaff^f$. Therefore~\eqref{eq:affine_Xaff_sqcup_Raff}  follows from~\eqref{eq:KM_Bruhat_Birkhoff}. The flag variety $\Gmin/\Bmin$ is a \emph{projective ind-variety} by~\cite[\S7.1]{Kum}. The Schubert cell $\Xaff^f$ and Schubert variety $\Xaffcl^f$ are finite-dimensional subvarieties, while the opposite Schubert cell $\Xaff_h$ and opposite Schubert variety $\Xaffcl_h$ are ind-subvarieties.

\begin{proposition}\label{Xaff_cl_irreducible}
Let $h\leq f\in\Waff$. Then $\Xaffcl_h\cap \Xaffcl^f$ is a closed irreducible $(\ell(f)-\ell(h))$-dimensional subvariety of $\Xaffcl^f$, and $\Richaff_h^f$ is an open dense subset of $\Xaffcl_h\cap \Xaffcl^f$.
\end{proposition}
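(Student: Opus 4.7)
The plan is to combine standard closure relations for Bruhat and Birkhoff cells (see~\cite[\S7]{Kum}), the affine chart isomorphism of \cref{prop:FS_iso} (which is stated there to hold in Kac--Moody generality), and a Deodhar-type parametrization of the open Richardson variety. Closedness is immediate: $\Xaffcl_h=\bigsqcup_{h\leq g}\Xaff_g$ is an ind-closed subvariety of $\Gmin/\Bmin$, so its intersection with the finite-dimensional projective variety $\Xaffcl^f$ is closed. Combining the two closure relations with the nonemptiness criterion $\Richaff_g^{g'}=\emptyset$ unless $g\leq g'$ yields the stratification
\[
\Xaffcl_h\cap\Xaffcl^f\;=\;\bigsqcup_{h\leq g\leq g'\leq f}\Richaff_g^{g'}.
\]
Each stratum has dimension $\ell(g')-\ell(g)$: this can be read off from the isomorphism $\Caff_{g'}\cap\Richaff_g^{g'}\xrasim\Richaff_{g'}^{g'}\times\Richaff_g^{g'}$ of \cref{prop:FS_iso}, which realizes $\Richaff_g^{g'}$ as a locally closed subvariety of $\Xaff^{g'}\cong\C^{\ell(g')}$ of codimension $\ell(g)$. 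Consequently $\Richaff_h^f$ is the unique top-dimensional stratum, of dimension $\ell(f)-\ell(h)$.

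For irreducibility of $\Richaff_h^f$, fix any reduced word $\bi$ for $f$ and apply the Kac--Moody Deodhar-type parametrization of~\cite{Had2,Had1,Billig} (the affine analog of \cref{lemma:positive_subexpression,thm:GMR_param}): the unique positive distinguished subexpression for $h$ inside $\bi$ yields an open subvariety of $\Richaff_h^f$ isomorphic to $(\Cast)^a\times\C^b$ with $a+b=\ell(f)-\ell(h)$. Hence $\Richaff_h^f$ is irreducible of the stated dimension, and $\overline{\Richaff_h^f}\subseteq\Xaffcl_h\cap\Xaffcl^f$ is an irreducible closed subvariety of the same dimension.

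The main obstacle is the remaining density step: proving $\overline{\Richaff_h^f}=\Xaffcl_h\cap\Xaffcl^f$, equivalently that every stratum $\Richaff_g^{g'}$ with $h\leq g\leq g'\leq f$ lies in $\overline{\Richaff_h^f}$. The plan here is to pass to a Bott--Samelson resolution $m\colon Z_\bi\to\Xaffcl^f$ associated with a reduced word $\bi$ for $f$: the preimage $m^{-1}(\Xaffcl_h\cap\Xaffcl^f)$ decomposes into Deodhar cells indexed by distinguished subexpressions of $h$ in $\bi$, with the positive subexpression contributing an open irreducible piece of dimension $\ell(f)-\ell(h)$ that maps birationally onto $\overline{\Richaff_h^f}$. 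Properness and birationality of $m$, combined with the lifting property of the Bruhat order and the usual closure relations $\overline{\Xaff_{g''}}\supseteq\Xaff_g$ for $g''\leq g$ (and dually for $\Xaff^{g'}$), let one exhibit each lower stratum $\Richaff_g^{g'}$ as the image of a boundary piece of this open cell in $Z_\bi$, hence as a subset of $\overline{\Richaff_h^f}$. Since the entire argument takes place inside the finite-dimensional projective variety $\Xaffcl^f$, no genuinely infinite-dimensional input is needed, and the result reduces to invoking finite-dimensional Schubert theory.
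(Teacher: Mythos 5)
Your proof takes a genuinely different route from the paper's, which is essentially a two-line argument: it cites \cite[Lemma~7.3.10]{Kum} for the statement that $\Xaff_h\cap\Xaffcl^f$ has codimension $\ell(h)$ in $\Xaffcl^f$, and then cites \cite[Proposition~6.6]{Kum2} for everything else (irreducibility and density). You instead try to reconstruct the argument from scratch. Several of your steps are fine or fixable: closedness and the stratification are correct, and irreducibility plus the dimension count for $\Richaff_h^f$ do follow from the Kac--Moody Deodhar parametrization. However, your step~3 is circular: applying \cref{FS_iso_Rich} with $(h,g,f)\mapsto(g,g',g')$ gives $\Caff_{g'}\cap\Richaff_g^{g'}\xrasim\Richaff_{g'}^{g'}\times\Richaff_g^{g'}$, and since $\Richaff_{g'}^{g'}$ is a point this is just the tautological inclusion $\Richaff_g^{g'}\subset\Caff_{g'}$ — it does not tell you the codimension of $\Richaff_g^{g'}$ in $\Xaff^{g'}$. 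That codimension statement is precisely what \cite[Lemma~7.3.10]{Kum} supplies, and you cannot get it from \cref{prop:FS_iso} alone. (It is recoverable from the Deodhar parametrization in step~4, so this is a repairable slip.)

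The genuine gap is the density step. Your Bott--Samelson argument is circular at exactly the crucial point: the image $m(\overline{D_h})$ of the closure of the top Deodhar cell $D_h$ equals $\overline{m(D_h)}=\overline{\Richaff_h^f}$ by properness, so ``exhibiting each lower stratum $\Richaff_g^{g'}$ as the image of a boundary piece of $\overline{D_h}$'' is literally equivalent to the claim $\Richaff_g^{g'}\subset\overline{\Richaff_h^f}$ you are trying to prove. Passing to $Z_\bi$ does not change the question, and no concrete mechanism is given for why the boundary of $D_h$ should surject onto every lower stratum (understanding closures of Deodhar cells is notoriously delicate in general). A clean way to close the gap — and the argument that \cite[Proposition~6.6]{Kum2} effectively packages — is a dimension-count instead: from \cite[Lemma~7.3.10]{Kum}, $\Xaffcl_h$ has local codimension $\ell(h)$ in the affine charts $\Caff_g$, so Krull's principal ideal theorem applied inside the finite-dimensional $\Xaffcl^f$ gives that every irreducible component of $\Xaffcl_h\cap\Xaffcl^f$ has dimension $\geq\ell(f)-\ell(h)$; since every stratum $\Richaff_g^{g'}$ with $(g,g')\neq(h,f)$ has strictly smaller dimension, every component must meet the irreducible open stratum $\Richaff_h^f$, and hence all components coincide with $\overline{\Richaff_h^f}$. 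Note also that your concluding claim that ``no genuinely infinite-dimensional input is needed'' is a bit misleading: the local codimension fact for $\Xaffcl_h$ in the charts does use the infinite-dimensional structure of $\Ummin$, even though the final intersection lives in a finite-dimensional variety.
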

\begin{proof}
  By~\eqref{eq:KM_affine_charts}, $\Xaff^f$ is $\ell(f)$-dimensional, and by~\cite[Lemma~7.3.10]{Kum}, $\Xaff_h\cap \Xaffcl^f$ has codimension $\ell(h)$ in $\Xaffcl^f$. The rest follows by~\cite[Proposition~6.6]{Kum2}.
\end{proof}

For $g\in \Waff$, let $\Caff_g:=\dg \Bmmin\Bmin/\Bmin$.  We have
\begin{equation}\label{eq:KM_Rich_stratif}
  \Gmin/\Bmin=\bigsqcup_{h\leq f} \Richaff_h^f\quad\text{and}\quad \Caff_g=\bigsqcup_{h\leq g\leq f} (\Caff_g\cap \Richaff_h^f),
\end{equation}
where the unions are taken over $h,f\in \Waff$. The first part of~\eqref{eq:KM_Rich_stratif} follows from~\eqref{eq:KM_Bruhat_Birkhoff}, and for the second part, see the proof of \cref{FS_iso_Rich}.

\begin{remark}\label{rmk:Gmin_vs_hG}
Let $\hG\supset \Gmin$ be the ``maximal'' Kac--Moody group (denoted $\G$ in \cite{Kum}) associated to $\GCMaff$, and let $\hB\supset \Bmin$ be its standard positive Borel subgroup. Then the standard negative Borel subgroup of $\hG$ is still $\Bmmin$. By~\cite[7.4.5(2)]{Kum}, we may identify $\Gmin/\Bmin$ with $\hG/\hB$. By~\cite[7.4.2(3)]{Kum}, $\Xaff^f$ coincides with the variety $\hB f \hB/\hB$ in~\cite[Definition~7.1.13]{Kum} for $f\in\Waff$. Similarly, for $h\in \Waff$, $\Xaff_h=\Bmmin\cdot \dh\Bmin/\Bmin$ coincides with the variety $\B_\emptyset^h:=\Bmmin h\hB/\hB$ defined in the last paragraph of~\cite[\S7.1.20]{Kum}.  
\end{remark}

\def\center{C}
\def\centeraff{\tilde{C}}
\def\pone{^1}
\subsection{Affine Kac--Moody groups and polynomial loop groups}\label{sec:AKM}
Suppose that $\GCMaff$ is the affine Cartan matrix associated to a simple and simply-connected algebraic group $G$. Thus we have $r=|I|+1$, $\rt=|I|+2$, and $\GCMaff$ is defined by~\cite[13.1.1(7)]{Kum}. Let $\G$ denote the polynomial loop group from \cref{sec:kac-moody-groups}. Our goal is to explain that the flag varieties $\G/\B$ and $\Gmin/\Bmin$ are isomorphic.

Let $\center\subset T\subset G$ be the center of $G$, and let $\centeraff\subset\Tmin\subset\Gmin$ be the center of $\Gmin$; see~\cite[Lemma~6.2.9(c)]{Kum}.  By~\cite[Corollary~13.2.9]{Kum}, there exists a surjective group homomorphism $\psi:\Gmin\to (\Cast\ltimes \G)/\center$ with kernel $\centeraff$, where $\Cast$ acts on $\G$ as in \cref{sec:KM_torus-action}; see also~\cite[Definition~13.2.1]{Kum}. The groups $\U,\U_-\subset \G$ are identified with the groups $\Umin,\Ummin\subset \Gmin$, and we have $\T/\center\cong \Tmin/\centeraff$. Thus $\psi$ induces an isomorphism $\Gmin/\Bmin\xrasim \G/\B$ between the affine Kac--Moody flag variety and the affine flag variety. The Weyl groups $\Waff$ of $\G$ and $\Gmin$ are isomorphic by~\cite[Proposition~13.1.7]{Kum}, and the root systems $\Delta$ coincide by~\cite[Corollary~13.1.4]{Kum}. Therefore the subsets $\Xaff^f$, $\Xaff_h$, $\Richaff_h^f$, and $\Caff_g$ of $\G/\B$ get sent by $\psi$ to the corresponding subsets of $\Gmin/\Bmin$. As explained in the last paragraph of~\cite[\S13.2.8]{Kum}, $G$ can be viewed as a subset of $\Gmin$ as well, and the restriction of $\psi$ to $G$ is the quotient map $G\to G/C$.

We justify some of the other statements that we used in \cref{sec:loop-groups-affine,sec:KM_torus-action}. 
For~\eqref{eq:tau_on_Phi}, see~\cite[\S13.1]{Kum}. For~\eqref{eq:Uvp_Uvn_normal}, see~\cite[\S6.1.13]{Kum}. For a description of $Y(\T)$ from~\cref{sec:KM_torus-action}, see~\cite[\S13.2.2]{Kum}.  For a description of the pairing $\<\cdot,\cdot\>:Y(\T)\times X(\T)\to \Z$ in the same section, see~\cite[\S13.1.1]{Kum}.

\subsection{Gaussian decomposition and affine charts} \label{sec:KMapp_Gauss}
By~\cite[Theorem~7.4.14]{Kum}, $\Gmin$ is an \emph{affine ind-group}. Similarly, $\Umin$, $\Ummin$, $\T$, $\Bmin$, and $\Bmmin$ are affine ind-groups; see e.g.~\cite[\S7.4]{Kum} and~\cite[Corollary~7.3.8]{Kum}.

Let $\Gmin_0:=\Bmmin\Bmin$ and $g\in\Waff$. Recall the subgroups $\U_1(g)$ and $\U_2(g)$ from~\eqref{eq:Caff_U_1(g)_U_2(g)}. Then $\U_1(g)$ is a closed $\ell(g)$-dimensional subgroup of $\Umin\cong \U$, and $\U_2(g)$ is a closed ind-subgroup of $\Ummin\cong \U_-$. 

\begin{proof}[Proof of~\cref{lemma:KM_gaussian}.]
  For~\itemref{KM_gaussian}, see~\cite[Proposition~7.4.11]{Kum}. For~\itemref{KM_U1_U2}, we use an argument given in~\cite[Proposition~2.5]{HW}. Both maps are bijective morphisms by~\cite[Lemma~6.1.3]{Kum}. In particular, it follows that $\dg \Ummin\dg^{-1}\subset \Gmin_0$ and for $x\in \dg \Ummin\dg^{-1}$, we have $[x]_0=1$. The inverse maps are given by $\mu_{21}^{-1}(x)= ([x]_-,[x]_+)$,  $\mu_{12}^{-1}(x)= ([x^{-1}]_+^{-1},[x^{-1}]_-^{-1})$. They are regular morphisms by~\itemref{KM_gaussian}, which proves~\itemref{KM_U1_U2}.
\end{proof}

\begin{proof}[Proof of~\eqref{eq:KM_affine_charts}.]
The map $  \dg\Ummin\dg^{-1}\xrasim \Caff_g$ is a biregular isomorphism for $g=\id$ by~\cite[Lemma~7.4.10]{Kum}. Since $\Waff$ acts on $\Gmin/\Bmin$ by left multiplication, the case of general $g\in \Waff$ follows as well. Since $\U_1(g)$, $\U_2(g)$ are closed ind-subvarieties of $\dg \Ummin\dg^{-1}$ and $\Xaff^g$, $\Xaff_g$ are closed ind-subvarieties of $\Caff_g$, it suffices to show that the image of $\U_1(g)$ equals $\Xaff^g$ while the image of $\U_2(g)$ equals $\Xaff_g$.  By~\cite[Exercise~7.4.E(9) and 5.2.3(11)]{Kum}, we have
\begin{align*}
\Umin&=(\Umin\cap \dg \Ummin\dg^{-1})\cdot(\Umin\cap \dg \Umin\dg^{-1})=\U_1(g)\cdot (\Umin\cap \dg \Umin\dg^{-1}),\\
\Ummin&=(\Ummin\cap \dg \Ummin\dg^{-1})\cdot(\Ummin\cap \dg \Umin\dg^{-1})=\U_2(g)\cdot (\Ummin\cap \dg \Umin\dg^{-1}).
\end{align*}
Thus 
\begin{align*}
\Bmin \dg\Bmin&=\U_1(g)\cdot(\Umin\cap \dg \Umin\dg^{-1})\cdot \dg\Bmin= \U_1(g)\cdot \dg\cdot \Bmin,\\
\Bmmin \dg\Bmin&=\U_2(g)\cdot(\Ummin\cap \dg \Umin\dg^{-1})\cdot \dg\Bmin= \U_2(g)\cdot \dg\cdot \Bmin.\qedhere
\end{align*}
\end{proof}


\bibliographystyle{alpha}
\bibliography{positroids}

\end{document}